\title{Some Results on Large Cardinals and the Continuum Function}
\author{Brent Cody}
\newcommand{\CARD}{{\rm CARD}}
\newcommand{\REG}{{\rm REG}}
\newcommand{\CH}{{\rm CH}}
\newcommand{\GCH}{{\rm GCH}}
\newcommand{\ZFC}{{\rm ZFC}}
\newcommand{\ORD}{\mathop{{\rm ORD}}}
\renewcommand{\P}{{\mathbb P}}
\newcommand{\Q}{{\mathbb Q}}
\newcommand{\R}{{\mathbb R}}
\newcommand{\F}{{\mathbb F}}
\newcommand{\Sacks}{\mathop{\rm Sacks}}
\newcommand{\Add}{\mathop{\rm Add}}
\newcommand{\Ult}{\mathop{\rm Ult}}
\newcommand{\forces}{\Vdash}
\newcommand{\forced}{\Vdash}
\renewcommand{\1}{\mathbbm{1}}
\newcommand{\restrict}{\upharpoonright}
\newcommand{\concat}{\mathbin{{}^\smallfrown}}
\newcommand{\elemsub}{\prec}
\newcommand{\elesub}{\prec}
\newcommand{\supp}{\mathop{\rm supp}}
\newcommand{\Aut}{\mathop{\rm Aut}}
\newcommand{\dom}{\mathop{\rm dom}}
\newcommand{\ran}{\mathop{\rm ran}}
\newcommand{\ot}{\mathop{\rm ot}\nolimits}
\newcommand{\cf}{\mathop{\rm cf}}
\newcommand{\tc}{\mathop{\rm tc}}
\newcommand{\id}{\mathop{\rm id}}
\newcommand{\cp}{\mathop{\rm cp}}
\renewcommand{\and}{\mathop{\&}}
\newcommand{\length}{\mathop{\rm length}}
\newcommand{\Split}{\mathop{\rm Split}}
\newtheorem{theorem}{Theorem}[chapter]
\newtheorem{lemma}[theorem]{Lemma}
\newtheorem{corollary}[theorem]{Corollary}
\newtheorem{claim}[theorem]{Claim}
\newtheorem{sublemma}{Sublemma}[theorem]
\newtheorem*{theorem31}{Theorem 3.1}
\newtheorem*{keylemma}{Key Lemma}
\theoremstyle{definition}
\newtheorem{question}{Question}
\newtheorem{remark}{Remark}
\newtheorem{fact}[theorem]{Fact}
\newtheorem{definition}[theorem]{Definition}
\begin{document}

\frontmatter

\maketitle 


\makeapprovalpage{Joel David Hamkins}{Arthur W. Apter}{Gunter Fuchs}

\makeabstractpage{Joel David Hamkins}{

Given a Woodin cardinal $\delta$, I show that if $F$ is any Easton function with $F"\delta\subseteq\delta$ and $\GCH$ holds, then there is a cofinality-preserving forcing extension in which $2^\gamma= F(\gamma)$ for each regular cardinal $\gamma<\delta$, and in which $\delta$ remains Woodin. 

I also present a new example in which forcing a certain behavior of the continuum function on the regular cardinals, while preserving a given large cardinal, requires large cardinal strength beyond that of the original large cardinal under consideration. Specifically, I prove that the existence of a $\lambda$-supercompact cardinal $\kappa$ such that $\GCH$ fails at $\lambda$ is equiconsistent with the existence of a cardinal $\kappa$ that is $\lambda$-supercompact and $\lambda^{++}$-tall.

I generalize a theorem on measurable cardinals due to Levinski, which says that given a measurable cardinal, there is a forcing extension preserving the measurability of $\kappa$ in which $\kappa$ is the least regular cardinal at which $\GCH$ holds. Indeed, I show that Levinski's result can be extended to many other large cardinal contexts. This work paves the way for many additional results, analogous to the results stated above for Woodin cardinals and partially supercompact cardinals.

}


\chapter*{Acknowledgements}

It is with great pleasure that I thank Joel David Hamkins, my advisor. This dissertation would certainly not have been possible without his support and guidance. Joel helped to broadly chart the course of my research and also provided help with technical details. Without his encouragement and advice, the work on Woodin cardinals contained herein would have been far less general, and its presentation much less clear. He encouraged me to generalize my work on the Levinski property and Woodin cardinals, which led directly to Theorem \ref{theoremwoodin}. I would also like to thank Arthur Apter for many helpful conversations regarding the topics of this thesis and for suggesting that I consider Woodin cardinals in the first place.

I would like to thank my partner, Anna, for her unending kindness and emotional support, as well as for a relationship I thought impossible before meeting her. I am thankful for my family and friends, for all of their support over the years---especially my parents, Michael and Fonda Cody, who actively helped me to develop the curiosity and self confidence that led to this dissertation. I would also like to thank my sister, Julie, for her friendship and my grandmother, Marion Topping, for being an inspiration throughout my life.

This dissertation is dedicated to Christopher Lee Doyle and Loretta Dolan Cody.

\tableofcontents

\listoffigures

\mainmatter

%
%
%
%

\chapter{Introduction and Background}\label{chapterintroduction}



Easton proved that the continuum function $\kappa\mapsto 2^\kappa$ on regular cardinals can be forced to behave in any way that is consistent with K\"onig's Theorem ($\kappa<\cf(2^\kappa)$) and monotonicity ($\kappa<\lambda$ implies $2^\kappa\leq 2^\lambda$). In the presence of large cardinals, there are additional restrictions on the possible behaviors of the continuum function on regular cardinals. For example, Scott proved that if $\GCH$ fails at a measurable cardinal $\kappa$, then $\GCH$ fails with normal measure one below $\kappa$. An \emph{Easton function} is a class of the form $F:\REG\to\CARD$ such that (1) $\kappa<\cf(F(\kappa))$ for each $\kappa\in\REG$ and (2) $\kappa<\lambda$ implies $F(\kappa)\leq F(\lambda)$ for $\kappa,\lambda\in\REG$. It seems natural to ask: 
\begin{question}\label{question}
Given a large cardinal $\kappa$, what Easton functions can be forced to equal the continuum function on the regular cardinals, while preserving the large cardinal property of $\kappa$? 
\end{question}
This dissertation fits into the program of answering the above question for the plenitude of large cardinal axioms; see \cite{Menas:ConsistencyResultsConcerningSupercompactness}, \cite{Apter:AnEastonTheoremForLevelByLevel}, \cite{FriedmanHonzik:EastonsTheoremAndLargeCardinals}, and \cite{CodyGitman:EastonsTheoremForRamseyCardinals}.

I now give a brief summary of the literature in this area. In \cite{Silver:GCHForMeasurable}, Silver proved that given a measurable cardinal $\kappa$, one can force the continuum to agree with a very specific Easton function, namely $\gamma\mapsto\gamma^+$, and preserve the measurability of $\kappa$. Kunen and Paris showed in \cite{KunenParis:BooleanExtensionsAndMeasurableCardinals} that under $\GCH$, given a measurable cardinal $\kappa$, an Easton function $F$, and a set $E\subseteq\kappa$ assigned measure zero by some measure on $\kappa$, there is a cofinality-preserving forcing extension in which $\kappa$ remains measurable and for each regular $\gamma\in E$ one has $2^\gamma=F(\gamma)$. Silver then devised a method for forcing a violation of $\GCH$ at a measurable cardinal $\kappa$, which of course requires violating $\GCH$ on a measure one subset of $\kappa$, assuming $\kappa$ is $\kappa^{++}$-supercompact.  Menas combined the methods of Easton and Silver in \cite{Menas:ConsistencyResultsConcerningSupercompactness} to prove that if $F$ is a \emph{locally definable Easton function} (for a definition see \cite[Theorem 18]{Menas:ConsistencyResultsConcerningSupercompactness} or \cite[Definition 3.16]{FriedmanHonzik:EastonsTheoremAndLargeCardinals}), then there is a forcing extension $V[G]$ in which $2^\gamma=F(\gamma)$ for each regular cardinal $\gamma$ and each supercompact cardinal in $V$ remains supercompact in $V[G]$. Levinski proved \cite{Levinski:FiltersAndLargeCardinals} that if $\kappa$ is measurable, then there is a forcing extension in which $\kappa$ remains measurable, $2^\kappa=\kappa^+$, and yet $2^\delta=\delta^{++}$ for each regular cardinal $\delta<\kappa$. Levinski's result stands in contrast to the result of Scott mentioned above. Although not as general as some of the other results discussed herein, Levinski's result illustrates that there are special cases that are interesting and surprising. In \cite{Apter:AnEastonTheoremForLevelByLevel}, Apter proved an Easton theorem for the level-by-level equivalence of strong compactness and supercompactness.

In \cite{FriedmanHonzik:EastonsTheoremAndLargeCardinals}, Friedman and Honsik extend the theorems of Easton and Menas to the large cardinal concepts of strong cardinals and partially hypermeasurable cardinals. They prove that if $F$ is any locally definable Easton function, then there is a forcing extension in which the continuum function agrees with $F$ and in which all strong cardinals are preserved. They also determine precisely what additional assumptions need to be made on an Easton function $F$ and a measurable cardinal $\kappa$ in order to force the continuum function to agree with $F$ and preserve the measurability of $\kappa$. To prove these results, Friedman and Honsik use the tuning fork method, introduced in \cite{FriedmanThompson:PerfectTreesAndElementaryEmbeddings}, which has come to be an extremely versatile tool in the study of large cardinal embeddings and forcing. Indeed, the tuning fork method led to a solution of the number of normal measures problem, see \cite{FriedmanMagidor:TheNumberOfNormalMeasures}.

In Chapter \ref{chapterwoodin}, I will give a full answer to Question \ref{question} for the case of Woodin cardinals. Indeed, I will show that under $\GCH$, if $\delta$ is a Woodin cardinal and $F$ is an Easton function with closure point $\delta$, then there is a cofinality-preserving forcing extension in which $\delta$ remains Woodin and $2^\gamma=F(\gamma)$ for each regular cardinal $\gamma$. In Chapter \ref{chapterthefailure}, I produce a new example of the phenomenon in which, forcing a certain behavior of the continuum function, while preserving a large cardinal property, requires additional large cardinal strength beyond the original large cardinal under consideration. Specifically, I will determine the precise consistency strength of the existence of a $\lambda$-supercompact cardinal $\kappa$ such that $2^\lambda>\lambda^+$. In Chapter \ref{chapterlevinski}, I explore the possibility of generalizing the result of Levinski mentioned above to other large cardinal contexts. That is, I consider the task of forcing nonreflections of $\GCH$ while preserving large cardinals. The work in Chapter \ref{chapterlevinski} lays the groundwork for many more general results along the lines of the main result in Chapter \ref{chapterwoodin}.

%
%
%
%

\chapter{Preliminaries}\label{chapterpreliminaries}

\section{Lifting Embeddings}

In what follows, I will be concerned with arguing that various large cardinals are preserved through forcing. The large cardinal properties will be witnessed by elementary embeddings of the form $j:M\to N$ between models of set theory. To show that a given large cardinal property is preserved to a forcing extension, say $V[G]$, one can lift the embedding to $j:M[G]\to N[j(G)]$ and argue that the lifted embedding witnesses the large cardinal property in $V[G]$. In this chapter, I will present some standard lemmas that will be useful for lifting embeddings. For proofs of Lemmas \ref{lemmaground} - \ref{lemmalambdadist}, one may consult \cite{Hamkins:Book}, \cite{Cummings:Handbook}, or \cite{Cummings:AModelInWhichGCH}.

In what follows $N$ and $M$ are always assumed to be transitive models of $\ZFC$. The following two lemmas are useful for building generic objects.

\begin{lemma}\label{lemmaground}
Suppose that $M^\lambda\subseteq M$ in $V$ and there is in $V$ an $M$-generic filter $H\subseteq \Q$ for some forcing $\Q\in M$. Then $M[H]^\lambda\subseteq M[H]$ in $V$.
\end{lemma}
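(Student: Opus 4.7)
The plan is to reduce the statement to a claim about names: given any sequence in $V$ of length $\lambda$ consisting of elements of $M[H]$, I want to exhibit a single $\Q$-name in $M$ whose $H$-interpretation is precisely that sequence. The hypothesis $M^\lambda \subseteq M$ will be used in exactly one place, namely to push a $V$-sequence of names down into $M$.

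More concretely, let $\vec{x} = \la x_\alpha : \alpha < \lambda\ra \in V$ with each $x_\alpha \in M[H]$. First, working in $V$ and invoking the Axiom of Choice, I would pick for each $\alpha < \lambda$ a $\Q$-name $\dot{x}_\alpha \in M$ with $\dot{x}_\alpha^H = x_\alpha$, thereby producing the sequence of names $\la \dot{x}_\alpha : \alpha < \lambda\ra$ in $V$. Second, since each $\dot{x}_\alpha \in M$ and $M$ is closed under $\lambda$-sequences in $V$, this sequence of names is an element of $M$. Third, inside $M$ I would assemble a single name for $\vec{x}$; letting $\mathrm{op}(\check{\alpha}, \dot{x}_\alpha)$ denote a canonical $\Q$-name in $M$ for the ordered pair $\la \alpha, x_\alpha\ra$, set
$$\dot{Y} = \{\, \la \mathrm{op}(\check{\alpha}, \dot{x}_\alpha), \1\ra : \alpha < \lambda\,\}.$$
This set belongs to $M$ because the underlying sequence of names does, and by construction $\dot{Y}^H = \{\la \alpha, x_\alpha\ra : \alpha < \lambda\} = \vec{x}$, so $\vec{x} \in M[H]$ as required.

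The main obstacle is really just a bookkeeping issue: verifying that the sequence of names chosen in $V$ genuinely lives in $M$. This is precisely where the $\lambda$-closure hypothesis is invoked, and once that step is in hand the rest of the argument is routine name manipulation. No appeal to properties of $\Q$ (such as chain condition or distributivity) is needed.
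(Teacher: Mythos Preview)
Your argument is correct and is precisely the standard proof of this fact; the paper itself does not supply a proof but refers the reader to \cite{Hamkins:Book}, \cite{Cummings:Handbook}, and \cite{Cummings:AModelInWhichGCH}, where the argument given is essentially the one you have written.
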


\begin{lemma}\label{lemmachain}
Suppose that $M\subseteq V$ is a model of $\ZFC$, $M^{<\lambda}\subseteq M$ in $V$ and $\P$ is $\lambda$-c.c. If $G\subseteq \P$ is $V$-generic, then $M[G]^{<\lambda}\subseteq M[G]$ in $V[G]$.
\end{lemma}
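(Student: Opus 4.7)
The plan is to take an arbitrary sequence $\vec a = \langle a_\alpha : \alpha < \eta\rangle \in V[G]$ with $\eta < \lambda$ and each $a_\alpha \in M[G]$, and to produce a sequence $\langle \dot b_\alpha : \alpha < \eta\rangle \in M$ of $\P$-names in $M$ with $\dot b_\alpha^G = a_\alpha$ for each $\alpha$. Interpreting this sequence by $G$ then yields $\vec a \in M[G]$, which is what is required. I tacitly assume $\P \in M$, which is implicit in making sense of $M[G]$.

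First, fix in $V$ a $\P$-name $\dot A$ for $\vec a$ and a condition $r \in G$ forcing that $\dot A(\check\alpha) \in \check M[\dot G]$ for every $\alpha < \check\eta$. For each $\alpha < \eta$, the set
\[
D_\alpha = \{\, p \le r : (\exists\, \dot c \in M^\P)\; p \Vdash \dot A(\check\alpha) = \dot c \,\}
\]
is dense below $r$, so in $V$ I choose a maximal antichain $A_\alpha \subseteq D_\alpha$ together with, by the Axiom of Choice, an assignment $p \mapsto \dot c_{\alpha,p} \in M$ satisfying $p \Vdash \dot A(\check\alpha) = \dot c_{\alpha,p}$. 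The $\lambda$-c.c.\ bounds $|A_\alpha| < \lambda$, so the selection data $X_\alpha = \{\,(\dot c_{\alpha,p}, p) : p \in A_\alpha\,\}$ has size less than $\lambda$ and its members all lie in $M$. The closure hypothesis $M^{<\lambda} \subseteq M$ therefore gives $X_\alpha \in M$. Working inside $M$, one assembles from $X_\alpha$ in the standard way a mixed $\P$-name $\dot b_\alpha \in M$ with the property that $\dot b_\alpha^G = \dot c_{\alpha,p}^G = a_\alpha$ for the unique $p \in G \cap A_\alpha$.

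Finally, the resulting sequence $\langle \dot b_\alpha : \alpha < \eta\rangle$ has length $<\lambda$ and all of its entries lie in $M$, so one further application of $M^{<\lambda} \subseteq M$ places the whole sequence in $M$; interpreting by $G$ then gives $\vec a \in M[G]$. The main subtlety is the intermediate step: the selection of witnessing names $\dot c_{\alpha,p}$ is carried out in $V$ using full Choice and is not a priori available inside $M$. It is precisely here that the two hypotheses must cooperate, the $\lambda$-c.c.\ bounding the quantity of choice data needed per coordinate and the $<\lambda$-closure then absorbing that bounded quantity of data back into $M$. Once $X_\alpha \in M$ is secured, the mixed-name construction is absolute and the outer assembly of the sequence is a routine second appeal to closure.
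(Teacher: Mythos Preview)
Your argument is correct and is precisely the standard proof of this lemma. The paper does not supply its own proof here; it simply refers the reader to standard sources (Hamkins's manuscript, Cummings's handbook chapter, and Cummings's \emph{A model in which GCH holds at successors but fails at limits}), and the argument you have written is essentially the one found in those references.
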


Suppose $j:M\to N$ is an embedding and $\P\in M$ a forcing notion. In order to lift $j$ to $M[G]$ where $G$ is $M$-generic for $\P$, one typically uses Lemmas \ref{lemmaground} and \ref{lemmachain} to build an $N$-generic filter $H$ for $j(\P)$ satisfying condition (1) in Lemma \ref{lemmaliftingcriterion} below.

\begin{lemma}\label{lemmaliftingcriterion}
Let $j:M\to N$ be an elementary embedding between transitive models of $\ZFC$. Let $\P\in M$ be a notion of forcing, let $G$ be $M$-generic for $\P$ and let $H$ be $N$-generic for $j(\P)$. Then the following are equivalent. 
\begin{enumerate}
\item $j"G\subseteq H$
\item There exists an elementary embedding $j^*:M[G]\to N[H]$, such that $j^*(G)=H$ and $j^*\restrict M =j$.
\end{enumerate}
\end{lemma}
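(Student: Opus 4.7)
The plan is to prove $(2) \Rightarrow (1)$ in one line, and then to construct $j^*$ in $(1) \Rightarrow (2)$ by defining it on $\P$-names and using the forcing theorem together with the hypothesis $j"G \subseteq H$.

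For $(2) \Rightarrow (1)$, suppose such a $j^*$ exists. Given $p \in G$, since $p \in M$ we have $j^*(p) = j(p)$; and since $j^*(G) = H$, applying $j^*$ elementarily gives $j(p) = j^*(p) \in j^*(G) = H$.

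For $(1) \Rightarrow (2)$, given $x \in M[G]$, fix some $\P$-name $\tau \in M$ with $\tau_G = x$ and define
\[
j^*(\tau_G) := j(\tau)_H.
\]
The verifications are all driven by the forcing theorem in $M$, elementarity of $j$, and the hypothesis $j"G \subseteq H$. For \emph{well-definedness}, if $\tau_G = \sigma_G$ then by the forcing theorem in $M$ there is $p \in G$ with $p \forces_\P^M \tau = \sigma$; elementarity of $j$ gives $j(p) \forces_{j(\P)}^N j(\tau) = j(\sigma)$, and by hypothesis (1) we have $j(p) \in H$, so $j(\tau)_H = j(\sigma)_H$. For \emph{elementarity}, the same pattern yields, for any formula $\varphi$ and names $\tau_1,\dots,\tau_n \in M$,
\[
M[G] \models \varphi(\tau_{1,G},\dots,\tau_{n,G})
\iff (\exists p \in G)\, p \forces_\P^M \varphi(\tau_1,\dots,\tau_n),
\]
which by elementarity of $j$ and $j"G \subseteq H$ implies $N[H] \models \varphi(j(\tau_1)_H,\dots,j(\tau_n)_H)$; the converse direction is obtained by running the same argument with $\neg\varphi$.

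Finally, the remaining two clauses follow by applying $j^*$ to suitable canonical names. For $j^* \restrict M = j$, use the canonical name $\check x$ for each $x \in M$: since $\check x_G = x$ and $j(\check x) = \check{j(x)}$, one computes $j^*(x) = j^*(\check x_G) = j(\check x)_H = \check{j(x)}_H = j(x)$. For $j^*(G) = H$, use the canonical name $\dot G$ for the generic; since $\dot G_G = G$ in $M[G]$ and by elementarity $j(\dot G)$ is the canonical name for the $j(\P)$-generic in $N$, we obtain $j^*(G) = j(\dot G)_H = H$. There is no serious obstacle in the argument; the only point that really uses the hypothesis $j"G \subseteq H$ is the passage, in both well-definedness and elementarity, from ``$j(p) \in j"G$'' to ``$j(p) \in H$,'' which is precisely what is needed to invoke the forcing theorem in $N[H]$.
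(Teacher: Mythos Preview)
Your proof is correct and is the standard argument for the lifting criterion. The paper does not actually supply its own proof of this lemma; it states the result and refers the reader to \cite{Hamkins:Book}, \cite{Cummings:Handbook}, and \cite{Cummings:AModelInWhichGCH}, where precisely the argument you give --- defining $j^*(\tau_G)=j(\tau)_H$ and verifying well-definedness and elementarity via the forcing theorem together with $j"G\subseteq H$ --- is the one presented.
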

\noindent The embedding $j^*$ in condition (2) above is called a $\emph{lift}$ of $j$.

Suppose $j:V\to M$ is an elementary embedding. A set $S\in V$ is said to \emph{generate $j$ over $V$} if $M$ is of the form 
\begin{align}
M&=\{j(h)(s)\mid h:[A]^{<\omega}\to V, s\in [S]^{<\omega}, h\in V\}.\label{seedrep}
\end{align}
where $A\in V$ and $S\subseteq j(A)$. In this context, the elements of $S$ are called seeds. For more on `seed theory' and its applications, see \cite{Hamkins:Book} and \cite{Hamkins:CanonicalSeedsAndPrickryTrees}. I will often make use of the following lemma which states that the above representation (\ref{seedrep}) of the target model of an elementary embedding remains valid after forcing.

\begin{lemma}\label{lemmaseedpreservation}
If $j:V\to M$ is an elementary embedding generated over $V$ by a set $S\in V$ then any lift of this embedding to a forcing extension $j^*:V[G]\to M[j^*(G)]$ is generated by $S$ over $V[G]$ even if $j^*$ is a class in some further forcing extension $N\supseteq V[G]$.
\end{lemma}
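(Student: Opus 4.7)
The plan is to show that every element of $M[j^*(G)]$ has the required seed representation using a function in $V[G]$ (the reverse inclusion is automatic, since $j^*$ maps $V[G]$ into $M[j^*(G)]$ and the seeds $s\in [S]^{<\omega}$ already lie in $M$). Let me fix $x\in M[j^*(G)]$ and choose a $j(\P)$-name $\tau\in M$ with $\tau_{j^*(G)}=x$. Since $M$ is generated by $S$ over $V$, there exist $A\in V$, a function $h\in V$ with $h:[A]^{<\omega}\to V$, and a finite tuple $s\in [S]^{<\omega}$ such that $\tau=j(h)(s)$. As a preliminary cleanup step, I would replace $h$ by the function that agrees with $h$ where $h(a)$ is a $\P$-name and sends $a$ to some fixed default $\P$-name (say $\check{\emptyset}$) otherwise; this does not change $j(h)(s)=\tau$, since $\tau$ is already a $j(\P)$-name in $M$ and elementarity transfers the property ``$h(a)$ is a $\P$-name'' along $j$. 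Thus I may assume $h$ takes values in $V^\P$.

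Next, working in $V[G]$, I would define
\[
h^*:[A]^{<\omega}\to V[G],\qquad h^*(a)=h(a)_G,
\]
so that $h^*\in V[G]$ is the pointwise $G$-interpretation of $h$. The crux is then the identity $j^*(h^*)(s)=x$. This follows by applying elementarity of $j^*:V[G]\to M[j^*(G)]$ to the statement ``$h^*$ is the function on $[A]^{<\omega}$ whose value at each $a$ is the interpretation of $h(a)$ by $G$'', which uses only the parameters $h$, $A$, $G$, and $h^*$. Elementarity then yields that $j^*(h^*)$ is the function on $[j(A)]^{<\omega}$ whose value at each $a'$ is the interpretation of $j(h)(a')$ by $j^*(G)$; evaluating at $s\in [S]^{<\omega}\subseteq [j(A)]^{<\omega}$ gives $j^*(h^*)(s)=j(h)(s)_{j^*(G)}=\tau_{j^*(G)}=x$, as desired.

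Finally, I would note that none of this argument requires $j^*$ to live in $V[G]$: the witnessing function $h^*$ is constructed inside $V[G]$, and the elementarity computation is about the single elementary embedding $j^*$ wherever it happens to reside, so the seed representation of $M[j^*(G)]$ persists even when $j^*$ is only definable in a further forcing extension $N\supseteq V[G]$. The only real obstacle is the bookkeeping around the first step, namely arranging that the values of $h$ are genuine $\P$-names so that the pointwise interpretation $h^*$ makes sense and commutes with the action of $j^*$; once this is in place, the lemma reduces to a single application of elementarity.
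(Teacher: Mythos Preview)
Your proof is correct and is the standard argument for this fact. Note that the paper does not actually supply a proof of this lemma; it is stated among the preliminary lifting lemmas and the reader is referred to \cite{Hamkins:Book}, \cite{Cummings:Handbook}, or \cite{Cummings:AModelInWhichGCH} for proofs, so there is no in-paper argument to compare against. Your approach---representing an arbitrary $x\in M[j^*(G)]$ by a $j(\P)$-name $\tau\in M$, writing $\tau=j(h)(s)$ via the seed hypothesis on $M$, normalizing $h$ to take values in $\P$-names, and then passing to $h^*(a)=h(a)_G$ in $V[G]$ so that elementarity of $j^*$ gives $j^*(h^*)(s)=j(h)(s)_{j^*(G)}=x$---is exactly the argument one finds in those references.
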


The following standard lemma, which appears in \cite[Chapter 1]{Hamkins:Book}, asserts that embeddings witnessed by extenders are preserved by highly distributive forcing.

\begin{lemma}\label{lemmalambdadist}

If $j:V\to M$ is generated by $S\subseteq j(I)$, and $V[G]$ is obtained by ${\leq}|I|$-distributive forcing, then $j$ lifts uniquely to an embedding $j:V[G]\to M[j(G)]$.

\end{lemma}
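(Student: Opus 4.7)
The plan is to invoke the lifting criterion of Lemma \ref{lemmaliftingcriterion}, so the task reduces to producing an $M$-generic filter $H \subseteq j(\P)$ with $j"G \subseteq H$, and then arguing that $H$ is forced upon us as the value of $j(G)$ under any lift. The natural candidate is the upward closure $H = \{p \in j(\P) : p \geq j(q) \text{ for some } q \in G\}$, which is a filter since $j"G$ is the image of the directed set $G$ under the order-preserving map $j$.

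The main obstacle, and the only place distributivity enters, is verifying $M$-genericity of $H$. Given any dense open $D \in M$ in $j(\P)$, I would use the seed representation to write $D = j(h)(s)$ for some $h : [I]^{<\omega} \to V$ with $h \in V$ and some $s \in [S]^{<\omega}$. By redefining $h(t)$ to be all of $\P$ on those inputs $t$ where $h(t)$ is not already a dense open subset of $\P$, elementarity preserves the identity $D = j(h)(s)$ while arranging that every value $h(t)$ is dense open in $\P$ in $V$. Since there are only $|I|$ relevant inputs and $\P$ is $\leq|I|$-distributive, the intersection $D^* = \bigcap_t h(t)$ remains dense open in $\P$, so I can pick $q \in G \cap D^*$. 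The statement that $q$ lies in every $h(t)$ transfers through $j$ to the statement that $j(q)$ lies in every $j(h)(t')$ for $t' \in [j(I)]^{<\omega}$, so in particular $j(q) \in j(h)(s) = D$. Since $q \in G$ gives $j(q) \in H$, this establishes $H \cap D \neq \emptyset$.

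For uniqueness, suppose $H^{*}$ is any $M$-generic filter in $j(\P)$ with $j"G \subseteq H^{*}$. If some $p \in H^{*} \setminus H$ existed, then applying $M$-genericity of $H$ to the dense set $\{r \in j(\P) : r \leq p \text{ or } r \perp p\}$ (which lies in $M$ since $p \in M$) would yield some $r \in H$ with either $r \leq p$, forcing $p \in H$ and a contradiction, or $r \perp p$, contradicting that $H^{*}$ is a filter containing both $r$ and $p$. Hence $H^{*} = H$, and since any lift is determined by its action on $\P$-names via $j^{*}(\tau^G) = j(\tau)^{H}$, uniqueness follows. The genuine obstacle throughout is the combinatorial reduction: recognizing that the seed representation converts $M$-genericity against arbitrary dense sets of $j(\P)$ in $M$ into the problem of meeting only $|I|$-many dense sets of $\P$ in $V$, which is exactly what the distributivity hypothesis provides.
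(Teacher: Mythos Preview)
Your proof is correct and follows exactly the approach the paper sketches: use the seed representation to write an arbitrary dense open $D\in M$ as $j(h)(s)$, intersect the at most $|I|$ many dense open sets in the range of $h$ via distributivity, and pull the resulting condition through $j$. You supply considerably more detail than the paper (whose proof is a single sentence), including the uniqueness argument the paper omits; in that argument, the reason $r\in H^*$ is that $H$ is by definition the upward closure of $j"G\subseteq H^*$, so $H\subseteq H^*$ automatically---you might make that half-line explicit.
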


\begin{proof}
Suppose $\P$ is $\leq|I|$-distributive forcing and that $G$ is $V$-generic for $\P$. By intersecting at most $|I|$ open dense subsets of $\P$, one may show that $j"G$ generates an $M$-generic filter on $j(\P)$.
\end{proof}

The following standard Lemma due to Easton will be used in many proofs below.

\begin{lemma}\label{lemmaeaston}
Suppose $\P$ is $\kappa^+$-c.c. and $\Q$ is ${\leq}\kappa$-closed. Then $\Q$ remains ${\leq}\kappa$-distributive in $V^\P$.
\end{lemma}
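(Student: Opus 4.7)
The plan is to prove distributivity by capturing $\Q$-names into $\P$-names.  Since $\Q\in V$, I treat the forcing as the product $\P\times\Q$, and it suffices to show that for every $(\P\times\Q)$-name $\dot{f}$ with $(\1_\P,q_0)\forces\dot{f}\colon\kappa\to\ORD$, the set
$$D:=\{q\leq q_0:\text{there exists a }\P\text{-name }\sigma\text{ with }(\1_\P,q)\forces\dot{f}=\sigma\}$$
is dense in $\Q$ below $q_0$.  Granted this, whenever $G\times H$ is $V$-generic for $\P\times\Q$, the filter $H$ meets $D$ at some $q$, and $\dot{f}^{G\times H}=\sigma^G\in V[G]$, which gives $\leq\kappa$-distributivity of $\Q$ in $V[G]$.

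The key step is a one-coordinate sublemma: given any $(\P\times\Q)$-name $\tau$ for an ordinal and any $q\in\Q$, there exist $q'\leq q$ and a $\P$-name $\sigma$ with $(\1_\P,q')\forces\tau=\sigma$.  I prove this by simultaneous recursion in $V$, building an antichain $\{p_i:i<\mu\}\subseteq\P$, a descending sequence $\langle r_i:i<\mu\rangle$ in $\Q$ below $q$, and ordinals $\beta_i$ satisfying $(p_i,r_{i+1})\forces\tau=\check{\beta}_i$.  At a successor stage $i$, if the partial antichain is not maximal in $\P$, I pick some $p\in\P$ incompatible with every $p_j$ for $j<i$, strengthen $(p,r_i)$ to some $(p_i,r_{i+1})$ deciding $\tau$, and add $p_i$; at limits I take a lower bound of the $r_j$'s by the $\leq\kappa$-closure of $\Q$.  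By the $\kappa^+$-c.c.\ of $\P$ the antichain has cardinality $\leq\kappa$, so the recursion must terminate at some $\mu<\kappa^+$ with $|\mu|\leq\kappa$; iterated applications of $\leq\kappa$-closure (legitimate since $|\mu|\leq\kappa$) produce a common lower bound $r^*$ of all the $r_i$'s.  Letting $\sigma$ be the $\P$-name that mixes the canonical names $\check{\beta}_i$ across the maximal antichain $\mathcal{A}=\{p_i:i<\mu\}$, maximality of $\mathcal A$ ensures every generic $G$ picks out a unique $p_{i_0}\in G\cap\mathcal A$, so $\sigma^G=\beta_{i_0}=\tau^{G\times H}$ whenever $r^*\in H$; hence $(\1_\P,r^*)\forces\tau=\sigma$.

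To complete the lemma, I iterate the sublemma across $\alpha<\kappa$.  Starting from any $q^*\leq q_0$, I build in $V$ a descending sequence $\langle q^\alpha:\alpha\leq\kappa\rangle$ in $\Q$ with $q^0=q^*$ and $\P$-names $\sigma_\alpha$ such that $(\1_\P,q^{\alpha+1})\forces\dot{f}(\alpha)=\sigma_\alpha$, applying the sublemma at successors and $\leq\kappa$-closure at limits.  Taking $\sigma$ to be the $\P$-name for the function $\alpha\mapsto\sigma_\alpha$, the terminal condition $q^\kappa$ satisfies $(\1_\P,q^\kappa)\forces\dot{f}=\sigma$, so $q^\kappa\in D$; since $q^*\leq q_0$ was arbitrary, $D$ is dense below $q_0$.

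The main obstacle is verifying that the sublemma's recursion stays within the $\leq\kappa$-closure of $\Q$: one has to see that the $\kappa^+$-c.c.\ of $\P$ keeps the antichain small enough ($|\mathcal A|\leq\kappa$) that the recursion length $\mu$ stays below $\kappa^+$, so that all the limit-stage lower bounds lie in the range of $\Q$'s closure.  Once this is secured, the $\kappa$-fold iteration producing density of $D$ is a routine reapplication of the same closure.
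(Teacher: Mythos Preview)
Your proof is correct and is essentially the standard argument for Easton's lemma. The paper does not actually prove this lemma in-line; it simply cites Jech's textbook (Lemma 15.19), and what you have written is precisely that standard proof: build a maximal antichain in $\P$ together with a descending sequence in $\Q$ so that each pair decides the relevant value, using the $\kappa^+$-c.c.\ of $\P$ to bound the antichain and the ${\leq}\kappa$-closure of $\Q$ to survive the limit stages, then iterate across the $\kappa$ many coordinates of $\dot f$.

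One minor remark: your framing assumes $(\1_\P,q_0)\forces\dot f\colon\kappa\to\ORD$, whereas a priori one only has $(p_0,q_0)\forces\dot f\colon\kappa\to\ORD$ for some $p_0\in G$. This is harmless---either work below $p_0$ throughout (building the antichain maximal below $p_0$ rather than in all of $\P$), or first replace $\dot f$ by a name that is forced by $\1$ to be a function $\kappa\to\ORD$. The substance of your argument is unaffected.
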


For a proof of Lemma \ref{lemmaeaston} see \cite[Lemma 15.19]{Jech:Book}.

\section{Iterations of Almost Homogeneous Forcing}\label{sectionhomogeneousiteration}


Recall that a poset $\P$ is \emph{almost homogeneous} if for each pair of conditions, $p,q\in\P$, there is an automorphism $f\in\Aut(\P)$ such that $f(p)$ and $q$ are compatible. Given an Easton support iteration $\P_\beta=\langle (\P_\alpha,\dot{\Q}_\alpha)\mid\alpha<\beta\rangle$, I will isolate a condition such that if each stage of forcing satisfies this condition, then the iteration $\P_\beta$ will be almost homogeneous. This will be used below in the proof of Theorem \ref{theoremwoodin}.

Let me discuss some preliminaries regarding automorphisms of forcing notions. Suppose $f\in\Aut(\P)$ is an automorphism of some forcing notion $\P$. One can recursively extend $f$ to $\P$-names by letting $\dot{x}^f=\{(\dot{y}^f,f(p))\mid (\dot{y},p)\in\dot{x}\}$. Since every automorphism of $\P$ fixes the top element $\1_\P$, it easily follows that check names are invariant under the application of automorphisms of $\P$. In other words, $\check{a}^f=\check{a}$ for each $a\in V$. Furthermore, I will use the fact that automorphisms of posets respect the forcing relation in the following sense. Suppose $f\in\Aut(\P)$ and $p\forces\varphi(\dot{x})$ where $p\in\P$ and $\varphi(\dot{x})$ is a formula in the forcing language for $\P$. Then $f(p)\forces \varphi(\dot{x}^f)$.

Suppose $\P$ is almost homogeneous and $G$ is $V$-generic for $\P$. Let $a_0,\ldots,a_n$ be elements of $V$. Then for each first order formula $\varphi$ with $n$ free variables, it follows by a density argument that $V[G]\models\varphi(a_0,\ldots,a_n)$ if and only if $\1\forces\varphi(\check{a}_0,\ldots,\check{a}_n)$. Hence, if $G$ and $H$ are $V$-generic for almost homogeneous forcing $\P$, then $V[G]$ and $V[H]$ are elementarily equivalent, and in fact they satisfy the same formulas with parameters from $V$.

Suppose $\P$ is almost homogeneous and $\forced_\P$ $``\dot{\Q}$ is almost homogeneous.'' It is not generally the case that the iteration $\P*\dot{\Q}$ is almost homogeneous. For example, suppose $\GCH$ holds and $\P$ is almost homogeneous. Let $\dot{\Q}$ be a $\P$-name and let $a_0,a_1\in\P$ such that $a_0\forces_\P\dot{\Q}=\Add(\omega_1,1)^{V^\P}$ and $a_1\forces_\P\dot{\Q}=\Add(\omega,\omega_2)^{V^\P}$ (such a name $\dot{\Q}$ can be obtained by the Mixing Lemma). In this case, $\forced_\P$ ``$\dot{\Q}$ is almost homogeneous,'' but there are conditions in $\P*\dot{\Q}$ that force incompatible statements, namely $\CH$ and $\lnot\CH$. Hence $\P*\dot{\Q}$ is not almost homogeneous. In this example, the value of $\dot{\Q}$ in $V^\P$ depends on the generic taken for $\P$.

I will now isolate a condition on iterations that will suffice to conclude that $\P*\dot{\Q}$ is almost homogeneous. If $\P$ is an almost homogeneous forcing notion, a $\P$-name $\dot{x}$ is called \emph{symmetric} if for every automorphism $f\in\Aut(\P)$ one has $\forced\dot{x}^f=\dot{x}$. Suppose $\P$ is almost homogeneous and $\forced_\P$ ``$\dot{\Q}$ is almost homogeneous.'' I will show below that if one also assumes that $\dot{\Q}$ is symmetric, then one can conclude that $\P*\dot{\Q}$ is almost homogeneous.

Let me discuss a property of $\P$-names that will be easy to verify in our application, and which will imply that a $\P$-name $\dot{X}$ is symmetric. Suppose that there is a first order formula $\varphi(x_0,\ldots,x_n)$ such that $\forced_\P$ ``$\forall x$ $[x\in\dot{X}$ if and only if $\varphi(x,\check{a}_1,\ldots,\check{a}_n)]$,'' where $a_1,\ldots,a_n\in V$. In this case I will say that \emph{$\varphi$ defines $\dot{X}$ in $V^\P$ from check names}. If there is such a formula $\varphi$, and $f\in\Aut(\P)$ is any automorphism of $\P$, then it follows that $\forced_\P$ ``$\forall x[x\in\dot{X}^f\textrm{ if and only if } \varphi(x,\check{a}_1,\ldots,\check{a}_n)]$.'' Hence, if $\dot{x}$ is a $\P$-name, then
$$ \forced_\P \dot{x}\in\dot{X}\longleftrightarrow \varphi(\dot{x},\check{a}_1,\ldots,\check{a}_n)\longleftrightarrow \dot{x}\in\dot{X}^f.$$
This shows that if some first order formula $\varphi$ defines $\dot{X}$ in $V^{\P}$ from check names (indeed even from symmetric names), then for each $f\in\Aut(\P)$ one has $\forced_\P\dot{X}^f=\dot{X}$. Hence if $\varphi$ defines a given $\P$-name $\dot{X}$ from check names (or even symmetric names), then $\dot{X}$ is symmetric.

Now I will show that if $\P$ is almost homogeneous, $\dot{\Q}$ is forced to be almost homogeneous, and $\dot{\Q}$ is symmetric, then $\P*\dot{\Q}$ is almost homogeneous.

\begin{lemma}\label{lemmaahtwostep}
Suppose $\P$ is almost homogeneous and $\forced_\P$ ``$\dot{\Q}$ is almost homogeneous.'' Suppose further that $\dot{\Q}$ is symmetric; that is, for each automorphism $f\in\Aut(\P)$ one has $\forces_\P\dot{\Q}^f=\dot{\Q}$. Then $\P*\dot{\Q}$ is almost homogeneous.
\end{lemma}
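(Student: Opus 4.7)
\medskip

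\noindent\textbf{Proof plan.} Given two conditions $(p_0,\dot q_0),(p_1,\dot q_1)\in\P*\dot\Q$, the plan is to build an automorphism $F\in\Aut(\P*\dot\Q)$ such that $F(p_0,\dot q_0)$ is compatible with $(p_1,\dot q_1)$ by composing two automorphisms: one coming from an automorphism of $\P$, and one coming from a $\P$-name for an automorphism of $\dot\Q$.

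First I would handle the $\P$-coordinate. Using almost homogeneity of $\P$, choose $f\in\Aut(\P)$ so that $f(p_0)$ is compatible with $p_1$, and fix $r\le f(p_0),p_1$ in $\P$. Define $F_1\colon\P*\dot\Q\to\P*\dot\Q$ by $F_1(p,\dot q)=(f(p),\dot q^f)$. Here the symmetry hypothesis is crucial: since $\forces_\P\dot\Q^f=\dot\Q$, whenever $p\forces_\P\dot q\in\dot\Q$ we get $f(p)\forces_\P\dot q^f\in\dot\Q^f=\dot\Q$, so $F_1$ does land in $\P*\dot\Q$; the fact that automorphisms of $\P$ respect the forcing relation likewise shows $F_1$ is order-preserving, and $F_{f^{-1}}$ furnishes the inverse, so $F_1\in\Aut(\P*\dot\Q)$.

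Next I would handle the $\dot\Q$-coordinate. Since $r\forces_\P$ ``$\dot\Q$ is almost homogeneous'' and $r\forces_\P\dot q_0^f,\dot q_1\in\dot\Q$, I can extend below $r$ to obtain $r'\le r$ and a $\P$-name $\dot g_0$ with
$$r'\forces_\P\text{``}\dot g_0\in\Aut(\dot\Q)\text{ and }\dot g_0(\dot q_0^f)\text{ is compatible with }\dot q_1\text{ in }\dot\Q\text{''}.$$
Using the Mixing Lemma, patch $\dot g_0$ together across a maximal antichain with the check name for the identity automorphism of $\dot\Q$ (which is always available) to obtain a global $\P$-name $\dot g$ satisfying $\mathbf 1_\P\forces_\P\dot g\in\Aut(\dot\Q)$ and $r'\forces_\P\dot g=\dot g_0$. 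Now define $F_2(p,\dot q)=(p,\dot g(\dot q))$, where $\dot g(\dot q)$ is a $\P$-name whose interpretation in any generic extension is the image of $\dot q$ under $\dot g$. Because $\dot g$ is forced by $\mathbf 1_\P$ to be an automorphism of $\dot\Q$, a direct check shows $F_2$ is order-preserving with inverse induced by $\dot g^{-1}$, so $F_2\in\Aut(\P*\dot\Q)$.

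Finally, set $F=F_2\circ F_1$, so $F(p_0,\dot q_0)=(f(p_0),\dot g(\dot q_0^f))$. Since $r'\le f(p_0),p_1$ and $r'\forces_\P\dot g(\dot q_0^f)$ and $\dot q_1$ are compatible in $\dot\Q$, there is a $\P$-name $\dot q'$ with $r'\forces_\P\dot q'\le\dot g(\dot q_0^f),\dot q_1$; then $(r',\dot q')$ is a common extension of $F(p_0,\dot q_0)$ and $(p_1,\dot q_1)$, completing the verification. The main subtlety I expect is the name construction in the middle step: turning the local statement $r'\forces_\P\dot g_0\in\Aut(\dot\Q)$ into a $\P$-name $\dot g$ forced by $\mathbf 1_\P$ to be an automorphism of $\dot\Q$, without disturbing its behavior below $r'$. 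This is exactly what the Mixing Lemma, applied with the identity automorphism as the ``other'' ingredient, accomplishes cleanly.
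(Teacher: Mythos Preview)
Your proof is correct and follows essentially the same strategy as the paper: build the automorphism of $\P*\dot\Q$ by combining an automorphism $f$ of $\P$ with a $\P$-name for an automorphism of $\dot\Q$, using the symmetry hypothesis to ensure the $f$-twist stays inside $\dot\Q$. The paper writes the resulting automorphism as a single formula $\pi(a_0,\dot a_1)=(f(a_0),\dot h(\dot a_1)^f)$ rather than your explicit decomposition $F_2\circ F_1$, and it applies the $f$-twist after the inner automorphism rather than before, but these are cosmetic differences.

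One minor simplification you might adopt: your extension to $r'\le r$ and subsequent Mixing Lemma patch are unnecessary. Since $\1_\P\forces$ ``$\dot\Q$ is almost homogeneous'' and $\1_\P\forces\dot q_0^f,\dot q_1\in\dot\Q$, already $\1_\P$ forces the existence of an automorphism of $\dot\Q$ sending $\dot q_0^f$ to something compatible with $\dot q_1$; the fullness (maximality) principle then gives a name $\dot g$ with the desired property forced by $\1_\P$ outright, with no mixing needed. This is what the paper does, and it streamlines the middle step you flagged as the main subtlety.
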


\begin{proof}

Let $(p_0,\dot{p}_1)$ and $(q_0,\dot{q}_1)$ be conditions in $\P*\dot{\Q}$. Since $\P$ is almost homogeneous and $\dot{\Q}$ is symmetric, it follows that there is a $f\in\Aut(\P)$ such that $f(p_0)$ is compatible with $q_0$ and $\forced_\P\dot{\Q}^f=\dot{\Q}$. Let $r_0\in\P$ with $r_0\leq f(p_0)$ and $r_0\leq q_0$. Since $\forced_\P \dot{q}_1\in\dot{\Q}$, it follows that $\forced_\P \dot{q}_1^{f^{-1}}\in\dot{\Q}$ since $\forced_\P\dot{\Q}^{f}=\dot{\Q}$.
Furthermore, since $\forced_\P$ ``$\dot{\Q}$ is almost homogeneous,'' it follows that $\forced_\P$ ``there is a $h\in\dot{\Aut(\Q)}$ such that $h(\dot{p}_1)$ is compatible with $\dot{q}_1^{f^{-1}}$.'' By the fullness principle let $\dot{h}$ be a $\P$-name such that $\forced_\P$ ``$\dot{h}\in\dot{\Aut(\Q)}$ and $\dot{h}(\dot{p}_1)$ is compatible with $\dot{q}_1^{f^{-1}}$.'' Let $\dot{r}_1$ be a $\P$-name with $\forced_\P$ ``$\dot{r}_1\leq\dot{h}(\dot{p}_1)$ and $\dot{r}_1\leq \dot{q}_1^{f^{-1}}$.'' Now, for $(a_0,\dot{a}_1)\in\P*\dot{\Q}$, define $\pi(a_0,\dot{a}_1)=(f(a_0),\dot{h}(\dot{a}_1)^f)$ where $\dot{h}(\dot{a}_1)$ is shorthand notation for a $\P$-name, say $\tau$, with the property $\forced_\P \tau=\dot{h}(\dot{a}_1)$. I will now show that $\pi\in\Aut(\P*\dot{\Q})$ and that $\pi(p_0,\dot{p}_1)$ is compatible with $(q_0,\dot{q}_1)$ via $(r_0,\dot{r}_1^f)$.

First I will demonstrate the compatibility. We have $\pi(p_0,\dot{p}_1)=(f(p_0),$ $\dot{h}(\dot{p}_1)^f)$ where $r_0\leq f(p_0)$ and $r_0\leq q_0$. By applying $f$ to the statement $\forces_\P \textrm{``}\dot{r}_1\leq \dot{h}(\dot{p}_1)$ and $\dot{r}_1\leq\dot{q}_1^{f^{-1}}$,'' one obtains $\forces_\P$ ``$\dot{r}_1^f\leq\dot{h}(\dot{p}_1)^f$ and $\dot{r}_1^f\leq \dot{q}_1$.'' From this it follows that $(r_0,\dot{r}_1)$ extends both $\pi(p_0,\dot{p}_1)$ and $(q_0,\dot{q}_1)$.

Now suppose $(a_0,\dot{a}_1),(b_0,\dot{b}_1)\in\P*\dot{\Q}$ with $(a_0,\dot{a}_1)\leq(b_0,\dot{b}_1)$. Thus $a_0\leq b_0$ and this implies $f(a_0)\leq f(b_0)$. Furthermore, $a_0\forces_\P \dot{a}_1\leq \dot{b}_1$, and since $\forced_\P\dot{h}\in\dot{\Aut(\Q)}$ it follows that $a_0\forces_\P h(\dot{a}_1)\leq h(\dot{b}_1)$. Now applying $f$ to this statement yields $f(a_0)\forces_\P \dot{h}(\dot{a}_1)^f\leq\dot{h}(\dot{b}_1)^f$.

Now let me show that $\pi$ is a bijection. Suppose $(a_0,\dot{a}_1),(b_0,\dot{b}_1)\in\P*\dot{\Q}$ and that $(f(a_0),\dot{h}(\dot{a}_1)^f)=(f(b_0),\dot{h}(\dot{b}_1)^f)$. Since $f$ is an automorphism of $\P$, it easily follows that $a_0=b_0$ and that $\dot{h}(\dot{a}_1)=\dot{h}(\dot{b}_1)$, where the last equality is an equality of $\P$-names. Hence $\forced_\P\dot{h}(\dot{a}_1)=\dot{h}(\dot{b}_1)$ and since $\forced_\P\dot{h}\in\Aut(\dot{\Q})$ one has $\forced_\P \dot{a}_1=\dot{b}_1$. This implies that $(a_0,\dot{a}_1)\leq(b_0,\dot{b}_1)$ and $(b_0,\dot{b}_1)\leq(a_0,\dot{a}_1)$. By replacing the elements of $\P*\dot{\Q}$ with equivalence classes if necessary, we can assume without loss of generality that this implies $(a_0,\dot{a}_1)=(b_0,\dot{b}_1)$. It can easily be verified that $\pi$, as defined above, produces a well-defined map on equivalence classes. Furthermore, it follows that $\pi$ is surjective using the map $(a_0,\dot{a}_1)\mapsto (f^{-1}(a_0),\dot{h}^{-1}(\dot{a}_1)^{f^{-1}})$.
\end{proof}

\begin{lemma}\label{lemmahomogeneousiteration}
Suppose $\P_\beta=\langle (\P_\alpha,\dot{\Q}_\alpha)\mid\alpha<\beta\rangle$ is an Easton support iteration and that for each $\alpha<\beta$ one has $\forces_{\P_\alpha}$ ``$\dot{\Q}_\alpha$ is almost homogeneous.'' Suppose further that for each $\alpha<\beta$, one has that $\dot{\Q}_\alpha$ is a symmetric $\P_\alpha$-name; that is, for each automorphism $f\in\Aut(\P_\alpha)$ one has $\forced_{\P_\alpha}\dot{\Q}_\alpha^f=\dot{\Q}_\alpha$. Then the iteration $\P_\beta$ is almost homogeneous.
\end{lemma}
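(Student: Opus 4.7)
The plan is to prove the lemma by induction on $\beta$, with Lemma \ref{lemmaahtwostep} providing the engine of the argument. The base case is trivial, and the successor case $\beta = \alpha + 1$ is immediate: factoring $\P_{\alpha+1} \cong \P_\alpha * \dot{\Q}_\alpha$, the inductive hypothesis supplies that $\P_\alpha$ is almost homogeneous, while the hypotheses of the current lemma supply both that $\dot{\Q}_\alpha$ is forced to be almost homogeneous and that it is a symmetric $\P_\alpha$-name, so Lemma \ref{lemmaahtwostep} applies directly.

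For the limit case, given $p, q \in \P_\beta$, I will build an automorphism $\pi \in \Aut(\P_\beta)$ witnessing that $\pi(p)$ is compatible with $q$ via a transfinite recursion along the iteration, producing a coherent tower $\la \pi_\gamma : \gamma \leq \beta \ra$ of automorphisms $\pi_\gamma \in \Aut(\P_\gamma)$ while maintaining the invariant that some condition $r_\gamma \in \P_\gamma$ simultaneously extends $\pi_\gamma(p\restrict\gamma)$ and $q\restrict\gamma$. At a successor stage $\gamma + 1$, I will mimic the construction from the proof of Lemma \ref{lemmaahtwostep}: choose a $\P_\gamma$-name $\dot{h}_\gamma$ for an automorphism of $\dot{\Q}_\gamma$, using the forced almost homogeneity of $\dot{\Q}_\gamma$, so that below $r_\gamma$, $\dot{h}_\gamma$ sends $\pi_\gamma(p(\gamma))$ to something compatible with $q(\gamma)$, and then define $\pi_{\gamma+1}(a_0, \dot{a}_1) = (\pi_\gamma(a_0), \dot{h}_\gamma(\dot{a}_1)^{\pi_\gamma})$. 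The symmetric-name hypothesis at stage $\gamma$, namely $\forces_{\P_\gamma} \dot{\Q}_\gamma^{\pi_\gamma} = \dot{\Q}_\gamma$, is precisely the ingredient needed to verify that $\pi_{\gamma+1}$ is a well-defined automorphism of $\P_{\gamma+1}$.

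At a limit stage $\gamma$ of the recursion, I will assemble $\pi_\gamma$ from the previously constructed automorphisms coordinate-wise, setting $\pi_\gamma(r)(\xi) = \pi_{\xi+1}(r\restrict(\xi+1))(\xi)$ for $r \in \P_\gamma$ and $\xi < \gamma$. The main obstacle I anticipate will be the verification at these limit stages that this coordinate-wise assembly really is a condition of $\P_\gamma$ (respecting both the forcing relation and the Easton support requirement) and is an order-preserving bijection; the symmetric-name hypothesis is what makes the forcing-relation check go through, since at each coordinate $\xi$ it ensures that $\pi_\xi$ sends names forced into $\dot{\Q}_\xi$ to names still forced into $\dot{\Q}_\xi$, while preservation of the trivial name by any automorphism handles the Easton support. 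Beyond this careful limit-stage bookkeeping, no new conceptual ingredient beyond Lemma \ref{lemmaahtwostep} is required, and setting $\pi = \pi_\beta$ completes the argument.
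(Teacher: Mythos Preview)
Your proposal is correct and follows essentially the same approach as the paper: a recursive construction of a coherent tower $\langle \pi_\xi \mid \xi \leq \beta \rangle$ of automorphisms, using the two-step mechanism of Lemma~\ref{lemmaahtwostep} at each successor stage and assembling coordinate-wise at limits. The paper dispenses with your outer induction on $\beta$ and carries out the recursion directly for arbitrary $\beta$; on the Easton support point it makes explicit the stipulation you only gesture at, namely that $\dot h_\alpha$ be taken to be the identity whenever $(q_\alpha)^{\pi_\alpha^{-1}}$ is forced trivial, which yields $\supp(\pi_\eta(a)) \subseteq \supp(a) \cup \supp(q)$.
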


\begin{proof}
Suppose $p=\langle p_\alpha\mid\alpha<\beta\rangle$ and $q=\langle q_\alpha\mid\alpha<\beta\rangle$ are conditions in $\P$. I will build a condition $r=\langle r_\alpha\mid\alpha<\beta\rangle$ and a sequence of automorphisms $\langle \pi_\xi\mid\xi\leq\beta\rangle$ by induction such that for each $\xi\leq\beta$ one has
\begin{enumerate}
\item[(1)] $\pi_\xi\in\Aut(\P_\xi)$,
\item[(2)] $r\restrict\xi\leq\pi_\xi(p\restrict\xi)$ and $r\restrict\xi\leq q\restrict\xi$,
\end{enumerate}

Since $\P_0$ is trivial, let $\pi_0=h_0=\id_{\P_0}$. For the successor stages $\alpha<\beta$, assume that $\langle\pi_\xi\mid\xi<\alpha+1\rangle$, $\langle h_\xi\mid\xi<\alpha\rangle$, and $\langle r_\xi\mid\xi<\alpha\rangle$ have all been defined. By assumption, $\forced_{\P_\alpha}$ ``$\dot{\Q}_\alpha$ is almost homogeneous,'' and for each $f\in\Aut(\P_\alpha)$ one has $\forced_{\P_\alpha}\dot{\Q}_\alpha^f=\dot{\Q}_\alpha$. Thus $\forced_{\P_\alpha}(q_\alpha)^{\pi_\alpha^{-1}}\in\Q_\alpha$, and it follows from the fullness principle that there is a $\P_\alpha$ name $h_\alpha$ such that $\forced_{\P_\alpha}$ ``$h_\alpha\in\dot{\Aut(\Q_\alpha)}$ and $h_\alpha(p_\alpha)$ is compatible with $(q_\alpha)^{\pi_\alpha^{-1}}$.'' Thus there is a $\P_\alpha$ name $r_\alpha$ such that $\forces_{\P_\alpha}$ ``$r_\alpha\leq h_\alpha(p_\alpha)$ and $r_\alpha\leq (q_\alpha)^{\pi_\alpha^{-1}}$.'' For $a=\langle a_\xi\mid\xi<\alpha+1\rangle\in\P_{\alpha+1}$ define
$\pi_{\alpha+1}(a) = \langle h_\xi(a_\xi)^{\pi_\xi}\mid\xi<\alpha+1\rangle$. It follows as in the proof of Lemma \ref{lemmaahtwostep}, and by the induction hypothesis, that $\pi_{\alpha+1}\in\Aut(\P_{\alpha+1})$ and that the conditions $\pi_{\alpha+1}(p\restrict(\alpha+1))$ and $q\restrict(\alpha+1)$ are compatible via $r\restrict(\alpha+1)$. In the above successor stage procedure, with an eye toward preserving supports, I also dictate that if $\forced_{\P_\alpha}$ ``$(q_\alpha)^{\pi_\alpha^{-1}}$ is the trivial condition,'' then $\forced_{\P_\alpha}$ ``$h_\alpha$ is the identity map on $\dot{\Q}_\alpha$.''

For the limit stages $\eta<\beta$, assume that $\langle \pi_\xi\mid\xi<\eta\rangle$, $\langle h_\xi\mid\xi<\eta\rangle$, and $\langle r_\xi\mid\xi<\eta\rangle$ have all been defined. Given a condition $a=\langle a_\xi\mid\xi<\eta\rangle\in\P_\eta$, define $\pi_\eta(a)=\langle h_\xi(a_\xi)^{\pi_\xi}\mid\xi<\eta\rangle$. It follows that $\supp(\pi_\eta(a))\subseteq\supp(a)\cup\supp(q)$. 

This defines $\langle \pi_\xi\mid\xi<\beta\rangle$, $\langle h_\xi\mid\xi<\beta\rangle$, and $\langle r_\xi\mid\xi<\beta\rangle$. One can define $\pi_\beta$ as above depending on whether $\beta$ is a limit or a successor ordinal. It follows by induction that $\pi_\beta\in\Aut(\P_\beta)$ and that $\pi_\beta(p)$ and $q$ are compatible via $r$. One can check that $\pi_\beta$ provides a bijection as in the proof of Lemma \ref{lemmaahtwostep}.
\end{proof}

%
%
%

\chapter{Woodin Cardinals and Easton's Theorem}\label{chapterwoodin}

The concept of a Woodin cardinal (see Definition \ref{definitionwoodin} below) was originally formulated, by Woodin, for the purpose of establishing the large cardinal consistency strength of The Axiom of Determinacy. Although part of the folklore, there has been little published, to the author's knowledge, concerning the preservation of Woodin cardinals through forcing. For example, it is widely known that if $\delta$ is a Woodin cardinal, then the following forcing notions preserve this: (1) any forcing of size less than $\delta$ (see \cite{HamkinsWoodin:SmallForcing} for this result and more), (2) the canonical forcing to achieve $\GCH$, and (3) any ${<}\delta$-closed forcing (see Lemma \ref{lemmawoodinclosed} below). 

In this Chapter I will apply the methods of \cite{FriedmanHonzik:EastonsTheoremAndLargeCardinals} and \cite{FriedmanThompson:PerfectTreesAndElementaryEmbeddings} to prove the following theorem.

\begin{theorem}\label{theoremwoodin}
Suppose $\GCH$ holds, $F:\REG\to\CARD$ is an Easton function, and $\delta$ is a Woodin cardinal with $F"\delta\subseteq\delta$. Then there is a cofinality-preserving forcing extension in which $\delta$ remains Woodin and $2^\gamma=F(\gamma)$ for each regular cardinal $\gamma$.
\end{theorem}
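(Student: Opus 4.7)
The plan is to construct the extension in two factors: first an Easton-support iteration $\P_\delta = \la (\P_\alpha, \dot\Q_\alpha) : \alpha < \delta \ra$ of length $\delta$ that realizes $2^\gamma = F(\gamma)$ at regular $\gamma < \delta$ and preserves the Woodinness of $\delta$, then a ${\leq}\delta$-closed factor $\P^{\geq\delta}$ to realize $F$ on regular $\gamma \geq \delta$. At each regular $\gamma < \delta$, $\dot\Q_\gamma$ is chosen to add $F(\gamma)$ subsets of $\gamma$: at inaccessible stages $\gamma$ I use the perfect-tree tuning fork forcing of \cite{FriedmanThompson:PerfectTreesAndElementaryEmbeddings}, designed so that embeddings with critical point $\gamma$ may be lifted through stage $\gamma$; at non-inaccessible regular $\gamma$ I use $\Add(\gamma, F(\gamma))^{V^{\P_\gamma}}$. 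Each $\dot\Q_\gamma$ is ${\leq}\gamma$-closed, almost homogeneous, and defined from $F \restrict (\gamma+1)$ by a uniform formula, so as a $\P_\gamma$-name it is symmetric; by Lemma \ref{lemmahomogeneousiteration}, $\P_\delta$ is then almost homogeneous.

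For cofinality preservation and the continuum values, a standard induction using $\GCH$ and Easton's lemma (Lemma \ref{lemmaeaston}) shows that $|\P_\gamma| = \gamma$ and $\P_\gamma$ is $\gamma^+$-c.c.\ for regular $\gamma < \delta$, while the tail $\P_{[\gamma, \delta)}$ is ${\leq}\gamma$-distributive in $V^{\P_\gamma}$; a nice-names count then gives $2^\gamma = F(\gamma)$ in $V[G_\delta]$ for each regular $\gamma < \delta$. The top factor $\P^{\geq\delta}$ is ${\leq}\delta$-closed and, by Lemma \ref{lemmalambdadist}, preserves the Woodinness of $\delta$ established in $V[G_\delta]$.

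The main work is to show $\delta$ is Woodin in $V[G_\delta]$. Fix $f : \delta \to \delta$ in $V[G_\delta]$; since $\P_\delta$ is $\delta$-c.c., there is $\bar f \in V$ with $\bar f(\alpha) \geq f(\alpha)$ for all $\alpha < \delta$, so it suffices to find $\kappa < \delta$ with $\bar f" \kappa \subseteq \kappa$ and an elementary $j^* : V[G_\delta] \to M^*$ with $\crit(j^*) = \kappa$ and $(V_{j^*(\bar f)(\kappa)})^{V[G_\delta]} \subseteq M^*$. Applying the Woodinness of $\delta$ in $V$ to $\bar f$, I obtain such $\kappa$ together with an elementary $j : V \to M$ satisfying $\crit(j) = \kappa$, $\lambda := j(\bar f)(\kappa) < \delta$, and $V_\lambda^V \subseteq M$; taking $j$ to be an extender ultrapower, $M$ is generated over $V$ by a set $S \subseteq V_\lambda$, so Lemma \ref{lemmaseedpreservation} will apply to any lift.

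The technical heart is lifting $j$ across $\P_\delta = \P_\kappa * \dot\Q_\kappa * \dot\P_{(\kappa, \delta)}$ in $V[G_\delta]$. Since $\crit(j) = \kappa$, the first $\kappa$ stages of $j(\P_\delta)$ coincide with $\P_\kappa$, so $G_\kappa := G_\delta \cap \P_\kappa$ is simultaneously $V$- and $M$-generic and $j$ lifts trivially to $j : V[G_\kappa] \to M[G_\kappa]$. At stage $\kappa$, the tuning-fork structure of $\dot\Q_\kappa$ is arranged precisely so that the $V[G_\kappa]$-generic $g_\kappa$ can be spliced with the needed $j$-image conditions to produce, in $V[G_\delta]$, an $M[G_\kappa]$-generic filter $H_0$ on $j(\dot\Q_\kappa)$ with $j" g_\kappa \subseteq H_0$. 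Finally, one builds in $V[G_\delta]$ an $M[G_\kappa * H_0]$-generic filter $H_1$ on the remaining tail of $j(\P_\delta)$ containing $j" G_{(\kappa, \delta)}$; the ${\leq}\kappa$-closure of the tail, the almost-homogeneity and symmetry of the iteration (Lemmas \ref{lemmaahtwostep}, \ref{lemmahomogeneousiteration}), Lemma \ref{lemmaseedpreservation}, and the bound of $(2^\lambda)^{V[G_\delta]}$ on the number of dense sets to be met permit a master-condition / diagonalization construction of $H_1$. Lemma \ref{lemmaliftingcriterion} then delivers the lift $j^*$, and $V_\lambda^V \subseteq M$ together with an analysis of the ranks of subsets added by the iteration shows $(V_{j^*(\bar f)(\kappa)})^{V[G_\delta]} \subseteq M^*$. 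I expect the hardest step to be exactly this tail lifting: extending $j" G_{(\kappa, \delta)}$ to a fully $M$-generic filter compatibly with the tuning-fork generic $H_0$, which is where the careful forcing choice at stage $\kappa$ and the symmetric-iteration framework must align.
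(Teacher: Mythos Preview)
Your plan has a genuine structural gap at the step you label ``trivial.'' To lift $j:V\to M$ through $\P_\kappa$ you need an $M$-generic for $j(\P_\kappa)$, and $j(\P_\kappa)$ is an Easton-support iteration of length $j(\kappa)$ in $M$ \emph{defined from $j(F)$}. Only the first $\kappa$ stages coincide with $\P_\kappa$; the stages in $[\kappa,j(\kappa))$ are governed by $j(F)\restrict[\kappa,j(\kappa))$, and nothing in your choice of $\kappa$ forces $j(F)$ to agree with $F$ there. You applied the function-definition of Woodinness to $\bar f$ alone, which gives $V_\lambda\subseteq M$ but says nothing about $j(F)$. Consequently there is no reason the actual $V$-generic $G_\lambda$ (or any part of $G_\delta$) is generic for the $M$-side iteration beyond stage $\kappa$, so you have no candidate for $j(G_\kappa)$ and the sentence ``$j$ lifts trivially to $j:V[G_\kappa]\to M[G_\kappa]$'' is simply false: the target model must be $M[j(G_\kappa)]$ with $j(G_\kappa)\subseteq j(\P_\kappa)$, not $M[G_\kappa]$.

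This is precisely why the paper does \emph{not} use the function definition. It passes to the ``$<\!\delta$-strong for $A$'' characterization (Lemma~\ref{lemmawoodin}(4)--(5)) and, given $A\subseteq\delta$ in $V[G_\delta]$ with name $\dot A$, chooses $\kappa$ to be $<\!\delta$-strong simultaneously for $\dot A$, for $F$, and for an auxiliary bookkeeping function $\sigma$. The clause $j(F)\restrict\theta=F\restrict\theta$ that comes for free from strongness-for-$F$ is exactly what makes $j(\P_\kappa)$ agree with $\P_\theta$ on a long initial segment, so that pieces of the real generic $G_\delta$ can be recycled. Even then the construction of the $M$-generic on $[\gamma_0,j(\kappa))$ is the longest part of the argument (the $p_\infty$ construction of Lemma~\ref{lemmapinfty} together with the homogeneity/automorphism trick of Lemma~\ref{lemmaaut}); it is not the tail $\P_{(\kappa,\delta)}$ that is hard, but getting up to $j(\kappa)$ in the first place. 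Your verification of $(V_{j^*(\bar f)(\kappa)})^{V[G_\delta]}\subseteq M^*$ would run into the same obstacle: it needs $M[j(G_\delta)]$ and $V[G_\delta]$ to agree below $\lambda$, which again requires the two iterations---hence $j(F)$ and $F$---to agree below $\lambda$.
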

Notice that in Theorem \ref{theoremwoodin}, there is no requirement stating that $F$ must be locally definable as in the results of \cite{Menas:ConsistencyResultsConcerningSupercompactness} and \cite{FriedmanHonzik:EastonsTheoremAndLargeCardinals}. It is the property $j(A)\cap\gamma = A\cap\gamma$ in the characterization of Woodin cardinals (see Lemma \ref{lemmawoodin}) that allows the removal of this additional requirement on $F$.

Since a straight forward argument shows that ${<}\delta$-closed forcing preserves the Woodinness of $\delta$ (see Lemma \ref{lemmawoodinclosed} below), the bulk of the work in proving Theorem \ref{theoremwoodin} will be to show that the continuum function can be forced to agree with $F$ below $\delta$ while preserving the Woodinness of $\delta$.

Let me remark here that as a corollary to the proof of Theorem \ref{theoremwoodin}, one has the following.

\begin{corollary}
Suppose $C$ is a class of Woodin cardinals and $F$ is an Easton function such that for each $\delta\in C$ one has $F"\delta\subseteq \delta$. Then there is a cofinality-preserving forcing extension in which $\delta$ remains Woodin and $2^\gamma=F(\gamma)$ for each regular cardinal $\gamma$.
\end{corollary}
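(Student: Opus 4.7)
The plan is to extend the proof of Theorem \ref{theoremwoodin} from a single Woodin cardinal to a class of them by an Easton-support class iteration that uses the same building blocks as in the single-Woodin proof. Concretely, let $\P$ be the class-length Easton-support iteration that at each regular $\gamma$ forces $2^\gamma=F(\gamma)$ using the symmetric, almost-homogeneous stage forcing isolated in the proof of Theorem \ref{theoremwoodin}; by Lemma \ref{lemmahomogeneousiteration}, this class $\P$ is almost homogeneous as an iteration, and by design it is an extension of the set-sized forcing $\P_\delta$ considered in Theorem \ref{theoremwoodin} for each $\delta\in C$.

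For each fixed $\delta\in C$, I would factor $\P\cong\P_\delta*\dot\P^\delta$. Because $F"\delta\subseteq\delta$, the initial segment $\P_\delta$ has size $\delta$ and is precisely (up to the choice of bookkeeping) the forcing that Theorem \ref{theoremwoodin} shows preserves the Woodinness of $\delta$; so in $V[G_\delta]$, $\delta$ is still Woodin and $2^\gamma=F(\gamma)$ holds for every regular $\gamma<\delta$. The tail $\P^\delta$ is then, in $V[G_\delta]$, an Easton-support class iteration whose $\gamma$-th nontrivial stage is ${<}\gamma$-closed for $\gamma\geq\delta$; by Lemma \ref{lemmaeaston} applied stage-by-stage together with the Easton-support closure calculation, $\P^\delta$ is ${<}\delta$-closed (or at least ${<}\delta$-distributive) in $V[G_\delta]$. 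Hence by Lemma \ref{lemmawoodinclosed}, $\delta$ remains Woodin in the final extension $V[G]$. Applying this factoring argument uniformly as $\delta$ ranges over $C$ yields the desired preservation for the whole class.

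Cofinality preservation and the pointwise identity $2^\gamma=F(\gamma)$ at every regular $\gamma$ then follow from the standard Easton/Menas analysis of Easton-support class iterations of stage-forcings with the appropriate chain-condition and closure bounds, combined with the analogous verification already carried out in the set-sized case of Theorem \ref{theoremwoodin}. In particular, at regular $\gamma$ the initial segment up to $\gamma^+$ is $\gamma^+$-c.c.\ and the tail is ${<}\gamma^+$-closed, so no new subsets of $\gamma$ are added by the tail and the value of $2^\gamma$ is determined by the initial segment.

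I expect the main technical obstacle to be the class-forcing bookkeeping: verifying that $\P$ is a tame/pretame class forcing so that $\ZFC$ is preserved and the factor $\P^\delta$ genuinely enjoys the ${<}\delta$-closure needed to invoke Lemma \ref{lemmawoodinclosed}, and more delicately, confirming that the Woodinness characterization from Lemma \ref{lemmawoodin}, together with the extender-lifting machinery of Lemma \ref{lemmalambdadist} and Lemma \ref{lemmaseedpreservation}, survives the transition from set to class forcing. Once these standard class-forcing amenability facts are in hand, the argument reduces to iterating the set-sized proof of Theorem \ref{theoremwoodin} at each $\delta\in C$.
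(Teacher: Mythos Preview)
Your proposal is correct and matches what the paper has in mind. The paper gives no separate proof of this corollary, merely noting that it follows from the proof of Theorem \ref{theoremwoodin}; indeed, the iteration $\P=\langle (\P_\eta,\dot{\Q}_\eta)\mid\eta\in\ORD\rangle$ defined there is already class-length, and the argument given for a single $\delta$ (factor $\P\cong\P_\delta*\dot\P_{[\delta,\infty)}$, verify Woodinness survives $\P_\delta$, then invoke Lemma \ref{lemmawoodinclosed} for the ${<}\delta$-closed tail) applies verbatim to each $\delta\in C$.
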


\section{Preliminaries for the Proof of Theorem \ref{theoremwoodin}}\label{sectionpreliminaries}

I now give some definitions and lemmas that will be used in the proof of Theorem \ref{theoremwoodin}. The following definition is due to Woodin.

\begin{definition}\label{definitionwoodin}
A cardinal $\delta$ is called a \emph{Woodin cardinal} if for every function $f:\delta\to\delta$ there is a $\kappa<\delta$ with $f"\kappa\subseteq\kappa$ and there is a $j:V\to M$ with critical point $\kappa$ such that $V_{j(f)(\kappa)}\subseteq M$.
\end{definition}

As it turns out, Woodin cardinals have another characterization which is more commonly used in practice. We present several versions of this characterization in the next lemma. First let me give a few definitions. Suppose $A\subseteq V_\delta$ and $\kappa<\delta$. One says that $\kappa$ is \emph{$\gamma$-strong for $A$} if there is a $j:V\to M$ with critical point $\kappa$ such that $V_\gamma\subseteq M$, $j(\kappa)>\gamma$, and $j(A)\cap V_\gamma= A\cap V_\gamma$. By definition $\kappa$ is \emph{${<}\delta$-strong for $A$} if $\kappa$ is $\gamma$-strong for $A$ for each $\gamma<\delta$.

\begin{lemma}\label{lemmawoodin}
The following are equivalent.
\begin{enumerate}
\item[$(1)$]\label{l1} $\delta$ is a Woodin cardinal. 
\item[$(2)$] For every $A\subseteq V_\delta$ the following set is stationary. $$\{\kappa<\delta\mid\textrm{$\kappa$ is ${<}\delta$-strong for $A$}\}$$
\item[$(3)$] For every $A\subseteq V_\delta$ there is a $\kappa<\delta$ that is ${<}\delta$-strong for $A$.
\item[$(4)$] For every $A\subseteq \delta$ there is a $\kappa<\delta$ such that for any $\gamma<\delta$ there is a $j:V\to M$ with critical point $\kappa$ such that $\gamma<j(\kappa)$ and $j(A)\cap\gamma =A\cap\gamma$.
\item[$(5)$] For any pair of sets $A_0,A_1\subseteq\delta$ there is a $\kappa<\delta$ such that for any $\gamma<\delta$ there is a $j:V\to M$ with critical point $\kappa$ such that $\gamma<j(\kappa)$, $j(A_0)\cap\gamma= A_0\cap\gamma$, and $j(A_1)\cap\gamma = A_1\cap\gamma$. 


\end{enumerate}
\end{lemma}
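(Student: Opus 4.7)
Proof plan: I will prove the cycle $(2)\Rightarrow(3)\Rightarrow(5)\Rightarrow(4)\Rightarrow(1)\Rightarrow(2)$, establishing that all five conditions are equivalent. Several links are essentially immediate: $(2)\Rightarrow(3)$ since stationary sets are nonempty, and $(5)\Rightarrow(4)$ by setting $A_0=A_1=A$. For $(3)\Rightarrow(5)$, given $A_0,A_1\subseteq\delta$, I would combine them into $A=\{2\alpha\mid\alpha\in A_0\}\cup\{2\alpha+1\mid\alpha\in A_1\}\subseteq\delta\subseteq V_\delta$ and apply (3); since $A$ consists of ordinals we have $A\cap V_\gamma=A\cap\gamma$, and the agreement on a sufficiently large $V_{\gamma'}$ decodes to simultaneous agreement of $A_0$ and $A_1$ on any prescribed level $\gamma<\delta$.

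For $(4)\Rightarrow(1)$, given $f:\delta\to\delta$ I would code $f$ as $A_f\subseteq\delta$ via its graph and a pairing function, and apply (4). The closure property $f"\kappa\subseteq\kappa$ follows from a standard reflection argument: if $f(\alpha)\geq\kappa$ for some $\alpha<\kappa$, then for $\gamma<\delta$ with the coded pair $\la\alpha,f(\alpha)\ra<\gamma$, the (4)-witness $j:V\to M$ satisfies $\la\alpha,f(\alpha)\ra\in A_f\cap\gamma=j(A_f)\cap\gamma$, forcing $j(f)(\alpha)=f(\alpha)$; but $f(\alpha)\geq\kappa=\crit(j)$ together with $f(\alpha)<j(\kappa)$ also give $j(f)(\alpha)=j(f(\alpha))>f(\alpha)$, a contradiction. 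To obtain $V_{j(f)(\kappa)}\subseteq M$ as required by Definition \ref{definitionwoodin}, I would pick $\gamma<\delta$ large enough that both $\la\kappa,f(\kappa)\ra<\gamma$ and $V_\gamma$ sits inside the derived ultrapower (for instance, $\gamma$ inaccessible below $\delta$), and pass from the (4)-witness $j_\gamma$ to the $(\kappa,\gamma)$-extender embedding $j_E:V\to N$ derived from it; the extender derivation preserves the agreement, giving $j_E(A_f)\cap\gamma=A_f\cap\gamma$ and hence $j_E(f)(\kappa)=f(\kappa)<\gamma$, so $V_{j_E(f)(\kappa)}\subseteq V_\gamma\subseteq N$.

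The main obstacle is $(1)\Rightarrow(2)$. Given $A\subseteq V_\delta$ and a club $C\subseteq\delta$, my plan is to design $f:\delta\to\delta$ combining two pieces of data: $f(\alpha)\geq\min(C\setminus(\alpha+1))$, forcing closure points of $f$ into $C$, and $f(\alpha)$ bounding the least $\gamma<\delta$ witnessing that $\alpha$ fails to be $\gamma$-strong for $A$, whenever such a witness exists. Applying (1) yields $\kappa\in C$ and $j:V\to M$ with $V_{j(f)(\kappa)}\subseteq M$ and $j(\kappa)>j(f)(\kappa)$, and the desired contradiction with the choice of $f$ at $\kappa$ would follow once one establishes $j(A)\cap V_{j(f)(\kappa)}=A\cap V_{j(f)(\kappa)}$. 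This agreement is the delicate step, since (1) by itself provides no control over the action of $j$ on $A$; to obtain it, I would further encode $A\cap V_\alpha$ canonically into $f(\alpha)$ (feasible under $\GCH$ since $|V_{\alpha+1}|<\delta$ for $\alpha<\delta$) and use derived-extender arguments, combined with the automatic agreement $j(A)\cap V_\kappa=A\cap V_\kappa$ that holds below the critical point, to transport the agreement up to level $j(f)(\kappa)$, thereby producing the required witness that $\kappa$ is $j(f)(\kappa)$-strong for $A$ and closing the cycle.
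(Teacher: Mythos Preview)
Your cycle and the trivial links are fine; the paper routes the coding step as $(4)\Rightarrow(5)$ via G\"odel pairing rather than your $(3)\Rightarrow(5)$ via even/odd, but these are interchangeable. Both substantive implications, however, have real gaps.

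\textbf{The step $(4)\Rightarrow(1)$.} Condition (4) hands you $j_\gamma:V\to M$ with $\gamma<j_\gamma(\kappa)$ and $j_\gamma(A_f)\cap\gamma=A_f\cap\gamma$, but \emph{no} strongness: nothing forces $V_\gamma\subseteq M$. You then derive the $(\kappa,\gamma)$-extender $E$ and assert $V_\gamma\subseteq N=\Ult(V,E)$ because $\gamma$ is inaccessible; this is false. The factor map $k:N\to M$ fixes all ordinals below $\gamma$, and if $V_\gamma\subseteq N$ one checks by induction on rank that $k\upharpoonright V_\gamma=\id$, whence $V_\gamma\subseteq M$. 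Contrapositively, $V_\gamma\not\subseteq M$ forces $V_\gamma\not\subseteq N$. (Inaccessibility of $\gamma$ would let you trade ordinal generators for generators in $V_\gamma$, but deriving the latter extender already presupposes $V_\gamma\subseteq M$.) The paper's fix is exactly the missing ingredient: it passes through (5) using a second set $A_0$ that codes a well-order $\langle\delta,R\rangle\cong\langle V_\delta,\in\rangle$; the agreement $j(A_0)\cap\gamma=A_0\cap\gamma$ then lets $M$ reconstruct $V_\gamma$ as a Mostowski collapse, so that $V_\gamma\subseteq M$ genuinely holds. You could equivalently fold such a coding into your single $A$.

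\textbf{The step $(1)\Rightarrow(2)$.} Even if you established $j(A)\cap V_{j(f)(\kappa)}=A\cap V_{j(f)(\kappa)}$, this only shows $\kappa$ is $\eta$-strong for $A$ for the one ordinal $\eta=j(f)(\kappa)<\delta$; it is not ${<}\delta$-strongness, and the claimed ``contradiction with the choice of $f$ at $\kappa$'' does not materialize, since $f(\kappa)$ records a $V$-failure while $j(f)(\kappa)$ is computed in $M$ from $j(A)$ and $j(C)$. Your proposed mechanism---encoding $A\cap V_\alpha$ into $f(\alpha)$ under $\GCH$---makes $j(f)(\kappa)$ encode $j(A)\cap V_\kappa=A\cap V_\kappa$, which you already knew from $\crit(j)=\kappa$; it yields nothing about $j(A)$ above $\kappa$. (The appeal to $\GCH$ is itself a warning sign, since the lemma is a $\ZFC$ equivalence.) The paper's argument is different in kind: by elementarity it suffices to show $M\models\text{``}\kappa$ is ${<}j(\delta)$-strong for $j(A)$.'' If this fails at some least $\gamma<j(f)(\kappa)$, derive the extender $E$ with seeds in $V_\gamma$ from $j$---legitimate now, since $V_{j(f)(\kappa)}\subseteq M$ is given by (1)---observe $E\in M$, and check that the internal ultrapower $j_E^M:M\to\Ult(M,E)$ witnesses $\gamma$-strongness of $\kappa$ for $j(A)$ in $M$ via the chain
\[
j(A)\cap V_{j(\kappa)}^M=j(A\cap V_\kappa)=j_E(A\cap V_\kappa)=j_E^M(j(A)\cap V_\kappa)=j_E^M(j(A))\cap j_E^M(V_\kappa).
\]
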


\begin{proof}




(1) $\implies$ (2). Suppose $A\subseteq V_\delta$ and let $C\subseteq\delta$ be closed and unbounded. I must show that there is a $\kappa\in C$ that is ${<}\delta$-strong for $A$. Define $f:\delta\to\delta$ as follows. If $\alpha<\delta$ is not ${<}\delta$-strong for $A$, let $f(\alpha)$ be a limit ordinal in $C\setminus\alpha$ such that there is a $\gamma<f(\alpha)$ such that $\alpha$ is not $\gamma$-strong for $A$. Otherwise, let $f(\alpha)=0$. Now let $\kappa<\delta$ be such that $f"\kappa\subseteq\kappa$ and there is a $j:V\to M$ with critical point $\kappa$ such that $V_{j(f)(\kappa)}\subseteq M$. Since $f"\kappa\subseteq\kappa$, it follows that $C\cap\kappa$ is unbounded in $\kappa$ and hence $\kappa\in j(C)$. By elementarity, it will suffice to show that $M\models$ $\kappa$ is ${<}j(\delta)$-strong for $j(A)$. Suppose that $M\models$ $\kappa$ is not ${<}j(\delta)$-strong for $j(A)$. Notice that $j(\delta)=\delta$ since $j(\delta)=\sup j"\delta\leq\delta$. Then, using the definition of $f$, one may let $\gamma<j(f)(\kappa)$ be the least ordinal such that $M\models$ $\kappa$ is not $\gamma$-strong for $j(A)$. Let $X=\{j(h)(s)\mid h:V_\kappa\to V, s\in V_\gamma, h\in V\}$. It follows by the Tarski-Vaught criterion that $X\elemsub M$. Let $\pi:X\to M_0$ be the Mostowski collapse. Since $\ran(j)\subseteq X$, we can define an elementary map $j_0=\pi\circ j:V\to M_0$. Since $\pi(\gamma)=\gamma$, it follows that $\gamma<j(\kappa)$. Furthermore, it follows that the critical point of $j_0$ is $\kappa$, $V_\gamma\subseteq M_0$, and $M_0=\{j_0(h)(s)\mid h:V_\kappa\to V, s\in V_\gamma, h\in V\}$. It follows that $M_0=\Ult(V,E)$ is the ultrapower by a $(\kappa,|V_\gamma|)$-extender $E$, and that $E\in M$. Moreover, $V_\gamma^M=V_\gamma\subseteq\Ult(M,E)^M$. Let $j_E^M:M\to N:=\Ult(M,E)^M$. This implies that $\kappa$ is $\gamma$-strong in $M$. To derive a contradiction it will suffice to show that $j^M_E(j(A))\cap V_\gamma^M=j(A)\cap V_\gamma^M$. It follows that
\begin{align*}
j(A)\cap V_{j(\kappa)}^M &= j(A\cap V_\kappa)\\
	&= j_E(A\cap V_\kappa)\\
	&= j_E^M(A\cap V_\kappa)\\
	&= j_E^M(j(A)\cap V_\kappa)\\
	&= j_E^M(j(A))\cap j_E^M(V_\kappa).
\end{align*}

(2) $\implies$ (3) $\implies$ (4) is trivial.


(4) $\implies$ (5).
Suppose $A_0$ and $A_1$ are subsets of $\delta$ and let 
$$A:=\{\langle 0, \alpha\rangle \mid \alpha\in A_0\}\cup\{\langle 1,\alpha\rangle \mid\alpha\in A_1\}$$
where $\langle i,\alpha\rangle$ denotes the ordinal given by the G\"{o}del pairing function $\langle \cdot,\cdot\rangle$. Now, let $\kappa<\delta$ be as in $(4)$ for the $A$ specified above. I will show that $\kappa$ also satisfies (5) for this $A_0$ and $A_1$. Fix a cardinal $\gamma<\delta$. Then by (4), there is a $j:V\to M$ with critical point $\kappa$ such that $j(A)\cap\gamma=A\cap\gamma$ and $j(\kappa)>\gamma$. Since $\gamma$ is closed under G\"{o}del pairing and since G\"{o}del pairing is absolute to $M$, it follows that $j(A_0)\cap\gamma=A_0\cap\gamma$ and $j(A_1)\cap\gamma=A_1\cap\gamma$. 
Thus (4) $\implies$ (5).

(5) $\implies$ (1). Suppose $f:\delta\to\delta$. Let $R\subseteq\delta\times\delta$ be a relation such that $\langle \delta,R\rangle\cong \langle V_\delta,\in\rangle$ with the property that for each $\beta$-fixed point $\eta<\delta$ one has $\langle \eta, R\restrict\eta\rangle\cong\langle V_\eta,\in\rangle$. It follows by the Mostowski Collapse Lemma that the isomorphism, say $\pi:\langle\delta,R\rangle\to\langle V_\delta,\in\rangle$, is unique and indeed, $\pi\restrict\eta:\langle \eta,R\restrict\eta\rangle\to\langle V_\eta,\in\rangle$ is an isomorphism for each $\beth$-fixed point $\eta<\delta$. Let $A_0:=\{\langle\alpha,\beta\rangle\mid(\alpha,\beta)\in R\}$ be the subset of $\delta$ that codes $R$ via G\"{o}del pairing. Let $A_1:=\pi^{-1}"f$ be the subset of $\delta$ that codes $f$ via the isomorphism $\pi^{-1}$. For this choice of $A_0$ and $A_1$ let $\kappa$ be as in (5) above. Let $\gamma$ be the least $\beth$-fixed point greater than $\max(\kappa,f(\kappa))$. Let $j:V\to M$ have critical point $\kappa$ such that $\gamma<j(\kappa)$, $j(A_0)\cap\gamma=A_0\cap\gamma$, and $j(A_1)\cap\gamma = A_1\cap\gamma$. Since $A_0\cap\gamma$ codes $R\restrict\gamma$ via G\"{o}del pairing, which is absolute to $M$, it follows by elementarity that $j(R)\restrict\gamma=R\restrict\gamma$. Furthermore, $\langle V_\gamma,\in\rangle\cong\langle\gamma,R\restrict\gamma=\langle\gamma,j(R)\restrict\gamma\rangle$ and thus the Mostowski collapse of $\langle \gamma,j(R)\restrict\gamma\rangle$ taken in $M$ is $\langle V_\gamma,\in\rangle$. Hence $V_\gamma\subseteq M$. 

It will suffice to show that $f"\kappa\subseteq\kappa$ and that $j(f)(\kappa)<\gamma$. Let me first illustrate that $j(\pi)\restrict\gamma = \pi\restrict\gamma$. It follows that 
$$\pi\restrict\gamma:\langle\gamma, R\restrict\gamma\rangle\stackrel{\cong}{\longrightarrow}\langle V_\gamma,\in\rangle$$
and since $V_\gamma\subseteq M$ one also has
$$j(\pi)\restrict\gamma:\langle \gamma, j(R)\restrict\gamma\rangle\stackrel{\cong}{\longrightarrow}\langle V_\gamma,\in\rangle.$$
Since $j(R)\restrict\gamma=R\restrict\gamma$ it follows from the uniqueness of the Mostowski collapse that $j(\pi)\restrict\gamma=\pi\restrict\gamma$. Now I will show that $f"\kappa\subseteq\kappa$. Suppose $\alpha<\kappa$. Since $A_1$, the code for $f$, agrees with $j(A_1)$ up to $\gamma$, and since $j(\pi)\restrict\gamma=\pi\restrict\gamma$, it follows that $j(f)(\alpha)=f(\alpha)<\gamma<j(\kappa)$. Since $\alpha$ is less than the critical point of $j$, it follows that $j(f(\alpha))<j(\kappa)$. By elementarity this implies $f(\alpha)<\kappa$. It easily follows that $j(f)(\kappa)=f(\kappa)<\gamma$.


\end{proof}

If $\delta$ is Woodin, then this is witnessed by embeddings as in Lemma \ref{lemmawoodin}(3). By considering a factor diagram, these embeddings can always be assumed to be extender embeddings, meaning that the target of such an embedding, $j:V\to M$, is of the form 
$$M=\{j(h)(a)\mid \textrm{$h:V_\kappa\to V$, $a\in V_\gamma$, and $h\in V$}\}.$$

The following lemma will be required in our proof of Theorem \ref{theoremwoodin}. 

\begin{lemma}\label{lemmawoodinmenas}
Suppose $\kappa$ is ${<}\delta$-strong for $A\subseteq V_\delta$ where $\delta$ is a Woodin cardinal. There is a function $\ell:\kappa\to \kappa$ such that for any $\theta<\delta$ there is a $j:V\to M$ witnessing that $\kappa$ is $\theta$-strong for $A$ such that $j(\ell)(\kappa)=\theta$.
\end{lemma}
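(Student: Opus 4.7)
The plan is to follow the Menas-function template, as adapted to strong cardinals in \cite{FriedmanHonzik:EastonsTheoremAndLargeCardinals}, with modifications to accommodate $\beta$-strength for $A\cap V_\alpha$ in the Woodin setting.

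First I would define $\ell:\kappa\to\kappa$ by setting $\ell(\alpha)$ to be the least $\beta<\kappa$ such that $\alpha$ is not $\beta$-strong for $A\cap V_\alpha$, with the convention $\ell(\alpha)=0$ if no such $\beta$ exists. Given $\theta<\delta$, the plan is to produce the required embedding by an extender factorization. Using that $\kappa$ is ${<}\delta$-strong for $A$, pick $j_0:V\to M_0$ witnessing that $\kappa$ is $(\theta+\omega)$-strong for $A$. Let $E$ be the $(\kappa,\theta)$-extender derived from $j_0$, let $j_E:V\to M_E$ be its ultrapower, and let $k:M_E\to M_0$ be the canonical factor embedding with $k\circ j_E=j_0$ and $k\restriction V_\theta=\mathrm{id}$. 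Standard extender facts then give $V_\theta\subseteq M_E$, $j_E(\kappa)>\theta$, and $j_E(A)\cap V_\theta=A\cap V_\theta$, so $j_E$ already witnesses that $\kappa$ is $\theta$-strong for $A$.

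The bulk of the work is then to verify $j_E(\ell)(\kappa)=\theta$. By elementarity this value is computed inside $M_E$ as the least $\beta<j_E(\kappa)$ for which $\kappa$ fails to be $\beta$-strong for $A\cap V_\kappa=j_E(A)\cap V_\kappa$. For the inequality $j_E(\ell)(\kappa)\geq\theta$: for each $\beta<\theta$ the $(\kappa,\beta)$-extender derived from $j_0$ is essentially a subset of $V_{\beta+\omega}\subseteq V_\theta\subseteq M_E$, so its internal ultrapower in $M_E$ witnesses $\beta$-strength for $A\cap V_\kappa$ inside $M_E$.

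The main obstacle will be the matching upper bound $j_E(\ell)(\kappa)\leq\theta$, i.e.\ that in $M_E$ the cardinal $\kappa$ is not $\theta$-strong for $A\cap V_\kappa$. My approach is to exploit the minimality of the seed hull generating $M_E$: any internal witness in $M_E$ of $\theta$-strength would yield a $(\kappa,\theta)$-extender $E^\star\in M_E$, and expressing $E^\star$ in the form $j_E(h)(s)$ for some $h:V_\kappa\to V$ in $V$ and some seed $s\in V_\theta$ should force $E^\star$ to cohere with $E$ on enough of its measures to contradict how $M_E$ was formed. Should this direct rank-counting argument falter in edge cases (for instance when $2^\kappa<\theta$ and $E$ itself embeds into $V_\theta$), the backup plan is to augment $\ell$ so that its values additionally record the Mostowski code of a canonical local seed hull at each $\alpha$, tailored so that $j_E(\ell)(\kappa)$ is pinned to $\theta$ on the nose. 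Once the equality is in hand, $j=j_E$ is the required embedding.
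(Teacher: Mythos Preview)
Your argument has a genuine gap at the upper bound $j_E(\ell)(\kappa)\leq\theta$, and the claim underlying it can simply fail. Nothing in your construction prevents $\kappa$ from being $\theta$-strong for $A\cap V_\kappa$ inside $M_E$: below a Woodin cardinal there may well be two $(\kappa,\theta)$-extenders $E_1,E_2$ witnessing $\theta$-strength for $A$ with $E_1\in M_{E_2}$, and if the extender you derive from $j_0$ happens to be $E_2$, then $\kappa$ \emph{is} $\theta$-strong in $M_E$ via $E_1$, so $j_E(\ell)(\kappa)>\theta$. Your ``minimality of the seed hull'' sketch does not address this (the seed hull \emph{is} all of $M_E$), and the backup plan of augmenting $\ell$ with coded seed-hull data is not a definition: $\ell$ must be a single function $\kappa\to\kappa$ fixed in advance and working uniformly for every $\theta<\delta$, and you have not said what that function is.

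The paper repairs exactly this point by choosing the embedding differently. Rather than deriving an extender from an arbitrary $(\theta+\omega)$-strong $j_0$, it selects $j:V\to M$ witnessing $\theta$-strength for $A$ with $j(\kappa)$ \emph{minimal}. Minimality forces $\kappa$ to fail $\theta$-strength for $j(A)$ in $M$: an internal witnessing $(\kappa,\theta)$-extender $F\in M$ would, by $V_\theta\subseteq M$ and agreement on $P(V_\kappa)$, also be a genuine extender in $V$ giving a $\theta$-strength-for-$A$ embedding with $j_F^V(\kappa)=j_F^M(\kappa)<j(\kappa)$, contradicting minimality. The paper also defines $\ell(\gamma)$ using strength for the full set $A$ (not $A\cap V_\gamma$) and without the artificial bound $\beta<\kappa$; this makes $j(\ell)(\kappa)$ line up cleanly with ``least failure of strength for $j(A)$ in $M$,'' at the cost of a short composition-of-embeddings argument to verify that $\ell$ really maps into $\kappa$.
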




\begin{proof}

Define a function $\ell$ with domain $\kappa$ as follows. If $\gamma<\kappa$ is not ${<}\delta$-strong for $A$ then define $\ell(\gamma)$ to be the least ordinal such that $\gamma$ is not $\ell(\gamma)$-strong for $A$. Otherwise define $\ell(\gamma)=0$. 

Let me show that $\ell(\gamma)<\kappa$ for each $\gamma<\kappa$. Suppose $\gamma$ is not ${<}\delta$-strong for $A$ and that $\ell(\gamma)\geq\kappa$. I will show that since $\kappa$ is ${<}\delta$-strong for $A$ it follows that $\gamma$ is also ${<}\delta$-strong for $A$, a contradiction. Choose $\theta<\delta$ and let $j:V\to M$ witness that $\kappa$ is $\theta$-strong for $A$. Since $\ell(\gamma)\geq\kappa$ it follows that $\gamma$ is ${<}\kappa$-strong for $A$. By elementarity $\gamma=j(\gamma)$ is ${<}j(\kappa)$-strong for $j(A)$ in $M$. Thus $\gamma$ is $\theta$-strong for $j(A)$ in $M$. Let $i:M\to N$ witness this. Now let $j^*:=i\circ j:V\to N$. It follows that $\gamma$ is the critical point of $j^*$, that $j^*(\gamma)=i(j(\gamma))=i(\gamma)>\theta$, and $j^*(A)\cap\theta = i(j(A))\cap\theta = j(A)\cap\theta = A\cap\theta$. Hence $\gamma$ is $\theta$-strong for $A$. This implies that $\gamma$ is ${<}\delta$-strong for $A$, a contradiction. This shows that $\ell$ is a function from $\kappa$ to $\kappa$.

Now fix $\theta<\delta$ and let $j:V\to M$ be an embedding witnessing that $\kappa$ is $\theta$-strong for $A$ such that $\kappa$ is not $\theta$-strong for $A$ in $M$. Such an embedding can be obtained by taking $j(\kappa)$ to be minimal. It follows that $\kappa$ is $\beta$-strong for $A$ in $M$ for every $\beta<\theta$. Thus, $j(f)(\kappa)=\theta$.

\end{proof}

The next widely known lemma\footnote{I would like to thank Arthur Apter for an enlightening discussion concerning Lemma \ref{lemmawoodinclosed} and its proof.} is important for our proof of Theorem \ref{theoremwoodin}, because it easily implies that if $\delta$ is a Woodin cardinal, then one can force the continuum function to agree with any Easton function on the interval $[\delta,\infty)$.

\begin{lemma}\label{lemmawoodinclosed}
If $\delta$ is a Woodin cardinal and $\P$ is ${<}\delta$-closed then $\delta$ remains Woodin after forcing with $\P$.
\end{lemma}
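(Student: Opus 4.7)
The plan is to verify characterization (4) of Lemma \ref{lemmawoodin} for $\delta$ in $V[G]$. Let $A\subseteq\delta$ be an arbitrary set in $V[G]$ and fix a $\P$-name $\dot A$ for $A$. The driving observation is that since $\P$ is ${<}\delta$-closed and hence ${<}\delta$-distributive, every bounded initial segment $A\cap\gamma$ (with $\gamma<\delta$) already lies in $V$; moreover $V_\delta^V=V_\delta^{V[G]}$. Thus although $A$ itself may be new, all of its proper initial segments are ground-model sets.

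I will produce a single $\kappa<\delta$ such that for every $\gamma<\delta$ there is in $V[G]$ an elementary embedding $j^*:V[G]\to M'$ with $\crit(j^*)=\kappa$, $j^*(\kappa)>\gamma$, and $j^*(A)\cap\gamma=A\cap\gamma$. In $V$, encode the forcing behaviour of $\dot A$ on bounded levels as a set $B\subseteq V_\delta$: since $\delta$ is inaccessible, for each $\gamma<\delta$ the collection of pairs $(a,p)$ with $a\subseteq\gamma$, $p\in\P$, and $p\forces\dot A\cap\check\gamma=\check a$ can be compressed to a subset of $V_\delta$ (for instance by passing to the complete subalgebra of the Boolean completion of $\P$ generated by the Boolean values $[\check\alpha\in\dot A]$ for $\alpha<\delta$, which has size at most $\delta$). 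Applying Lemma \ref{lemmawoodin}(4) in $V$ to $B$ yields $\kappa<\delta$ such that for each $\gamma<\delta$ there is a $V$-embedding $j:V\to M$ with $\crit(j)=\kappa$, $j(\kappa)>\gamma$, $V_\gamma\subseteq M$, and $j(B)\cap V_\gamma=B\cap V_\gamma$.

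Fix $\gamma<\delta$ and the corresponding $j$; I lift $j$ to $j^*:V[G]\to M[H]$ in $V[G]$. By elementarity $j(\P)$ is ${<}j(\delta)$-closed in $M$, in particular $\gamma^+$-closed, and the seed representation of $M$ over $V_\gamma$ bounds the collection of $M$-dense subsets of $j(\P)$ by $|V_\gamma|<\delta$. Working in $V[G]$, I enumerate these dense sets and, using the $\gamma^+$-closure of $j(\P)$ in $M$ together with the master conditions $j(p)$ for $p\in G$, recursively construct a descending sequence of conditions in $j(\P)$ meeting every $M$-dense set and lying below each $j(p)$, producing an $M$-generic filter $H\subseteq j(\P)$ with $j"G\subseteq H$; Lemma \ref{lemmaliftingcriterion} then gives $j^*:V[G]\to M[H]$. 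The agreement $j(B)\cap V_\gamma=B\cap V_\gamma$, combined with the encoding of $\dot A$ into $B$ below level $\gamma$, forces $j^*(A)\cap\gamma=A\cap\gamma$. The main technical obstacle is the $M$-generic construction of $H$ when $|\P|\geq\delta$: in that case $j"G$ has no single lower bound in $j(\P)\cap M$, so one must interleave successive extensions with the enumeration of $M$-dense sets, relying on the fact that only $|V_\gamma|<\delta$ such dense sets need be met and on the $\gamma^+$-closure of $j(\P)$ in $M$.
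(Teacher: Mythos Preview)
Your route via characterization (4) of Lemma~\ref{lemmawoodin} differs from the paper's, which works directly with Definition~\ref{definitionwoodin}. That choice is what makes the paper's argument short: a name $\dot f$ for a function $\delta\to\delta$ can be completely decided by a descending $\delta$-sequence of conditions (using ${<}\delta$-closure), yielding a ground-model $F:\delta\to\delta$ to which Woodinness in $V$ applies directly; the lift is then immediate from Lemma~\ref{lemmalambdadist}. Your approach via subsets $A\subseteq\delta$ cannot capture all of $A$ by a single ground-model object, and this is where the difficulties arise.

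There are two concrete gaps. First, the encoding $B\subseteq V_\delta$ is not well-defined when $|\P|\geq\delta$. Conditions of $\P$ do not live in $V_\delta$, and the complete subalgebra generated by the Boolean values $[\check\alpha\in\dot A]$ need not have size ${\leq}\delta$: already for $\P=\Add(\delta,1)$ with $\dot A$ the canonical name for the generic, this subalgebra is all of $\mathrm{ro}(\P)$, of size $2^\delta$. Without a concrete $B$ in $V_\delta$ you cannot transport the agreement $j(B)\cap V_\gamma=B\cap V_\gamma$ to the desired $j^*(A)\cap\gamma=A\cap\gamma$.

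Second, the lifting argument is incorrect. The claim that $M$ contains only $|V_\gamma|$ dense subsets of $j(\P)$ is false: every element of $M$ has the form $j(h)(a)$ with $a\in V_\gamma$, but $h$ ranges over all functions $V_\kappa\to V$, and there is no bound on these. Worse, extender targets are typically not even closed under $\omega$-sequences from $V$, so a descending sequence constructed externally need not lie in $M$, and you cannot invoke the closure of $j(\P)$ in $M$ to continue it. The correct lift is exactly Lemma~\ref{lemmalambdadist}: since $\P$ is ${\leq}\kappa$-distributive and the seeds lie in $V_\gamma\subseteq j(V_\kappa)$, the upward closure of $j"G$ is already $M$-generic, with no diagonalization needed.
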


\begin{proof} For this proof, I will use the definition of Woodin cardinal as opposed to one of the characterizations given in Lemma \ref{lemmawoodin}.
Let $G$ be generic for $\P$ and suppose $p\in G$ and $p\forces\dot{f}:\delta\to\delta$. Let $D$ be the set of conditions $q\leq p$ such that $q$ forces there is a $\kappa<\delta$ such that $\dot{f}"\kappa\subseteq\kappa$ and there is a $j:V[\dot{G}]\to M[j(\dot{G})]$ with critical point $\kappa$ and $(V_{j(\dot{f})(\kappa)})^{V[\dot{G}]}\subseteq M[j(\dot{G})]$. Note that the existence of the previous embedding is equivalent to the existence of an extender that has a first order definition. I will show that $D$ is dense below $p$. Choose $r\leq p$ and use the ${<}\delta$-closure of $\P$ to find a descending sequence $\langle p_\alpha\mid\alpha<\delta\rangle$ of conditions below $r$ such that $p_\alpha$ decides $\dot{f}\restrict(\alpha+1)$ for each $\alpha<\delta$. Let $F:\delta\to\delta$ be the function in $V$ determined by the sequence $\langle p_\alpha\mid\alpha<\delta\rangle$. By applying the Woodinness of $\delta$ in $V$ to $F$ find a $\kappa<\delta$ such that $F"\kappa\subseteq \kappa$ and there is a $j:V\to M$ with critical point $\kappa$ and $V_{j(F)(\kappa)}\subseteq M$. In addition, by taking a factor embedding if necessary, one may assume without loss of generality that $M=\{j(h)(a)\mid \textrm{$h:V_\kappa\to V$, $a\in V_{j(F)(\kappa)}$, and $h\in V$}\}$. Now choose $\alpha<\delta$ large enough so that $p_\alpha$ forces $\dot{f}$ to agree with $F$ up to and including at $\kappa$. Let $H$ be $V$-generic for $\P$ with $p_\alpha\in H$. Then $\dot{f}^H"\kappa\subseteq\kappa$. Since $\P$ is ${\leq}\kappa$-distributive, it follows by Lemma \ref{lemmalambdadist} that $j$ lifts to $j:V[H]\to M[j(H)]$. By elementarity and the fact that $p_\alpha\in H$, it follows that $j(\dot{f}^H)(\kappa)=j(F)(\kappa)$. Since $\P$ is ${<}\delta$-closed, it follows that $(V_{j(F)(\kappa)})^{V[H]}=V_{j(F)(\kappa)}$. Thus, $(V_{j(\dot{f}^H)(\kappa)})^{V[H]}=(V_{j(F)(\kappa)})^{V[H]}=V_{j(F)(\kappa)}\subseteq M\subseteq M[j(H)]$. This shows that $p_\alpha\in D$ and thus that $D$ is dense below $p$.

Now choose a condition $q\in G\cap D$ so that by the definition of $D$ it follows that in $V[G]$ there is a $\kappa<\delta$ such that $f"\kappa\subseteq\kappa$ and there is a $j:V[G]\to M[j(G)]$ with critical point $\kappa$ and $V[G]_{j(f)(\kappa)}\subseteq M[j(G)]$.
\end{proof}

Kanamori was the first to generalize Sacks forcing to uncountable cardinals in \cite{Kanamori:PerfectSetForcing}. In what follows I will use a version of generalized Sacks forcing introduced by Friedman and Thompson in \cite{FriedmanThompson:PerfectTreesAndElementaryEmbeddings}, which works particularly well for preserving large cardinals. For the reader's convenience I will recall the definition and properties of generalized Sacks forcing given in \cite{FriedmanThompson:PerfectTreesAndElementaryEmbeddings} and \cite{FriedmanHonzik:EastonsTheoremAndLargeCardinals}.

Suppose $\kappa$ is an inaccessible cardinal. Then $p\subseteq 2^{<\kappa} $ is a \emph{perfect $\kappa$-tree} if the following conditions hold.
\begin{enumerate}
\item[$(1)$] If $s\in p$ and $t \in 2^{<\kappa}$ is an initial segment of $s$, then $t\in p$.
\item[$(2)$] If $\langle s_\alpha\mid\alpha<\eta\rangle$ is a sequence of elements of $p$ with $\eta<\kappa$ where $s_\alpha\subseteq s_\beta$ for $\alpha<\beta$, then $\bigcup_{\alpha<\eta} s_\alpha\in p$.
\item[$(3)$] For each $s\in p$ there is a $t\in p$ with $s\subseteq t$ and $t\concat 0,t\concat 1\in p$.
\item[$(4)$] Let $\Split(p)=\{s\in p\mid s\concat 0,s\concat 1\in p\}$. Then for some unique closed unbounded set $C(p)\subseteq\kappa$, $\Split(p)=\{s\in p\mid \length(s)\in C(p)\}$.
\end{enumerate}

\emph{Sacks forcing on $\kappa$} is denoted by $\Sacks(\kappa)$ and conditions in $\Sacks(\kappa)$ are perfect $\kappa$-trees. For $p,q\in\Sacks(\kappa)$, one says that $p$ is stronger than $q$ and writes $p\leq q$ if and only if $p\subseteq q$. For a condition $p\in\Sacks(\kappa)$ let $\langle \alpha_i\mid i<\kappa\rangle$ be the increasing enumeration of $C(p)$. Let $\Split_i(p):=\{s\in p \mid \length(s)=\alpha_i\}$ denote the $i^{th}$ splitting level of $p$. For $p,q\in\Sacks(\kappa)$, define $p\leq_\beta q$ if and only if $p\leq q$ and $\Split_i(p)=\Split_i(q)$ for $i<\beta$. It is easy to verify that $\Sacks(\kappa)$ is ${<}\kappa$-closed and satisfies the $\kappa^{++}$-chain condition under $\GCH$. By standard arguments, this implies that $\Sacks(\kappa)$ preserves cardinals less than or equal to $\kappa$ and greater than or equal to $\kappa^{++}$ under $\GCH$. Furthermore, as shown in \cite{FriedmanThompson:PerfectTreesAndElementaryEmbeddings}, $\Sacks(\kappa)$ satisfies the following fusion property. If $\langle p_\alpha\mid\alpha<\kappa\rangle$ is a decreasing sequence of conditions in $\Sacks(\kappa)$ and for each $\alpha<\kappa$, $p_{\alpha+1}\leq_{\alpha}p_{\alpha}$, then the sequence has a lower bound in $\Sacks(\kappa)$. The sequence $\langle p_\alpha\mid\alpha<\kappa\rangle$ is called a fusion sequence. This fusion property implies that $\Sacks(\kappa)$ preserves $\kappa^+$ by the following straightforward argument. Suppose $p\forces \dot{f}:\check{\kappa}\to\check{\kappa}^+$. One can build a fusion sequence $\langle p_\alpha\mid\alpha<\kappa\rangle$ such that for each $\alpha<\kappa$, the condition $p_\alpha\in\Sacks(\kappa)$ forces $\dot{f}(\check{\alpha})$ to equal the check name of an element of some set $A_\alpha=\{{\beta}_\xi\mid\xi<2^\alpha\}$ where each $\beta_\xi$ is less than $\kappa^+$. By the fusion property, this sequence has a lower bound, call it $r$, and it follows that $r\forces \ran(\dot{f})\subseteq \bigcup_{\alpha<\kappa}A_\alpha$. Since $\bigcup_{\alpha<\kappa}A_\alpha$ has size at most $\kappa$, it follows that $r$ forces $\ran(\dot{f})$ to be bounded below $\kappa^+$. The forcing $\Sacks(\kappa)$ adds a single subset of $\kappa$ given by a cofinal branch through $2^{<\kappa}$ and preserves cardinals under $\GCH$.

Define $\Sacks(\kappa,\lambda)$ to be the product forcing obtained by taking the product of $\lambda$-many copies of $\Sacks(\kappa)$ with supports of size less than or equal to $\kappa$. Thus, a condition $\vec{p}\in\Sacks(\kappa,\lambda)$ can be thought of as a function $\vec{p}:\lambda\to\Sacks(\kappa)$ such that the set $\{\alpha<\lambda\mid \vec{p}(\alpha)\not= 2^{<\kappa}\}$ has size at most $\kappa$. The ordering on $\Sacks(\kappa,\lambda)$ is given by the usual product ordering. It is easy to verify that $\Sacks(\kappa,\lambda)$ is ${<}\kappa$-closed and satisfies the $\kappa^{++}$-chain condition under $\GCH$. Thus, assuming $\GCH$, the poset $\Sacks(\kappa,\lambda)$ preserves cardinals less than or equal to $\kappa$ and greater than or equal to $\kappa^{++}$. To show that $\Sacks(\kappa,\lambda)$ preserves $\kappa^{+}$ one may use the following generalized fusion property (see \cite{FriedmanThompson:PerfectTreesAndElementaryEmbeddings}). For $X\subseteq\lambda$ and $\vec{p},\vec{q}\in\Sacks(\kappa,\lambda)$ write $\vec{p}\leq_{\beta,X}\vec{q}$ if and only if $\vec{p}\leq \vec{q}$ and for each $\alpha\in X$, $\vec{p}(\alpha)\leq_\beta\vec{q}(\alpha)$. The generalized fusion property for $\Sacks(\kappa,\lambda)$ asserts that if $\langle \vec{p}_\alpha\mid\alpha<\kappa\rangle$ is a descending sequence of conditions in $\Sacks(\kappa,\lambda)$ and there is an increasing sequence $\langle X_\alpha\mid\alpha<\kappa\rangle$ of subsets of $\lambda$, each of size less than $\kappa$, such that $\bigcup_{\alpha<\kappa}X_\alpha=\bigcup_{\alpha<\kappa}\supp(\vec{p}_\alpha)$, and for each $\beta<\kappa$, $\vec{p}_{\beta+1}\leq_{\beta,X_\beta}\vec{p}_{\beta}$, then there is a lower bound of the sequence $\langle \vec{p}_\alpha\mid\alpha<\kappa\rangle$ in $\Sacks(\kappa,\lambda)$. The above generalized fusion property implies that $\kappa^+$ is preserved by the following argument. Suppose $\vec{p}\forces \dot{f}:\check{\kappa}\to\check{\kappa}^+$. One can build a fusion sequence $\langle \vec{p}_\alpha\mid\alpha<\kappa\rangle$ such that for each $\alpha<\kappa$, the condition $\vec{p}_\alpha$ forces $\dot{f}(\alpha)$ to belong to a subset of $\kappa^+$ of size $(2^\alpha)^\gamma$ for some $\gamma<\kappa$. A lower bound $\vec{r}$ of this fusion sequence forces a bound on $f$ below $\kappa^+$.

Since $\Sacks(\kappa,\lambda)$ is not $\kappa^+$-c.c. more than Lemma \ref{lemmachain} will be required to see that $\Sacks(\kappa,\lambda)$ preserves closure under $\kappa$ sequences on inner models. For this reason will need the following.
\begin{lemma}\label{lemmaclosuresacks}
Suppose $M\subseteq V$ is an inner model with $M^\kappa\subseteq M$ in $V$. If $G$ is $V$-generic for $\Sacks(\kappa,\lambda)$, then $M[G]^\kappa\subseteq M[G]$ in $V[G]$.
\end{lemma}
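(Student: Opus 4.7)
The plan is to show that every $\kappa$-sequence $s$ of elements of $M[G]$ in $V[G]$ already lies in $M[G]$, by reducing any $V$-name for such an $s$ to an $M$-name modulo extension by a generic condition. Specifically, if $s=\dot{s}^G$ for some $V$-name $\dot{s}$, and if $\vec{p}_0\in G$ forces that $\dot{s}$ is a function from $\check{\kappa}$ into $M[\dot{G}]$, it suffices to show that the set of $\vec{q}\leq\vec{p}_0$ for which $\vec{q}\forces \dot{s}=\dot{\tau}$ for some name $\dot{\tau}\in M$ is dense below $\vec{p}_0$; genericity then supplies such a $\vec{q}\in G$, so $s=\dot{\tau}^G\in M[G]$.

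I would first observe that under $\GCH$ every condition of $\Sacks(\kappa,\lambda)$ is an element of $M$: a perfect $\kappa$-tree is a subset of the $\kappa$-sized set $2^{<\kappa}\subseteq M$, and hence may be coded as a $\leq\kappa$-sequence of elements of $M$, which lies in $M$ by the closure hypothesis $M^\kappa\subseteq M$; a condition is then a $\leq\kappa$-supported function into such trees and likewise lies in $M$. Thus $\Sacks(\kappa,\lambda)^V=\Sacks(\kappa,\lambda)^M$ and $G$ is $M$-generic.

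For the density claim, fix $\vec{p}\leq\vec{p}_0$ and build a fusion sequence $\vec{p}=\vec{p}^0\geq\vec{p}^1\geq\cdots$ with increasing supports $X_\alpha\subseteq\lambda$ of size less than $\kappa$, satisfying $\vec{p}^{\alpha+1}\leq_{\alpha,X_\alpha}\vec{p}^\alpha$, arranged so that for each of the at most $(2^\alpha)^{|X_\alpha|}\leq\kappa$ many $\alpha$-splitting nodes $\sigma$ of $\vec{p}^\alpha\restrict X_\alpha$, the natural extension of $\vec{p}^{\alpha+1}$ that forces the generic to pass through $\sigma$ already forces $\dot{s}(\check{\alpha})=\dot{y}_{\alpha,\sigma}$ for some name $\dot{y}_{\alpha,\sigma}\in M$. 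Such extensions and names exist because each such extension lies below $\vec{p}_0$, which forces $\dot{s}(\alpha)\in M[\dot{G}]$, so by the maximality principle the set of pairs $(\vec{q},\dot{y})$ with $\dot{y}\in M$ and $\vec{q}\forces \dot{s}(\alpha)=\dot{y}$ is dense below any extension of $\vec{p}_0$; gluing these choices over the $\leq\kappa$-many $\sigma$'s in the standard Sacks manner yields $\vec{p}^{\alpha+1}$, and limits are handled by the generalized fusion construction.

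The heart of the argument is that each $\vec{p}^\alpha$, each $X_\alpha$, and each $\dot{y}_{\alpha,\sigma}$ is individually an element of $M$, so the entire $\kappa$-sequence of fusion data is a $\kappa$-sequence of elements of $M$ and hence lies in $M$ by the closure hypothesis. The generalized fusion property then produces a lower bound $\vec{q}$ computed from this data by a definition absolute to $M$, so $\vec{q}\in M$. In $M$ I can define a $\Sacks(\kappa,\lambda)$-name $\dot{\tau}$ whose interpretation at $\alpha$ below $\vec{q}$ is $\dot{y}_{\alpha,\sigma}^{\dot{G}}$, where $\sigma$ is the unique $\alpha$-splitting node of $\vec{p}^\alpha\restrict X_\alpha$ along $\dot{G}\restrict X_\alpha$. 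Since $\vec{q}\leq\vec{p}^{\alpha+1}$ for every $\alpha<\kappa$, we obtain $\vec{q}\forces\dot{s}(\check{\alpha})=\dot{\tau}(\check{\alpha})$ for each $\alpha$, and hence $\vec{q}\forces\dot{s}=\dot{\tau}$. The principal obstacle is ensuring that the fusion construction stays within $M$; this is precisely where the two ingredients — the maximality principle to locate each $\dot{y}_{\alpha,\sigma}\in M$, together with $M^\kappa\subseteq M$ to package the $\kappa$-many intermediate choices into a single $M$-object — come together.
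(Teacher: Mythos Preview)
Your proof is correct and follows essentially the same fusion strategy as the paper's (terse) argument, which in turn cites \cite{FriedmanThompson:PerfectTreesAndElementaryEmbeddings}. The paper simplifies by treating only $\kappa$-sequences of ordinals (sufficient, since closure under ordinal sequences implies full closure), so the values decided at each splitting node are checked ordinals rather than arbitrary $M$-names; your version handles sequences into $M[G]$ directly, which is slightly more elaborate but amounts to the same construction. Both arguments hinge on the same two points you identified: conditions of $\Sacks(\kappa,\lambda)$ lie in $M$ by $M^\kappa\subseteq M$, and the $\kappa$-length fusion data assembled in $V$ is pushed into $M$ by the same closure, so that the resulting lower bound $\vec{q}$ and the recipe for reading off $s$ from $G$ below $\vec{q}$ are already in $M$.
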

\begin{proof}
Let me recall the proof given in \cite[Lemma 3]{FriedmanThompson:PerfectTreesAndElementaryEmbeddings}. Let $G$ be generic for $\Sacks(\kappa,\lambda)$. Suppose $X$ is a $\kappa$-sequence of ordinals in $V[G]$ and that this is forced by $p\in G$. Using generalized fusion, one can show that every $q\leq p$ can be extended to a condition $r$ such that $r$ forces that $X$ can be determined from $r$ and $G$. This implies that there is such an $r\in G$. Since $r$ and $G$ are both in $M[G]$, it follows that $X\in M[G]$. 
\end{proof}


The following lemma, which is analagous to Lemma \ref{lemmaeaston} above, will be important for the proof of our main theorem. 
\begin{lemma}\label{lemmaeastonforsacks}
Suppose $\P$ is any ${\leq}\kappa$-closed forcing and $\alpha$ is an ordinal. Then after forcing with $\Sacks(\kappa,\alpha)$, $\P$ remains ${\leq}\kappa$-distributive.
\end{lemma}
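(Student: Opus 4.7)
The approach is to adapt the proof of Easton's lemma (Lemma~\ref{lemmaeaston}) to Sacks forcing, with the generalized fusion property of $\Sacks(\kappa,\alpha)$ taking the place of the $\kappa^+$-chain condition. Working in $V$, it suffices to show that for any $\Sacks(\kappa,\alpha)\times\P$-name $\dot f$ and any condition $(\vec p_0,q_0)$ forcing $\dot f\colon\check\kappa\to\mathrm{Ord}$, there exist $(\vec p^{*},q^{*})\leq(\vec p_0,q_0)$ and a $\Sacks(\kappa,\alpha)$-name $\dot g$ such that $(\vec p^{*},q^{*})\forces \dot f=\dot g$. This will imply that in $V[G]$, where $G$ is the $\Sacks(\kappa,\alpha)$-generic, every $\P$-name for a $\kappa$-sequence of ordinals is forced to equal a sequence in $V[G]$, so $\P$ remains ${\leq}\kappa$-distributive there.

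I would build the extension by simultaneous recursion on $i<\kappa$, constructing a fusion sequence $\langle \vec p_i\mid i<\kappa\rangle$ in $\Sacks(\kappa,\alpha)$ with associated increasing supports $X_i\subseteq\alpha$ of size $<\kappa$, together with a decreasing sequence $\langle q_i\mid i<\kappa\rangle$ in $\P$, so that $\vec p_{i+1}\leq_{i,X_i}\vec p_i$ and every branch-extension of $\vec p_{i+1}$ obtained by fixing an element of $\Split_i(\vec p_i(\beta))$ for each $\beta\in X_i$, paired with $q_{i+1}$, decides $\dot f(\check i)$ to a specific ordinal. The total number of such branches at stage $i$ is $\prod_{\beta\in X_i}|\Split_i(\vec p_i(\beta))|<\kappa$ by the inaccessibility of $\kappa$. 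At a successor step I enumerate the branches, and for each branch $s$ in turn choose by density an extension $\vec r^{\,s}$ of the branch-condition $\vec p_i^{\,s}$ together with a condition $q^{s}\in\P$ extending $q_i$ and every previously chosen $q^{s'}$, so that $(\vec r^{\,s},q^{s})\forces \dot f(\check i)=\check\alpha^{s}$ for some ordinal $\alpha^{s}$; the ${\leq}\kappa$-closure of $\P$ supports this sequential refinement since the set of earlier $q^{s'}$ has size $<\kappa$. Then $q_{i+1}$ is taken to be a common lower bound of all the $q^{s}$ (one more use of ${\leq}\kappa$-closure), and $\vec p_{i+1}$ is the standard gluing of the $\vec r^{\,s}$ into a single Sacks condition below $\vec p_i$ compatible with $\leq_{i,X_i}$. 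At a limit stage $\eta<\kappa$, the generalized fusion property supplies $\vec p_\eta$ and ${\leq}\kappa$-closure of $\P$ supplies $q_\eta$.

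At the end of the recursion, generalized fusion yields $\vec p^{*}\leq\vec p_i$ for every $i<\kappa$, and ${\leq}\kappa$-closure of $\P$ applied to the full length-$\kappa$ sequence $\langle q_i\mid i<\kappa\rangle$ yields $q^{*}$; this is precisely the step that uses closure at length exactly $\kappa$ rather than merely $<\kappa$. I then define the $\Sacks(\kappa,\alpha)$-name $\dot g$ by stipulating, for each stage $i$ and each branch $s$ considered there, that $\vec r^{\,s}\forces \dot g(\check i)=\check\alpha^{s}$. By construction $(\vec p^{*},q^{*})\forces \dot f=\dot g$, as desired.

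The main obstacle is running the Sacks fusion and the $\P$-closure arguments in lockstep: at each successor stage one must process all of the $<\kappa$ many branches by sequential ${\leq}\kappa$-closed refinements of $\P$, and then reassemble the resulting branch-extensions of Sacks conditions into a single condition respecting $\leq_{i,X_i}$ via the standard gluing technique of~\cite{FriedmanThompson:PerfectTreesAndElementaryEmbeddings}. Here the inaccessibility of $\kappa$, which bounds the number of branches at each stage by something less than $\kappa$, together with the ${\leq}\kappa$-closure of $\P$, jointly play the role that the $\kappa^+$-c.c.\ plays in the proof of Easton's lemma.
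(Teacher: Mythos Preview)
Your proposal is correct and follows essentially the same approach as the paper's (brief) proof: work in the product $\Sacks(\kappa,\alpha)\times\P$, run generalized fusion in the Sacks coordinate while descending in $\P$ via ${\leq}\kappa$-closure, so that the resulting condition forces the values of $\dot f$ to be determined by the Sacks generic alone. The paper merely sketches this and refers to \cite[Lemma~3.7]{FriedmanHonzik:EastonsTheoremAndLargeCardinals} for details, which you have accurately reconstructed.
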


\begin{proof}
Suppose $p\in\Sacks(\kappa,\lambda)\times\P$ forces that $\dot{f}$ is a function with $\dom(\dot{f})=\kappa$. One can show, using generalized fusion in the first coordinate and closure in the second coordinate, that every condition $q$ below $p$ can be extended to a condition $r$ which forces over $\Sacks(\kappa,\lambda)\times\P$ that the values of $\dot{f}$ can be determined from $r$ and $G$, the generic for $\Sacks(\kappa,\lambda)$. 
\end{proof}



For a more detailed proof of Lemma \ref{lemmaeastonforsacks} see \cite[Lemma 3.7]{FriedmanHonzik:EastonsTheoremAndLargeCardinals}.

%
%

\section{Proof of Theorem \ref{theoremwoodin}}

Recall the statement of Theorem \ref{theoremwoodin}.

\begin{theorem31}
Suppose $\GCH$ holds, $F:\REG\to\CARD$ is an Easton function, and $\delta$ is a Woodin cardinal with $F"\delta\subseteq\delta$. Then there is a cofinality-preserving forcing extension in which $\delta$ remains Woodin and $2^\gamma=F(\gamma)$ for each regular cardinal $\gamma$.
\end{theorem31}

\noindent \textit{Proof of Theorem \ref{theoremwoodin}.}

Suppose $\delta$ is a Woodin cardinal and $F:\REG\to\CARD$ is an Easton function with $F"\delta\subseteq\delta$. For an ordinal $\alpha$ let $\bar{\alpha}$ denote the least closure point of $F$ greater than $\alpha$. The forcing is, the same iteration introduced in \cite{FriedmanHonzik:EastonsTheoremAndLargeCardinals}, that is, an Easton support iteration $\P=\langle (\P_{\eta},\dot{\Q}_{\eta}) : \eta\in\ORD \rangle$ of Easton support products defined as follows. 
\begin{enumerate}
\item[$(1)$] If $\eta$ is an inaccessible closure point of $F$ in $V^{\P_{\eta}}$, then $\dot{\Q}_{\eta}$ is a $\P_{\eta}$-name for the Easton support product $$\Sacks(\eta,F(\eta))\times \prod_{\gamma\in[\eta,\bar{\eta})\cap\REG} \Add(\gamma,F(\gamma))$$
as defined in $V^{\P_{\eta}}$ and $\P_{\eta+1}=\P_{\eta}*\dot{\Q}_{\eta}$
\item[$(2)$] If $\eta$ is a singular closure point of $F$ in $V^{\P_{\eta}}$, then $\dot{\Q}_{\eta}$ is a $\P_{\eta}$-name for $\prod_{\gamma\in[\eta,\bar{\eta})\cap\REG} \Add(\gamma,F(\gamma))$  as defined in $V^{\P_{\eta}}$ and $\P_{\eta+1}=\P_{\eta}*\dot{\Q}_{\eta}$.
\item[$(3)$] Otherwise, if $\eta$ is not a closure point of $F$, then $\dot{\Q}_\eta$ is a $\P_\eta$-name for trivial forcing and $\P_{\eta+1}=\P_\eta*\dot{\Q}_\eta$.
\end{enumerate}

Let $G$ be $V$-generic for $\P$. As in \cite{FriedmanHonzik:EastonsTheoremAndLargeCardinals}, it follows that cardinals are preserved (see \cite[Lemma 3.6]{FriedmanHonzik:EastonsTheoremAndLargeCardinals}) and that for each regular cardinal $\gamma$ one has $2^\gamma=F(\gamma)$ (see \cite[Theorem 3.8]{FriedmanHonzik:EastonsTheoremAndLargeCardinals}). 

Let me now discuss some notation that will be useful for factoring $\P$. If $\eta$ is a closure point of $F$, then one can factor $\P\cong\P_\eta * \dot{\P}_{[\eta,\infty)}$ where $\P_\eta$ denotes the iteration up to stage $\eta$ and $\dot{\P}_{[\eta,\infty)}$ is a $\P_\eta$-name for the remaining stages. Thus $G$ naturally factors as $G\cong G_\eta*G_{[\eta,\infty)}$. The stage $\eta$ forcing in the iteration $\P$ is $\Q_{\eta}$ and I will write $\Q_{\eta}=\Q_{[\eta,\bar{\eta})}$ to emphasize the interval on which the stage $\eta$ forcing has an effect. Let $H_{[\eta,\bar{\eta})}$ denote the $V[G_{\eta}]$-generic for $\Q_{[\eta,\bar{\eta})}$ obtained from $G$. Let $\R_\gamma$ denote a particular factor of the product forcing $\Q_{[\eta,\bar{\eta})}$ so that $\Q_{[\eta,\bar{\eta})}=\prod_{\gamma\in[\eta,\bar{\eta})\cap\REG}\R_\gamma$. In this situation let $H_\gamma$ denote that $V[G_{\eta}]$-generic for $\R_\gamma$ obtained from $G$. In general, if $I\subseteq [\eta,\bar{\eta})$ then let $\Q_I=\prod_{\gamma\in I\cap\REG}\R_\gamma$.

Since $\P_{[\delta,\infty)}$ is ${<}\delta$-closed in $V^{\P_\delta}$, it follows by Lemma \ref{lemmawoodinclosed} that if $\delta$ is Woodin in $V^{\P_\delta}$ then $\delta$ remains Woodin in $V^{\P_\delta*\dot{\P}_{[\delta,\infty)}}$. Thus it will suffice to show that $\delta$ remains Woodin in $V[G_\delta]$. Let me note here that by the previous statements, one could have defined the iteration above so that $\dot{\P}_{[\delta,\infty)}$ is simply a $\P_\delta$-name for an Easton support product of Cohen forcing.

Let me note here that in what follows I will use the fact that since conditions in $\P_\delta$ have bounded support, one can view them as sequences of length less than $\delta$. Indeed, by cutting off trivial coordinates, one can view a condition $p\in\P_\delta$ as being a condition in some initial segment of the poset.

I will show that property (3) in Lemma \ref{lemmawoodin} holds in $V[G_\delta]$. Suppose $A\subseteq \delta$ with $A\in V[G_\delta]$ and let $\dot{A}$ be a $\P_\delta$-name for $A$. For each $\alpha<\delta$, let $A_\alpha$ be a maximal antichain of conditions in $\P_\delta$ that decide $\check{\alpha}\in\dot{A}$. Define a function $\sigma:\delta\to\delta$ such that $\sigma(\gamma)$ equals the least ordinal $\beta$ such that for each $\alpha<\gamma$, the antichain $A_\alpha$, is contained in $\P_\beta$.

Now I will apply the Woodinness of $\delta$ in $V$. By an argument similar to that for Lemma \ref{lemmawoodin}(5), i.e. by coding the name $\dot{A}\subseteq V_\delta$, the Easton funciton $F\cap\delta\times\delta$, and the function $\sigma\subseteq\delta\times\delta$, into a single subset of $\delta$, that there is a $\kappa<\delta$ that is ${<}\delta$-strong for the name $\dot{A}$, the Easton function $F\restrict \delta$, and the function $\sigma$. As an abbreviation, I will say that such a $\kappa$ is ${<}\delta$-strong for $\langle\dot{A},F,\sigma\rangle$. Since $C_F:=\{\alpha<\delta\mid F"\alpha\subseteq\alpha\}$ is a closed unbounded subset of $\delta$ and since the set $S:=\{\kappa<\delta\mid\textrm{$\kappa$ is ${<}\delta$-strong for $\langle \dot{A},F,\sigma\rangle$}\}$ is stationary, one may choose such a $\kappa\in C\cap S$. This is, of course, necessary since there is no hope of $\kappa$ remaining measurable in $V[G_\delta]$ if $\kappa$ is not a closure point of $F$. 

Fix $\kappa<\delta$ such that $\kappa$ is a closure point of $F$ and $\kappa$ is ${<}\delta$-strong for $\langle \dot{A},F,u\rangle$. Fix a function $\ell:\kappa\to\kappa$ as in Lemma \ref{lemmawoodinmenas}. I will show that property (3) in Lemma \ref{lemmawoodin} holds for this $\kappa$ and the initially chosen $A\subseteq \delta$ in $V[G_\delta]$. 

Since the inaccessible closure points of $F$ are unbounded in $\delta$, one can choose $\mu$ to be an inaccessible closure point of $F$ with $F(\kappa)<\mu<\delta$. It will suffice to show that in $V[G_\delta]$ there is an embedding $j:V[G_\delta]\to M[j(G_\delta)]$ with critical point $\kappa$ and $j(A)\cap\mu = A\cap\mu$. Now I will define a singular $\theta>\mu$ and lift an embedding that is $\theta$-strong for $\langle\dot{A},F,\sigma\rangle$. I will also show that the lifted embedding satisfies $j(A)\cap \mu=A\cap\mu$ and indeed witnesses that $\kappa$ is $\mu$-strong for $A$ in $V[G_\delta]$. Using a singular degree of strength is advantageous since this will mean there will be no forcing over $\theta$ on the $M$ side, and hence the relevant tail forcing will be sufficiently closed. Let $\mu'$ be the least inaccessible closure point of $\sigma$ greater than $\mu$. Define a sequence $\langle\gamma_\alpha\mid\alpha<\kappa^+\rangle$ by recursion as follows. Let $\gamma_0$ be the least inaccessible closure point of $F$ greater than $\mu'$. Assuming $\gamma_\alpha$ is defined where $\alpha<\kappa^+$, let $\gamma_{\alpha+1}$ be the least inaccessible closure point of $F$ greater than $\gamma_\alpha$. At limit stages $\zeta<\kappa^+$, assuming $\langle \gamma_\alpha\mid\alpha<\zeta\rangle$ is defined, let $\gamma_\zeta$ be the least inaccessible closure point of $F$ greater than $\sup\{\gamma_\alpha\mid\alpha<\zeta\}$. Now define $\theta:=\sup\{\gamma_\alpha\mid\alpha<\kappa^+\}$. We have
$$\kappa<F(\kappa)<\mu<\mu'<\gamma_0<\cdots<\gamma_\alpha<\cdots<\theta.$$
For emphasis, let me state the following explicitly.
\begin{itemize}
\item $\langle\gamma_\alpha\mid\alpha<\kappa^+\rangle$ is a discontinuous sequence of inaccessible closure points of $F$.
\item $\theta=\sup\{\gamma_\alpha\mid\alpha<\kappa^+\}$
\item $\sigma"\mu'\subseteq\mu'$
\end{itemize}

By assumption on $\kappa$, there is a $j:V\to M$ with critical point $\kappa$ such that the following hold.
\begin{enumerate}
\item[(1)] $V_\theta\subseteq M$ $\and$ $\theta<j(\kappa)$
\item[(2)] $j(\dot{A})\cap\theta=\dot{A}\cap\theta$ $\and$ $j(F)\restrict\theta=F\restrict\theta$ $\and$ $j(\sigma)\restrict\theta=\sigma\restrict\theta$
\item[(3)] $M=\{j(h)(s)\mid h:V_\kappa\to V, s\in V_\theta, h\in V\}$
\item[(4)] $j(\ell)(\kappa)=\theta$ (using Lemma \ref{lemmawoodinmenas})
\end{enumerate}
Since $j(F)\restrict\theta=F\restrict\theta$, the sequence $\langle \gamma_\alpha\mid\alpha<\kappa^+\rangle$ can be constructed in $M$ from $j(F)$ just as it was constructed in $V$ from $F$. This implies that
\begin{enumerate}
\item[(5)] $\cf(\theta)^M=\kappa^+$.
\end{enumerate}

\subsection{Lifting $j$ Through $G_\kappa$.}\label{subsectionwoodingkappa}

In order to lift $j$ to $V[G_\kappa]$, I will find an $M$-generic filter $j(G_\kappa)$ for $j(\P_\kappa)$ that satisfies $j"G_\kappa\subseteq j(G_\kappa)$. To do so, the length $j(\kappa)$ iteration $j(\P_\kappa)$ will be factored in $M$. See Figure \ref{figureordinals} for the placement of various ordinals involved.


\begin{figure}
\centering
\begin{tikzpicture}[scale=0.7,>=latex]
\draw[thick] (0,0) -- (0,5.3);
\draw[thick] (3,0) -- (3,5.3); 
\draw[->,thick] (0,1) -- (3,4) [];

\draw[thick] (0,0) node [anchor=north] {$V$};
\draw[thick] (-0.15,0) -- (0.15,0);
\draw[thick] (-0.15,1) node [anchor=east] {$\kappa$} -- (0.15,1);
\draw[thick] (-0.15,1.9) -- (0.15,1.9);
\draw[thick] (-0.15,2.4) -- (0.15,2.4);
\draw[thick] (-0.15,3.2) -- (0.15,3.2);
\draw[thick] (-0.15,4) -- (0.15,4);
\draw[thick] (-0.15,5) node [anchor=east] {$\delta$} -- (0.15,5);

\draw[thick] (3,0) node [anchor=north] {$M$};
\draw[thick] (2.85,0) -- (3.15,0);
\draw[thick] (2.85,1) -- (3.15,1);
\draw[thick] (2.85,1.9) -- (3.15,1.9) node [anchor=west] {$\mu$};
\draw[thick] (2.85,2.4) -- (3.15, 2.4) node [anchor=west] {$\gamma_0$};
\draw[thick] (2.85,3.2) -- (3.15,3.2) node [anchor=west] {$\theta$};
\draw[thick] (2.85,4) -- (3.15,4) node [anchor=west] {$j(\kappa)$};
\draw[thick] (2.85,5) -- (3.15,5);
\end{tikzpicture}\caption{The placement of relevant ordinals.}\label{figureordinals}
\end{figure}
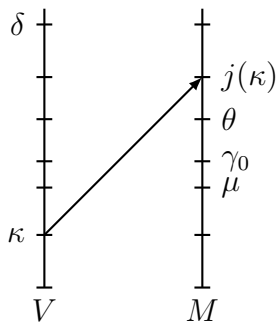

Since $V_\theta\subseteq M$ it follows that $j(\P_\kappa)\cong \P_{\gamma_0}*\dot{\widetilde{\P}}_{[\gamma_0,\theta)}*\dot{\widetilde{\P}}_{[\theta,j(\kappa))}$ where $\dot{\widetilde{\P}}_{[\gamma_0,\theta)}$ is a $\P_{\gamma_0}$-term for the iteration over the interval $[\gamma_0,\theta)$ as defined in $M^{\P_{\gamma_0}}$ and similarly $\dot{\widetilde{\P}}_{[\theta,j(\kappa))}$ is a $\P_{\gamma_0}*\dot{\widetilde{\P}}_{[\gamma_0,\theta))}$-term for the tail of the iteration $j(\P_\kappa)$ as defined in $M^{\P_{\gamma_0}*\dot{\widetilde{\P}}_{[\gamma_0,\theta)}}$. Since $V_\theta\subseteq M$, the iteration $j(\P_\kappa)$ agrees with $\P_\delta$ up to stage $\gamma_0$. Thus it follows that $G_{\gamma_0}$ is $M$-generic for $\P_{\gamma_0}$. Since $\theta$ is singular in $V$, conditions in $\P_{[\gamma_0,\theta)}$ are allowed to have unbounded support. Since $M$ and $V$ do not agree on the collection of unbounded subsets of $\theta$, it follows by a density argument that $G_{[\gamma_0,\theta)}$ is not contained in $\widetilde{\P}_{[\gamma_0,\theta)}$. Nonetheless, Lemmas \ref{lemmapinfty} and \ref{lemmaaut} below will establish that there is an $M[G_{\gamma_0}]$-generic filter, call it $\widetilde{G}_{[\gamma_0,\theta)}$, in $V[G_{\gamma_0}][G_{[\gamma_0,\theta)}]$ for $\widetilde{\P}_{[\gamma_0,\theta)}$. In Lemma \ref{lemmapinfty}, I will show that there is a condition $p_\infty\in \P_{[\gamma_0,\theta)}$ which forces all dense subsets of $\widetilde{\P}_{[\gamma_0,\theta)}$ in $M[G_{\gamma_0}]$ to be met by $G_{[\gamma_0,\theta)}$. It might not be the case that $p_\infty\in G_{[\gamma_0,\theta)}$, but in Lemma \ref{lemmaaut} I will show that $p_\infty$ is in an automorphic image of $G_{[\gamma_0,\theta)}$, which I shall argue is good enough.

Let me note here that the proof of Lemma \ref{lemmapinfty} resembles the construction of $p_\infty$ in \cite[Sublemma 3.12]{FriedmanHonzik:EastonsTheoremAndLargeCardinals}. However, there is an important difference in that the forcing here, namely $\P_{[\gamma_0,\theta)}$, is an iteration, whereas in \cite{FriedmanHonzik:EastonsTheoremAndLargeCardinals}, the analagous forcing  is a product.

\begin{lemma}\label{lemmapinfty}
There is a condition $p_\infty\in \P_{[\gamma_0,\theta)}$ such that if $G^*_{[\gamma_0,\theta)}$ is $V[G_{\gamma_0}]$-generic for $\P_{[\gamma_0,\theta)}$ with $p_\infty\in G^*_{[\gamma_0,\theta)}$, then $G^*_{[\gamma_0,\theta)}\cap \widetilde{\P}_{[\gamma_0,\theta)}$ is $M[G_{\gamma_0}]$-generic for $\widetilde{\P}_{[\gamma_0,\theta)}$.
\end{lemma}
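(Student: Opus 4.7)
The plan is to construct $p_\infty$ by a master-condition style diagonalization in $V[G_{\gamma_0}]$, meeting every dense open subset of $\widetilde{\P}_{[\gamma_0,\theta)}$ that lies in $M[G_{\gamma_0}]$.

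First, I would enumerate these dense sets as $\langle D_\alpha : \alpha < \kappa^+\rangle$ in $V[G_{\gamma_0}]$. The seed representation $M = \{j(h)(s) : h \in V,\ h : V_\kappa \to V,\ s \in V_\theta\}$ together with $\GCH$ in $V$ and the fact that $\cf^M(\theta) = \kappa^+$ enables such an enumeration: names for $M[G_{\gamma_0}]$-dense subsets of $\widetilde{\P}_{[\gamma_0,\theta)}$ can be organized along the cofinal sequence $\langle \gamma_\alpha : \alpha < \kappa^+\rangle$, with meeting a single master dense set $D_\alpha$ per stage sufficing to meet all dense sets determined at that level. Here one uses that under $\GCH$ in $V$ the functions $h : V_\kappa \to V_\kappa$ number at most $\kappa^+$, while seeds $s$ range over $V_\theta$ but can be grouped by the stage $\gamma_\alpha$ at which they first appear.

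Next, I would recursively construct a descending sequence $\langle p_\alpha : \alpha < \kappa^+\rangle$ of conditions in $\widetilde{\P}_{[\gamma_0,\theta)}$ with $p_{\alpha+1} \in D_\alpha$. Successor steps use density of $D_\alpha$ in $\widetilde{\P}_{[\gamma_0,\theta)}$. At limit stages $\lambda < \kappa^+$, one has $|\lambda| \leq \kappa$, so Lemma \ref{lemmaground} (together with $M^\kappa \subseteq M$ in $V$, which holds because $M$ is an extender ultrapower with critical point $\kappa$) yields $\langle p_\alpha : \alpha < \lambda\rangle \in M[G_{\gamma_0}]$, and its natural lower bound lies in $\widetilde{\P}_{[\gamma_0,\theta)}$ by the ${<}\gamma_0$-closure of that iteration in $M[G_{\gamma_0}]$, using $\kappa^+ < \gamma_0$.

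Finally, let $p_\infty \in \P_{[\gamma_0,\theta)}$ be a lower bound of the entire $\kappa^+$-sequence, guaranteed by ${<}\gamma_0$-closure of $\P_{[\gamma_0,\theta)}$ and the fact that the union of $\kappa^+$ Easton supports remains Easton, being bounded below every inaccessible $\gamma_\beta \geq \gamma_0$ in the interval, since $\kappa^+ < \gamma_\beta$. If $G^*_{[\gamma_0,\theta)}$ is $V[G_{\gamma_0}]$-generic containing $p_\infty$, each $p_{\alpha+1}$ lies in $G^*_{[\gamma_0,\theta)} \cap \widetilde{\P}_{[\gamma_0,\theta)} \cap D_\alpha$, so the intersection meets every $M[G_{\gamma_0}]$-dense set on $\widetilde{\P}_{[\gamma_0,\theta)}$ and is thus generic. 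The main obstacle is the first step: combining the seed representation of $M$, $\GCH$ in $V$, and the $M$-cofinality of $\theta$ being $\kappa^+$, in order to reduce the $M[G_{\gamma_0}]$-dense sets of $\widetilde{\P}_{[\gamma_0,\theta)}$ to a $\kappa^+$-indexed family of master dense sets in $V[G_{\gamma_0}]$; this is the core technical content of the lemma, since the rest of the construction then follows routinely from closure and support considerations.
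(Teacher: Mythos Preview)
Your outline has the right global shape --- a $\kappa^+$-length descending sequence in $\widetilde{\P}_{[\gamma_0,\theta)}$, with limit steps handled by $M[G_{\gamma_0}]^{\kappa}\subseteq M[G_{\gamma_0}]$ and ${<}\gamma_0$-closure, and $p_\infty$ a lower bound in $\P_{[\gamma_0,\theta)}$ --- but the step you flag as ``the core technical content'' is not merely hard; as you describe it, it does not work. There is no $\kappa^+$-indexed family of \emph{master dense sets} $D_\alpha\subseteq\widetilde{\P}_{[\gamma_0,\theta)}$ into which you can simply extend. For a fixed function $h$ and a fixed level $\gamma_\beta$, the dense sets $j(h)(a)^{G_{\gamma_0}}$ with $a\in V_{\gamma_\beta}$ number $|V_{\gamma_\beta}|\geq\gamma_0$, and $\widetilde{\P}_{[\gamma_0,\theta)}$ is only ${<}\gamma_0$-closed in $M[G_{\gamma_0}]$; the intersection need not be dense, so there is no single $D_\alpha$ to step into. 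The seed count is genuinely $\theta$, not $\kappa^+$, and no amount of organizing along $\langle\gamma_\alpha:\alpha<\kappa^+\rangle$ collapses this to a dense-set-per-stage picture.

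The paper's proof solves this not by producing master dense sets but by producing, at each of the $\kappa^+$ stages, a condition $p$ below which the \emph{generic} is guaranteed to meet the given block of dense sets --- without $p$ itself lying in any of them. The mechanism is a two-step factoring $\widetilde{\P}_{[\gamma_0,\theta)}\cong\R*\dot{\Q}$ with $\R=\P_{[\gamma_0,\gamma_\alpha)}$ and $\dot{\Q}=\widetilde{\P}_{[\gamma_\alpha,\theta)}$ for a suitable $\gamma_\alpha>|V_\beta|$. A mixing lemma (Sublemma~\ref{sublemmapd}) shows that for each dense $D\subseteq\R*\dot{\Q}$ one can strengthen only the $\dot{\Q}$-coordinate of $(r_*,\dot{q}_*)$ to some $(r_*,\dot{q}_D)$ so that $\{r\leq r_*:(r,\dot{q}_D)\in D\}$ is dense in $\R$ below $r_*$. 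Since $\dot{\Q}$ is ${<}\gamma_\alpha$-closed and only the $\dot{\Q}$-coordinate moves, one can iterate this for all $|V_\beta|<\gamma_\alpha$ dense sets in the block, obtaining a single $(r_*,\dot{q}_\zeta)$; the actual hit on each $D$ is then supplied by the $V[G_{\gamma_0}]$-genericity of $G^*$ in the $\R$-factor. This idea --- shifting the burden of meeting dense sets onto the genericity of the initial segment rather than onto the condition itself --- is exactly what your proposal is missing, and it is the reason the argument goes through for an iteration where the analogous product argument from \cite{FriedmanHonzik:EastonsTheoremAndLargeCardinals} would not.
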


\begin{proof}

By our choice of $\theta$, the sequence $\langle \gamma_\alpha\mid \alpha<\kappa^+\rangle$ is an increasing cofinal sequence of inaccessible closure points of $F$ in $\theta$. Recall the placement of the following ordinals.
$$\mu<\mu'<\gamma_0<\gamma_1<\cdots<\gamma_\alpha<\cdots<\theta$$
It follows that, in $M[G_{\gamma_0}]$, for each $\alpha<\kappa^+$,
$$\widetilde{\P}_{[\gamma_0,\theta)}\cong\P_{[\gamma_0,\gamma_\alpha)}*\dot{\widetilde{\P}}_{[\gamma_\alpha,\theta)}$$
where $\P_{[\gamma_0,\gamma_\alpha)}$ is $\gamma_\alpha^+$-c.c. in $V[G_{\gamma_0}]$ and $\widetilde{\P}_{[\gamma_\alpha,\theta)}$ is forced to be ${<}\gamma_\alpha$-closed.

A few sublemmas will be required.

\begin{sublemma}\label{sublemmapd}
Suppose $p_*=(r_*,\dot{q}_*)\in\R*\dot{\Q}$ and $D\subseteq \R*\dot{\Q}$ is open dense. Then there is an $\R$-name $\dot{q}_D$ such that the following hold.
\begin{enumerate}
\item[$(1)$] $(r_*,\dot{q}_D)\leq(r_*,\dot{q}_*)$
\item[$(2)$] $\bar{D}=\{r\leq r_*\mid(r,\dot{q}_D)\in D\}$ is open dense in $\R$ below $r_*$.
\item[$(3)$] $r_*\forces_\R \exists r\in\dot{G}\ (r,\dot{q}_D)\in D$
\end{enumerate}
\end{sublemma}

\begin{proof} I will work below $(r_*,\dot{q}_*)$. Choose $(r_0,\dot{q}_0)\leq (r_*,\dot{q}_*)$ with $(r_0,\dot{q}_0)\in D$. Let $r_0'\leq r$  with $r_0'\perp r_0$. Now let $(r_1,\dot{q}_1)\leq (r_0',\dot{q}_*)$ with $(r_1,\dot{q}_1)\in D$. Proceed by induction.

If $\alpha$ is a successor ordinal, say $\alpha=\beta+1$, choose $r_\beta'\leq r_*$ with $r_\beta'\perp\{r_\xi\mid\xi\leq\beta\}$. Let $(r_{\beta+1},\dot{q}_{\beta+1})\in D$ with $(r_{\beta+1},\dot{q}_{\beta+1})\leq (r_\beta',\dot{q}_*)$.

If $\alpha$ is a limit ordinal, suppose $\{r_\xi\mid\xi<\alpha\}$ is the antichain of $\R$ constructed so far. Let $r_\alpha''\in\R$ be such that $r''_\alpha\perp\{r_\xi\mid\xi<\alpha\}$. Let $(r_\alpha,\dot{q}_\alpha)\in D$ with $(r_\alpha,\dot{q}_\alpha)\leq (r_\alpha'',\dot{q}_*)$.

The process terminates at some stage $\gamma$ once $A:=\{r_\xi\mid\xi<\gamma\}$ forms a maximal antichain of $\R$ below $r_*$. Let $\dot{q}_D$ be the $\R$-name obtained by mixing the names $\dot{q}_\xi$, defined above, over $A$. In other words, $\dot{q}_D$ has the property that for each $\xi<\gamma$ the condition $r_\xi$ forces $\dot{q}_D=\dot{q}_\xi$.

Let me show that (1) holds. Any generic for $\R$ containing $r_*$ will contain $r_\xi$ for some $\xi<\gamma$. Since $r_\xi\forces\dot{q}_D=\dot{q}_\xi$ and $(r_\xi,\dot{q}_\xi)\leq (r_*,\dot{q}_*)$, it follows that $r_\xi\forces \dot{q}_D=\dot{q}_\xi \leq\dot{q}_*$. Hence $r_*\forces \dot{q}_D\leq\dot{q}_*$.


I will now show that (2) holds. Since $D$ is open it easily follows that $\bar{D}$ is open. Suppose $p\leq r_*$ with $p\in \R$. Since $A$ is a maximal antichain of $\R$ below $r_*$ the condition $p$ is compatible with some $r_\xi\in A$. Thus, let $s\in \R$ with $s\leq r_\xi$ and $s\leq p$. Since $(r_\xi,\dot{q}_\xi)\in D$ and $D$ is open dense, to show that $s\in \bar{D}$ it will suffice to show that $(s,\dot{q}_D)\leq (r_\xi,\dot{q}_\xi)$. This easily follows since $s\leq r_\xi$ and $r_\xi\forces \dot{q}_D=\dot{q}_\xi$ imply that $s\forces \dot{q}_D\leq\dot{q}_\xi$.
\end{proof}

\begin{sublemma}\label{sublemmajoel}
Suppose $q\in\widetilde{\P}_{[\gamma_0,\theta)}$. For all functions $h\in V$ with $\dom(h)=V_\kappa$ and all $\beta<\theta$ there is a $p\leq q$ with $p\in\widetilde{\P}_{[\gamma_0,\theta)}$ such that if $p\in G^*_{[\gamma_0,\theta)}$ is $V[G_{\gamma_0}]$-generic for $\P_{[\gamma_0,\theta)}$, then $G^*_{[\gamma_0,\theta)}$ meets every dense subset of $\widetilde{\P}_{[\gamma_0,\theta)}$ of the form $j(h)(a)^{G_{\gamma_0}}$ where $a\in V_\beta$.
\end{sublemma}

\begin{proof}
Fix $q\in\widetilde{\P}_{[\gamma_0,\theta)}$, a function $h$, and $\beta$ as in the statement of the sublemma. I will obtain the condition $p\leq q$ as a lower bound of a descending sequence of conditions in $\widetilde{\P}_{[\gamma_0,\theta)}$. Since $\langle\gamma_\alpha\mid\alpha<\kappa^+\rangle$ is cofinal in $\theta$, one may choose $\gamma_\alpha>|V_\beta|$. It follows that there is an enumeration $\vec{D}=\langle D^h_\xi\mid\xi<\zeta\rangle$, in $M[G_{\gamma_0}]$, of all dense subsets of $\widetilde{\P}_{[\gamma_0,\theta)}$ of the form $j(h)(a)^{G_{\gamma_0}}$ with $a\in V_\beta$. Clearly one has $\zeta\leq|V_\beta|<\gamma_\alpha$. Factor $\widetilde{\P}_{[\gamma_0,\theta)}$ as $\widetilde{\P}_{[\gamma_0,\theta)}\cong\P_{[\gamma_0,\gamma_{\alpha})} * \widetilde{\P}_{[\gamma_{\alpha},\theta)}$. In order to simplify notation, let me define $\R:=\P_{[\gamma_0,\gamma_{\alpha})}$ and $\dot{\Q}:=\widetilde{\P}_{[\gamma_{\alpha},\theta)}$, so that $\widetilde{\P}_{[\gamma_0,\theta)}\cong \R*\dot{\Q}$. Note that $\forced_\R$ ``$\dot{\Q}$ is ${<}\gamma_{\alpha}$-closed.'' Since $q\in\widetilde{\P}_{[\gamma_0,\theta)}\cong \R*\dot{\Q}$ one may write $q=(r_*,\dot{q}_*)$ where $r_*=q\restrict[\gamma_0,\gamma_{\alpha})\in\R$ and $\dot{q}_*$ denotes the $\R$-name, $q\restrict[\gamma_{\alpha},\theta)$.

By the repeated application of Sublemma \ref{sublemmapd}, and using the fact that $\forced_\R$ ``$\dot{\Q}$ is ${<}\gamma_{\alpha}$-closed,'' one may build a descending sequence of conditions $\langle (r_*,\dot{q}_\xi)\mid\xi\leq\zeta\rangle$ in $\R*\dot{\Q}$ such that for each $\xi\leq\zeta$, the set 
$$\bar{D}^h_\xi:=\{r\leq r_*\mid (r,\dot{q}_\xi)\in D_\xi\}$$ 
is dense below $r^*$ in $\R=\P_{[\gamma_0,\gamma_{\alpha})}$. Let $p:=(r_*,\dot{q}_\zeta)$.

Suppose $p\in G^*_{[\gamma_0,\theta)}$ is $V[G_{\gamma_0}]$-generic for $\P_{[\gamma_0,\theta)}$. Fix an $a\in V_\beta$ such that $j(h)(a)^{G_{\gamma_0}}$ is a dense subset of $\widetilde{\P}_{[\gamma_0,\theta)}$. Since $j(h)(a)^{G_{\gamma_0}}$ must appear on the enumeration of dense sets we fixed above, there is a $\xi<\zeta$ such that $D^h_\xi=j(h)(a)^{G_{\gamma_0}}$. Since $\bar{D}_\xi$ is dense below $r^*$ in $\R=\P_{[\gamma_0,\gamma_{\alpha})}$ there is a condition $r\in G^*_{[\gamma_0,\gamma_{\alpha})}\cap\bar{D}_\xi$. By definition of $\bar{D}_\xi$, it follows that $(r,\dot{q}_\xi)\in D_\xi^{h}$. By padding $r$ with $\1$'s, one sees that there is an $\R$-name $\dot{b}$ such that $(r,\dot{b})\in G^*_{[\gamma_0,\theta)}$. Since $p=(r_*,\dot{q}_{\zeta})$ and $(r,\dot{b})$ are both in $G^*_{[\gamma_0,\theta)}$ they have a common extension $(r',\dot{q}')\in G^*_{[\gamma_0,\theta)}$. Since $(r',\dot{q}')\leq (r,\dot{q}_{\zeta})$, and since $r_*\forces \dot{q}_\zeta\leq\dot{q}_\xi$, it follows that $(r',\dot{q}')\leq (r,\dot{q}_\xi)$. Since $G^*_{[\gamma_0,\theta)}$ is a filter, one concludes that $(r,\dot{q}_\xi)\in G^*_{[\gamma_0,\theta)}\cap D^{h}_\xi$.
\end{proof}

Continuing with the proof of Lemma \ref{lemmapinfty}, I will now use Sublemma \ref{sublemmajoel} to construct the condition $p_\infty\in\P_{[\gamma_0,\theta)}$. Let $\langle f_\xi\mid\xi<\kappa^+\rangle\in V$ be a sequence of functions with domain $V_\kappa$ such that every dense subset of $\widetilde{\P}_{[\gamma_0,\theta)}$ in $M[G_{\gamma_0}]$ has a name of the form $j(f_\xi)(a)$ for some $\xi<\kappa^+$ and some $a\in V_\theta$. Let ${w}:\kappa^+\to\kappa^+\times\kappa^+$ be a bijection. It follows that ${w}\in M[G_{\gamma_0}]$ since ${w}\in V_\theta$. For each $\alpha<\kappa^+$ let ${w}(\alpha)=({w}(\alpha)_0,{w}(\alpha)_1)$. The function ${w}$ provides a well-ordering of pairs of the form $(f_\xi,\gamma_\alpha)$. Notice that the well-ordering is not in $M[G_{\gamma_0}]$ since the sequence $\langle f_\xi\mid\xi<\kappa^+\rangle$ is not in $M[G_{\gamma_0}]$. I will use this well-ordering of all pairs of the form $(f_\xi,\gamma_\alpha)$ of order type $\kappa^+$ to build a descending sequence of conditions $\langle p_\beta\mid\beta<\kappa^+\rangle$ in $V[G_{\gamma_0}]$ with $p_\beta\in \widetilde{\P}_{[\gamma_0,\theta)}$ such that if $p_\beta\in G^*_{[\gamma_0,\theta)}$ is $V[G_{\gamma_0}]$-generic for $\P_{[\gamma_0,\theta)}$, then $G^*_{[\gamma_0,\theta)}$ meets $D^{f_\xi}_a=j(f_\xi)(a)_{G_{\gamma_0}}$ for each $a\in V_{\gamma_\alpha}$ where ${w}(\beta)=(\xi,\alpha)$. Since the above mentioned well-ordering will not be in $M[G_{\gamma_0}]$, I will need the next lemma to build the descending sequence.

\begin{lemma}
The model $M[G_{\gamma_0}]$ is closed under $\kappa$-sequences in $V[G_{\gamma_0}]$.
\end{lemma}

\begin{proof}
Since $\P_\kappa$ is $\kappa$-c.c. in $V$, it follows that $M[G_\kappa]^\kappa\subseteq M[G_\kappa]$ in $V[G_\kappa]$. By Lemma \ref{lemmaclosuresacks} it follows that $M[G_\kappa][H_\kappa]^\kappa\subseteq M[G_\kappa][H_\kappa]$ in $V[G_\kappa][H_\kappa]$. Since the remaining forcing $\Q_{[\kappa^+,\bar{\kappa})}*\P_{[\bar{\kappa},\gamma_0)}$ is ${\leq}\kappa$-distributive in $V[G_\kappa][H_\kappa]$ (by Lemma \ref{lemmaeastonforsacks}) it follows that $M[G_{\gamma_0}]^\kappa\subseteq M[G_{\gamma_0}]$ in $V[G_{\gamma_0}]$.
\end{proof}

I will now use the bijection $w:\kappa^+\to\kappa^+\times\kappa^+$ defined above to build the descending sequence. Let $p_0$ be the condition obtained by applying Sublemma \ref{sublemmajoel} below the trivial condition to the function $h=f_{\xi}$ where $\xi={w}(0)_0$ and to the ordinal $\beta=\gamma_\alpha$ where $\alpha={w}(0)_1$. For successor stages, assume that $\langle p_\eta\mid\eta\leq\zeta\rangle$ has been constructed, where $\zeta<\kappa^+$. Let $p_{\zeta+1}\in\widetilde{\P}_{[\gamma_0,\theta)}$ be obtained by applying Sublemma \ref{sublemmajoel} below $p_\zeta$ to the function $h=f_\xi$ where $\xi={w}(\zeta+1)_0$ and to the ordinal $\beta=\gamma_\alpha$ where $\alpha={w}(\zeta+1)_1$. At limit stages $\zeta<\kappa^+$, assume $\langle p_\eta\mid\eta<\zeta\rangle$ has been constructed. The fact that $M[G_{\gamma_0}]^\kappa\subseteq M[G_{\gamma_0}]$ implies that the sequence $\langle p_\eta\mid\eta<\zeta\rangle$ is in $M[G_{\gamma_0}]$ since it has been constructed from an initial segment of $\langle f_\xi\mid\xi<\kappa^+\rangle$ and from $\langle \gamma_\alpha\mid\alpha<\kappa^+\rangle\in M[G_{\gamma_0}]$. Since $\widetilde{\P}_{[\gamma_0,\theta)}$ is ${<}\gamma_0$-closed in $M[G_{\gamma_0}]$, one may let $p_\zeta'\in \widetilde{\P}_{[\gamma_0,\theta)}$ be a lower bound of $\langle p_\beta\mid\beta<\zeta\rangle$. Now let $p_\zeta$ be obtained by applying Sublemma \ref{sublemmajoel} below $p_\zeta'$ to the function $f_\xi$ where $\xi={w}(\zeta)_0$ and the ordinal $\beta=\gamma_\alpha$ where $\alpha={w}(\zeta)_1$.

This defines the sequence $\langle p_\eta\mid\eta<\kappa^+\rangle$ in $V[G_{\gamma_0}]$ where $p_\eta\in\widetilde{\P}_{[\gamma_0,\theta)}\subseteq\P_{[\gamma_0,\theta)}$ for each $\eta<\kappa^+$. Let $p_\infty\in\P_{[\gamma_0,\theta)}$ be a lower bound of $\langle p_\eta\mid\eta<\kappa^+\rangle$.

Suppose $p_\infty\in G^*_{[\gamma_0,\theta)}$ is $V[G_{\gamma_0}]$-generic for $\P_{[\gamma_0,\theta)}$. Suppose $D\in M[G_{\gamma_0}]$ is a dense subset of $\widetilde{\P}_{[\gamma_0,\theta)}$. Then $D=D^{f_\xi}_a=j(f_\xi)(a)^{G_{\gamma_0}}$ for some $\xi<\kappa^+$ and where $a\in V_{\gamma_\alpha}$ for some $\alpha<\kappa^+$. Let $\zeta<\kappa^+$ with ${w}(\zeta)=({w}(\zeta)_0,{w}(\zeta)_1)=(\xi,\alpha)$. Since $p_\infty\leq p_\zeta$, it follows that $p_\zeta\in G^*_{[\gamma_0,\theta)}$ and hence, $G^*_{[\gamma_0,\theta)}$ meets $D^{f_\xi}_a$, by Sublemma \ref{sublemmajoel}.

This concludes the proof of Lemma \ref{lemmapinfty}. \hfill $\Box$

I will now show that there is an automorphic image of $G_{[\gamma_0,\theta)}$ containing $p_\infty$.

\begin{lemma}\label{lemmaaut}
Suppose $c\in \P_{[\gamma_0,\theta)}$. There is an automorphism $\pi:\P_{[\gamma_0,\theta)}\to\P_{[\gamma_0,\theta)}$ in $V[G_{\gamma_0}]$ such that $c\in\pi"G_{[\gamma_0,\theta)}$. 
\end{lemma}

\begin{proof}[Proof of Lemma \ref{lemmaaut}]

Working in $V[G_{\gamma_0}]$, I claim each stage in the iteration $\P_{[\gamma_0,\theta)}$ is forced to be homogeneous over the previous stages. For the Cohen forcing conditions, this claim is obvious. One can see that the Sacks forcing $\Sacks(\eta,\lambda)$ is almost homogeneous by using automorphisms that permute coordinates. Furthermore, at each stage $\alpha\in[\gamma_0,\theta)$, there is a formula $\varphi_\alpha$ defining $\dot{\Q}_\alpha$ in $V^{\P_\alpha}$ from check names (see Section \ref{sectionhomogeneousiteration}). Thus it follows from Lemma \ref{lemmahomogeneousiteration}, that $\P_{[\gamma_0,\theta)}$ is almost homogeneous in $V[G_{\gamma_0}]$. By the almost homogeneity of $\P_{[\gamma_0,\theta)}$ in $V[G_{\gamma_0}]$, every condition $p\in\P_{[\gamma_0,\theta)}$ can be extended to a condition $q\leq p$ such that there is an $f\in\Aut(\P_{[\gamma_0,\theta)})$ with $f(q)\leq c$. Therefore, by the genericity of $G_{[\gamma_0,\theta)}$, there is such a $q\in G_{[\gamma_0,\theta)}$ with such an $f\in\Aut(\P_{[\gamma_0,\theta)})$. Let $\pi:=f$. Since $\pi"G_{[\gamma_0,\theta)}$ is a filter and $\pi(q)\leq c$, it follows that $c\in \pi"G_{[\gamma_0,\theta)}$.
\end{proof}

As discussed above, one may use Lemmas \ref{lemmapinfty} and \ref{lemmaaut} to obtain $\widetilde{G}_{[\gamma_0,\theta)}\in V[G_{\gamma_0}][G_{[\gamma_0,\theta)}]$, an $M[G_{[\gamma_0,\theta)}]$-generic for $\widetilde{\P}_{[\gamma_0,\theta)}$. 

To finish lifting $j$ through $j(\P_\kappa)\cong \P_{\gamma_0}*\dot{\widetilde{\P}}_{[\gamma_0,\theta)}*\dot{\widetilde{\P}}_{[\theta,j(\kappa))}$, I will build an $M[G_{\gamma_0}][\widetilde{G}_{[\gamma_0,\theta)}]$-generic for $\widetilde{\P}_{[\theta,j(\kappa))}$ in $V[G_{\gamma_0}][G_{[\gamma_0,\theta)}]$. The following lemma will be required.

\begin{lemma}
$M[G_{\gamma_0}][\widetilde{G}_{[\gamma_0,\theta)}]$ is closed under $\kappa$-sequences in $V[G_{\gamma_0}][G_{[\gamma_0,\theta)}]$.
\end{lemma}
\begin{proof}
Since $\P_\kappa$ is $\kappa$-c.c., it follows by Lemma \ref{lemmachain} that $M[G_\kappa]$ is closed under $\kappa$-sequences in $V[G_\kappa]$. It is shown in \cite[Lemma 3.14]{FriedmanHonzik:EastonsTheoremAndLargeCardinals} and \cite[Lemma 3]{FriedmanThompson:PerfectTreesAndElementaryEmbeddings}, using a fusion argument, that $M[G_\kappa][H_\kappa]$ is closed under $\kappa$-sequences in $V[G_\kappa][H_\kappa]$. It will suffice to show that $M[G_{\gamma_0}][\widetilde{G}_{[\gamma_0,\theta)}]$ has every $\kappa$-sequence of ordinals in $V[G_{\gamma_0}][G_{[\gamma_0,\theta)}]$. Suppose $\vec{x}$ is a $\kappa$-sequence of ordinals in $V[G_{\gamma_0}][G_{[\gamma_0,\theta)}]$. Then since $\Q_{[\kappa^+,\bar{\kappa})}*\P_{[\bar{\kappa},\theta)}$ is ${\leq}\kappa$-distributive in $V[G_\kappa][H_\kappa]$, it follows that $\vec{x}\in V[G_\kappa][H_\kappa]$. Thus $\vec{x}\in M[G_\kappa][H_\kappa]\subseteq M[G_{\gamma_0}][\widetilde{G}_{[\gamma_0,\theta)}]$.
\end{proof}

Suppose $D$ is a dense subset of $\widetilde{\P}_{[\theta,j(\kappa))}$ in $M[G_{\gamma_0}][\widetilde{G}_{[\gamma_0,\theta)}]$. Let $\dot{D}\in M$ be a nice $\P_\theta$-name for $D$. Let $h$ be a function in $V$ with $\dom(h)=V_\kappa$ and $s\in V_\theta$ with $\dot{D}=j(h)(s)$. Without loss of generality, assume that $\ran(h)$ is contained in the set of nice names for dense subsets of a particular tail of $\P$. Since $\theta$ is singular, $\widetilde{\P}_{[\theta,j(\kappa))}$ is $\leq\theta$-closed in $M[G_{\gamma_0}][\widetilde{G}_{[\gamma_0,\theta)}]$. The collection $\mathcal{D}:=\{ j(h)(s)_{G_{\gamma_0}*\widetilde{G}_{[\gamma_0,\theta)}}\mid s\in V_\theta\}$ is in $M[G_{\gamma_0}][\widetilde{G}_{[\gamma_0,\theta)}]$. Since $\theta$ is a $\beth$-fixed point, there are at most $\theta$ dense subsets of $\widetilde{\P}_{[\theta,j(\kappa))}$ in $\mathcal{D}$. Thus, there is a single condition in $\widetilde{\P}_{[\theta,j(\kappa))}$ that meets every dense set in $\mathcal{D}$. Since there are at most $\kappa^+$ functions from $V_\kappa$ to nice names for dense subsets of a tail of $\P_\kappa$, and since every dense subset of $\widetilde{\P}_{[\theta,j(\kappa))}$ has a name in $M$ which is represented by such a function, the above procedure can be iterated to obtain a descending $\kappa^{+}$-sequence of conditions in $\widetilde{\P}_{[\theta,j(\kappa))}$ meeting every dense subset of $\widetilde{\P}_{[\theta,j(\kappa))}$ in $M[G_{\gamma_0}][\widetilde{G}_{[\gamma_0,\theta)}]$. Let $\widetilde{G}_{tail}$ be the $M[G_{\gamma_0}][\widetilde{G}_{[\gamma_0,\theta)}]$-generic filter for $\P_{tail}$ generated by this sequence.

Now let $j(G_\kappa):=G_{\gamma_0}*\widetilde{G}_{[\gamma_0,\theta)}*\widetilde{G}_{tail}$ and note that $j"G_\kappa\subseteq j(G_\kappa)$ since conditions in $G_\kappa$ have support bounded below the critical point of $j$. Hence by Lemma \ref{lemmaliftingcriterion}, the embedding lifts to 
$$j:V[G_\kappa]\to M[j(G_\kappa)]$$
in $V[G_{\gamma_0}][G_{[\gamma_0,\theta)}]$.

\subsection{Lifting $j$ Through $\Sacks(\kappa,F(\kappa))$.}

It remains to show that the embedding lifts further through the forcing $\P_{[\kappa,\lambda)}$. I will now argue that $j$ lifts through $\R_\kappa=\Sacks(\kappa,F(\kappa))^{V[G_\kappa]}$, the first factor of the stage $\kappa$ forcing. I will use the tuning fork method of \cite{FriedmanThompson:PerfectTreesAndElementaryEmbeddings} to construct an $M[j(G_\kappa)]$-generic for $j(\R_\kappa)=\Sacks(j(\kappa),j(F(\kappa)))^{M[j(G_\kappa)]}$ in $V[G_\kappa][H_\kappa]$ that satisfies the lifting criterion in Lemma \ref{lemmaliftingcriterion}. Say that $t\subseteq 2^{<j(\kappa)}$ is a \emph{tuning fork that splits at $\kappa$} if and only if $t=t^0\cup t^1$ where $t^0$ and $t^1$ are two distinct cofinal branches of $2^{<j(\kappa)}$ such that $t^0\cap\kappa=t^1\cap\kappa$, $t^0(\kappa)=0$, and $t^1(\kappa)=1$. For $\alpha<j(F(\kappa))$ let 
$$t_\alpha:=\bigcap\{j(p)(\alpha)\mid p\in H_\kappa\}.$$
The next lemma is key.
\begin{lemma}\label{lemmatuningfork}
If $\alpha\in j"F(\alpha)$ then $t_\alpha$ is a tuning fork that splits at $\kappa$. Otherwise, if $\alpha<j(F(\kappa))$ is not in the range of $j$, then $t_\alpha$ is a cofinal branch through $2^{<j(\kappa)}$. 
\end{lemma}

\begin{proof}
The following proof follows \cite{FriedmanThompson:PerfectTreesAndElementaryEmbeddings} closely, except that here Lemma \ref{lemmawoodinmenas} is required. Working in $V[G_\kappa]$, let 
$$X:=\bigcap \{j(C)\mid \textrm{$C\subseteq\kappa$ is club and $C\in V$}\}.$$
First let me show that $X=\{\kappa\}$. If $\alpha<\kappa$ then clearly $\alpha\notin X$ since there is a closed unbounded subset $C$ of $\kappa$ whose least element is greater than $\alpha$, and thus $\alpha\notin j(C)$. Since the limit cardinals below $\kappa$ form a closed unbounded subset of $\kappa$ it follows that any element of $X$ must be a limit cardinal in $M[j(G_\kappa)]$ which is greater than or equal to $\kappa$. Suppose $\lambda<j(\kappa)$ is a limit cardinal and $\lambda>\theta$. Then $\lambda=j(h)(a)$ for some function $h:V_\kappa\to \kappa$ in $V[G_\kappa]$ and some $a\in V_\theta$. Let $C_h:=\{\gamma<\kappa\mid\textrm{$\gamma$ is a limit cardinal and $h"V_\gamma\subseteq\gamma$}\}$. Then $C_h$ is a closed unbounded subset of $\kappa$ and $\lambda\notin j(C_h)$ since $\lambda>\theta$ and $j(h)"V_\lambda\not\subseteq\lambda$. Now suppose $\kappa<\lambda\leq\theta$. Above, the function $\ell$ is chosen using Lemma \ref{lemmawoodinmenas} so that $\ell:\kappa\to\kappa$ and $j(\ell)(\kappa)=\theta$. Then $C_\ell:=\{\gamma<\kappa\mid\textrm{$\ell"\gamma\subseteq\gamma$}\}$ is a closed unbounded subset of $\kappa$ in $V[G_\kappa]$ and $\lambda\notin j(C_\ell)$ since $\theta\in j(\ell)"\lambda$ and this implies $j(\ell)"\lambda\not\subseteq\lambda$. This shows that $X\subseteq\{\kappa\}$. Clearly $\kappa\in X$ since for each closed unbounded $C\subseteq\kappa$ in $V[G_\kappa]$, $j(C)\cap\kappa=C$.

The rest of the proof is exactly as in \cite{FriedmanThompson:PerfectTreesAndElementaryEmbeddings} and \cite{FriedmanHonzik:EastonsTheoremAndLargeCardinals}.

Let $C$ be any closed unbounded subset of $\kappa$ in $V[G_\kappa]$. Choose $\alpha<j(F(\kappa))$ and write $\alpha=j(f)(a)$ where $f:V_\kappa\to 	F(\kappa)$ and $a\in V_\theta$. It is easy to show that the following set is dense in $\Sacks(\kappa,F(\kappa))$.
$$D_C=\{p\in\Sacks(\kappa,F(\kappa))\mid\xi\in\ran(f)\implies C(p(\xi))\subseteq C\}$$
Thus there is a $p\in H_\kappa\cap D_C$ with $C(j(p)(\alpha))\subseteq j(C)$. Since $C$ was an arbitrary closed unbounded subset of $\kappa$, this, together with the fact that $X=\{\kappa\}$, implies that $t_\alpha$ can only possibly split at $\kappa$. If $\alpha\in\ran(j)$ then since $\kappa$ is a limit point of $j(C)$ for every closed unbounded $C\subseteq\kappa$ in $V[G_\kappa]$, it follows that $t_\alpha$ splits at $\kappa$ and is a tuning fork. 

If $\alpha\notin\ran(j)$ then $\ran(f)$ must have size $\kappa$ since otherwise $\alpha\in j(\ran(f))= j"\ran(f)$. Let $\langle \bar{\alpha}_i\mid i<\kappa\rangle$ enumerate $\ran(f)$. Then $j(\langle \bar{\alpha}_i\mid i<\kappa\rangle)=\langle \alpha_i\mid i<j(\kappa)\rangle$ in an enumeration of $\ran(j(f))$. It is easy to see that the set of conditions $p\in\Sacks(\kappa,F(\kappa))$ such that for each $i<\kappa$, the least splitting level of $p(\bar{\alpha}_i)$ is above level $i$ is dense. Thus there is a $p\in H_\kappa$ such that for each $i<j(\kappa)$ the least splitting level of $j(p)(\alpha_i)$ is beyond level $i$. Since $\alpha\notin\ran(j)$ it follows that $\alpha=\alpha_i$ for some $i\in[\kappa,j(\kappa))$. It follows that the first splitting level of $j(p)(\alpha)$ is above $\kappa$. Thus, $t_\alpha$ is a cofinal branch.
\end{proof}

Each $t_\alpha$ generates an $M[j(G_\kappa)]$-generic filter for $j(\R_\kappa)$ as follows. For $\alpha\in j"F(\kappa)$, let $t_\alpha^0$ and $t_\alpha^1$ be the left-most and right-most branches of $t_\alpha$ respectively; that is, for $k\in\{0,1\}$ let
$$t^k_\alpha:=\{s\in t_\alpha\mid \kappa\in\dom(s)\implies s(\kappa)=k\}.$$
For $\alpha<j(F(\kappa))$ not in the range of $j$, let $t^0_\alpha:=t_\alpha$ be the cofinal branch in Lemma \ref{lemmatuningfork}.
Let 
$$g:=\{\widetilde{p}\in j(\R_\kappa)\mid \forall \alpha<j(F(\kappa))\ t^0_\alpha\subseteq \widetilde{p}(\alpha)\}.$$
It is easy to check that $j"H_\kappa\subseteq g$, so to show that $j$ lifts through $\R_\kappa$ it remains to show that $g$ is $M[j(G_\kappa)]$-generic for $j(\R_\kappa)$. For this the following two definitions will be used, both of which are given in \cite{FriedmanThompson:PerfectTreesAndElementaryEmbeddings}. Suppose $ p\in \Sacks(\kappa,F(\kappa))^{V[G_\kappa]}$, $S\subseteq F(\kappa)$ with $|S|^{V[G_\kappa]}<\kappa$. Friedman and Thompson say that an \emph{$(S,\alpha)$-thinning of $ p$} is an extension of $ p$ obtained by thinning each $ p(\xi)$ for $\xi\in S$ to the subtree 
$$ p(\xi)\restrict s_\xi:=\{s\in p(\xi)\mid s_\xi\subseteq s\textrm{ or } s\subseteq s_\xi\}$$
where $s_\xi$ is some particular node of $p(\xi)$ on the $\alpha$-th splitting level of $p(\xi)$. A condition $p\in \Sacks(\kappa, F(\kappa))^{V[G_\kappa]}$ is said to \emph{reduce} a dense subset $D$ of $\Sacks(\kappa, F(\kappa))^{V[G_\kappa]}$ if and only if for some $S\subseteq  F(\kappa)$ of size less than $\kappa$ in $V[G_\kappa]$, any $(S,\alpha)$-thinning of $p$ meets $D$.

Let me now argue that $g$ is $M[j(G_\kappa)]$-generic for the poset $j(\R_\kappa)=\Sacks(j(\kappa),j( F(\kappa)))^{M[j(G_\kappa)]}$. Suppose $D$ is a dense subset of $j(\R_\kappa)$ in the model $M[j(G_\kappa)]$. Then by Lemma \ref{lemmaseedpreservation} one can write $D=j(h)(a)$ where $h\in V[G_\kappa]$ is a function from $V_\kappa$ to the collection of dense subsets of $\Sacks(\kappa,$ $F(\kappa))^{V[G_\kappa]}$ and $a\in V_\theta$. Let $\langle D_\beta\mid\beta<\kappa\rangle\in V[G_\kappa]$ enumerate the range of $h$. One may show, as in \cite{FriedmanThompson:PerfectTreesAndElementaryEmbeddings} that any condition $p\in \Sacks(\kappa, F(\kappa))^{V[G_\kappa]}$ can be extended to $q\leq p$ which reduces each $D_\beta$ for $\beta<\kappa$. This implies that the following is a dense subset of $\Sacks(\kappa, F(\kappa))^{V[G_\kappa]}$.
$$D':=\{p\in\R_\kappa\mid \textrm{$p$ reduces each $D_\beta$ for $\beta<\kappa$}\}$$
Thus one may choose a condition $p\in H\cap D'$. By elementarity $j(p)$ reduces each dense subset of $j(\R_\kappa)$ in the range of $j(h)$; in particular, $j(p)$ reduces $D=j(h)(a)$. Thus it follows that there is an $S\subseteq j( F(\kappa))$ of size less than $j(\kappa)$ and an $\alpha<j(\kappa)$ such that any $(S,\alpha)$-thinning of $j(p)$ meets $D$. For each $\xi\in S$ let $\widetilde{q}(\xi)$ be the thinning of $j(p)(\xi)$ obtained by choosing an initial segment of $t_\xi^0$ on the $\alpha$-th splitting level of $j(p)(\xi)$. For $\xi\in j( F(\kappa))\setminus S$ let $\widetilde{q}(\xi):=j(p)(\xi)$. The fact that $\widetilde{q}$ is a condition in $j(\R_\kappa)$ will follow from the next lemma.

\begin{lemma}\label{lemmathinned}
For any $\beta<j(\kappa)$ and any subset $S$ of $j( F(\kappa))$ of size at most $j(\kappa)$ in $M[j(G_\kappa)]$, the sequence $\langle t_\xi^0\restrict\beta\mid\xi\in S\rangle$ belongs to $M[j(G_\kappa)]$. 
\end{lemma}

\begin{proof}[Proof of Lemma \ref{lemmathinned}]

Write $\beta=j(f_0)(a)$ where $f_0:V_\kappa\to \kappa$ and $a\in V_\theta$. Let $C=\{\lambda<\kappa\mid\textrm{$f_0"V_\lambda\subseteq\lambda$ and $\lambda$ is a limit cardinal}\}$. By Lemma \ref{lemmaseedpreservation} it follows that $S=j(f)(b)$ where $f:V_\kappa\to[F(\kappa)]^{\leq\kappa}$ and $b\in V_\theta$. Since $S\subseteq j(\bigcup\ran(f))$ it can be assumed without loss of generality that $S=j(\bar{S})$ for some $\bar{S}\in[F(\kappa)]^{\leq\kappa}$. Let $\langle \bar{\alpha}_i\mid i<\kappa\rangle$ be an enumeration of $\bar{S}$. Then $j(\langle \bar{\alpha}_i\mid i<\kappa\rangle)=\langle\alpha_i\mid i<j(\kappa)\rangle$ is an enumeration of $S$. One can easily see that 
$$D=\{\bar{p}\in\Sacks(\kappa, F(\kappa))\mid\textrm{for each $i<\kappa$, $C(\bar{p}(\bar{\alpha}_i))\subseteq C\setminus(i+1)$}\}$$
is a dense subset of $\Sacks(\kappa, F(\kappa))$. Let $\bar{p}\in H_\kappa\cap D$. Then for each $i<j(\kappa)$, $C(j(\bar{p})(\alpha_i))\subseteq C\setminus (i+1)$. Thus, for each $\alpha_i$, the tree $j(\bar{p})(\alpha_i)$ has no splits between $\kappa$ and $\alpha$. If $\kappa\leq i< j(\kappa)$ then $j(\bar{p})(\alpha_i)$ does not split between $0$ and $\alpha$. If $\kappa\leq i <j(\kappa)$ then $t^0_{\alpha_i}\restrict \alpha$ is the unique element of $j(\bar{p})(\alpha_i)$ of length $\alpha$. If $i<\kappa$, then $t^0_{\alpha_i}\restrict\alpha$ is the unique element of $j(\bar{p})(\alpha_i)$ that extends $t^0_{\alpha_i}\restrict\kappa$ and takes on value $0$ at $\kappa$. 
\end{proof}

By Lemma \ref{lemmathinned}, $\widetilde{p}$ is in $M[j(G_\kappa)]$ and is thus a condition in $j(\R_\kappa)$. Furthermore, $\widetilde{p}$ meets $D$ and since $t^0_\xi\subseteq\widetilde{p}(\xi)$ for each $\xi<F(\kappa)$, it follows that $\widetilde{p}$ is in $g$. This establishes that $g$ is $M[j(G_\kappa)]$-generic for $j(\R_\kappa)$. Thus the embedding lifts to $j:V[G_\kappa][H_\kappa]\to M[j(G_\kappa)][j(H_\kappa)]$.

\subsection{Lifting $j$ Through $\Q_{[\kappa^+,\bar{\kappa})}*\P_{[\bar{\kappa},\delta)}$.}

By Lemma \ref{lemmaeastonforsacks}, the poset $\Q_{[\kappa^+,\bar{\kappa})}*\P_{[\bar{\kappa},\delta)}$ is $\leq\kappa$-distributive in $V[G_\kappa][H_\kappa]$. Thus, from Lemma \ref{lemmalambdadist} one sees that $j"H_{[\kappa^+,\bar{\kappa})}*G_{[\bar{\kappa},\delta)}$ generates an $M[j(G_\kappa)]$ $[j(H_\kappa)]$-generic filter for $j(\Q_{[\kappa^+,\bar{\kappa})}*\P_{[\bar{\kappa},\delta)})$, call it $j(H_{[\kappa^+,\bar{\kappa})}*G_{[\bar{\kappa},\delta)})$. Thus $j$ lifts to $j:V[G_\delta]\to M[j(G_\delta)]$ where $j(G_\delta):=j(G_\kappa)*(j(H_\kappa)\times j(H_{[\kappa^{+},\bar{\kappa})}))*j(G_{[\bar{\kappa},\delta)})$.

\subsection{Verifying strongness for $A$}

Let me argue that the lifted embedding $j:V[G_\delta]\to M[j(G_\delta)]$ satisfies $j(A)\cap\mu = A\cap \mu$. This will follow from the next fact. 
\begin{fact}\label{factequation}\ 
\begin{itemize}
\item[$(1)$] $j(\dot{A})\cap V_\theta = \dot{A}\cap V_\theta$
\item[$(2)$] $j(G_\delta)=G_{\gamma_0}*\widetilde{G}_{[\gamma_0,\theta)}*\widetilde{G}_{[\theta,j(\kappa))}$ agrees with $G_\delta$ up to $\mu'$ since $\mu'<\gamma_0$.
\item [$(3)$] $j(u)\restrict \mu' = u\restrict\mu'$
\end{itemize}
\end{fact}
Using the above fact, one has the following.
\begin{align*}
A\cap\mu & = \dot{A}^{G_\delta}\cap\mu \\
 & = (\dot{A}\cap V_{\mu'})^{G_{\mu'}}\cap\mu \tag{\textrm{using the definition of $u$}}\\
 & = (j(\dot{A})\cap V_{\mu'})^{G_{\mu'}}\cap\mu \tag{\textrm{by Fact \ref{factequation}(1)}}\\
 & = j(\dot{A})^{j(G_\delta)}\cap\mu \tag{\textrm{by Fact \ref{factequation}(2) and (3)}}\\
 & = j(A)\cap \mu \\
\end{align*}
This completes the proof of Theorem \ref{theoremwoodin}.
\end{proof}

%
%
%

\chapter[Supercompactness and Failures of $\GCH$]{The Failure of $\GCH$ at a Degree of Supercompactness}\label{chapterthefailure}

\section{Introduction}\label{introduction}

Silver proved that if $\kappa$ is $\kappa^{++}$-supercompact and $\GCH$ holds then there is a cofinality-preserving forcing extension in which $\kappa$ is measurable and $\GCH$ fails at $\kappa$. It was known at the time, by the work of Kunen \cite{Kunen:GCHAtMeasurables}, that a model with a measurable cardinal at which $\GCH$ fails could not be obtained from a mere measurable cardinal. In \cite{Gitik:TheNegationOfTheSingularCardinalHypothesis}, Gitik established the consistency of the existence of a measurable cardinal at which $\GCH$ fails from the existence of a $\kappa$ with $o(\kappa)=\kappa^{++}$. Gitik then proved that this hypothesis was optimal, see \cite{Gitik:TheStrengthOfTheFailureOfSCH}.

Woodin produced a new proof of the consistency of the existence of a measurable cardinal at which $\GCH$ fails, from a hypothesis that is equiconsistent with $o(\kappa)=\kappa^{++}$. That is,
Woodin showed that the existence of a measurable cardinal at which $\GCH$ fails is equiconsistent with the existence of a cardinal $\kappa$ that is $\kappa^{++}$-tall (see \cite{Hamkins:TallCardinals}, \cite{Gitik:TheNegationOfTheSingularCardinalHypothesis}, or \cite{Jech:Book}), where a cardinal $\kappa$ is \emph{$\theta$-tall} if there is a nontrivial elementary embedding $j:V\to M$ with critical point $\kappa$ such that $j(\kappa)>\theta$ and $M^\kappa\subseteq M$ in $V$. In this chapter, I extend Woodin's result into the realm of partially supercompact cardinals. Since $\kappa$ is measurable if and only if $\kappa$ is $\kappa$-supercompact, one immediately sees several natural ways of doing this. Consider the following questions for cardinals $\kappa$, $\lambda$, and $\theta$.

\begin{enumerate}
\item What is the strength of the hypothesis that $\kappa$ is $\lambda$-supercompact and $\GCH$ fails at $\kappa$?
\item What is the strength of the hypothesis that $\kappa$ is $\lambda$-supercompact and $\GCH$ fails at $\kappa$ with $2^\kappa\geq\theta$?
\item What is the strength of the hypothesis that $\kappa$ is $\lambda$-supercompact and $\GCH$ fails at $\lambda$?
\item What is the strength of the hypothesis that $\kappa$ is $\lambda$-supercompact and $\GCH$ fails at $\lambda$ with $2^\lambda\geq\theta$?
\end{enumerate}

Note that Woodin's theorem answers question (1) in the case that $\lambda=\kappa$. 

The following theorem, together with Woodin's result, provides complete answers to questions (1) - (4).

\begin{theorem}\label{theoremdegsupcomp} Suppose $\lambda$ and $\theta$ are cardinals. 
\begin{enumerate}
\item For $\lambda>\kappa$, the existence of a $\lambda$-supercompact cardinal $\kappa$ such that $\GCH$ fails at $\kappa$ is equiconsistent with the existence of a $\lambda$-supercompact cardinal.
\item The existence of a $\lambda$-supercompact cardinal $\kappa$ such that $2^\kappa\geq\theta$ is equiconsistent with the existence of a $\lambda$-supercompact cardinal that is also $\theta$-tall.
\item The existence of a $\lambda$-supercompact cardinal $\kappa$ such that $\GCH$ fails at $\lambda$ is equiconsistent with the existence of a $\lambda$-supercompact cardinal that is $\lambda^{++}$-tall.
\item The existence of a $\lambda$-supercompact cardinal $\kappa$ such that $2^\lambda\geq\theta$ is equiconsistent with the existence of a $\lambda$-supercompact cardinal that is $\theta$-tall.
\end{enumerate}
In each case above, the term ``equiconsistent'' is intended to mean that, in the forward direction the same cardinal witnessing the hypothesis also witnesses the conclusion; and in the reverse direction, the same cardinal witnessing the hypothesis witnesses the conclusion in a forcing extension.

\end{theorem}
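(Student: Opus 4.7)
The plan is to establish both directions of each of the four equiconsistency claims. The forward directions all reduce to a single embedding computation, while the reverse directions use a uniform Silver-style master condition lifting through $\Add(\kappa, \theta)$.

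For the forward directions, fix a $\lambda$-supercompactness embedding $j : V \to M$ arising from a normal fine measure on $P_\kappa\lambda$, so that $M^\lambda \subseteq M$. Since $\lambda \geq \kappa$, we have $P(\lambda)^V \subseteq M$, so $(2^\lambda)^M = 2^\lambda$ and $(2^\kappa)^M = 2^\kappa$; and since $j(\kappa)$ is measurable, hence strong limit, in $M$ with $\lambda < j(\kappa)$, we obtain $j(\kappa) > (2^\lambda)^M = 2^\lambda$. This one observation settles every forward direction: in part (2), $j(\kappa) > 2^\kappa \geq \theta$, so $j$ witnesses $\theta$-tallness; in part (4), $j(\kappa) > 2^\lambda \geq \theta$; part (3) is the instance $\theta = \lambda^{++}$ of part (4); and part (1) is trivial. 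Closure $M^\kappa \subseteq M$ follows from $M^\lambda \subseteq M$, so $j$ meets the tallness closure requirement.

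For the reverse directions of parts (2)--(4), begin with $\kappa$ that is $\lambda$-supercompact and $\theta$-tall via a \emph{common} embedding $j : V \to M$, so $M^\lambda \subseteq M$ and $j(\kappa) > \theta$. Part (1) reduces to part (2) with $\theta = \kappa^{++}$, since $\lambda$-supercompactness with $\lambda > \kappa$ already forces $j(\kappa) > \kappa^{++}$: indeed $(\kappa^{++})^M = \kappa^{++}$ is an $M$-successor cardinal, whereas $j(\kappa)$ is a limit $M$-cardinal above $\kappa^+$. Perform a reverse Easton preparation $\P$ below $\kappa$ to force $\GCH$ below $\kappa$ while preserving the embedding $j$ via standard lifting lemmas, then at stage $\kappa$ force with $\Q := \Add(\kappa, \theta)^{V^\P}$; let $G$ be generic. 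In $V^\P[G]$, $2^\kappa = \theta$, which gives $2^\lambda \geq \theta$ and every desired $\GCH$ failure. To lift $j$, form the master condition $p^* := \bigcup j"G$ in $j(\Q)^M = \Add(j(\kappa), j(\theta))^M$; since $j(\kappa) > \theta$, the domain $j"\theta \times \kappa$ of $p^*$ has size $\theta < j(\kappa)$, so $p^*$ is a legitimate condition. Below $p^*$, the ${<}j(\kappa)$-closure of $j(\Q)$ in $M$ and the closure of $M$ under $\lambda$-sequences allow the construction of an $M$-generic $j(G) \ni p^*$ for $j(\Q)$ with $j"G \subseteq j(G)$; Lemma \ref{lemmaliftingcriterion} then produces the lift $j^* : V[G] \to M[j(G)]$. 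Closure $M[j(G)]^\lambda \subseteq M[j(G)]$ in $V[G]$ follows from Lemmas \ref{lemmaground} and \ref{lemmachain}, so $j^*$ witnesses that $\kappa$ remains $\lambda$-supercompact.

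The principal technical obstacle is ensuring that the master condition $p^* \in M$. When $\theta \leq \lambda$, the restriction $j \restriction \theta$ is a sequence of length $\theta$ of ordinals, which lies in $M$ by $M^\lambda \subseteq M$, and $p^* \in M$ follows immediately. When $\theta > \lambda$, $p^*$ need not belong to $M$ directly, and one must carry out the standard surgery argument (as in Woodin's original treatment of measurable cardinals with failure of $\GCH$): modify the candidate generic $j(G)$ on an $M$-definable set of coordinates of size less than $j(\kappa)$ so that the resulting filter still contains $j"G$ and is definable inside $M$. Checking that this surgery preserves both $M$-genericity and the $\lambda$-closure of the lifted target, together with verifying that the reverse Easton preparation interacts correctly with the common embedding $j$, comprises the remaining technical content of the argument.
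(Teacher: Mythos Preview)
Your forward directions are correct and match the paper exactly. Your reduction of part~(1) to part~(2) with $\theta=\kappa^{++}$ is fine, though the paper instead proves part~(1) directly (Theorem~\ref{easy}) by forcing $2^\kappa\geq\lambda^+$ with $\Add(\kappa,\lambda^+)$ after a lottery preparation and lifting via increasingly masterful conditions.

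The backward direction, however, has a genuine gap in the $\theta>\lambda$ case. Your description of surgery is confused on the central point: surgery does not \emph{produce} a generic for $j(\Q)$, it \emph{modifies} one that you must already have in hand. You write ``modify the candidate generic $j(G)$,'' but you never say where this candidate comes from---and you cannot build it by diagonalization inside $V^\P[G]$, because $j(\Q)=\Add(j(\kappa),j(\theta))^M$ has far more than $\lambda^+$ dense sets in $M$ while $M$ is only $\lambda$-closed. The paper's solution (Theorem~\ref{alllambda}) is to pass to a factor embedding $j_0:V[G]\to M_0$ via an ultrapower, then \emph{actually force} over $V[G][H]$ with $j_0(\Q)$ to obtain a generic $g_0$, transfer $g_0$ along the factor map $k:M_0\to M$ to an $M$-generic $g$ for $j(\Q)$, and only then perform surgery on $g$ to obtain $g^*\supseteq j"H$. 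The lifted embedding therefore lives in the strictly larger model $V[G][H][g_0]$, not in $V[G][H]$, and one must separately verify that $j_0(\Q)$ preserves cardinals (it is ${\leq}\lambda$-distributive and $\lambda^{++}$-c.c.) and that $j$ lifts further through $g_0$. None of this structure appears in your sketch. Your phrase ``definable inside $M$'' is also wrong: the surgically modified filter $g^*$ is $M[j(G)]$-\emph{generic}, not an element of $M$.

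A secondary difference: the paper uses the lottery preparation relative to a fast function $f$ with $j(f)(\kappa)>\theta$, rather than a GCH iteration. This is not cosmetic---it guarantees that $\Q$ appears in the stage-$\kappa$ lottery of $j(\P)$, so $j(\P)$ factors as $\P*\Q*\P_{tail}$ with $\P_{tail}$ ${\leq}\theta$-closed, and lifting through $\P$ already absorbs $H$. Your GCH preparation does not provide this factoring, and you do not indicate how the tail generic for $j(\P)$ is built or how many dense sets it has.
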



The details of cardinal preservation in the various forcing extensions in parts (1) - (4) of the main theorem will be worked out below.

Questions (1) - (4) above can be seen as a special case to a more general question: 
\begin{enumerate}
\item[(5)] What kind of $\GCH$ patterns are consistent with a $\lambda$-supercompact cardinal from what type of large cardinal assumption? 
\end{enumerate}
There are some obvious restrictions, such as if $\GCH$ fails at $\kappa$, a $\lambda$-supercom\-pact cardinal, then it must fail unboundedly often below $\kappa$. Also, if $\lambda$ is a strong limit and $\GCH$ holds below and at $\kappa$ then $\GCH$ must hold up to $\lambda$. There are some more subtle issues in answering question (5) as well.

The backward direction of Theorem \ref{theoremdegsupcomp}(2) - (4) will be established by using forcing that achieves $2^\kappa>\lambda^+$, and hence $2^\lambda>\lambda^+$, and preserves the $\lambda$-supercompactness of $\kappa$, where $\lambda>\kappa$ is a cardinal. This suggests the question, can one force a violation of $\GCH$ at $\lambda$ while preserving $\GCH$ in the interval $[\kappa,\lambda)$ and preserving the $\lambda$-supercompactness of $\kappa$? It seems as though the method of surgical modification of a generic, due to Woodin, does not generalize to answer this question. However, Friedman and Honsik show in their forthcoming paper \cite{FriedmanHonsik:SupercompactnessAndFailuresOfGCH} that the answer to the previous question is yes by using generalized Sacks forcing and the tuning fork method.

Let me now give an outline of the rest of the chapter. I will prove Theorem \ref{theoremdegsupcomp}(1) in Section \ref{easysection}. In Section \ref{preliminariessection}, in order to prepare for the proof of Theorem \ref{theoremdegsupcomp}(2) - (4), I discuss the large cardinal concept of ``tallness with closure,'' which synthesizes the concepts of $\lambda$-supercompactness and $\theta$-tallness. I prove Theorem \ref{theoremdegsupcomp}(2) - (4) in Section \ref{mainproofsection}.

\section{Proof of Theorem \ref{theoremdegsupcomp}(1)}\label{easysection}

The proof of Theorem \ref{theoremdegsupcomp}(1) will use a preparatory forcing notion called the lottery preparation, which was introduced by Hamkins in \cite{Hamkins:TheLotteryPreparation}. The lottery preparation works uniformly as a generalized Laver preparation in a variety of large cardinal contexts. Here I give a brief introduction to the lottery preparation.

The \emph{lottery sum} of a collection of posets $\{(\Q_\alpha,\leq_\alpha)\mid\alpha{<}\kappa\}$ is
$$\bigoplus\{\Q_\alpha\mid\alpha{<}\kappa\}:=\{\1 \}\cup\bigcup_{\alpha{<}\kappa}\{(\alpha,q)\mid q\in \Q_\alpha\}$$
where the ordering on the lottery sum is defined by (1) $(\alpha,q)\leq\emptyset$ for all $\alpha{<}\kappa$ and $q\in \Q_\alpha$ and (2) $(\alpha,p)\leq(\beta,q)$ if and only if $\alpha=\beta$ and $p\leq_\alpha q$. As Hamkins says, a generic for the lottery sum of a collection of posets chooses a poset and forces with it. For a detailed account of the lottery preparation see \cite{Hamkins:TheLotteryPreparation}.

A poset $\Q$ is said to be \emph{allowed} at stage $\gamma$ if $\Q$ is ${<}\gamma$-strategically closed; note that ``${<}\gamma$-strategic closure'' will not play a role in the arguments to come, so the reader who is unfamiliar with this concept may take this to simply mean ${<}\gamma$-closed. For a partial function $f\subseteq\kappa\times\kappa$ the \emph{lottery preparation of $\kappa$ with respect to $f$} is defined to be the Easton support forcing iteration of length $\kappa$, such that if $\gamma{<}\kappa$ is inaccessible and $f"\gamma\subseteq\gamma$, then the stage $\gamma$ forcing is the lottery sum in $V^{\P_\gamma}$ of all allowed posets in $H(f(\gamma)^+)^{V^{\P_\gamma}}$ and otherwise the stage $\gamma$ forcing is trivial.  Suppose $\P$ is the lottery preparation of $\kappa$ with respect to a partial function $f\subseteq\kappa\times\kappa$. Let $p=\langle p_\alpha\mid\alpha{<}\kappa\rangle\in \P$ be the condition such that $p_\alpha=\emptyset$ for $\alpha\neq\gamma$ and $p_\gamma= 1_\Q$, where $1_\Q$ is the top element of $\Q$. Since forcing below $p$ provides a $V^{\P_\gamma}$-generic for $\Q$, Hamkins says that the condition $p$ \emph{opts} for $\Q$ at stage $\gamma$.


The lottery preparation $\P$ of some large cardinal $\kappa$ is usually used with respect to a partial function $f\subseteq\kappa\times\kappa$ with the Menas property, such as a function added by fast function forcing (see Section \ref{fffsection}). Using the lottery preparation with respect to such a function insures that $j(\P)$, where $j$ is an elementary embedding witnessing the large cardinal property at hand, has a tail with a high degree of closure.

I will now show that given a $\lambda$-supercompact cardinal $\kappa$, one may pump up the power set of $\kappa$ to have size at least $\lambda^+$ while maintaining the $\lambda$-supercompactness of $\kappa$. This will establish Theorem \ref{theoremdegsupcomp}(1) because $\lambda>\kappa$ and $2^\kappa\geq\lambda^+$ trivially implies $2^\kappa\geq\kappa^{++}$.


\begin{theorem}\label{easy}
If $\kappa$ is $\lambda$-supercompact then there is a forcing extension preserving this in which $2^\kappa\geq\lambda^+$.
\end{theorem}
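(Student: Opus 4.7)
The plan is a two-step forcing: first a preparatory lottery iteration $\P$ to make the $\lambda$-supercompactness of $\kappa$ sufficiently indestructible, followed by Cohen forcing $\Q = \Add(\kappa,\lambda^+)$ to blow up the power set of $\kappa$. Assume without loss of generality that $\GCH$ holds from $\kappa$ onward (by a preliminary forcing). Fix a partial function $f : \kappa \to \kappa$ with the Menas property for $\lambda$-supercompactness, meaning that for suitably many targets there is a $\lambda$-supercompactness embedding $j : V \to M$ with $j(f)(\kappa) \geq \lambda^+$; such an $f$ can be obtained by fast-function forcing. Let $\P$ be the lottery preparation of $\kappa$ relative to $f$, and let $G * H$ be $V$-generic for $\P * \dot{\Q}$. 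Standard chain-condition and closure arguments (since $\P$ is $\kappa$-c.c.\ with ${<}\kappa$-strategically closed stages, and $\Q$ is $\kappa^+$-c.c.\ and ${<}\kappa$-directed closed) ensure that cardinals are preserved and $2^\kappa \geq \lambda^+$ in $V[G][H]$.

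The crux is lifting $j : V \to M$ (with $j(f)(\kappa) \geq \lambda^+$) all the way to $V[G][H]$. First I would lift $j$ through $\P$. Since $\crit(j) = \kappa$, the iteration $j(\P)$ agrees with $\P$ below $\kappa$ and presents, at stage $\kappa$, a lottery sum in $M[G]$ of all allowed posets in $H(j(f)(\kappa)^+)^{M[G]}$. Because $j(f)(\kappa) \geq \lambda^+$, I can opt in this lottery for $\Q = \Add(\kappa,\lambda^+)^{V[G]} = \Add(\kappa,\lambda^+)^{M[G]}$. The remaining tail $\P^{\mathrm{tail}}$ of $j(\P)$ is highly strategically closed in $M[G][H]$, so using the closure $M^\lambda \subseteq M$ together with $\GCH$ to bound the relevant dense sets, I construct an $M[G][H]$-generic $G^{\mathrm{tail}}$ for $\P^{\mathrm{tail}}$ in $V[G][H]$ by a single descending-sequence argument. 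This produces a lift $j : V[G] \to M[j(G)]$ where $j(G) = G * H * G^{\mathrm{tail}}$.

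The delicate step is lifting $j$ further through $\Q = \Add(\kappa,\lambda^+)$. I need an $M[j(G)]$-generic filter $H^*$ for $j(\Q) = \Add(j(\kappa), j(\lambda^+))^{M[j(G)]}$ with $j{}"H \subseteq H^*$. Because $|H| = \lambda^+$ exceeds the $\lambda$-closure of $M[j(G)]$, the pointwise image $j{}"H$ need not lie in $M[j(G)]$, and the naive master-condition approach fails. I would instead invoke Woodin's surgery method: first construct, by closure and a counting argument, an arbitrary $M[j(G)]$-generic $\widetilde{H}$ for $j(\Q)$ in $V[G][H]$; then surgically overwrite $\widetilde{H}$ on the coordinate set $\{(j(\alpha),\beta) : \alpha < \lambda^+,\ \beta < \kappa\}$ with the values dictated by $H$. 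The modification is internally consistent because $H$ is a filter (any two $j(p),j(p')$ with $p,p' \in H$ agree on their common coordinates), and the altered filter $H^*$ remains $M[j(G)]$-generic: the affected coordinates form a set of size $\lambda^+ < j(\kappa)$, and by the homogeneity of Cohen forcing, every dense subset of $j(\Q)$ in $M[j(G)]$ can already be met by extending the generic away from the altered coordinates.

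The main obstacle will be the surgery step: one must carefully verify both the genericity of the modified filter $H^*$ and that the target model $M[j(G)][H^*]$ remains closed under $\lambda$-sequences from $V[G][H]$, the latter being needed to conclude that the lifted embedding $j : V[G][H] \to M[j(G)][H^*]$ witnesses the $\lambda$-supercompactness of $\kappa$ in the final model.
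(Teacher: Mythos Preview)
Your setup—lottery preparation relative to a Menas function, followed by $\Q=\Add(\kappa,\lambda^+)$—and your lift of $j$ through $\P$ by opting for $\Q$ at stage $\kappa$ match the paper exactly. The divergence is in the lift through $\Q$: you reach for Woodin's surgery, whereas the paper uses the more elementary method of \emph{increasingly masterful conditions} (due to Apter and Hamkins). The paper's key observation is that $\sup j"\lambda^+=j(\lambda^+)$ (since $\cf(\lambda^+)>\lambda$), so every maximal antichain $A$ of $j(\Q)$ in $M[j(G)]$, having $M$-size at most $j(\kappa)$, already lies in $\Add(j(\kappa),j(\alpha))$ for some $\alpha<\lambda^+$. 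The partial master condition $q=\bigcup j"(H\cap\Add(\kappa,\alpha))$ then has size at most $\lambda$, hence is a condition in $M[j(G)]$, and one extends below $q$ to decide $A$ while remaining compatible with all of $j"H$. Iterating this over the $\lambda^+$-many antichains builds $j(H)$ directly—no surgery required.

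Your surgery sketch, by contrast, has a genuine gap in the genericity step. You argue that the modification set has size $\lambda^+<j(\kappa)$ and appeal to homogeneity, but the modification set $\Delta\subseteq j"\lambda^+\times\kappa$ is \emph{not} in $M[j(G)]$: the embedding is only $\lambda$-supercompact, so $j"\lambda^+\notin M[j(G)]$ in general. Consequently you cannot factor $j(\Q)$ along $\Delta$ inside $M[j(G)]$, and the global automorphism flipping the $\Delta$-bits does not live there; ``meeting dense sets away from the altered coordinates'' has no internal meaning. The correct surgery argument (carried out in Chapter~\ref{chapterthefailure} for the harder tallness theorem) needs the Key Lemma: for each maximal antichain $A$, the domain $\bigcup_{p\in A}\dom(p)$ has $M$-size at most $j(\lambda)$, so its intersection with $\ran(j)$—and hence with $\Delta$—has $V$-size at most $\lambda$, and \emph{that} small piece $\Delta_A$ does lie in $M[j(G)]$, yielding a local automorphism $\pi_A\in M[j(G)]$ that witnesses genericity of the modified filter. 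So surgery can be salvaged, but not via the reasoning you gave, and it is heavier machinery than this particular theorem needs.
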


\begin{proof} 
Since every $\lambda$-supercompactness embedding is a $\lambda^{<\kappa}$-supercompactness embedding, it follows that one may assume that $\lambda^{{<}\kappa}=\lambda$. I also will assume $2^\kappa\leq\lambda$ since otherwise the theorem is trivial. Further, I assume that $2^\lambda=\lambda^+$ since this can be forced using ${\leq}\lambda$-distributive forcing, which easily preserves the $\lambda$-supercompactness of $\kappa$. Let $f\subseteq\kappa\times\kappa$ be a partial function and let $j:V\to M$ witness that $\kappa$ is $\lambda$-supercompact such that $j(f)(\kappa)>\lambda$ where
$$M=\{j(h)(j"\lambda)\mid h:P_\kappa\lambda\to V, h\in V\}.$$ Note that $(\lambda^+)^M=\lambda^+$ since $M$ is closed under $\lambda$ sequences in $V$. Thus, since $j(\kappa)$ is inaccessible in $M$ it follows that $j(\kappa)>\lambda^+$. Now let $\P$ be the lottery preparation of $\kappa$ relative to $f$. Let $G$ be $V$-generic for $\P$. Let $\Q:=\Add(\kappa,\lambda^+)^{V[G]}$ and let $H$ be $V[G]$-generic for $\Q$. 

By elementarity, $j(\P)$ is the lottery preparation of $j(\kappa)$ defined relative to $j(f)$. Since $M$ is closed under $\lambda$-sequences in $V$, the first $\kappa$ stages of $\P$ and $j(\P)$ agree. Furthermore, $j(f)\restrict\kappa=f$ implies that $j(f)"\kappa\subseteq\kappa$, and since $\kappa$ is inaccessible in $M$, it follows that the stage $\kappa$ forcing in $j(\P)$ is the lottery sum in $M[G]$ of all allowed posets in $H(j(f)(\kappa)^+)^{M[G]}$. Since $\Q$ is in $M[G]$ and also in $H(j(f)(\kappa)^+)^{M[G]}$, it follows that $\Q$ appears in the stage $\kappa$ lottery sum in $j(\P)$. Thus, $j(\P)$ factors below a condition $p$ that opts for $\Q$ at stage $\kappa$ as $j(\P)\restrict p\cong \P * \dot\Q * \dot\P_{tail}$ where $\dot\P_{tail}$ is a term for the forcing $j(\P)$ beyond stage $\kappa$. For example, $p$ could be the condition $\langle p_\alpha \mid \alpha<j(\kappa)\rangle$ such that $p_\kappa=1_\Q$ and $p_\alpha=\emptyset$ for every other $\alpha<j(\kappa)$. The next stage of nontrivial forcing in $j(\P)$ is beyond $\lambda$ since $j(f)(\kappa)>\lambda$. From this it follows that $\dot\P_{tail}$ is a $\P*\dot\Q$-name for ${\leq}\lambda$-closed forcing. Since $M\subseteq V$, it follows that $G$ is $M$-generic for $\P$ and $H$ is $M[G]$-generic for $\Q$. Thus $\P_{tail}$ is ${\leq}\lambda$-closed in $M[G][H]$. Furthermore, it follows from Lemma \ref{lemmachain} that $M[G][H]$ is closed under $\lambda$-sequences in $V[G][H]$ because $\P*\dot\Q$ is $\kappa^+$-c.c.. Since in $V$, $\P$ has at most $2^\kappa\leq\lambda$-many dense subsets, it follows that $\P_{tail}$ has at most $j(\lambda)$-many dense subsets in $M[G][H]$ where $|j(\lambda)|^V\leq (\lambda)^{\lambda^{{<}\kappa}}=\lambda^\lambda=2^\lambda=\lambda^+$. Thus, one can see that there is an $M[G][H]$-generic $G_{tail}$ for $\P_{tail}$ in $V[G][H]$ by building a descending sequence of conditions. Furthermore, $j"G\subseteq G*H*G_{tail}$, since each condition in $\P$ has support bounded below the critical point of $j$. This implies that the embedding lifts to $j:V[G]\to M[j(G)]$ where $j(G)=G*H*G_{tail}$ and the lifted embedding is a class of $V[G][H]$. It follows from Lemma \ref{lemmaground} that $M[j(G)]$ is closed under $\lambda$ sequences in $V[G][H]$.

Now it is shown, using the methods of \cite[Corollary 10]{ApterHamkins:IndestructibilityAndTheLevelByLevelAgreement}, that the embedding lifts through $\Q$. Let $A\subseteq j(\Q)=\Add(j(\kappa),j(\lambda^+))$ be a maximal antichain in $M[j(G)]$. Let $r\in j(\Q)$ be a condition that is compatible with every element of $j"H$. I will argue that there is a condition $r'\leq r$ deciding $A$ that is still compatible with every element of $j"H$. Since $j(\Q)$ is $j(\kappa^+)$-c.c. it follows that $|A|\leq j(\kappa)$ in $M[j(G)]$. Furthermore, $\sup j"\lambda^+=j(\lambda^+)$ follows from the fact that $\cf(\lambda^+)>\lambda$, and this implies $A\subseteq \Add(j(\kappa),j(\alpha))$ for some $\alpha<\lambda^+$. Fix such an $\alpha$ so that also $r\in \Add(j(\kappa),j(\alpha))$. Let $q=\bigcup(j"(H\cap\Add(\kappa,\alpha)))$. Since $j(p)=j"p$ for $p\in \Add(\kappa,\alpha)$ one has $|q|\leq\lambda<j(\kappa)$ and thus $q\in M[j(G)]$ is a master condition in $\Add(j(\kappa),j(\alpha))$ (which is a complete subposet of $\Add(j(\kappa),j(\lambda^+))$). Now since $r$ is compatible with every element of $j"H$ it follows that $r$ and $q$ are compatible in $\Add(j(\kappa),j(\alpha))$.  Choose $r'\in \Add(j(\kappa),j(\alpha))$ below $r$ and $q$ deciding $A$. Let me show that $r'$ remains compatible with $j"H$. Consider $j(p)$ for $p\in H$. One may split $p$ into two pieces: $p=p_0\cup p_1$ where $\dom(p_0)\subseteq \alpha\times\kappa$ and $\dom(p_1)\subseteq [\alpha,\lambda^+)\times\kappa$. Then $j(p)=j(p_0)\cup j(p_1)$ where the domain of $j(p_1)$ is disjoint from the domain of any element of $\Add(j(\kappa),j(\alpha))$. Thus, $r'$ is compatible with $j(p_1)$ in $\Add(j(\kappa),j(\lambda^+))$. Furthermore, $r'\leq q\leq j(p_0)$ and hence $r'$ is compatible with $j(p)$.


Since $\Q$ has $\lambda^+$-many antichains, the above procedure may be iterated to choose a decreasing sequence of conditions in $V[G][H]$, meeting all the antichains of $\Add(j(\kappa),j(\lambda^+))$, such that each element of the sequence is compatible with $j"H$. Let $j(H)$ be the filter generated by this sequence. Then $j(H)$ is an $M[j(G)]$-generic filter for $\Add(j(\kappa),j(\lambda^+))$ with $j"H\subseteq j(H)$. Hence the embedding lifts to $j:V[G][H]\to M[j(G)][j(H)]$ in $V[G][H]$, which implies that $\kappa$ is $\lambda$-supercompact in $V[G][H]$. 
\end{proof}

\section{Tallness with Closure}\label{preliminariessection}

\subsection{Definitions and Basic Facts}\label{tallness}

Here I include some basic definitions and results about $\theta$-tall cardinals, and $\theta$-tall cardinals with closure $\lambda$, where $\lambda$ is some cardinal and $\theta$ is an ordinal. Both of these large cardinal notions are defined in \cite{Hamkins:TallCardinals}. A cardinal $\kappa$ is called \emph{$\theta$-tall} if there is a nontrivial elementary embedding $j:V\to M$ with critical point $\kappa$ such that $j(\kappa)>\theta$ and $M^\kappa\subseteq M$. Woodin and Gitik used such cardinals to determine the strength of the failure of $\GCH$ at a measurable cardinal (see \cite{Gitik:TheNegationOfTheSingularCardinalHypothesis}), and Hamkins has studied them in their own right in \cite{Hamkins:TallCardinals}. Hamkins says that $\kappa$ is \emph{$\theta$-tall with closure $\lambda$} if there is an elementary embedding $j:V\to M$ with $\cp(j)=\kappa$, $j(\kappa)>\theta$, and $M^\lambda\subseteq M$ in $V$.
By composing embeddings, one can see that a cardinal $\kappa$ is $\theta$-tall and $\lambda$-supercopmact if and only if it is $\theta$-tall with closure $\lambda$.

The following lemma will be required below.

\begin{lemma}\label{generators}
If $\kappa$ is $\theta$-tall with closure $\lambda$ then there is an embedding witnessing this $j:V\to M$ such that $$M=\{j(h)(j"\lambda,\alpha)\mid \alpha\leq\delta \textrm{ and } h:P_\kappa\lambda\times\kappa\to V \textrm{ is a function in $V$}\}$$ where $\delta=(\theta^\lambda)^M$. 
\end{lemma}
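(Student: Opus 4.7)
The plan is to start from any witness $j_0:V\to M_0$ of $\kappa$ being $\theta$-tall with closure $\lambda$ and refine it via a seed hull construction. Let $\delta_0=(\theta^\lambda)^{M_0}$ and set
$$X \;=\; \{\, j_0(h)(j_0"\lambda,\alpha) : h\in V,\ h:P_\kappa\lambda\times\kappa\to V,\ \alpha\leq\delta_0\,\}.$$
To see $X\prec M_0$ via Tarski--Vaught, I would combine finitely many representing functions into a single $H$ in $V$ and pack the associated ordinal seeds into a single $\alpha^*\leq\delta_0$ using G\"odel pairing (which sends $(\delta_0+1)^2$ into $\delta_0+1$ whenever $\delta_0$ is an infinite cardinal); a uniform $V$-choice of witnesses to $\exists v\,\varphi(v,H(s,\xi))$ then supplies a representing function for a witness lying in $X$. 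Mostowski-collapse $X$ via $\pi:X\to M$ and set $j:=\pi\circ j_0:V\to M$.

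For the routine properties: the function $h(s,\xi)=\xi$ gives $\alpha=j_0(h)(j_0"\lambda,\alpha)\in X$ for every $\alpha\leq\delta_0$, so $\pi$ fixes every such ordinal, including $\kappa$, $\lambda$, $\theta$, and $\delta_0$ (note that $\delta_0\geq\theta$ and $\delta_0\geq 2^\lambda>\lambda$). The constant functions $h\equiv\alpha$ place $j_0(\alpha)$ in $X$ for every $\alpha<\lambda$, so $\pi(j_0"\lambda)=j"\lambda$, and order-preservation of $\pi$ yields $j(\kappa)=\pi(j_0(\kappa))>\pi(\theta)=\theta$ with $\cp(j)=\kappa$. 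Since $\pi$ is elementary and fixes $\theta,\lambda$, we get $\delta:=(\theta^\lambda)^M=\pi(\delta_0)=\delta_0$; unpacking $M=\pi"X$ then delivers the stated seed representation.

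The main obstacle is verifying $M^\lambda\subseteq M$ in $V$. Given $\vec y\in V$ with $\vec y:\lambda\to M$, pull back to $\vec x=\langle\pi^{-1}(y_\alpha):\alpha<\lambda\rangle$ and write $x_\alpha=j_0(h_\alpha)(j_0"\lambda,\beta_\alpha)$. By $\lambda$-closure of $M_0$, both $\vec h$ and $\vec\beta$ lie in $M_0$; and since $M_0\models|{}^\lambda(\delta_0+1)|=\delta_0^\lambda=\delta_0$, a canonical ordinal coding yields some $\gamma\leq\delta_0$ whose decoding under a fixed absolute $D:\mathrm{Ord}\times\mathrm{Ord}\to\mathrm{Ord}$ satisfies $j_0(D)(\gamma,\zeta)=\beta_\zeta$ for every $\zeta<\lambda$. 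Defining
$$F(s,\xi)(\zeta) \;=\; h_{\sigma_s(\zeta)}\bigl(s,\,D(\xi,\zeta)\bigr),\qquad \zeta<|s|,$$
with $\sigma_s:|s|\to s$ the order isomorphism, a direct computation using $\sigma_{j_0"\lambda}(\zeta)=j_0(\zeta)$ shows $j_0(F)(j_0"\lambda,\gamma)=\vec x$, placing $\vec x\in X$ and hence $\vec y=\pi(\vec x)\in M$. The delicate step is engineering the absolute decoding $D$ so that its $M_0$-interpretation remains surjective onto ${}^\lambda(\delta_0+1)$ with codes bounded by $\delta_0$; this follows the standard template for showing $\lambda$-closure of extender ultrapowers that include a supercompactness seed.
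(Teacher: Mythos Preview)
Your proof is correct and follows essentially the same approach as the paper: form the seed hull $X$ of $j_0"\lambda$ together with the ordinals $\leq\delta_0$, verify $X\prec M_0$ by Tarski--Vaught, collapse, and factor $j_0$ through the collapse. The paper's proof is considerably terser and simply asserts that the resulting $j$ is as desired without spelling out the $\lambda$-closure verification you provide; your coding argument for $\vec\beta$ works, though it can be streamlined by observing directly that $X$ contains an $M_0$-bijection from $\delta_0$ onto $({}^\lambda(\delta_0+1))^{M_0}$, so every such $\vec\beta$ already lies in $X$.
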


\begin{proof}
Let $j_0:V\to M_0$ witness the $\theta$-tallness with closure $\lambda$ of $\kappa$ and let $$X=\{j_0(h)(j_0"\lambda,\alpha)\mid \alpha\leq\delta \textrm{ and } h:P_\kappa\lambda\times\kappa\to V \textrm{ with } h\in V\}$$ where $\delta:=(\theta^\lambda)^M$. By Tarski-Vaught, it follows that $X\elesub M_0$. Let $\pi:X\to M$ be the Mostowski collapse of $X$ and define an elementary embedding $j:V\to M$ by $j= \pi \circ j_0$ an let $k:=\pi^{-1}:M\to X\subseteq M_0$. It follows that $j$ is the desired embedding.
\end{proof}

I will often make use of the easy fact that if $\kappa$ is $\theta$-tall with closure $\lambda$, then it is $\theta^\lambda$-tall with closure $\lambda^{{<}\kappa}$, which I demonstrate now. Suppose $j:V\to M$ witnesses the $\theta$-tallness with closure $\lambda$ of $\kappa$. I will argue that $j$ actually witnesses the $\theta^\lambda$-tallness with closure $\lambda^{{<}\kappa}$ of $\kappa$. If $\sigma\in P_\kappa\lambda$ then $j(\sigma)=j"\sigma$, and from this it follows that
$j"P_\kappa\lambda= P_\kappa(j"\lambda)$. Thus $j"P_\kappa\lambda\in M$, since $j"\lambda\in M$. By using a bijection from $P_\kappa\lambda$ to $\lambda^{{<}\kappa}$, it is routine to verify that $j"\lambda^{{<}\kappa}\in M$ and this implies that $M^{\lambda^{{<}\kappa}}\subseteq M$. Since $M^{\lambda^{{<}\kappa}}\subseteq M$, it follows that $(^{\lambda^{{<}\kappa}}\theta)^M=\,^{\lambda^{{<}\kappa}}\!\theta$. Furthermore, $j(\kappa)$ is inaccessible in $M$ and hence $\theta^{\lambda^{{<}\kappa}}\leq(\theta^{\lambda^{{<}\kappa}})^M<j(\kappa)$. This proves the following.



By the remarks in the previous paragraph, given that $\kappa$ is $\theta$-tall with closure $\lambda$, in many arguments we will be able to assume without loss of generality that $\theta^\lambda=\theta$ and $\lambda^{{<}\kappa}=\lambda$. Then by Lemma \ref{generators} there is an embedding $j:V\to M$ witnessing that $\kappa$ is $\theta$-tall with closure $\lambda$ such that $$M=\{j(h)(j"\lambda,\alpha)\mid\textrm{$\alpha\leq\theta$ and $h:P_\kappa\lambda\times\kappa\to V$ is a function in $V$}\}.$$

\subsection{Fast Function Forcing and Tallness with Closure}\label{fffsection}

The goal of this section will be to prove that one can force to add a function with the Menas property with respect to $\theta$-tallness with closure $\lambda$. In other words, it is shown that if $j:V\to M$ witnesses that $\kappa$ is $\theta$-tall with closure $\lambda$, then one may force to add a partial function $f\subseteq \kappa\times\kappa$ with the property $j(f)(\kappa)>\theta$ and that $j$ lifts to $j:V[f]\to M[j(f)]$. In fact, one can arrange that $j(f)(\kappa)$ is equal to any ordinal less than $j(\kappa)$, the degree of tallness of $\kappa$. To accomplish this I will use a technique invented by Woodin called fast function forcing.

For a cardinal $\kappa$, define the \emph{fast function forcing} poset $\F_\kappa$ as follows. Conditions in $\F_\kappa$ are partial functions $p\subseteq\kappa\times\kappa$ such that the following two conditions hold.
\begin{enumerate}
\item Each $\gamma\in \dom(p)$ is inaccessible and $p"\gamma\subseteq\gamma$.
\item If $\gamma{\leq}\kappa$ is inaccessible then $|p\restrict\gamma|<\gamma$.
\end{enumerate}
The ordering on $\F_\kappa$ is given by $p\leq q$ if and only if $p\supseteq q$. For a fixed condition of the form $p:=\{(\gamma,\delta)\}$  the poset $\F_\kappa$ factors below $p$ as $\F_\kappa\restrict p \cong \F_{\gamma}\times\F_{[\lambda,\kappa)}$ where $\lambda$ is the next inaccessible beyond $\max(\gamma,\delta)$ and $\F_{[\lambda,\kappa)}:=\{p\in \F_\kappa\mid \dom(p)\subseteq [\lambda,\kappa)\}$. A generic $G$ for $\F_\kappa$ provides a partial function $f:=\bigcup G$ from $\kappa$ to $\kappa$. Since $f$ determines $G$, the forcing extension by the fast-function-forcing poset will be written as $V[f]$ from this point forward. For a more detailed account of fast-function-forcing see \cite{Hamkins:TheLotteryPreparation}.

\begin{lemma}\label{menaslemma}
Suppose $j:V\to M$ is a $\theta$-tallness embedding with closure $\lambda$ with critical point $\kappa$ where $\lambda\leq\theta$ (or merely $\lambda$ is less than the first inaccessible beyond $\theta$). Then there is a fast function forcing extension $V[f]$ such that $j$ lifts to $j:V[f]\to M[j(f)]$ witnessing the $\theta$-tallness with closure $\lambda$ in $V[f]$ such that $j(f)(\kappa)>\theta$. Furthermore, for any $\delta<j(\kappa)$ there is such a lift $j$ such that $j(f)(\kappa)=\delta$.
\end{lemma}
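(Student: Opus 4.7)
The plan is to carry out the standard fast function forcing lifting argument, adapted to tallness with closure. Let $G \subseteq \F_\kappa$ be $V$-generic with associated partial function $f = \bigcup G$, and fix an arbitrary ordinal $\delta$ with $\theta < \delta < j(\kappa)$ (so that $j(f)(\kappa) = \delta$ in particular gives $j(f)(\kappa) > \theta$). The first move is to choose the single-point condition $p := \{(\kappa,\delta)\}$ in $j(\F_\kappa)$: this is legitimate because $\kappa$ is inaccessible in $M$ and $p"\kappa \subseteq \kappa$ is vacuous. Below $p$ the poset factors in $M$ as
$$j(\F_\kappa)\restrict p \;\cong\; \F_\kappa \;\times\; \F_{[\lambda',\,j(\kappa))}^M,$$
where $\lambda'$ denotes the least inaccessible of $M$ strictly greater than $\max(\kappa,\delta)$. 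By the hypothesis on $\lambda$, one has $\lambda' > \lambda$. I will use $G$ itself for the first factor and build the generic for the tail inside $V[G]$.

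The heart of the argument is the construction of an $M[G]$-generic filter $G_{\text{tail}}$ for the tail poset $\F_{[\lambda',j(\kappa))}^M$ in $V[G]$. The tail is ${<}\lambda'$-closed in $M$; since $\F_\kappa$ has size $\kappa$, Lemma \ref{lemmachain} gives $M[G]^\lambda \subseteq M[G]$ in $V[G]$, and hence the tail remains ${\leq}\lambda$-closed in $V[G]$ as well. To build $G_{\text{tail}}$ I need to enumerate all dense subsets of the tail that live in $M[G]$ and descend through them. Assuming without loss of generality (by the remarks following Lemma \ref{generators}) that $\theta^\lambda = \theta$ and $\lambda^{<\kappa} = \lambda$, every element of $M$ is of the form $j(h)(j"\lambda,\alpha)$ for some $h:P_\kappa\lambda\times\kappa\to V$ in $V$ and some $\alpha \leq \theta$, and a standard nice-name reduction passes this through the forcing by $\F_\kappa$. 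The number of such parameters is bounded by $(2^\kappa)^V \cdot \theta$, which is strictly below $j(\kappa)$, hence well below the closure degree of the tail. A straightforward transfinite descent then produces $G_{\text{tail}} \in V[G]$.

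Set $j(G)$ to be the filter on $j(\F_\kappa)$ generated by the union of $G$, $\{p\}$, and $G_{\text{tail}}$ under the factorization above. To verify $j"G \subseteq j(G)$, observe that every condition $q \in \F_\kappa$ satisfies $|q| < \kappa$ by clause (2) of the definition of $\F_\kappa$, so each pair $(\alpha,\beta) \in q$ lies below $\kappa$ and is fixed pointwise by $j$; hence $j(q) = q$ lies in the $G$-factor. Lemma \ref{lemmaliftingcriterion} then produces the lift $j : V[G] \to M[j(G)]$, and since $p \in j(G)$ forces $\dot{f}(\check\kappa) = \check\delta$, elementarity yields $j(f)(\kappa) = \delta$. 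Finally, $M[j(G)]^\lambda \subseteq M[j(G)]$ in $V[G]$ follows from the closure of $M[G]$ in $V[G]$ already noted, together with Lemma \ref{lemmaground} applied to the tail construction. This gives the full conclusion: $j$ witnesses $\theta$-tallness with closure $\lambda$ in $V[f]$, and moreover $j(f)(\kappa)$ realizes any prescribed $\delta < j(\kappa)$.

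The main obstacle is the dense-set counting in the middle paragraph: one must verify that the closure of the tail in $V[G]$ strictly exceeds the cofinality of the enumeration of dense sets coming from $M[G]$. This is precisely where the assumption $\lambda \leq \theta$ (or the weaker variant that $\lambda$ lies below the next inaccessible past $\theta$) gets used, since it guarantees $\lambda < \lambda'$ and hence that the tail forcing is ${\leq}\lambda$-closed after $\F_\kappa$, leaving plenty of room to meet the at most $((2^\kappa)^V \cdot \theta)$-many dense sets from $M[G]$.
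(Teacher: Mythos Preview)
Your overall architecture is the same as the paper's, but the heart of the argument---the diagonalization for $G_{\text{tail}}$---has a genuine gap. First, the count is off: the dense subsets of the tail in $M[G]$ are parametrized by pairs $(h,\alpha)$ with $h:P_\kappa\lambda\times\kappa\to\{\text{nice names for dense sets}\}$ and $\alpha\leq\theta$. The codomain has size $2^\kappa$, but the number of such functions $h$ is $(2^\kappa)^{|P_\kappa\lambda\times\kappa|}=(2^\kappa)^\lambda=2^\lambda$, not $(2^\kappa)^V$ as you wrote. The paper additionally assumes $2^\lambda=\lambda^+$ (via preliminary ${\leq}\lambda$-distributive forcing) precisely to bound this by $\lambda^+$.

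Second, and more seriously, even with the corrected count the naive descent you describe does not go through. You observe that the tail is ${\leq}\lambda$-closed in $V[G]$, and then assert that since the number of dense sets is ``well below the closure degree of the tail'' a straightforward descent suffices. But the relevant closure for a descent performed in $V[G]$ is the closure \emph{in $V[G]$}, which is only ${\leq}\lambda$; the ${<}\lambda'$-closure lives in $M$ and cannot be invoked for a sequence whose initial segments need not lie in $M$. A descent of length $\lambda^+\cdot\theta$ (or even just $\theta$) will therefore fail at stage $\lambda^+$ whenever $\theta>\lambda^+$, since you cannot take a lower bound of $\lambda^+$-many conditions using only ${\leq}\lambda$-closure.

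The missing idea, which the paper supplies, is a two-tiered diagonalization: for each fixed $h$, the entire sequence $\langle j(h)(j"\lambda,\alpha)\mid\alpha\leq\theta\rangle$ lies in $M$, so one may use the ${\leq}\theta$-closure of the tail \emph{internally in $M$} to find a single condition meeting all $\theta+1$ of those dense sets at once. This collapses the outer diagonalization to length $\lambda^+$ (one step per $h$), and now limits of length ${\leq}\lambda$ are handled by $M^\lambda\subseteq M$. Your sketch needs exactly this refinement to close the gap.
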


\begin{proof}
As mentioned at the end of Subsection \ref{tallness}, it can be assumed without loss of generality that $\lambda^{{<}\kappa}=\lambda$ and $\theta^\lambda=\theta$. It can further be assumed that $2^\lambda=\lambda^+$, since this can be accomplished using ${\leq}\lambda$-distributive forcing, which preserves the $\theta$-tallness with closure $\lambda$ of $\kappa$ by Lemma \ref{lemmalambdadist}. Let $f$ be $V$-generic for $\F_\kappa$ and let $j:V\to M$ be a $\theta$-tallness embedding with closure $\lambda$ such that $$M=\{j(h)(j"\lambda,\alpha)\mid\textrm{$\alpha\leq\theta$ and $h:P_\kappa\lambda\times\kappa\to V$ is a function in $V$}\}.$$ Let $\delta$ be an ordinal with $\theta<\delta<j(\kappa)$ and let $p:=\{(\kappa,\delta)\}$. The poset $j(\F_\kappa)$ factors below the $p$ as $j(\F_\kappa)\restrict p\cong \F_\kappa\times \F_{[\gamma,j(\kappa))}$ where $\gamma$ is the next inaccessible cardinal above $\delta$.

An $M$-generic filter for $\F_{[\gamma,j(\kappa))}$ will be constructed in $V$. Let $D$ be a dense subset of $\F_{[\gamma,j(\kappa))}$ in $M$. Then $D=j(h_{D})(j"\lambda,\alpha)$ for some $\alpha\leq\theta$ and $h_{D}:P_\kappa\lambda\times\kappa\to V$. Since $j"\lambda$ and $j(h_D)$ are both in $M$ it follows that $\vec{D}:=\langle j(h_{{D}})(j"\lambda,\alpha)\mid\alpha\leq\theta\rangle\in M$. Using the ${\leq}\theta$-closure of $\F_{[\gamma,j(\kappa))}$ in $M$ one may find a single condition in $\F_{[\gamma,j(\kappa))}$ meeting every dense set mentioned by $\vec{D}=\langle j(h_{{D}})(j"\lambda,\alpha)\mid\alpha\leq\theta\rangle$.

Now assume without loss of generality that 
$$h_{{D}}:P_\kappa\lambda\times\kappa\to\{\textrm{dense subsets of a tail of $\F_\kappa$}\}.$$ Since $|\F_\kappa|=\kappa$ it follows that there are $2^\kappa$  dense subsets of a tail of $\F_\kappa$. This implies that there are $(2^\kappa)^{\lambda^{{<}\kappa}}=2^\lambda=\lambda^+$ functions $h$ with domain $P_\kappa\lambda\times\kappa$ that represent dense subsets of a tail of $\F_\kappa$. In $V$, one may enumerate such $h$'s as $\vec{h}:=\langle h_\xi\mid\xi<\lambda^+\rangle$. Since every dense subset of $\F_{[\gamma,j(\kappa))}$ in $M$ is represented by a function $h_\xi$ on the list, an $M$-generic filter for $\F_{[\gamma,j(\kappa))}$ can be constructed in $V$ as follows. At successor stages $\xi$, by using the ${\leq}\theta$-closure of $\F_{[\gamma,j(\kappa))}$ in $M$, one can find a single condition $p_\xi\in \F_{[\gamma, j(\kappa))}$ below all previously constructed conditions meeting each dense set of the form $j(h_\xi)(j"\lambda,\alpha)$ for $\alpha\leq\theta$. At limit stages one may use the fact that $M^\lambda\subseteq M$ is ${\leq}\lambda$-closed in $V$ to find a condition below all previously constructed conditions. This defines a descending sequence of conditions in $V$. Let $f_{[\gamma,j(\kappa))}$ be the $M$-generic filter for $\F_{[\gamma,j(\kappa))}$ generated by the sequence. Since $f_{[\gamma,j(\kappa)}\in V$ and $f$ is $V$-generic for $\F_\kappa$, it follows from the product forcing lemma that $f\cup p \cup f_{[\gamma,j(\kappa))}$ is $M$-generic for $j(\F_\kappa)\restrict p$. Since $j"f\subseteq f\cup p\cup f_{[\gamma,j(\kappa))}$ the embedding lifts to $j:V[f]\to M[j(f)]$ where $j(f)=f\cup p \cup f_{[\gamma,j(\kappa))}$ and $j$ is a class of $V[f]$. Since $\F_\kappa$ is $\kappa$-c.c. and $f_{[\gamma,j(\kappa))}$ is in $V$, it follows by Lemmas \ref{lemmaground} and \ref{lemmachain} that $M[j(f)]$ is closed under $\lambda$-sequences in $V[f]$ and hence that the lifted embedding witnesses that $\kappa$ is $\theta$-tall with closure $\lambda$ in $V[f]$.
\end{proof}

\subsection{The Lottery Preparation and Tallness with Closure}

In \cite{Hamkins:TheLotteryPreparation}, Hamkins shows that the lottery preparation makes many large cardinals indestructible by a wide array of forcing notions. Here I will extend the results in \cite{Hamkins:TheLotteryPreparation} to include $\theta$-tallness with closure $\lambda$.


\begin{theorem}\label{theoremindestructibility}
Suppose $\kappa$ is $\theta$-tall with closure $\lambda$ where $\lambda\leq\theta$. Then after the lottery preparation of $\kappa$, the $\theta$-tallness with closure $\lambda$ of $\kappa$ is indestructible by ${<}\kappa$-directed closed forcing of size $\leq\lambda$.
\end{theorem}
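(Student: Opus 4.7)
The plan is to follow Hamkins' standard lottery preparation template, with the tallness-with-closure hypothesis playing the role of the large cardinal property to be preserved. Let $\P$ denote the lottery preparation of $\kappa$ taken with respect to a fast function $f \subseteq \kappa\times\kappa$ added by $\F_\kappa$ as in Lemma \ref{menaslemma}; that is, I will work in $V[f]$ and assume the iteration $\P$ has been designed so that for any desired degree $\theta$ there is an embedding $j:V\to M$ witnessing the $\theta$-tallness with closure $\lambda$ of $\kappa$ with $j(f)(\kappa)>\theta$. Suppose $G*H$ is $V$-generic for $\P*\dot\Q$, where in $V[G]$ the poset $\Q$ is ${<}\kappa$-directed closed and $|\Q|\leq\lambda$. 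I may further assume $\lambda^{<\kappa}=\lambda$ and $\theta^\lambda=\theta$, and (after preparatory ${\leq}\lambda$-distributive forcing, which preserves tallness with closure by Lemma \ref{lemmalambdadist}) that $2^\lambda=\lambda^+$.

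First I would lift $j$ through $\P$. By Lemma \ref{generators} choose $j:V\to M$ witnessing the $\theta$-tallness with closure $\lambda$ of $\kappa$ with $M$ generated by $j"\lambda$ and ordinals $\leq\theta$, and with $j(f)(\kappa)>\theta$. Since $|\Q|\leq\lambda$ and $M[G]^\lambda\subseteq M[G]$ in $V[G]$ (by Lemmas \ref{lemmaground} and \ref{lemmachain} using $\kappa^+$-c.c. of $\P$ and $\lambda$-closure of $M$), the poset $\Q$ lies in $M[G]$ and in $H(j(f)(\kappa)^+)^{M[G]}$. Hence $\Q$ appears in the stage $\kappa$ lottery sum of $j(\P)$, so $j(\P)$ factors below a condition opting for $\Q$ as $\P*\dot\Q*\dot\P_{\mathrm{tail}}$, where $\dot\P_{\mathrm{tail}}$ is forced to be ${\leq}\theta$-closed in $M^{\P*\dot\Q}$. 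Then I build an $M[G][H]$-generic $G_{\mathrm{tail}}$ for $\P_{\mathrm{tail}}$ inside $V[G][H]$ by diagonalization: every dense subset of $\P_{\mathrm{tail}}$ in $M[G][H]$ is represented as $j(h)(j"\lambda,\alpha)$ for some $\alpha\leq\theta$ and some $h\in V[G][H]$, and the number of such representing functions $h$ mapping $P_\kappa\lambda\times\kappa$ into nice names for dense subsets of $\P_{\mathrm{tail}}$ is bounded by $(2^\lambda)^{V[G][H]}=\lambda^+$. Using ${\leq}\theta$-closure of $\P_{\mathrm{tail}}$ in $M[G][H]$ and the $\lambda$-closure of $M[G][H]$ in $V[G][H]$ at limits, one constructs a descending $\lambda^+$-sequence meeting every dense set, producing the generic $G_{\mathrm{tail}}$. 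Setting $j(G):=G*H*G_{\mathrm{tail}}$, the map lifts to $j:V[G]\to M[j(G)]$ in $V[G][H]$, and Lemma \ref{lemmaground} applied twice preserves $\lambda$-closure.

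Next I would lift $j$ through $\Q$. The image $j"H\subseteq j(\Q)$ is a directed subset of size at most $\lambda<j(\kappa)$, and $j(\Q)$ is ${<}j(\kappa)$-directed closed in $M[j(G)]$, so a master condition $m\in j(\Q)$ below $j"H$ exists in $M[j(G)]$. To build an $M[j(G)]$-generic $j(H)$ through $m$, I count dense subsets of $j(\Q)$ in $M[j(G)]$ via the seed representation of Lemma \ref{generators}: every such dense set has the form $j(h)(j"\lambda,\alpha)$ with $\alpha\leq\theta$ and $h\in V[G]$ a function from $P_\kappa\lambda\times\kappa$ into the collection of dense subsets of $\Q$. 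In $V[G]$ there are at most $(2^\lambda)^{\lambda}=\lambda^+$ such functions, and inside $M[j(G)]$ the poset $j(\Q)$ is ${<}j(\kappa)$-directed closed with $j(\kappa)>\theta\geq\lambda$, so one diagonalizes over these $\lambda^+$ families, at each stage using directed closure of $j(\Q)$ in $M[j(G)]$ and $\lambda$-closure of $M[j(G)]$ in $V[G][H]$ to assemble a descending sequence whose generated filter $j(H)$ lies below $m$. Since $m\leq j(p)$ for every $p\in H$, one has $j"H\subseteq j(H)$, and the embedding lifts to $j:V[G][H]\to M[j(G)][j(H)]$.

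Finally I would verify that the lifted $j$ witnesses the $\theta$-tallness with closure $\lambda$ of $\kappa$ in $V[G][H]$: the critical point and $j(\kappa)>\theta$ pass up automatically, and the closure $M[j(G)][j(H)]^\lambda\subseteq M[j(G)][j(H)]$ in $V[G][H]$ follows from Lemma \ref{lemmaground}, using that $j(H)$ was constructed in $V[G][H]$ and that $M[j(G)]$ is already $\lambda$-closed there. I expect the main obstacle to be the bookkeeping in the diagonal construction of $G_{\mathrm{tail}}$ and $j(H)$: one must simultaneously arrange genericity, the master condition constraint, and enough closure in the intermediate models to continue the recursion through limit stages. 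This is precisely where the hypotheses $\lambda^{<\kappa}=\lambda$, $\theta^\lambda=\theta$, and the Menas property $j(f)(\kappa)>\theta$ are all used essentially, and it is the reason one needs the fast function in the definition of $\P$ rather than an arbitrary preparatory iteration.
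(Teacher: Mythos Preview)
Your proposal is correct and follows essentially the same approach as the paper's proof: the same normalizations ($\lambda^{<\kappa}=\lambda$, $\theta^\lambda=\theta$, $2^\lambda=\lambda^+$), the same factoring of $j(\P)$ below a condition opting for $\Q$, the same diagonalization of length $\lambda^+$ to build $G_{\mathrm{tail}}$ using the seed representation and ${\leq}\theta$-closure, and the same master condition plus diagonalization to build $j(H)$. One small imprecision: when representing dense subsets of $\P_{\mathrm{tail}}$ in $M[G][H]$, the functions $h$ should be taken in $V$ (representing nice $\P*\dot\Q$-names in $M$) rather than in $V[G][H]$, since the seed representation applies to the ground-model embedding; the paper handles this by passing to names, but the cardinality count and the argument are unaffected.
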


\begin{proof}
Suppose $j:V\to M$ witnesses the $\theta$-tallness with closure $\lambda$ of $\kappa$. As before, without loss of generality assume that $\lambda^{{<}\kappa}=\lambda$, $\theta^\lambda=\theta$, $2^\lambda=\lambda^+$, and $$M=\{j(h)(j"\lambda,\alpha)\mid\textrm{$\alpha\leq\theta$ and $h:P_\kappa\lambda\times\kappa\to V$ is a function in $V$}\}.$$
Let me remark that the forcing to obtain $2^\lambda=\lambda^+$ collapses cardinals in the interval $[\lambda^+,2^\lambda]$ to $\lambda^+$. By Lemma \ref{menaslemma}, assume that there is a fast function $f\subseteq\kappa\times\kappa$ with $j(f)(\kappa)>\theta$. Let $\P$ be the lottery preparation defined relative to $f$ and let $G$ be $V$-generic for $\P$. Let $\Q$ be any ${<}\kappa$-directed closed forcing of size less than or equal to $\lambda$ in $V[G]$ and let $H$ be $V[G]$-generic for $\Q$.

Since $\Q$ could be trivial forcing it will suffice to lift $j$ to $V[G][H]$ in $V[G][H]$. Assume without loss of generality that $\Q\subseteq \ORD$. Since $|\P|^V=\kappa$ it follows from Lemma \ref{lemmachain} that $M[G]^\lambda\subseteq M[G]$ in $V[G]$ and hence $\Q\in M[G]$. By elementarity, $j(\P)$ is the lottery preparation of $j(\kappa)$ with respect to $j(f)$. Since $M$ is closed under $\lambda$-sequences in $V$, it follows that the first $\kappa$ stages in $\P$ and $j(\P)$ are the same. Since $\lambda\leq\theta$, one concludes that $\Q\in H(j(f)(\kappa)^+)^{M[G]}$ and thus $\Q$ appears in the lottery sum at stage $\kappa$ in $j(\P)$. Thus $j(\P)$ factors below a condition $p$ that opts for $\Q$ at stage $\kappa$ as $j(\P)\restrict p\cong \P*\dot\Q*\dot\P_{tail}$, where $\dot\P_{tail}$ is a term for the iteration beyond stage $\kappa$. It follows that $\dot\P_{tail}$ is a term for $\leq\theta$-closed forcing because $j(f)(\kappa)>\theta$. Since $|\Q|^{V[G]}\leq\lambda$, it follows that $M[G][H]^\lambda\subseteq M[G][H]$ in $V[G][H]$. I will show that one can construct an $M[G][H]$-generic for $\P_{tail}$ in $V[G][H]$. Let $D$ be a dense subset of $\P_{tail}$ in $M[G][H]$. Let $\dot{D}\in M$ be a $\P*\Q$-name for $D$, that is $\dot{D}_{G*H}=D$, and let $\dot{D}=j(h_{\dot{D}})(j"\lambda,\alpha)$ where $h_{\dot{D}}:P_\kappa\lambda\times\kappa\to V$, $h_{\dot{D}}\in V$, and $\alpha\leq\theta$. Since $j(h_{\dot{D}}),j"\lambda\in M$ the sequence of names $\vec{D}:=\langle j(h_{\dot{D}})(j"\lambda,\alpha)\mid\alpha\leq\theta\rangle$ is in $M$, and furthermore the sequence of dense subsets of $\P_{tail}$, $\vec{D}_{G*H}:=\langle j(h_{\dot{D}})(j"\lambda,\alpha)_{G*H}\mid\alpha\leq\theta\rangle$, is in $M[G][H]$. Since $\P_{tail}$ is $\leq\theta$-closed in $M[G][H]$ there is a single condition below every dense set mentioned by $\vec{D}_{G*H}$. Without loss of generality, assume that the range of $h_{\dot{D}}$ is contained in the set of nice names for dense subsets of a tail of $\P$. Working in $V[G][H]$, I will put a bound on the number of functions $$h:P_\kappa\lambda\times\kappa\to \{\textrm{nice names for dense subsets of a tail of $\P$}\}.$$ Since $|\P|=\kappa$ there are $2^\kappa$-many nice names for subsets of a tail of $\P$. Thus there are at most $(2^\kappa)^{\lambda^{{<}\kappa}}=2^\lambda=\lambda^+$-many such $h$'s. 
In $V[G][H]$ one may enumerate all such $h$'s as $\langle h_\xi\mid\xi<\lambda^+\rangle.$ Since every dense subset of $\P_{tail}$ in $M[G][H]$ has a nice $\P*\Q$-name and each nice $\P*\Q$-name is represented by one of the $h_\xi$'s on our list, one may construct an $M[G][H]$-generic descending sequence of conditions of $\P_{tail}$ as follows. At successor stages $\xi$, one work in $M[G][H]$ and use the fact that $\P_{tail}$ is $\leq\theta$-closed in $M[G][H]$ to find a condition of $\P_{tail}$ meeting every dense subset of $\P_{tail}$ which has a name on the list $\langle j(h_\xi)(j"\lambda,\alpha)\mid\alpha\leq\theta\rangle$. At limits $\xi<\lambda^+$, since $M[G][H]$ is closed under $\lambda$-sequences in $V[G][H]$ it follows that $\P_{tail}$ is $\leq\lambda$-closed in $V[G][H]$, and hence, in $V[G][H]$, there is a condition of $\P_{tail}$ below all previously constructed conditions. This defines a descending $\lambda^+$-sequence of conditions in $\P_{tail}$. Let $G_{tail}$ be the filter generated by this sequence of conditions. Clearly $G_{tail}\in V[G][H]$ is an $M[G][H]$-generic filter for $\P_{tail}$. Thus the embedding lifts to $j:V[G]\to M[j(G)]$ where $j(G):=G*H*G_{tail}$ and $j$ is a class of $V[G][H]$. Since $\P*\Q$ is $\lambda^+$-c.c. it follows from Lemma \ref{lemmachain} that $M[G][H]^\lambda\subseteq M[G][H]$ in $V[G][H]$. Furthermore, since $G_{tail}\in V[G][H]$, it follows from Lemma \ref{lemmaground} that $M[j(G)]$ is closed under $\lambda$-seqences in $V[G][H]$.

Now I will show that $j$ lifts to $V[G][H]$. Since $H$ and $j"\Q$ are both in $M[j(G)]$ it follows that $j"H$ is in $M[j(G)]$. Since $j(\Q)$ is ${<}j(\kappa)$-directed closed in $M[j(G)]$ it follows that in $M[j(G)]$, there is a master condition $r\in j(\Q)$ below each element of $j"H$. Let me now construct an $M[j(G)]$-generic filter for $j(\Q)$ in $V[G][H]$. Let $D\in M[j(G)]$ be a dense subset of $j(\Q)$. Then $D$ is of the form $D=j(h_D)(j"\lambda,\alpha)$ where $h_D\in V[G]$ is a function from $P_\kappa\lambda\times\kappa$ to the collection of dense subsets of $\Q$ and $\alpha\leq\theta$. Now let $\vec{D}:=\langle j(h_D)(j"\lambda,\alpha)\mid\alpha\leq\theta\rangle$. Since $j"\lambda$ and $j(h_D)$ are in $M[j(G)]$, it follows that $\vec{D}\in M[j(G)]$. Since $j(\Q)$ is ${<}j(\kappa)$-directed closed in $M[j(G)]$, one can find, via an internal argument in $M[j(G)]$, a single condition that meets every dense set mentioned by $\vec{D}$. In $V[G]$, $|\Q|=\lambda$ and this implies that there are at most $(2^\lambda)^{\lambda^{{<}\kappa}}=\lambda^+$-many functions $h\in V[G]$ that represent dense subsets of $j(\Q)$ in $M[j(G)]$. As before, one can enumerate these functions as $\langle h_\xi\mid\xi<\lambda^+\rangle$ and define a descending sequence of conditions meeting every dense subset of $j(\Q)$. Start the descending sequence with the master condition, $r$. If $\xi$ is a successor, use the ${<}j(\kappa)$-directed closure of $j(\Q)$ in $M[j(G)]$ to meet all dense sets mentioned in $\langle j(h_\xi)(j"\lambda,\alpha)\mid\alpha\leq\theta\rangle$ with a single condition that is also below $r$. At limit stages $\xi<\lambda^+$, since $M[j(G)]$ is closed under $\lambda$-sequences in $V[G][H]$, it follows that $j(\Q)$ is ${\leq}\lambda$-closed in $V[G][H]$, and hence one may find a condition of $j(\Q)$ below all previously constructed conditions. This defines a descending $\lambda^+$-sequence of conditions below the master condition $r$. Let $j(H)$ be the generic filter generated by this sequence. Since $r$ is stronger than every element of $j"H$ and $r\in j(H)$ it follows that $j"H\subseteq j(H)$ and thus the embedding lifts to $j:V[G][H]\to M[j(G)][j(H)]$, where the lifted embedding is a class in $V[G][H]$.

This shows that the $\theta$-tallness with closure $\lambda$ of $\kappa$ is indestructible by any ${<}\kappa$-directed closed forcing of size ${\leq}\lambda$ in $V[G]$.
\end{proof}

Let me now give a quick application of Theorem \ref{theoremindestructibility}. The following corollary can be proven using a master condition argument, but it also follows directly from Theorem \ref{theoremindestructibility}.

\begin{corollary}\label{GCHoninterval}
Suppose $\GCH$ holds and $\kappa$ is $\theta$-tall with closure $\lambda$ where $\lambda\leq\theta$. Then there is a forcing extension in which $\kappa$ is $\theta$-tall with closure $\lambda$ and $2^\kappa=\lambda$.
\end{corollary}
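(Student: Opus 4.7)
My plan is to obtain the corollary as a direct application of Theorem \ref{theoremindestructibility}. First, using the reductions discussed just after Lemma \ref{generators}, I replace $\lambda$ by $\lambda^{<\kappa}$ if necessary so that I may assume without loss of generality that $\lambda^{<\kappa}=\lambda$ (and $\theta^\lambda=\theta$). By Lemma \ref{menaslemma}, pass to a fast-function extension $V[f]$ in which $\kappa$ is still $\theta$-tall with closure $\lambda$ and $j(f)(\kappa)>\theta$ for some witnessing embedding $j$. Now force with the lottery preparation $\P$ of $\kappa$ defined relative to $f$, yielding $V[G]$. By Theorem \ref{theoremindestructibility}, the $\theta$-tallness with closure $\lambda$ of $\kappa$ is now indestructible by ${<}\kappa$-directed closed forcing of size at most $\lambda$.

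Second, force over $V[G]$ with $\Q:=\Add(\kappa,\lambda)^{V[G]}$. This poset is ${<}\kappa$-directed closed. Since $\P$ has size $\kappa$ and is $\kappa^+$-c.c., $\GCH$ is preserved at cardinals $\geq\kappa$ in passing from $V$ to $V[G]$; combined with the standing assumption $\lambda^{<\kappa}=\lambda$, this gives $|\Q|^{V[G]}=\lambda^{<\kappa}=\lambda$. Hence Theorem \ref{theoremindestructibility} applies directly, and $\kappa$ remains $\theta$-tall with closure $\lambda$ in $V[G][H]$, where $H$ is $V[G]$-generic for $\Q$. On the continuum side, the $\lambda$ Cohen-generic subsets of $\kappa$ give $2^\kappa\geq\lambda$ in $V[G][H]$, and the $\kappa^+$-c.c.\ of $\Q$ together with the nice-name count $|\Q|^\kappa=\lambda^\kappa=\lambda$ yield the reverse inequality, so $2^\kappa=\lambda$ in the final model.

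The main obstacle is really just the cardinal bookkeeping ensuring $\lambda^{<\kappa}=\lambda=\lambda^\kappa$ in $V[G]$, so that both the hypothesis of Theorem \ref{theoremindestructibility} (namely $|\Q|\leq\lambda$) and the nice-name count for $2^\kappa$ come out correctly; this is handled by the WLOG passage to $\lambda^{<\kappa}$ at the start and the preservation of $\GCH$ above $\kappa$ by $\P$. Once these bookkeeping points are in hand, no new forcing argument is needed beyond the indestructibility already established in Theorem \ref{theoremindestructibility}.
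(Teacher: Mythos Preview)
Your proposal is correct and is exactly the approach the paper intends: the paper gives no explicit proof and simply remarks that the corollary follows directly from Theorem~\ref{theoremindestructibility}, which is precisely what you do with $\Q=\Add(\kappa,\lambda)^{V[G]}$. The only minor quibble is that your nice-name bound $\lambda^\kappa=\lambda$ tacitly uses $\cf(\lambda)>\kappa$ (not just $\lambda^{<\kappa}=\lambda$), but this is forced anyway by K\"onig's theorem once one wants $2^\kappa=\lambda$, so it is really an implicit hypothesis of the corollary rather than a gap in your argument.
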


\section{Proof of Theorem \ref{theoremdegsupcomp}(2) - (4)}\label{mainproofsection}

Let me now argue that the equiconsistencies in the forward directions in Theorem \ref{theoremdegsupcomp}(2) - (4) are actually implications. For Theorem \ref{theoremdegsupcomp}(2), suppose $j:V\to M$ witnesses that $\kappa$ is $\lambda$-supercompact and $2^\kappa\geq\theta$. Since $j(\kappa)$ is inaccessible in $M$ it follows that $\theta\leq2^\kappa\leq (2^\kappa)^M<j(\kappa)$. Hence $j$ is a $\theta$-tallness embedding with closure $\lambda$. The forward directions in Theorem \ref{theoremdegsupcomp}(3) and (4) are similar.

It remains to prove the backward directions of Theorem \ref{theoremdegsupcomp}(2) - (4). To do this, start with an embedding $j:V\to M$ witnessing the $\theta$-tallness with closure $\lambda$ of $\kappa$, force to violate $\GCH$ at either $\kappa$ or $\lambda$, and then lift the embedding to the forcing extension. In order to lift the embedding, I will use Woodin's method of surgery to modify a certain generic $g$ to obtain $g^*$ with the pullback property $j"H\subseteq g^*$. The following lemma, due to Woodin, will imply that $g^*$ is still a generic filter.

\begin{keylemma}\label{surgery}
Suppose $N$ and $M$ are transitive inner models of $\ZFC$ and $j:N\to M$ is a nontrivial elementary embedding with critical point $\kappa$ that is continuous at regular cardinals $\geq\lambda^+$ where $\lambda\geq\kappa$. If $A\in M$ is such that $|A|^M\leq j(\lambda)$ then $|A\cap \ran(j)|^V\leq\lambda$.
\end{keylemma}

\begin{proof}
Let $j:N\to M$ and $A\in M$ be as above; that is, $|A|\leq j(\lambda)$. 

First I will argue that it suffices to consider the case where $A$ is a set of ordinals. Let $\vec{B}:=\langle b_\alpha\mid\alpha<\beta\rangle\in N$ be a sequence of length $\beta$ such that $A\subseteq \ran(j(\vec{B}))$; for example, $\vec{B}$ could be an enumeration of some sufficiently large $V_\theta^N$ so that $j(\vec{B})$ is an enumeration of $V_{j(\theta)}^M$. Clearly $j(\vec{B})$ is a sequence of length $j(\beta)$ in $M$, write $j(\vec{B})=\langle b'_\alpha\mid\alpha<j(\beta)\rangle$. Let $A_0=\{\alpha<j(\beta)\mid b'_\alpha\in A\}$. Then $A_0\in M$ and $|A_0|^M=|A|^M$. Clearly $b'_\alpha\in\ran(j)$ if and only if for some $\xi<\beta$ it holds that $b'_\alpha=j(b_\xi)=b'_{j(\xi)}$. In other words, $b'_\alpha\in \ran(j)$ if and only if $\alpha\in\ran(j)$. It follows that $|A\cap\ran(j)|^V=|A_0\cap\ran(j)|^V$, and hence it will suffice to consider the case in which $A$ is a set of ordinals.

Suppose $A\in M$ is a set of ordinals with $|A|^M\leq j(\lambda)$ and $|A\cap\ran(j)|^V\geq\lambda^+$. Then $A$ contains $\lambda^+$-many elements of the form $j(\alpha)$. That is, $A$ contains elements of the form $j(\beta_\alpha)$ where $\alpha<\lambda^+$ and $\langle\beta_\alpha\mid\alpha<\lambda^+\rangle\in V$ is a strictly increasing sequence of ordinals which is not necessarily in $N$ since it was defined using $A\in M$. Now let $\delta=\sup\langle\beta_\alpha\mid\alpha<\lambda^+\rangle$. It follows that $\cf(\delta)^V=\lambda^+$ and hence $\cf(\delta)^N\geq\lambda^+$. By elementarity this implies that $\cf(j(\delta))^M\geq j(\lambda^+)$. Since $j$ is continuous at regular cardinals $\geq\lambda^+$, and thus at $\cf(\delta)^N$, it follows that $A$ contains unboundedly many $j(\beta_\alpha)$ less than $j(\delta)$. So in $M$, $A$ is unbounded in $j(\delta)$, but this implies that $|A|^M\geq j(\lambda^+)$ which contradicts our assumption that $|A|^M\leq j(\lambda)$.
\end{proof}


The following theorem suffices to finish the proof of Theorem\ref{theoremdegsupcomp}(2) - (4).

\begin{theorem}\label{alllambda}
For any cardinals $\kappa\leq\lambda\leq\theta$, if $\kappa$ is $\lambda$-supercompact and $\theta$-tall then there is a forcing extension in which $\kappa$ is $\lambda$-supercompact and $2^\kappa\geq\theta$; and hence also $2^\lambda\geq\theta$. Indeed, the forcing preserves cardinals on $[\kappa,\lambda^+]\cup (2^\lambda,\infty)$ and assuming $\GCH$ holds at $\lambda$, all cardinals $\geq\kappa$ are preserved.
\end{theorem}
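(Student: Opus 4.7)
The plan is to adapt Woodin's method of surgery to lift a tallness-with-closure embedding through Cohen forcing at $\kappa$. Since $\kappa$ is $\lambda$-supercompact and $\theta$-tall, Subsection~\ref{tallness} provides an embedding $j:V\to M$ witnessing that $\kappa$ is $\theta$-tall with closure $\lambda$. After the usual normalizations---assume $\theta^\lambda=\theta$ and $\lambda^{<\kappa}=\lambda$, and, if necessary, force $2^\lambda=\lambda^+$ by ${\leq}\lambda$-distributive forcing (the only possible cardinal collapses, on $(\lambda^+,2^\lambda]$, occur here and vanish if $\GCH$ holds at $\lambda$)---add a fast function $f$ via Lemma~\ref{menaslemma} so that $j$ lifts to $V[f]$ with $j(f)(\kappa)>\theta$. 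Then let $\P$ be the lottery preparation of $\kappa$ relative to $f$, let $\Q:=\Add(\kappa,\theta)^{V[G]}$, and force with $\P*\dot{\Q}$ to obtain $V[G][H]$. Clearly $2^\kappa\geq\theta$ in $V[G][H]$, so the task is to show that $\kappa$ remains $\lambda$-supercompact there.

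First I would lift $j$ through $\P$. By elementarity $j(\P)$ is the lottery preparation of $j(\kappa)$ relative to $j(f)$, and $\Q\in H(j(f)(\kappa)^+)^{M[G]}$ since $j(f)(\kappa)>\theta$, so $\Q$ appears in the stage-$\kappa$ lottery of $j(\P)$. Opting for $\Q$ at stage $\kappa$ factors $j(\P)\cong\P*\dot{\Q}*\dot{\P}_{tail}$ with $\dot{\P}_{tail}$ forced to be ${\leq}\theta$-closed. An $M[G][H]$-generic filter $G_{tail}$ for $\P_{tail}$ is constructed in $V[G][H]$ by the counting-plus-closure argument from the proof of Theorem~\ref{theoremindestructibility}, using that $M[G][H]^\lambda\subseteq M[G][H]$ in $V[G][H]$ (by Lemma~\ref{lemmachain} and the $\lambda^+$-c.c. of $\P*\dot{\Q}$). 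This yields $j:V[G]\to M[j(G)]$ with $j(G)=G*H*G_{tail}$, a class of $V[G][H]$, and $M[j(G)]$ remains $\lambda$-closed in $V[G][H]$.

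The heart of the proof is lifting $j$ through $\Q$. I aim for an $M[j(G)]$-generic filter $j(H)\subseteq j(\Q)=\Add(j(\kappa),j(\theta))^{M[j(G)]}$ with $j"H\subseteq j(H)$. First I would build an arbitrary $M[j(G)]$-generic $g\subseteq j(\Q)$ in $V[G][H]$ using high closure. Then I perform surgery: define $F^*:j(\theta)\times j(\kappa)\to 2$ by $F^*(\alpha,\beta)=(\bigcup H)(j^{-1}(\alpha),\beta)$ for $(\alpha,\beta)\in(j"\theta)\times\kappa$ and $F^*(\alpha,\beta)=(\bigcup g)(\alpha,\beta)$ otherwise, and let $g^*$ be the filter on $j(\Q)$ determined by $F^*$. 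By construction $j"H\subseteq g^*$, so once $g^*$ is shown $M[j(G)]$-generic I may set $j(H):=g^*$ and lift $j$ to $j:V[G][H]\to M[j(G)][j(H)]$ in $V[G][H]$.

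The main obstacle is verifying that $g^*$ is $M[j(G)]$-generic; this is precisely what the Key Lemma is designed for. Given a maximal antichain $A\in M[j(G)]$ of $j(\Q)$, the $j(\kappa)^+$-c.c.\ of $j(\Q)$ gives $|A|^{M[j(G)]}\leq j(\kappa)\leq j(\lambda)$, and the set $S\subseteq j(\theta)$ of first-coordinates appearing in some $r\in A$ with second coordinate below $\kappa$ lies in $M[j(G)]$ with $|S|^{M[j(G)]}\leq j(\lambda)$. Since $j$ is continuous at regular cardinals $\geq\lambda^+$ (because $M^\lambda\subseteq M$), the Key Lemma applied to $j:V[G]\to M[j(G)]$ yields $|S\cap j"\theta|^{V[G][H]}\leq\lambda$, so the set of coordinates where the surgery could clash with any element of $A$ has size at most $\lambda$. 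A density argument exploiting this smallness, together with the genericity of $g$, will produce a condition $q\in g$ whose surgical modification lies below some element of $A$, giving $g^*\cap A\neq\emptyset$. Once this is verified, the lifted embedding witnesses $\lambda$-supercompactness in $V[G][H]$; cardinal preservation on $[\kappa,\lambda^+]\cup(2^\lambda,\infty)$ follows from $|\P|=\kappa$ and the $\kappa^+$-c.c.\ of $\Q$, and if $\GCH$ holds at $\lambda$ no preliminary collapsing is needed so all cardinals $\geq\kappa$ are preserved.
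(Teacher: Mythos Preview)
Your setup and the lift through $\P$ are correct, and your surgery-via-Key-Lemma argument is essentially the right mechanism. The genuine gap is the sentence ``First I would build an arbitrary $M[j(G)]$-generic $g\subseteq j(\Q)$ in $V[G][H]$ using high closure.'' This cannot be done. Although $j(\Q)$ is ${<}j(\kappa)$-closed (hence ${\leq}\theta$-closed) in $M[j(G)]$, in $V[G][H]$ you only have $M[j(G)]^\lambda\subseteq M[j(G)]$, so $j(\Q)$ is only ${\leq}\lambda$-closed there. Meanwhile the dense subsets of $j(\Q)$ in $M[j(G)]$ are represented as $j(h)(j"\lambda,\alpha)$ with $h:P_\kappa\lambda\times\kappa\to\{\text{dense subsets of }\Q\}$; even after using ${\leq}\theta$-closure in $M[j(G)]$ to handle all $\alpha\leq\theta$ for a fixed $h$ at once, you must still diagonalize over the functions $h$, and since $|\Q|=\theta$ there are $(2^\theta)^\lambda=2^\theta$ of them. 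When $\theta>\lambda$ this exceeds the available closure, so the descending sequence cannot be built in $V[G][H]$.

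The paper's fix is to obtain $g$ by further forcing rather than by diagonalization. One factors $j$ through the single-measure ultrapower $j_0:V[G]\to M_0[j_0(G)]$ generated by the seed $(j"\lambda,\theta)$, with factor map $k:M_0[j_0(G)]\to M[j(G)]$, and then \emph{forces} with $j_0(\Q)$ over $V[G][H]$ to obtain $g_0$. One checks that $k"g_0$ generates an $M[j(G)]$-generic $g$ for $j(\Q)$, performs surgery exactly as you describe to get $g^*$, and lifts $j$ to $j:V[G][H]\to M[j(G)][g^*]$---but this lift lives in $V[G][H][g_0]$, not $V[G][H]$. One must then lift once more through the $g_0$ forcing (using that $j_0(\Q)$ is ${\leq}\lambda$-distributive in $V[G][H]$ and Lemma~\ref{lemmalambdadist}), and finally argue that $j_0(\Q)$ is $\lambda^{++}$-c.c.\ in $V[G][H]$ so that cardinals are preserved and $2^\kappa\geq\theta$ survives into $V[G][H][g_0]$. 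Your outline omits this entire additional layer; the final model is $V[G][H][g_0]$, not $V[G][H]$, and the nontrivial chain-condition argument for $j_0(\Q)$ is what secures the cardinal-preservation clause of the theorem.
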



In the following proof of Theorem \ref{alllambda}, I will use Woodin's method of surgery referred to just before the Key Lemma above.

\begin{proof}[Proof of Theorem \ref{alllambda}]\

\par

\subsection{Setup}

Let $\kappa$ be $\lambda$-supercompact and $\theta$-tall. As before, by the remarks at the end of Subsection \ref{tallness}, one may assume without loss of generality that $\lambda^{{<}\kappa}=\lambda$ and $\theta^\lambda=\theta$. One may further assume that $2^\lambda=\lambda^+$ since the forcing to achieve this is $\leq\lambda$-distributive and thus preserves the $\lambda$-supercompactness and $\theta$-tallness of $\kappa$. By Lemma \ref{generators}, there is an elementary embedding $j:V\to M$ with $\cp(j)=\kappa$, $j(\kappa)>\theta$, $M^\lambda\subseteq M$, and 
$$M=\{j(h)(j"\lambda,\alpha)\mid\alpha\leq\theta \textrm{ and } h:P_\kappa\lambda\times\kappa\to V\textrm{ is a function in } V\}.$$ 
By Lemma \ref{menaslemma}, one can assume without loss of generality that there is a partial function $f\subseteq\kappa\times\kappa$ in $V$ such that $j(f)(\kappa)>\theta$. Let $\P$ be the lottery preparation relative to $f$. Let $G\subseteq\P$ be $V$-generic and let $\Q=\Add(\kappa,\theta)^{V[G]}$. Let $H\subseteq \Q$ be $V[G]$-generic. Notice that since $\P$ has size $\kappa$ and $\Q$ is $\kappa^+$-c.c. it follows that $\P*\Q$ is $\kappa^+$-c.c., and thus by Lemma \ref{lemmachain} that $M[G][H]$ is closed under $\lambda$-sequences in $V[G][H]$.

\subsection{Lifting $j$ Through the Lottery Preparation}

By elementarity $j(\P)$ is the lottery preparation of length $j(\kappa)$ relative to $j(f)$ as defined in $M$. Since $M$ is closed under $\lambda$-sequences in $V$ it follows that the iterations $\P$ and $j(\P)$ agree up to stage $\kappa$ and since $\Q\in M[G]$ is ${<}\kappa$-closed it appears in the stage $\kappa$ lottery in $j(\P)$. Hence $j(\P)$ factors below a condition $p\in j(\P)$ that opts for $\Q$ at stage $\kappa$ as $j(\P)\restrict p\cong \P*\Q*\P_{tail}$. Since $j(f)(\kappa)>\theta$ it follows that the next nontrivial stage of forcing in $j(\P)$ is beyond $\theta$ and hence that $\P_{tail}$ is a term for ${\leq}\theta$-closed forcing. As in the proof of Theorem \ref{theoremindestructibility} it will be shown that one may construct a descending $\lambda^+$-sequence of conditions in $V[G][H]$ that meets every dense subset of $\P_{tail}$ in $M[G][H]$. Let $D$ be a dense subset of $\P_{tail}$ in $M[G][H]$ and let $\dot{D}\in M$ be a nice $\P*\Q$-name for $D$. Then $D=j(h_{\dot{D}})(j"\lambda,\alpha)_{G*H}$ for some $\alpha\leq\theta$ and some function $h_{\dot{D}}$ with domain $P_\kappa\lambda\times\kappa$ and range contained in the set of nice names for dense subsets of a tail of $\P$. Since the sequence of names $\langle j(h_{\dot{D}})(j"\lambda,\alpha)\mid\alpha\leq\theta\rangle$ is in $M$ and $\P_{tail}$ is $\leq\theta$-closed in $M[G][H]$, there is a condition in $\P_{tail}$ meeting every dense set mentioned by the sequence $\langle j(h_{\dot{D}})(j"\lambda,\alpha)\mid\alpha\leq\theta\rangle$. Since there are ${\leq}\lambda^+$-many functions from $P_\kappa\lambda\times\kappa$ to the set of nice names for dense subsets of a tail of $\P$, it follows from the fact that $M[G][H]$ is closed under $\lambda$-sequences in $V[G][H]$ that one can construct a descending $\lambda^+$-sequence in $V[G][H]$ that meets each dense subset of $\P_{tail}$ in $M[G][H]$ as in the proof of Theorem \ref{theoremindestructibility}. Let $G_{tail}$ be the $M[G][H]$-generic filter generated by this descending sequence. Then the embedding lifts in $V[G][H]$ to $j:V[G]\to M[j(G)]$ where $j(G)=G*H*G_{tail}$ and since $G_{tail}\in V[G][H]$ it follows from Lemma \ref{lemmaground} that  $M[j(G)]$ is closed under $\lambda$-sequences in $V[G][H]$.


\subsection{The Factor Diagram}

Let $X=\{j(h)(j"\lambda,\theta)\mid h:P_\kappa\lambda\times\kappa\to V[G] \textrm{ where $h\in V[G]$}\}$. Then it follows that $X\elesub M[j(G)]$. Let $k:M_0'\to M[j(G)]$ be the inverse of the Mostowski collapse $\pi:X\to M_0'$ and let $j_0:V[G]\to M_0'$ be defined by $j_0:=k^{-1}\circ j$. It follows that $j_0$ is the ultrapower embedding by the measure $\mu:=\{X\subseteq P_\kappa\lambda\times\kappa\mid (j"\lambda,\theta)\in j(X)\}$ where $\mu\in V[G][H]$. By elementarity, $M_0'$ is of the form $M_0[j_0(G)]$, where $M_0\subseteq M_0'$ and $j_0(G)\subseteq j_0(\P)\in M_0'$ is $M_0$-generic. Furthermore, $j_0(G)=G*H_0*G^0_{tail}$ where $H_0$ is $M_0[G]$-generic for $\Add(\kappa,\pi(\theta))^{M_0[G]}$ and $G^0_{tail}$ is $M[G][H_0]$-generic for the tail of the iteration $j_0(\P)$ above $\kappa$. The following diagram is commutative.
$$
\xymatrix{
V[G] \ar[r]^j \ar[rd]_{j_0} & M[j(G)] \\
						 & M_0[j_0(G)] \ar[u]_k \\
}
$$
It follows that $j_0$ is a class of $V[G][H_0]$ and that $M_0[j_0(G)]$ is closed under $\lambda$-sequences in $V[G][H_0]$ and that $j_0(\kappa)>\pi(\theta)$.


\subsection{Outline of the Rest of the Proof}\label{outline}

One would like to lift $j$ through the stage $\kappa$ forcing, $\Q$. This cannot be accomplished using a master condition argument since $\lambda$ may be less than $|j"H|^{V[G]}=\theta$. In order to lift the embedding, force with $j_0(\Q)$ over $V[G][H]$ to obtain a generic $g_0$ for $j_0(\Q)$. In Subsection \ref{obtainingsection}, I will argue that $k"g_0$ generates an $M[j(G)]$-generic $g$ for $j(\Q)$. However, one has no reason to expect that $j"H\subseteq g$, and thus one needs to do more work in order to lift the embedding. In Subsection \ref{surgerysection} I will use Woodin's method of surgery to modify the filter $g$ to obtain an $M[j(G)]$-generic $g^*$ for $j(\Q)$ with $j"H\subseteq g^*$. Then the embedding lifts to 
\begin{align}
j:V[G][H]\to M[j(G)][g^*]\label{surgerylift}
\end{align}
in $V[G][H][g_0]$ where $j(H)=g^*$.

The embedding (\ref{surgerylift}) does not witness that $\kappa$ is $\lambda$-supercompact in $V[G][H]$ because the embedding is a class of $V[G][H][g_0]$. Under the assumption that the embedding lifts to $V[G][H]$ as in (\ref{surgerylift}), I will now show that the embedding lifts further to the final model $V[G][H][g_0]$ witnessing that $\kappa$ is $\theta$-tall and $\lambda$-supercompact in $V[G][H][g_0]$. Furthermore, I will show in Subsection \ref{preservescardinals} that 
\begin{align}
(2^\kappa\geq\theta)^{V[G][H][g_0]}.\label{GCHresultsalllambda}
\end{align}

Let me first argue, assuming that $g^*$ is as above, that $M[j(G)][g^*]$ is closed under $\lambda$-sequences in $V[G][H][g_0]$. I will now show that $j_0(\Q)$ is ${\leq}\lambda$-distributive in $V[G][H]$. Since $j_0(\kappa)>\lambda$ it follows that $j_0(\Q)$ is ${\leq}\lambda$-closed in $M_0[j_0(G)]$. Since $M_0[j_0(G)]$ is closed under $\lambda$-sequences in $V[G][H_0]$ it follows that $j_0(\Q)$ is ${\leq}\lambda$-closed in $V[G][H_0]$ and since ${\leq}\lambda$-closed forcing remains ${\leq}\lambda$-distributive in $\lambda^+$-c.c. forcing extensions, it follows that $j_0(\Q)$ is ${\leq}\lambda$-distributive in $V[G][H]$. Since $M[j(G)]$ is closed under $\lambda$-sequences in $V[G][H]$ and $j_0(\Q)$ is ${\leq}\lambda$-distributive in $V[G][H]$ it easily follows that $M[j(G)]$ is closed under $\lambda$-sequences in $V[G][H][g_0]$. Since $g^*$ is constructed from $g_0$ one concludes that $g^*\in V[G][H][g_0]$, and from this it follows that $M[j(G)][g^*]$ is closed under $\lambda$-sequences of ordinals in $V[G][H][g_0]$. By using a well ordering of a sufficient initial segment of the universe $M[j(G)][g^*]$, it follows that $M[j(G)][g^*]$ is closed under $\lambda$-sequences in $V[G][H][g_0]$.

Now I show that the embedding (\ref{surgerylift}) lifts through $j_0(\Q)$. Every element of $M[j(G)][g^*]$ is of the form $j(h)(j"\lambda,\alpha)$ where $h:P_\kappa\lambda\times\kappa\to V[G][H]$ is in $V[G][H]$ and $\alpha\leq\theta$. In other words, $M[j(G)][g^*]$ is generated by $\{(j"\lambda,\alpha)\mid\alpha\leq\theta\}\subseteq P_\kappa\lambda\times\kappa$ over $V[G][H]$. Since $P_\kappa\lambda\times\kappa$ has size $\lambda$ and $j_0(\Q)$ is ${\leq}\lambda$-distributive, it follows by Lemma \ref{lemmalambdadist} that $j"g_0$ generates an $M[j(G)][g^*]$-generic filter $j(g_0)$ for the poset $j(j_0(\Q))$. Thus $j$ lifts in $V[G][H][g_0]$ to $$j:V[G][H][g_0]\to M[j(G)][g^*][j(g_0)].$$
Since $j(g_0)\in V[G][H][g_0]$ and $M[j(G)][g^*]$ is closed under $\lambda$-sequences in $V[G][H][g_0]$ it follows that $M[j(G)][g^*][j(g_0)]$ is closed under $\lambda$-sequences in $V[G][H][g_0]$. Since $j$ is a lift of the original embedding it still satisfies $j(\kappa)>\theta$. Hence $j$ witnesses that $\kappa$ is $\theta$-tall with closure $\lambda$ in $V[G][H][g_0]$.

To complete the proof of Theorem \ref{alllambda} it remains to carry out the surgery argument and to show that (\ref{GCHresultsalllambda}) holds.

\subsection{Obtaining the Generic for Use in Surgery}\label{obtainingsection}

Let $g_0$ be as in Subsection \ref{outline}; that is, $g_0$ is $V[G][H]$-generic for $j_0(\Q)$. In this Subsection I will argue that $k"g_0$ generates an $M[j(G)]$-generic for $j(\Q)$. Each $x\in M[j(G)]$ is of the form $x=j(h)(j"\lambda,\alpha)$ for some $\alpha\leq\theta$ and some $h:P_\kappa\lambda\times\kappa\to V[G]$ with $h\in V[G]$. Since $j_0"\lambda\in M_0$ it follows that each $x\in M[j(G)]$ is of the form $k(h)(\alpha)$ for some $\alpha\leq\theta$ and some $h:j_0(\kappa)\to M_0[j_0(G)]$ with $h\in M_0[j_0(G)]$; in fact, since $k(h\restrict\pi(\theta))$, where $\pi(\theta)$ is the collapse of $\theta$, still has every $\alpha\leq\theta$ in its domain, one may assume that $h:\pi(\theta)\to M_0$. Let $D$ be an open dense subset of $j(\Q)$ in $M[j(G)]$. Then $D=k(\vec{D})(\alpha)$ for some fixed $\alpha\leq\theta$ where $\vec{D}=\langle D_\beta \mid \beta<\pi(\theta)\rangle$ is a sequence of dense open subsets of $j_0(\Q)$. Since $j_0(\Q)$ is ${\leq}\pi(\theta)$-distributive one sees that $\bar{D}:=\bigcap_{\beta<\pi(\theta)}(D_\beta)$ is open dense in $j_0(\Q)$. Hence there is a condition $p\in g_0\cap \bar{D}$. Then $k(p)\in k"g_0\cap k(\bar{D})$. Now $\bar{D}\subseteq D_\beta=\vec{D}(\beta)$ for each $\beta<\pi(\theta)$ and this implies $k(\bar{D})\subseteq k(\vec{D})(\beta)$ for each $\beta<k(\pi(\theta))=\theta$. It follows that 
$$ k(\bar{D})\subseteq k(\vec{D})(\alpha)=D$$
and hence $k(p)\in k"g_0\cap k(\bar{D})\subseteq D$. Therefore $k"g_0$ generates an $M[j(G)]$-generic filter for $j(\Q)$.





\subsection{Surgery}\label{surgerysection}

From the work above, $g$ is $M[j(G)]$-generic for $j(\Q)$. I now use Woodin's method of surgery to obtain an $M[j(G)]$-generic $g^*$ for $j(\Q)$ with $j"H\subseteq g^*$. Define $g^*$ in terms of $g$ and $j"H$ in the following way. Let $\Delta$ be the set of coordinates $(\alpha,\beta)\in j(\theta)\times j(\kappa)$ such that there is a $p\in H$ such that $(\alpha,\beta)\in\dom(p)$ and $j(p)(\alpha,\beta)\neq g(\alpha,\beta)$ and let $\pi: j(\Q)\to j(\Q)$ be the automorphism induced by flipping bits over coordinates in $\Delta$. Let $g^*:=\pi"g$. In other words, one obtains the modified generic $g^*$ simply by using $g$, except that whenever $g$ and $j"H$ disagree, one changes $g$ to match $j"H$.


\begin{figure}
\centering
\begin{tikzpicture}


\filldraw[thick,fill=gray!30] (0, 1.5) node [left] {$\kappa$} --  (1.5,1.5) -- (1.5,0) node [below] {$\kappa$} -- (0,0) -- (0,1.5);
\filldraw[thick,fill=gray!30] (2.5,0) node [below] {$j(\kappa)$} -- (2.5,1.5) -- (3.2,1.5) -- (3.2,0);
\filldraw[thick,fill=gray!30] (3.8,0) -- (3.8,1.5) -- (4.5,1.5) -- (4.5,0);


\node at ( .75,.75) [circle,draw=none] {$j"H$};
\node at ( 2,2) [circle,draw=none] {$g$};

\draw[thick] (0, 2.5) -- (5, 2.5) -- (5,0) node [below] {$j(\theta)$} -- (0,0) -- (0,2.5)
	node [left] {$j(\kappa)$};

\end{tikzpicture}\caption{The domain of $j(\Q)$.}\label{figuredomain1}
\end{figure}


Since $j$ is continuous at regular cardinals $\geq\lambda^+$ the key lemma applies and will be used to show that $g^*$ is a generic filter on $j(\Q)$. First note that if $p\in j(\Q)$ then $|p|^{M[j(G)]}<j(\kappa)$ and so the set of coordinates on which $p^*:=\pi(p)\neq p$ has size $\leq\lambda$ by the key lemma and is thus in $M[j(G)]$ since $M[j(G)]^\lambda\subseteq M[j(G)]$ in $V[G][H][g_0]$. This implies that $p^*\in M[j(G)]$ and thus that $g^*$ defines a filter in $M[j(G)]$.

Now I show that $g^*$ is $M[j(G)]$-generic for $j(\Q)$. Let $A\subseteq j(\Q)$ be a maximal antichain in $M[j(G)]$. Since $j(\Q)$ has the $j(\kappa)^+$-c.c. it follows that $|A|^{M[j(G)]}\leq j(\kappa)$. Furthermore, each $p\in A$ has $|p|^{M[j(G)]}<j(\kappa)$. Hence $|\bigcup_{p\in A}\dom(p)|^{M[j(G)]}\leq j(\kappa)$. By the key lemma, the set of coordinates mentioned by conditions in $A$ that were involved in the changes made in going from $g$ to $g^*$ has size $\leq \lambda$, call this set $\Delta_A$. In other words, $\Delta_A:=\Delta\cap\left(\bigcup_{p\in A}\dom(p)\right)$. Let $\pi_A:j(\Q)\to j(\Q)$ be the automorphism induced by flipping bits over coordinates in $\Delta_A$. The coordinates of bits that get flipped by $\pi_A$ are contained in the domain of the antichain (see the shaded region in the figure below).

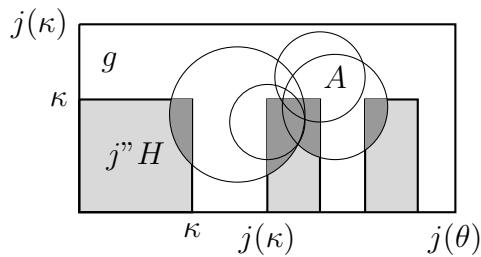
\begin{figure}
\centering
\begin{tikzpicture}


\def\recta{(0, 1.5) node [left] {$\kappa$} --  (1.5,1.5) -- (1.5,0) node [below] {$\kappa$} -- (0,0) -- (0,1.5)}
	
\def\rectb{(2.5,0) node [below] {$j(\kappa)$} -- (2.5,1.5) -- (3.2,1.5) -- (3.2,0)}

\def\rectc{(3.8,0) -- (3.8,1.5) -- (4.5,1.5) -- (4.5,0)}

\filldraw[thick,fill=gray!30] \recta;

\filldraw[thick,fill=gray!30] \rectb;

\filldraw[thick,fill=gray!30] \rectc;


\node at ( .75,.75) [circle,draw=none] {$j"H$};
\node at ( 0.4,2) [circle,draw=none] {$g$};

\draw[thick] (0, 2.5) -- (5, 2.5) -- (5,0) node [below] {$j(\theta)$} -- (0,0) -- (0,2.5)
	node [left] {$j(\kappa)$};


\def\circlea{(3.4,1.4) circle (0.7)}
\def\circleb{(2.6,1.5) circle (0.5)}
\def\circlec{(2.5,1.2) circle (0.5)}
\def\circled{(2.1,1.3) circle (0.9)}
\def\circlee{(3.2,1.8) circle (0.6)}

\begin{scope}
\clip \circled;
\fill[gray!80] \recta;
\end{scope}

\begin{scope}
\clip \circled;
\fill[gray!80] \rectb;
\end{scope}

\begin{scope}
\clip \circlea;
\fill[gray!80] \rectb;
\end{scope}

\begin{scope}
\clip \circlea;
\fill[gray!80] \rectc;
\end{scope}

\draw \circlea;
\draw \circlec;
\draw \circled;
\draw \circlee;

\node at ( 3.4,1.8) [circle,draw=none] {$A$};

\end{tikzpicture}\caption{The domain of $A$.}\label{figuredomain2}
\end{figure}

Since $|\Delta_A|\leq\lambda$ one has $\Delta_A\in M[j(G)]$ and it follows that $\pi_A\in M[j(G)]$. Then $\pi_A^{-1}"A$ is a maximal antichain of $j(\Q)$ and by genericity of $g$ there is a condition $p\in  g$ that decides $\pi_A^{-1}"A$. It follows that $\pi(p)\in g^*$  decides $A$ since $\pi"A = \pi_A"A$. This establishes that $g^*$ is $M[j(G)]$-generic for $j(\Q)$.



Since $j"H\subseteq g^*$ was arranged by definition, one may use Lemma \ref{lemmaliftingcriterion} to lift the embedding to $j:V[G][H]\to M[j(G)][j(H)]$ where $j(H)=g^*$. Since $g_0$ was used to define $g^*$, this lift is a class of $V[G][H][g_0]$. It was shown above (in the outline given in Section \ref{outline}) that the embedding lifts further through the $g_0$ forcing. So all that remains is to show that $j_0(\Q)$ preserves cardinals and that $2^\kappa\geq\theta$ in $V[G][H][g_0]$.

\subsection{Preserving $2^\kappa\geq\theta$ in $V[G][H][g_0]$}\label{preservescardinals}

I have already argued that $j_0(\Q)$ is $\leq\lambda$-distributive in $V[G][H]$ and I will now argue that $j_0(\Q)$ is $\lambda^{++}$-c.c. From this it follows that $j_0(\Q)$ preserves cardinals over $V[G][H]$ and $2^\kappa\geq\theta$ in $V[G][H]$. Each condition $p\in j_0(\Q)$ is in $M_0[j_0(G)]$ and is thus of the form $p=j_0(h_p)(j_0"\lambda,\theta)$ for some $h_p:P_\kappa\lambda\times\kappa\to \Q$ with $h\in V[G]$. For each $p\in j_0(\Q)$, $\dom(h_p)$ has size $\lambda$ in $V[G]$ and thus $h_p$ leads to a function $\bar{h}_p:\lambda\to \Q$, which can be viewed as a condition in the full support product of $\lambda$-many copies of $\Q$ as defined in $V[G]$, which I denote by $\overline{\Q}$. I will show that $j_0(\Q)$ is $\lambda^{++}$-c.c. in $V[G][H]$ by arguing that $\bar{\Q}$ is $\lambda^{++}$-c.c. in $V[G][H]$ and that an antichain of $j_0(\Q)$ of size $\lambda^{++}$ in $V[G][H]$ would lead to an antichain of $\bar{\Q}$ of size $\lambda^{++}$ in $V[G][H]$.

\begin{claim}\label{Qbarlemma}
$\bar{\Q}$ is $\lambda^{++}$-c.c. in $V[G][H]$
\end{claim}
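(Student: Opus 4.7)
The plan is to use a $\Delta$-system argument in $V[G]$ to establish the stronger property that $\bar\Q$ is $\lambda^{++}$-Knaster there, and then to transfer this into $\lambda^{++}$-c.c.\ in $V[G][H]$ via a product argument against $\Q$.

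First I will verify the cardinal arithmetic in $V[G]$. Since the lottery preparation $\P$ has size $\kappa\le\lambda$ and $V$ satisfies $2^\lambda=\lambda^+$, a nice-name count gives $(2^\lambda)^{V[G]}\le\kappa^\lambda=\lambda^+$, and hence $(2^\lambda)^{V[G]}=\lambda^+$. It follows that $\mu^\lambda\le\lambda^+$ for every $\mu\le\lambda^+$, which is precisely the hypothesis needed to apply the generalized $\Delta$-system lemma in $V[G]$ to any family of $\lambda^{++}$-many sets of size at most $\lambda$.

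Next I will apply the lemma to $\bar\Q$. A condition $\bar h\in\bar\Q$ can be coded as a partial function on $\lambda\times\theta\times\kappa$ whose total domain has size at most $\lambda\cdot\kappa=\lambda$, so any $\lambda^{++}$-sized family of conditions contains a $\lambda^{++}$-sized $\Delta$-subfamily whose total domains share a common root $R$ with $|R|\le\lambda$. A further refinement to the conditions that agree on $R$---legal because there are only $2^{|R|}\le\lambda^+$ possibilities---produces a $\lambda^{++}$-sized pairwise compatible subfamily. Hence $\bar\Q$ is $\lambda^{++}$-Knaster in $V[G]$.

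Finally, because $\Q$ is $\kappa^+$-c.c., and therefore $\lambda^{++}$-c.c., in $V[G]$, the standard product lemma (Knaster $\times$ c.c.\ $=$ c.c.)\ yields that $\Q\times\bar\Q$ is $\lambda^{++}$-c.c.\ in $V[G]$. A putative $\lambda^{++}$-antichain $\langle \dot{\bar h}_i : i<\lambda^{++}\rangle$ of $\bar\Q$ in $V[G][H]$ can then be pulled back by choosing for each $i$ a condition $q_i\in\Q$ and a value $\bar h'_i\in\bar\Q$ with $q_i\Vdash\dot{\bar h}_i=\check{\bar h}'_i$; a routine argument shows that $\{(q_i,\bar h'_i) : i<\lambda^{++}\}$ is an antichain in $\Q\times\bar\Q$ in $V[G]$, contradicting the product c.c. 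The subtlest point is keeping the total support of a condition in $\bar\Q$ bounded by $\lambda$ rather than by $\lambda^+$ or larger, which is what permits the $\Delta$-system lemma to be invoked at the level $\lambda^{++}$; this bound depends on $\kappa\le\lambda$ together with the fact that each coordinate of $\bar h$ contributes support of size strictly less than $\kappa$.
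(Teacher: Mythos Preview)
Your proof is correct and follows essentially the same line as the paper's: establish the chain condition for $\bar\Q$ in $V[G]$ by a $\Delta$-system argument, then pull back a hypothetical large antichain from $V[G][H]$ to an antichain in $\Q\times\bar\Q$ in $V[G]$ by choosing deciding conditions. The only cosmetic difference is that the paper, instead of invoking ``Knaster $\times$ c.c.\ is c.c.'', uses the isomorphism $\bar\Q\cong\Q\times\bar\Q$ (valid since $\bar\Q$ is a full-support product of $\lambda$-many copies of $\Q$) to transfer the $\lambda^{++}$-c.c.\ of $\bar\Q$ directly to $\Q\times\bar\Q$; your Knaster observation accomplishes the same thing and comes for free from the $\Delta$-system argument anyway. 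One small point worth making explicit in your pull-back step: the conditions $q_i$ should all be chosen below a single $q\in H$ which forces the sequence $\langle\dot{\bar h}_i\rangle$ to enumerate an antichain, so that compatibility of $(q_i,\bar h'_i)$ with $(q_j,\bar h'_j)$ genuinely yields a contradiction.
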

\begin{proof}[Proof of claim]

By a delta system argument $\bar{\Q}$ is $\lambda^{++}$-c.c. in $V[G]$. Suppose $A\in V[G][H]$ is an antichain of $\bar{\Q}$ with $|A|=\delta$. I will show that $A$ leads to an antichain of size $\delta$ of $\bar{\Q}\cong \Q\times\bar{\Q}$ in $V[G]$ and thus that $\delta<\lambda^{++}$. Let 
$$q\forces \dot{A}\textrm{ is an antichain of $\bar{\Q}$ and }\dot{f}:\delta\to \dot{A}\textrm{ is bijective}$$ where $q\in \Q\cap H$ and $\dot{A}_H=A$. For each $\alpha<\delta$ let $q_\alpha\leq q$ be such that $q_\alpha\forces \dot{f}(\check{\alpha})=\check{p}_\alpha$ where $p_\alpha\in \bar{\Q}$. It follows that $\bar{\Q}\cong\Q\times\bar{\Q}$ in $V[G]$ and I now show that $W:=\{(q_\alpha,p_\alpha)\in \Q\times\bar{\Q}\mid\alpha<\delta\}$ is an antichain of size $\delta$ of $\Q\times\bar{\Q}$ in $V[G]$. Clearly $W\in V[G]$ because in choosing the pairs $(q_\alpha,p_\alpha)$ in $W$ one only needs to use the forcing relation $\forces_{\Q}$. Suppose for a contradiction that $W$ is not an antichain, i.e. that $(q^*,p^*)\leq(q_\alpha,p_\alpha),(q_\beta,p_\beta)$ for some $\alpha,\beta<\delta$ with $\alpha\neq\beta$ and some $(q^*,p^*)\in \Q\times\bar{\Q}$. Let $H^*$ be $V[G]$-generic for $\Q$ with $q^*\in H^*$. Since $q^*\leq q$ it follows that $\dot{f}_{H^*}$ enumerates an antichain. Furthermore, it follows that $\dot{f}_{H^*}(\alpha)=p_\alpha$, $\dot{f}_{H^*}(\beta)=p_\beta$, and $p^*\leq p_\alpha, p_\beta$, a contradiction. Hence one concludes that $W$ is an antichain of $\Q\times\bar{\Q}$ in $V[G]$ and since $\bar{\Q}\cong\Q\times\bar{\Q}$ it follows that $W$ leads to an antichain of $\bar{\Q}$ of size $\delta$ in $V[G]$. Therefore, $\delta<\lambda^{++}$. Hence, an antichain $A$ of $\bar{\Q}$ in $V[G][H]$ must have size $<\lambda^{++}$.
\end{proof}

Now I complete the proof of Theorem \ref{alllambda} by showing that $j_0(\Q)$ is $\lambda^{++}$-c.c. in $V[G][H]$. Suppose that in $V[G][H]$, $j_0(\Q)$ has an antichain $A$ of size $\delta$. For each $p\in A\subseteq j_0(\Q)$ let $h_p:P_\kappa\lambda\times\kappa\to \Q$ be such that $p=j_0(h_p)(j"\lambda,\theta)$ where $h_p\in V[G]$. As above each $h_p$ yields a condition in $\bar{\Q}$, call it $\bar{h}_p$. For $p,q\in A$ it follows that $j_0(h_p)(j"\lambda,\theta)\perp_{j_0(\Q)} j_0(h_q)(j"\lambda,\theta)$, and thus by elementarity, there is a $(\sigma,\alpha)\in P_\kappa\lambda\times\kappa$ such that $h_p(\sigma,\alpha)\perp h_q(\sigma,\alpha)$. This implies that $\bar{A}:=\{\bar{h}_p\mid p\in A\}$ is an antichain in $\bar{\Q}$ where $|\bar{A}|=\delta$. By Claim \ref{Qbarlemma}, $\delta<\lambda^{++}$. Thus $j_0(\Q)$ is $\lambda^{++}$-c.c. in $V[G][H]$.

Thus in $V[G][H][g_0]$, $\kappa$ is $\lambda$-supercompact and $\theta$-tall, and $2^\lambda\geq\theta$.

Let me argue that cardinals in $[\kappa,\lambda^+]\cup(2^\lambda,\infty)$ are preserved. I started with a model and forced $2^\lambda=\lambda^+$ which may have collapsed cardinals in $(\lambda^+, 2^\lambda]$. I then add a fast function using $\kappa^+$-c.c. forcing which preserves cardinals $\geq\kappa$. The remaining forcing is $\P*\Q*j_0(\Q)$ where $\P*\Q$ is $\kappa^+$-c.c. and $j_0(\Q)$ preserves cardinals over $V[G][H]$. Thus in the final model $V[G][H][g_0]$ cardinals are preserved in $[\kappa,\lambda^+]\cup (2^\lambda,\infty)$.
\end{proof}

%
%
%

\chapter{The Levinski Property}\label{chapterlevinski}

Levinski proved in \cite{Levinski:FiltersAndLargeCardinals}, from merely the existence of a measurable cardinal, say $\kappa$, that there is a forcing extension in which $\kappa$ remains measurable and yet $\kappa$ is the least regular cardinal at which $\GCH$ holds. This result is in contrast with Scott's well known theorem that if $\GCH$ fails at a measurable cardinal $\kappa$, then $\{\delta<\kappa\mid 2^\delta>\delta^+\}$ has measure one with respect to some normal measure on $\kappa$. In preparation for what follows let me give the following definition.

\begin{definition}
A cardinal $\kappa$ is said to have the \emph{Levinski property} if $\kappa$ is the least regular cardinal such that $2^\kappa=\kappa^+$. Additionally, $\kappa$ has the \emph{strict Levinski property} if $\kappa$ has the Levinski Property and both (1) for each regular cardinal $\delta<\kappa$ one has $2^\delta=\delta^{++}$ and (2) for each regular $\delta\geq\kappa$ one has $2^\delta=\delta^+$.
\end{definition}

Clearly, the strict Levinski property implies the Levinski property. I make the distinction because forcing the strict Levinski property requires controlling the continuum function in a more specific manner than forcing the Levinski property.

In this chapter, I recall a proof of Levinski's theorem suggested in \cite{ApterCummings:BlowingUpThePowerSetOf}, and I generalize Levinski's theorem to other large cardinal contexts. It will easily follow that one may force inaccessible cardinals, Mahlo cardinals, and weakly compact cardinals to have the Levinski property. Due to the fact that many of the stronger large cardinals are $\Sigma_2$-reflecting, it is impossible for them to have the Levinski property. For example, supercompact cardinals and huge cardinals are $\Sigma_2$-reflecting and thus cannot have the Levinski property. However, I will show that the proof of Levinski's theorem can be extended to show that, among other things, the degree of hugeness can be forced to have the Levinski property; more precisely, if $\kappa$ is an $n$-huge cardinal and $\GCH$ holds, I will show that there is a forcing extension in which $\kappa$ remains $n$-huge and $j^{n}(\kappa)$ has the strong Levinski property.

\section{Levinski's Theorem and a Generalization}

\subsection{The Forcing}\label{sectionlevinskiforcing}

Here I provide a discussion of the forcing notion due to Levinski that will produce, from a measurable cardinal $\kappa$, a forcing extension in which $\kappa$ remains measurable and $\kappa$ has the Levinski property. The forcing will be an Easton support iteration of Easton support products. First, define the length $\kappa$ iteration $\P_\kappa=\langle (\P_{\alpha},\dot{\Q}_{\alpha}) : \alpha<\kappa \rangle$ as follows. For a cardinal $\lambda$, let $\bar{\lambda}$ denote the least inaccessible cardinal in the interval $(\lambda,\kappa)$, if such a cardinal exists, and $\bar{\lambda}=\kappa$ otherwise. In general, for a set $I\subseteq\ORD$, define
$$\Q_{I}:=\prod_{\gamma\in I\cap\REG}\Add(\gamma,\gamma^{++})$$
where the product has Easton support. The \emph{Levinski iteration up to $\kappa$}, denoted by $\P_\kappa$, is defined by the following.
\begin{enumerate}
\item The first stage of nontrivial forcing is $\Q_\omega:=\Q_{[\omega,\bar{\omega})}$.
\item If $\alpha<\kappa$ is an inaccessible cardinal, then $\dot{\Q}_\alpha$ is a $\P_\alpha$-name for the forcing $\Q_{[\alpha,\bar{\alpha})}$ as defined in $V^{\P_\alpha}$ and $\P_{\alpha+1}:=\P_\alpha*\dot{\Q}_\alpha$.
\item If $\alpha<\kappa$ is a singular limit of inaccessible cardinals, then $\dot{\Q}_\alpha$ is a $\P_\alpha$-name for the forcing $\Q_{[\alpha,\bar{\alpha})} = \Q_{[\alpha^+,\bar{\alpha})}$ as defined in $V^{\P_\alpha}$ and $\P_{\alpha+1}:=\P_\alpha*\dot{\Q}_\alpha$.
\item Otherwise, for $\alpha<\kappa$, $\dot{\Q}_\alpha$ is a $\P_\alpha$-name for trivial forcing and $\P_{\alpha+1}:=\P_\alpha *\dot{\Q}_\alpha$.
\item The iteration uses Easton support. 
\end{enumerate}

Condition 3 above is necessary in order to, for example, force $\GCH$ to fail at the successor of the least singular limit of inaccessible cardinals. Notice that if $\alpha$ is a singular limit of inaccessible cardinals, then $\forces_{\P_\alpha}$ ``$\dot{\Q}_\alpha$ is ${\leq}\alpha$-closed,'' since $\dot{\Q}_\alpha$ is a $\P_\alpha$-name for $\Q_{[\alpha^+,\bar{\alpha})}$. 

\begin{remark}[Notation]\ 
\begin{itemize}
\item If $\delta<\kappa$ is either an inaccessible cardinal or a singular limit of inaccessible cardinals, then the Levinski iteration can be factored as $\P_\kappa\cong \P_\delta*\dot{\Q}_{[\delta,\bar{\delta})}*\dot{\P}_{[\bar{\delta},\kappa)}$. Furthermore, working in $V^{\P_\delta}$, for each regular cardinal $\gamma\in[\delta,\bar{\delta})$ let $\R_\gamma$ denote the $\gamma^{th}$-factor of the product $\Q_{[\delta,\bar{\delta})}$ so that $\Q_{[\delta,\bar{\delta})}\cong \R_\delta\times\Q_{[\delta^+,\bar{\delta})}$.
\item In the results below, I will typically be lifting elementary embeddings of the form $j:V\to M$ with critical point $\kappa$ through the Levinski iteration. Thus, $j(\P_\kappa)$ will be the Levinski iteration up to $j(\kappa)$ as defined in $M$. Working in $M$, suppose $\delta<j(\kappa)$ is an inaccessible cardinal or a singular limit of inaccessible cardinals. In $M^{\P_\delta}$, let $\widetilde{\Q}_{[\delta,\bar{\delta}^M)}$ denote the stage $\delta$ forcing in $j(\P_\kappa)$, where $\bar{\delta}^M$ is the least $M$-inaccessible cardinal greater than $\delta$. As above, I will write $\widetilde{\Q}_{[\delta,\bar{\delta}^M)}\cong\widetilde{\R}_\delta\times\widetilde{\Q}_{[\delta^+,\bar{\delta}^M)}$.
\end{itemize}
\end{remark}

As it will be used many times in later sections, let me isolate the following lemma which asserts that the Levinski iteration up to $\kappa$ preserves cardinals and forces $\kappa$ to have the Levinski property. This lemma shows that in the results that follow, one only needs to be concerned with preserving the large cardinal property under consideration. The proof of the next lemma is omitted, as it is completely standard.

\begin{lemma}\label{lemmalevinski}
If $\GCH$ holds then the Levinski iteration up to $\kappa$, written as $\P_\kappa$, preserves cofinalities and forces $\kappa$ to have the strict Levinski property.
\end{lemma}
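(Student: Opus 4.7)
The plan is to verify standard chain condition and closure bounds on natural factorizations of $\P_\kappa$, from which cofinality preservation and the continuum function values will follow by routine Easton-style arguments. The technical preliminary is to show that for any inaccessible $\delta \leq \kappa$, one can factor $\P_\kappa \cong \P_\delta * \dot{\Q}_{[\delta,\bar{\delta})} * \dot{\P}_{[\bar{\delta},\kappa)}$, and that under $\GCH$ an induction on $\delta$ using Easton support together with Lemma \ref{lemmaeaston} yields: $|\P_\delta| = \delta$ and $\P_\delta$ has the $\delta$-c.c.; the stage $\delta$ forcing factors further as $\R_\delta \times \Q_{[\delta^+,\bar{\delta})}$ where $\R_\delta = \Add(\delta,\delta^{++})$ is ${<}\delta$-closed and $\delta^+$-c.c.\ and $\Q_{[\delta^+,\bar{\delta})}$ is ${\leq}\delta$-closed; and the tail $\dot{\P}_{[\bar{\delta},\kappa)}$ is forced to be ${\leq}\delta$-closed, because its first nontrivial stage is at the inaccessible $\bar{\delta}$ with bottom factor $\Add(\bar{\delta},\bar{\delta}^{++})$ already ${<}\bar{\delta}$-closed. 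Singular limits $\alpha$ of inaccessibles below $\kappa$ fit this same pattern, with the stage $\alpha$ forcing being ${\leq}\alpha$-closed by the very design of the iteration (since $\R_\alpha$ is deliberately omitted).

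From these bounds cofinality preservation is routine: for any regular $\gamma \leq \kappa$, the forcing splits into a factor with the $\gamma$-c.c.\ below $\gamma$ and a factor that is ${\leq}\gamma$-closed (or at least sufficiently distributive after applying Lemma \ref{lemmaeaston}), and neither part can collapse cardinals or change cofinalities at $\gamma$. Above $\kappa$, the fact that $|\P_\kappa| = \kappa$ and $\P_\kappa$ has the $\kappa$-c.c.\ immediately gives preservation of all cofinalities $\geq \kappa^+$.

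Next I would compute $2^\gamma$ in $V[G]$ for each regular $\gamma$. For $\gamma < \kappa$, the cardinal $\gamma$ lies in some interval $[\delta,\bar{\delta})$ whose stage $\delta$ forcing contains $\Add(\gamma,\gamma^{++})$ as a factor; this is exactly where the asymmetric treatment of singular limits $\alpha$ pays off, since the regular cardinal $\alpha^+$ is picked up by the bottom factor of $\Q_{[\alpha^+,\bar{\alpha})}$ rather than by an omitted $\R_\alpha$. This forces $2^\gamma \geq \gamma^{++}$, and a nice-name counting argument using the $\gamma^+$-c.c.\ of the relevant head factor, the ${\leq}\gamma$-distributivity of the tail, and $\GCH$ in the ground model gives the matching upper bound $2^\gamma = \gamma^{++}$. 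For $\gamma = \kappa$, since $|\P_\kappa| = \kappa$ with the $\kappa$-c.c.\ and no subset of $\kappa$ is added at stage $\kappa$ itself, nice-name counting yields $2^\kappa = \kappa^+$. For regular $\gamma > \kappa$, the forcing is small relative to $\gamma$, so $\GCH$ at $\gamma$ is preserved.

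The main obstacle is not deep but bookkeeping-intensive: the cases for $2^\gamma$ must be organized carefully around the three types of stage in the iteration (inaccessible, singular limit of inaccessibles, and trivial), and one must verify that in every configuration the sole regular cardinals missing an immediate $\Add(\gamma,\gamma^{++})$ contribution are precisely $\kappa$ and cardinals beyond $\kappa$. Once the factorization bounds above are in hand, however, each case reduces to a standard chain-condition-plus-closure computation, and the strict Levinski property at $\kappa$ follows.
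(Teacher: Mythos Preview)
Your proposal is correct and is precisely the ``completely standard'' argument the paper alludes to: the paper explicitly omits the proof of this lemma, and the factorization, chain-condition/closure bounds, and nice-name counting you outline are exactly what is meant. One small caveat: the blanket claim that $\P_\delta$ has the $\delta$-c.c.\ for \emph{every} inaccessible $\delta\leq\kappa$ is a bit stronger than you need and requires a Mahlo-style $\Delta$-system argument; for the cofinality and continuum computations it suffices to use $|\P_\delta|\leq\delta$ (hence $\delta^+$-c.c.) together with the finer factorization $\P_\delta*\Q_{[\delta,\gamma]}$ versus $\Q_{(\gamma,\bar\delta)}*\P_{[\bar\delta,\kappa)}$ at each regular $\gamma$, which you already describe.
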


\subsection{Proof of Levinski's Theorem}\label{sectionmeasurable}

I now state and prove Levinski's result which originally appeared in \cite{Levinski:FiltersAndLargeCardinals} and is discussed in \cite{ApterCummings:BlowingUpThePowerSetOf}. The proof presented below, while substantially different from Levinski's, is suggested by the discussion in \cite{ApterCummings:BlowingUpThePowerSetOf}, and is similar to the proof of Theorem \ref{easy}.

\begin{theorem}[Levinski]\label{theoremlevinski}
Suppose $\kappa$ is measurable and $\GCH$ holds. Then there is a cofinality-preserving forcing extension in which $\kappa$ remains measurable and $\kappa$ has the strict Levinski property, i.e. $2^\delta=\delta^{++}$ for each regular cardinal $\delta<\kappa$, and $2^\kappa=\kappa^+$.
\end{theorem}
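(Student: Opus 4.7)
The strategy is to use $\P_\kappa$, the Levinski iteration up to $\kappa$ from Section \ref{sectionlevinskiforcing}, and to lift the ultrapower embedding of a normal measure on $\kappa$ into the extension. Since Lemma \ref{lemmalevinski} already guarantees that $\P_\kappa$ preserves cofinalities and forces $\kappa$ to have the strict Levinski property in $V[G]$, the only remaining task is to verify that $\kappa$ remains measurable in $V[G]$. Fix a normal measure $U$ on $\kappa$ in $V$ and let $j\colon V\to M=\Ult(V,U)$; then $M^\kappa\subseteq M$, and under $\GCH$ one has $|M|^V=|j(\kappa)|^V=\kappa^+$. Computing in $M$, the iteration $j(\P_\kappa)$ factors naturally as
$$j(\P_\kappa)\;\cong\;\P_\kappa*\dot{\widetilde{\Q}}_{[\kappa,\bar\kappa^M)}*\dot{\P}_{[\bar\kappa^M,j(\kappa))},$$
where $\widetilde{\Q}_{[\kappa,\bar\kappa^M)}$ is the stage-$\kappa$ Easton product computed in $M[G]$ and $\bar\kappa^M$ denotes the least $M$-inaccessible above $\kappa$. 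Because $M\subseteq V$, the filter $G$ is already $M$-generic for the initial factor $\P_\kappa$ of $j(\P_\kappa)$.

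The plan is then to construct the remaining generics $\widetilde G$ and $G_{\mathrm{tail}}$ and lift $j$ to $j^*\colon V[G]\to M[j(G)]$, where $j(G)=G*\widetilde G*G_{\mathrm{tail}}$. Since conditions of $\P_\kappa$ have bounded support below $\kappa$ and $j$ is the identity on $V_\kappa$, the inclusion $j"G\subseteq j(G)$ will be automatic, and Lemma \ref{lemmaliftingcriterion} will apply. Building $G_{\mathrm{tail}}$ is routine: the tail $\P_{[\bar\kappa^M,j(\kappa))}$ is $\bar\kappa^M$-closed in $M[G*\widetilde G]$, and $M[G*\widetilde G]$ has size $\kappa^+$ in the working universe, so a standard enumeration-and-diagonalization argument produces it as a decreasing $\kappa^+$-sequence meeting each of the $\kappa^+$-many dense subsets of the tail in $M[G*\widetilde G]$.

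Constructing $\widetilde G$ is the principal technical obstacle: the product $\widetilde{\Q}_{[\kappa,\bar\kappa^M)}$ contains $\Add(\kappa,(\kappa^{++})^M)^{M[G]}$ as a factor and so is only $\kappa$-closed in $M[G]$, yet there are $\kappa^+$-many dense sets in $M[G]$ to meet. Following the template of the proof of Theorem \ref{easy}, I would carry out this construction inside a mild auxiliary extension $V[G][H]$ produced by an appropriately closed preparation $H$ over $V[G]$, using the additional structure supplied by $H$ to diagonalize through the dense sets of $M[G]$. The resulting lift $j^*\colon V[G]\to M[j(G)]$ is a priori only a class of $V[G][H]$, and it yields a normal measure $U^*=\{X\in P(\kappa)^{V[G]} : \kappa\in j^*(X)\}$ on $\kappa$ living in $V[G][H]$. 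The final and most delicate step is to verify that $U^*$ actually belongs to $V[G]$: for this I would show that, for each $\P_\kappa$-name $\dot X\in V$ for a subset of $\kappa$, the truth value of ``$\check\kappa\in j(\dot X)$'' in $M[j(G)]$ is determined by $G$ and the elementary image $j\restrict V\in V$ alone, independently of the auxiliary generic $H$. A counting argument exploiting $|M|^V=\kappa^+$ together with the definability of the forcing relation in $M$ then pins down a $V[G]$-definable description of $U^*$, completing the proof.
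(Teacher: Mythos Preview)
Your outline correctly identifies the factorization of $j(\P_\kappa)$ and the routine construction of $G_{\mathrm{tail}}$, but the final step --- pulling the measure $U^*$ back from $V[G][H]$ to $V[G]$ --- is a genuine gap. The truth value of ``$\check\kappa\in j(\dot X)$'' in $M[j(G)]$ is governed by which conditions of the stage-$\kappa$ factor $\widetilde{\R}_\kappa=\Add(\kappa,(\kappa^{++})^M)^{M[G]}$ lie in $\widetilde G$; since $\widetilde G$ is built from the auxiliary $H$ and adds new subsets of $\kappa$ not present in $V[G]$, there is no reason these truth values should be independent of $H$. Your appeal to Theorem~\ref{easy} is also a misreading of that template: there the lifted embedding and the resulting measure live in $V[G][H]$, which is the final model --- no descent to $V[G]$ is ever attempted.

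The paper sidesteps this difficulty by changing the target model: it forces with $\P_\kappa*\dot{\R}_\kappa$, where $\R_\kappa=\Add(\kappa,\kappa^+)^{V[G_\kappa]}$, and takes $V[G_\kappa][H_\kappa]$ as the extension witnessing the theorem. This extra forcing is harmless for the conclusion --- $2^\kappa$ remains $\kappa^+$, so the strict Levinski property survives --- yet it supplies exactly the missing generic. Since $|j(\kappa)|^V=\kappa^+$ one has $(\kappa^{++})^M<\kappa^{++}$, giving an isomorphism $\widetilde{\R}_\kappa\cong\Add(\kappa,\kappa^+)^{V[G_\kappa]}$ in $V[G_\kappa]$, through which $H_\kappa$ rearranges to the required $M[G_\kappa]$-generic $\widetilde H_\kappa$. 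The upper factor $\widetilde{\Q}_{[\kappa^+,\bar\kappa^M)}$ is ${\leq}\kappa$-closed with only $\kappa^+$ dense sets in $M[G_\kappa]$, so its generic is built by diagonalization already in $V[G_\kappa]$. Finally, lifting through $\R_\kappa$ itself requires one further idea not in your proposal: since $|H_\kappa|=\kappa^+$ there is no single master condition in $j(\R_\kappa)$, and one instead builds a descending $\kappa^+$-sequence of increasingly masterful conditions, each compatible with every element of $j"H_\kappa$, exactly as in the proof of Theorem~\ref{easy}.
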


\begin{proof}

Let $G_\kappa$ be $V$-generic for the Levinski iteration $\P_\kappa$ define above and let $H_\kappa$ be $V[G_\kappa]$-generic for $\R_\kappa:=\Add(\kappa,\kappa^+)^{V[G_\kappa]}$.

It is routine to check that $\P_\kappa*\R_\kappa$ preserves cardinals and that in $V[G_\kappa][H_\kappa]$, $2^\delta=\delta^{++}$ for each regular $\delta<\kappa$, and $2^\kappa=\kappa^+$. It remains to show that $\kappa$ is measurable in $V[G_\kappa][H_\kappa]$.

Let $j:V\to M$ be an ultrapower by a normal measure on $\kappa$. Then $M=\{j(h)(\kappa)\mid h:\kappa\to V\textrm{ and } h\in V\}$ and $M^\kappa\subseteq M$ in $V$. This implies that 
\begin{align*}
j(\P_\kappa)&\cong\P_\kappa * \dot{\widetilde{\Q}}_{[\kappa,\bar{\kappa}^M)}*\dot{\widetilde{\P}}_{[\bar{\kappa}^M,j(\kappa))}\\
	&\cong \P_\kappa * (\dot{\widetilde{\R}}_\kappa\times\dot{\widetilde{\Q}}_{[\kappa^+,\bar{\kappa}^M)}) * \dot{\widetilde{\P}}_{[\bar{\kappa}^M,j(\kappa))}
\end{align*}
where $\bar{\kappa}^M$ is the next $M$-inaccessible cardinal greater than $\kappa$ and $\widetilde{\Q}_{[\kappa,\bar{\kappa}^M)}$ is the Easton support product violating $\GCH$ at regulars on the interval $[\kappa,\bar{\kappa}^M)$ as defined in $M[G_\kappa]$ and $\widetilde{\R}_\kappa=\Add(\kappa,\kappa^{++})^{M[G_\kappa]}$.

Since $\P_\kappa$ is $\kappa$-c.c., it follows that $M[G_\kappa]^\kappa\subseteq M[G_\kappa]$. Furthermore, since $\widetilde{\Q}_{[\kappa^+,\bar{\kappa}^M)}$ is $\leq\kappa$-closed in $M[G_\kappa]$ and has at most $2^\kappa=\kappa^+$ many dense subsets in $M[G_\kappa]$ one can build an $M[G_\kappa]$-generic filter $\widetilde{H}_{[\kappa^+,\bar{\kappa}^M)}$ for $\widetilde{\Q}_{[\kappa^+,\bar{\kappa}^M)}$ in $V[G_\kappa]$. 

Since $j(\kappa)$ has size $\kappa^+$ it follows that $(\kappa^{++})^M<\kappa^{++}$. Since $M$ and $V$ agree on subsets of $(\kappa^{++})^M$ of size $<\kappa$, there is an isomorphism $\Add(\kappa,\kappa^{++})^{M[G_\kappa]}\stackrel{i}{\cong} \Add(\kappa,\kappa^+)^{V[G_\kappa]}$ in $V[G_\kappa]$. By using $i$ to rearrange $H_\kappa$ and by noting that $\widetilde{H}_{[\kappa^+,\bar{\kappa}^M)}\in V[G_\kappa]$ it follows that there is an $M[G_\kappa][\widetilde{H}_{[\kappa^+,\bar{\kappa}^M)}]$-generic $\widetilde{H}_\kappa$ for $\widetilde{\R}_\kappa$ in $V[G_\kappa][H_\kappa]$. By the product forcing lemma, $H_{[\kappa,\bar{\kappa}^M)}:=\widetilde{H}_\kappa\times\widetilde{H}_{[\kappa^+,\bar{\kappa}^M)}$ is $M[G_\kappa]$-generic for $\widetilde{\R}_\kappa\times\widetilde{\Q}_{[\kappa^+,\bar{\kappa}^M)}$. Since $\P_\kappa * \R_\kappa$ is $\kappa^+$-c.c. it follows that $M[G_\kappa][\widetilde{H}_\kappa]$ is closed under $\kappa$-sequences in $V[G_\kappa][H_\kappa]$. Since $\widetilde{H}_{[\kappa^+,\bar{\kappa}^M)}\in V[G_\kappa][H_\kappa]$ it follows that $M[G_\kappa][\widetilde{H}_\kappa][\widetilde{H}_{[\kappa^+,\bar{\kappa}^M)}]$ is closed under $\kappa$-sequences in $V[G_\kappa][H_\kappa]$.

The forcing $\widetilde{\P}_{[\bar{\kappa}^M,j(\kappa))}$ is $\leq\kappa$-closed and has at most $\kappa^+$ dense subsets in $M[G_\kappa][\widetilde{H}_{[\kappa,\bar{\kappa}^M)}]$. This implies that there is an $M[G_\kappa][H_{[\kappa,\bar{\kappa}^M)}]$-generic $\widetilde{G}_{[\bar{\kappa}^M,j(\kappa))}$ for $\widetilde{\P}_{[\bar{\kappa}^M,j(\kappa))}$ in $V[G_\kappa][H_\kappa]$.

Since conditions in $G_\kappa$ have support bounded below $\kappa$ it follows that $j"G_\kappa\subseteq j(G_\kappa):=G_\kappa*\widetilde{H}_{[\kappa,\bar{\kappa}^M)}*\widetilde{G}_{[\bar{\kappa}^M,j(\kappa))}$ and thus the embedding lifts to $j:V[G_\kappa]\to M[j(G_\kappa)]$ in $V[G_\kappa][H_\kappa]$ where $M[j(G_\kappa)]$ is closed under $\kappa$-sequences in $V[G_\kappa][H_\kappa]$.

It remains to lift $j$ through $\R_\kappa=\Add(\kappa,\kappa^+)^{V[G_\kappa]}$. Notice that since $\R_\kappa$ has size $\kappa^+$ a master condition argument is not possible here. I will build a descending sequence of increasingly masterful conditions in $j(\R_\kappa)=\Add(j(\kappa),j(\kappa^+))^{M[j(G_\kappa)]}$.

Suppose $A\in M[j(G_\kappa)]$ is a maximal antichain of $j(\R_\kappa)$ and $p\in j(\R_\kappa)$ is a condition that is compatible with every element of $j"H_\kappa$. I will first argue that $p$ extends to some $p'\leq p$ that decides $A$ and remains compatible with every element of $j"H_\kappa$. Since $j(\R_\kappa)$ is $j(\kappa^+)$-c.c., $A$ has size at most $j(\kappa)$ in $M[j(G_\kappa)]$. Since $\sup j"\kappa^+ = j(\kappa^+)$ and $j(\kappa^+)$ is regular in $M[j(G_\kappa)]$ it follows that there is an $\alpha<\kappa^+$ such that $A\subseteq \Add(j(\kappa),j(\alpha))$. Let me fix such an $\alpha$ with $p\in \Add(j(\kappa),j(\alpha))$ as well. Since $j"(H_\kappa\cap\Add(\kappa,\alpha))$ is a $\kappa$-sequence of elements of $M[j(G_\kappa)]$ which is in $V[G_\kappa][H_\kappa]$, it follows that $q:=\bigcup j"(H_\kappa\cap\Add(\kappa,\alpha))$ is a \emph{master} condition in $\Add(j(\kappa),j(\alpha))$. Since $p$ is compatible with every element of $j"H_\kappa$ it follows that $p$ and $q$ are compatible in $\Add(j(\kappa),j(\alpha))$. Let $p'$ be a condition in $\Add(j(\kappa),j(\alpha))$ that extends $p$ and $q$ and decides the maximal antichain $A$. Let me argue that $p'$ remains compatible with every element of $j"H_\kappa\subseteq \Add(j(\kappa),j(\kappa^+))$. Suppose $j(r)\in j"H_\kappa$. Then $j(r)=r_0\cup r_1$ where $\dom(r_0)\subseteq j(\alpha)\times j(\kappa)$ and $\dom(r_1)\subseteq [j(\alpha),j(\kappa^+))\times j(\kappa)$. It follows that $p'$ is compatible with $r_0$ since $p'\leq q\leq r_0$ and $p'$ is compatible with $r_1$ since $\dom(p')\cap\dom(r_1)=\emptyset$. Hence $p'$ and $r$ are compatible in $j(\R_\kappa)=\Add(j(\kappa),j(\kappa^+))$.

Since $\R_\kappa$ has at most $\kappa^+$ maximal antichains it follows that $j(\R_\kappa)$ has at most $j(\kappa^+)$ maximal antichains in $M[j(G_\kappa)]$. Since $|j(\kappa^+)|^V\leq (\kappa^+)^\kappa = \kappa^+$ it follows that $V[G_\kappa]$ has an enumeration $\langle A_\xi\mid\xi<\kappa^+\rangle$ of all maximal antichains of $j(\R_\kappa)$ in $M[j(G_\kappa)]$. By iterating the procedure in the previous paragraph and using the facts that $j(\R_\kappa)$ is $\leq\kappa$-closed in $M[j(G_\kappa)]$ and $M[j(G_\kappa)]$ is closed under $\kappa$ sequences in $V[G_\kappa][H_\kappa]$, one can build a descending sequence of conditions $\langle p_\xi\mid\xi<\kappa^+\rangle$ in $V[G_\kappa][H_\kappa]$ such that $p_\xi\in j(\R_\kappa)$ decides $A_\xi$ and is compatible with every element of $j"H_\kappa$. Let $j(H_\kappa)$ be the filter generated by this descending sequence. It follows that $j(H_\kappa)\in V[G_\kappa][H_\kappa]$ and that $j(H_\kappa)$ is $M[j(G_\kappa)]$-generic for $j(\R_\kappa)$. Suppose $p\in H_\kappa$ then since every element of $j(H_\kappa)$ is compatible with $j(p)$ it follows from the fact that generic filters are maximal that $j(p)\in j(H_\kappa)$. Hence the embedding lifts to $j:V[G_\kappa][H_\kappa]\to M[j(G_\kappa)][j(H_\kappa)]$ in $V[G_\kappa][H_\kappa]$, witnessing that $\kappa$ is measurable in $V[G_\kappa][H_\kappa]$.
\end{proof}

Let me note here that by modifying the definition of the forcing used in the proof of Theorem \ref{theoremlevinski} one may in fact show that from a measurable cardinal $\kappa$, any Easton function $F:\REG\to\CARD$ with $F"\kappa\subseteq\kappa$ can be realized as the continuum function below $\kappa$. In a related result, Friedman and Honsik have shown in \cite{FriedmanHonzik:EastonsTheoremAndLargeCardinals} that if $F:\REG\to\CARD$ is an Easton function with $F"\kappa\subseteq\kappa$ and $\kappa$ is $F(\kappa)$-hypermeasurable witnessed by an embedding $j$ with $j(F)(\kappa)\geq F(\kappa)$, then there is a forcing extension in which $\kappa$ is measurable and $2^\delta=F(\delta)$ for every regular cardinal.

\subsection{A Generalization of Levinski's Theorem Involving Measurable Cardinals}

I will now generalize Levinski's theorem by starting with a class of measurable cardinals, force to a model in which $\GCH$ fails at every nonmeasurable regular cardinal, $\GCH$ holds at every measurable cardinal, every measurable cardinal is preserved, and no new measurable cardinals are created.

\begin{theorem}\label{theoremeverymeasurable}
Suppose there is a measurable cardinal and $\GCH$ holds. There is a forcing extension in which all measurable cardinals are preserved, no new measurable cardinals are created, and $\GCH$ holds at every measurable cardinal and fails at every nonmeasurable regular cardinal.
\end{theorem}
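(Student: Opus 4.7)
The plan is to force with a class-length globalisation of the Levinski iteration of Section~4.1.1, calibrated so that at each non-measurable regular $\gamma$ one adds $\gamma^{++}$ Cohen subsets of $\gamma$, while at each $V$-measurable $\gamma$ one adds only $\gamma^+$ such subsets. Explicitly, let $\P = \langle (\P_\alpha,\dot{\Q}_\alpha):\alpha\in\ORD\rangle$ be the Easton-support class iteration in which, at every inaccessible closure point $\alpha$, $\dot{\Q}_\alpha$ is a $\P_\alpha$-name for the Easton-support product
\[
\prod_{\gamma\in[\alpha,\bar{\alpha})\cap\REG}\dot{\R}_\gamma,
\]
where $\dot{\R}_\gamma$ names $\Add(\gamma,\gamma^{++})$ if $\gamma$ is non-measurable in $V$ and $\Add(\gamma,\gamma^+)$ if $\gamma$ is measurable in $V$; at singular limits of inaccessibles, $\dot{\Q}_\alpha$ is handled as in Section~4.1.1. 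Standard arguments, as in the proof of Lemma~\ref{lemmalevinski}, show that $\P$ preserves cofinalities and $\ZFC$, and by construction the continuum function in $V[G]$ realises the declared pattern: $2^\gamma=\gamma^+$ at $V$-measurable $\gamma$ and $2^\gamma=\gamma^{++}$ at all other regular $\gamma$.

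To see that every $V$-measurable $\kappa$ is preserved, I will lift a normal ultrapower $j:V\to M$ through $\P$ by following the proof of Theorem~\ref{theoremlevinski} essentially verbatim. The crucial feature is that the stage-$\kappa$ forcing in $\P$ contains the factor $\Add(\kappa,\kappa^+)^{V[G_\kappa]}$, which rearranges, via the isomorphism guaranteed by $(\kappa^{++})^M<\kappa^{++}$, into an $M[G_\kappa]$-generic for $\Add(\kappa,\kappa^{++})^{M[G_\kappa]}$; the latter is the stage-$\kappa$ factor appearing in $j(\P^{<\kappa})$, because $M$ does not view $\kappa$ as measurable. Generics for the remaining factors of the stage-$\kappa$ product and for the tails of both $j(\P^{<\kappa})$ and of $\P$ above $\kappa$ are built by the standard descending-sequence and closure arguments used in Theorem~\ref{theoremlevinski} together with Lemma~\ref{lemmalambdadist}.

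The main obstacle is ruling out new measurables. Suppose for contradiction that some $\gamma$ is measurable in $V[G]$ but not in $V$. Since $\P$ preserves cofinalities and forces $2^\alpha\leq\alpha^{++}<\gamma$ for every $V$-regular $\alpha<\gamma$, $\gamma$ must be inaccessible in $V$. Choose a $V$-inaccessible $\delta<\gamma$ and factor $\P\cong\P^{<\delta}*\dot{\P}^{\geq\delta}$: the head $\P^{<\delta}$ has size $\delta$, while the tail $\dot{\P}^{\geq\delta}$ is forced to be ${\leq}\delta$-closed. This is a gap-forcing configuration at $\delta^+$ in the sense of Hamkins, so the Gap Forcing Theorem of Hamkins and Woodin applies: any embedding $j:V[G]\to N$ in $V[G]$ with critical point $\gamma>\delta$ must have $j\restriction V$ definable in $V$. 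The derived measure $U=\{X\in\mathcal{P}(\gamma)^V:\gamma\in j(X)\}$ is then an element of $V$, witnessing the measurability of $\gamma$ in $V$, a contradiction.
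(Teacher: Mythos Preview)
Your gap-forcing argument for ruling out new measurables does not go through. You claim that for a $V$-inaccessible $\delta<\gamma$ the factoring $\P\cong\P^{<\delta}*\dot{\P}^{\geq\delta}$ has tail forced to be ${\leq}\delta$-closed, but this is false: the stage-$\delta$ forcing in your iteration contains the factor $\Add(\delta,\delta^{++})$ (or $\Add(\delta,\delta^+)$), which is only ${<}\delta$-closed. Nor can this be repaired by shifting the cut point: a Levinski-style iteration that pumps up $2^\gamma$ at \emph{every} regular $\gamma$ admits no closure point in Hamkins' sense, since at every possible split the head already has size at least as large as the closure degree of the tail. The paper handles exactly this difficulty by prepending $\Add(\omega,1)$ as the very first stage and \emph{deferring} the product $\Q_{[\omega,\gamma_0)}$ (where $\gamma_0$ is the least inaccessible) until after the main iteration $\P$. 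Then $\P$ factors as $\Add(\omega,1)*\P_1$ with $|\Add(\omega,1)|<\omega_1$ and $\P_1$ ${\leq}\omega_1$-closed, so Hamkins' gap forcing theorem yields that no new measurables are created by $\P$; the deferred forcing $\Q_{[\omega,\gamma_0)}$ is small relative to every measurable and hence neither creates nor destroys any.

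Two smaller points. First, in the preservation argument you assert that $M$ does not view $\kappa$ as measurable; this is not automatic and requires choosing a normal measure under which $\kappa$ is not measurable in the ultrapower (e.g.\ one of minimal Mitchell order), as the paper does explicitly. Second, your iteration as described has its first nontrivial stage at an inaccessible, so $\GCH$ would persist below the least inaccessible; if you instead start at $\omega$ as in Section~\ref{sectionlevinskiforcing} you recover the desired continuum pattern but make the absence of a gap even more apparent.
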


\begin{proof}

Let $\P$ be the Easton support iteration defined as follows. Let $\gamma_0$ denote the least inaccessible cardinal. Note that the forcing which violates $\GCH$ on the interval $[\omega,\gamma_0)$ will be put off until later so that Hamkins' results in \cite{Hamkins:GapForcing} or \cite{Hamkins:Approximation}. The first stage of forcing in $\P$ is $\Add(\omega,1)*\Q_{[\gamma_0,\bar{\gamma}_0)}$. Here I will use the notation $\Q_I$ to denote the Easton support product forcing over an interval $I$ as defined at the beginning of Subsection \ref{sectionlevinskiforcing}. If $\gamma$ is a nonmeasurable inaccessible cardinal or a singular limit of inaccessible cardinals then force at stage $\gamma$ with $\Q_{[\gamma,\bar{\gamma})}$ as defined in $V^{\P_\gamma}$. If $\gamma$ is a measurable cardinal then the stage $\gamma$ forcing is $\Add(\gamma,\gamma^+)\times\Q_{[\gamma^+,\bar{\gamma})}$ as defined in $V^{\P_\gamma}$. If there is a largest inaccessible cardinal, say $\delta$, then there is a last stage of forcing in the iteration $\P$ of the form $\Q_{[\delta,\infty)}$, where the first factor of $\Q_{[\delta,\infty)}$ depends as above on whether $\delta$ is a measurable cardinal or a nonmeasurable inaccessible cardinal. If there is no largest inaccessible cardinal, but the inaccessible cardinals are bounded, say $\delta=\sup\{\gamma\mid\textrm{$\gamma$ is inaccessible}\}$, then the last stage of forcing occurs at stage $\delta$ and is of the form $\Q_{[\delta,\infty)}=\Q_{[\delta^+,\infty)}$ since $\delta$ must be singular. If there is no largest inaccessible cardinal, then $\P$ is a class length iteration as defined above. Let $G$ be $V$-generic for $\P$ and let $H$ be $V[G]$-generic for $\Q_{[\omega,\bar{\omega})}$. Note that $\bar{\omega}=\gamma_0$. I will show that the conclusion of the theorem holds in $V[G][H]$.

By factoring the iteration one can verify that $\P$ preserves all cardinals and cofinalities. It is also easy to check that $\GCH$ holds at every cardinal that is measurable in $V$ and fails at every cardinal which is nonmeasurable and regular in $V$, starting at the least inaccessible. Since the forcing $\P$ factors as $\Add(\omega,1)*\P_1$ where $|\Add(\omega,1)|<\omega_1$ and $\P_1$ is $\leq\omega_1$ closed it follows by Hamkins' gap forcing theorem (see \cite{Hamkins:GapForcing}) that every measurable cardinal in $V[G]$ is measurable in $V$ and hence that $\GCH$ holds at every measurable cardinal in $V[G]$. Since the forcing $H\subseteq\Q_{[\omega,\bar{\omega})}$ is small relative to the least measurable cardinal, it neither creates nor destroys any measurable cardinals. Thus it will suffice to show that if $\kappa$ is a measurable cardinal in $V$, then it remains so in $V[G]$.

It remains to show that $\P$ preserves every measurable cardinal. Suppose $\kappa$ is a measurable cardinal and factor 
$\P\cong\P_\kappa*\dot{\Q}_{[\kappa,\bar{\kappa})}*\dot{\P}_{[\bar{\kappa},\infty)}.$ Let $j:V\to M$ be an ultrapower embedding by a normal measure on $\kappa$ such that $\kappa$ is not measurable in $M$. Since $j(\kappa)$ is measurable in $M$ there will be many $M$-inaccessible cardinals below $j(\kappa)$. It follows that 
$$j(\P_\kappa)\cong\P_\kappa* (\dot{\widetilde{\R}}_\kappa\times \dot{\widetilde{\Q}}_{[\kappa^+,\bar{\kappa}^M)})*\dot{\widetilde{\P}}_{tail}$$ 
where $\dot{\widetilde{\R}}_\kappa$ is a $\P_\kappa$-term for $\Add(\kappa,\kappa^{++})^{M[G_\kappa]}$, $\dot{\widetilde{\Q}}_{[\kappa^+,\bar{\kappa}^M)}$ is a $\P_\kappa$-term for the relevant Easton product on the interval $[\kappa^+, \bar{\kappa}^M)$ where $\bar{\kappa}^M$ is the next $M$-inaccessible cardinal above $\kappa$, and $\dot{\widetilde{\P}}_{tail}$ is a $\P_\kappa*(\dot{\widetilde{\R}}_\kappa\times\dot{\widetilde{\Q}}_{[\kappa^+,\bar{\kappa}^M)})$-term for $\leq\kappa$-closed forcing. It follows as in the proof of Theorem \ref{theoremlevinski}, that the embedding lifts to $j:V[G_\kappa]\to M[j(G_\kappa)]$ in $V[G_\kappa][H_\kappa]$ where $H_\kappa$ is $V[G_\kappa]$-generic for $\R_\kappa$.

As before, the embedding lifts further through $\R_\kappa=\Add(\kappa,\kappa^+)^{V[G_\kappa]}$ by building a descending sequence of increasingly masterful conditions in $j(\R_\kappa)$. Thus $j$ lifts to $j:V[G_\kappa][H_\kappa]\to M[j(G_\kappa)][j(H_\kappa)]$ in $V[G_\kappa][H_\kappa]$ witnessing that $\kappa$ is measurable in $V[G_\kappa][H_\kappa]$.

By Lemma \ref{lemmaeaston} the remaining forcing $\Q_{[\kappa^+,\bar{\kappa})} * \dot{\P}_{[\bar{\kappa},\infty)}$ is $\leq\kappa$ distributive in $V[G_\kappa][H_\kappa]$ and therefore will not kill the measurability of $\kappa$. So $\kappa$ remains measurable in $V[G]$.
\end{proof}

\section{Other Large Cardinals and the Levinski Property}\label{sectioncohen}

I now begin a survey in which various large cardinals are preserved through forcing that achieves the Levinski property at some cardinal relevant to the definition of the large cardinal property. For example, I will show that if $\kappa$ is $n$-huge then there is a forcing extension preserving this in which $j^n(\kappa)$ has the Levinski property. 

\subsection{Various Small Large Cardinals and the Levinski Property}

I will begin with a few easy cases and work my way up through the large cardinal hierarchy. If $\kappa$ is inaccessible, it is easy to show, using standard arguments, that $\kappa$ remains inaccessible after forcing with the Levinski iteration up to $\kappa$. If $\kappa$ is Mahlo, standard arguments show that after forcing with $\P_\kappa$, each inaccessible cardinal less than $\kappa$ remains inaccessible and every new club contains an old club. Hence in $V^{\P_\kappa}$, every club subset of $\kappa$ contains an inaccessible and thus $\kappa$ remains Mahlo and has the Levinski property.

Let me now consider weakly compact cardinals. I will quickly review a characterization of weak compactness that is useful in preserving the property through forcing. First, a transitive set $M$ is a \emph{$\kappa$-model} if $|M|=\kappa$, $M^{<\kappa}\subseteq M$, and $M$ satisfies $\ZFC^-$ where $\ZFC^-$ is the theory consisting of the usual axioms of $\ZFC$, excluding the powerset axiom, and using the collection axiom in place of the replacement axiom. A cardinal $\kappa$ is \emph{weakly compact} if for every $A\subseteq \kappa$ there is a $\kappa$-model $M$ with $\kappa,A\in M$ and a transitive set $N$ with an embedding $j:M\to N$ with critical point $\kappa$.

\begin{theorem}
Suppose $\kappa$ is weakly compact and $\GCH$ holds. Then there is a cofinality-preserving forcing extension in which $\kappa$ has the strict Levinski property and remains weakly compact.
\end{theorem}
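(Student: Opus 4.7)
The plan is to use the same forcing as in Theorem \ref{theoremlevinski}, namely $\P_\kappa * \dot{\R}_\kappa$ where $\P_\kappa$ is the Levinski iteration up to $\kappa$ and $\R_\kappa = \Add(\kappa,\kappa^+)^{V[G_\kappa]}$. By Lemma \ref{lemmalevinski} and the standard argument that forcing of this shape preserves cofinalities, the strict Levinski property at $\kappa$ and the usual $\GCH$ pattern above $\kappa$ will be achieved in $V[G_\kappa][H_\kappa]$ without any further work, so the entire burden is to show that $\kappa$ remains weakly compact in the extension.

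To verify weak compactness in $V[G_\kappa][H_\kappa]$, I would use the $\kappa$-model embedding characterization recalled above. Given any $A \in V[G_\kappa][H_\kappa]$ with $A \subseteq \kappa$, fix a nice $\P_\kappa * \dot{\R}_\kappa$-name $\dot{A}\in V$ for $A$ of hereditary size $\kappa$, and apply weak compactness of $\kappa$ in $V$ to obtain a $\kappa$-model $\bar{M}\in V$ containing $\dot{A}$ and $\P_\kappa * \dot{\R}_\kappa$, together with a transitive $\bar{N}\in V$ of size $\kappa$ and an embedding $j:\bar{M}\to\bar{N}$ with critical point $\kappa$ (taking, say, the ultrapower of $\bar{M}$ by an $\bar{M}$-normal measure so that $j(\kappa)<\kappa^+$). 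By Lemmas \ref{lemmaground} and \ref{lemmachain}, $\bar{M}[G_\kappa][H_\kappa]$ is a $\kappa$-model in $V[G_\kappa][H_\kappa]$ containing $A$, so it suffices to lift $j$ to an embedding $j:\bar{M}[G_\kappa][H_\kappa]\to\bar{N}[j(G_\kappa)][j(H_\kappa)]$ in $V[G_\kappa][H_\kappa]$.

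The lift through $\P_\kappa$ follows the template of Theorem \ref{theoremlevinski}. Factor
\[
j(\P_\kappa)\;\cong\;\P_\kappa * (\dot{\widetilde{\R}}_\kappa\times\dot{\widetilde{\Q}}_{[\kappa^+,\bar{\kappa}^{\bar{N}})})*\dot{\widetilde{\P}}_{[\bar{\kappa}^{\bar{N}},j(\kappa))}
\]
in $\bar{N}$, where $\widetilde{\R}_\kappa=\Add(\kappa,\kappa^{++})^{\bar{N}[G_\kappa]}$ and $\bar{\kappa}^{\bar{N}}$ is the least $\bar{N}$-inaccessible above $\kappa$. Because $\bar{N}$ has size $\kappa$, each of $\widetilde{\Q}_{[\kappa^+,\bar{\kappa}^{\bar{N}})}$ and $\widetilde{\P}_{[\bar{\kappa}^{\bar{N}},j(\kappa))}$ has at most $\kappa$ dense subsets in the relevant intermediate model and is sufficiently closed there, so generics can be constructed in $V[G_\kappa]$ (respectively $V[G_\kappa][H_\kappa]$) by a straightforward diagonalization. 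For $\widetilde{\R}_\kappa$, observe that $(\kappa^{++})^{\bar{N}[G_\kappa]}$ has $V$-cardinality $\kappa$, while $\bar{N}$ and $V$ agree on subsets of $\kappa$ of size ${<}\kappa$, so there is an isomorphism in $V[G_\kappa]$ between $\widetilde{\R}_\kappa$ and a suitable complete subposet of $\R_\kappa$; rearranging $H_\kappa$ through this isomorphism yields an $\bar{N}[G_\kappa][\widetilde{H}_{[\kappa^+,\bar{\kappa}^{\bar{N}})}]$-generic $\widetilde{H}_\kappa$. Since conditions in $G_\kappa$ have bounded support, $j"G_\kappa\subseteq j(G_\kappa)$ is automatic and $j$ lifts to $j:\bar{M}[G_\kappa]\to\bar{N}[j(G_\kappa)]$.

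Finally, lifting through $\R_\kappa$ is carried out exactly as in the measurable case: build a descending sequence $\langle p_\xi\mid\xi<\kappa^+\rangle$ of increasingly masterful conditions in $j(\R_\kappa)=\Add(j(\kappa),j(\kappa^+))^{\bar{N}[j(G_\kappa)]}$, where each $p_\xi$ decides a predetermined maximal antichain of $j(\R_\kappa)$ in $\bar{N}[j(G_\kappa)]$ and remains compatible with every element of $j"H_\kappa$ (the compatibility step uses, as before, that $\sup j"\kappa^+=j(\kappa^+)$ so that each antichain is contained in some $\Add(j(\kappa),j(\alpha))$, together with a master-condition argument at $\alpha$). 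Since $|\bar{N}[j(G_\kappa)]|^V=\kappa$, there are only $\kappa^+$ many antichains to handle, and the closure and distributivity needed at limit stages are available in $\bar{M}[G_\kappa][H_\kappa]$. The main obstacle I anticipate is the bookkeeping at the $\widetilde{\R}_\kappa$ stage: one must verify that the isomorphism rearrangement genuinely produces a filter generic over the intermediate $\bar{N}$-model while remaining compatible with the later master-condition construction. Once this is in hand the lift $j:\bar{M}[G_\kappa][H_\kappa]\to\bar{N}[j(G_\kappa)][j(H_\kappa)]$ is a class of $V[G_\kappa][H_\kappa]$ witnessing weak compactness of $\kappa$ for the chosen $A$.
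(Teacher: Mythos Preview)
Your approach has a genuine gap, and it stems from importing the extra forcing $\R_\kappa=\Add(\kappa,\kappa^+)$ from the measurable case into a setting where it is both unnecessary and harmful.

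First, $\R_\kappa$ has size $\kappa^+$, so it cannot be an element of any transitive $\kappa$-model. Thus the object ``$\bar M[G_\kappa][H_\kappa]$'' that you want to use as the domain of the lifted embedding is not a forcing extension of $\bar M[G_\kappa]$ in the usual sense; the poset simply is not there. You could try to repair this by observing that any $A\subseteq\kappa$ in $V[G_\kappa][H_\kappa]$ already lies in $V[G_\kappa][H_\kappa\cap\Add(\kappa,\beta)]$ for some $\beta<\kappa^+$ and putting $\Add(\kappa,\beta)$ into $\bar M$, but then you must lift through $\Add(\kappa,\beta)$, and here the second problem bites. The increasingly-masterful-conditions argument you invoke needs the partial master condition $q=\bigcup j{''}(h\cap\Add(\kappa,\alpha))$ to be an element of $\bar N[j(G_\kappa)]$; in the measurable proof this follows from $M[j(G_\kappa)]^\kappa\subseteq M[j(G_\kappa)]$. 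A weak-compactness target model is only closed under ${<}\kappa$-sequences, so this $\kappa$-sized object need not lie in $\bar N[j(G_\kappa)]$, and the argument breaks.

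The paper sidesteps both issues by forcing only with $\P_\kappa$. By Lemma~\ref{lemmalevinski} this already yields the strict Levinski property (no $\R_\kappa$ is needed, since $|\P_\kappa|=\kappa$ keeps $2^\kappa=\kappa^+$). For the lift, one takes a $\kappa$-model $M\ni\dot A,V_\kappa$ and an embedding $j:M\to N$ with $N$ also a $\kappa$-model, and observes that the \emph{entire} tail $\widetilde{\P}_{[\kappa,j(\kappa))}$ of $j(\P_\kappa)$---including the $\Add(\kappa,\kappa^{++})^{N[G_\kappa]}$ factor---has at most $\kappa$ dense subsets in $N[G_\kappa]$ (since $|N|=\kappa$) and is ${<}\kappa$-closed there. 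A single length-$\kappa$ diagonalization in $V[G_\kappa]$, using $N[G_\kappa]^{<\kappa}\subseteq N[G_\kappa]$, produces the required $N[G_\kappa]$-generic, and $j$ lifts to $M[G_\kappa]\to N[j(G_\kappa)]$ in $V[G_\kappa]$. The smallness of the weak-compactness target is exactly what makes the extra $\R_\kappa$-forcing and the masterful-conditions machinery superfluous.
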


\begin{proof}
Assume without loss of generality that $\GCH$ holds since this can be forced without disturbing the weak compactness of $\kappa$. Let $\P_\kappa$ be the Levinski iteration up to $\kappa$ and let $G_\kappa$ be $V$-generic for $\P_\kappa$. Suppose $A\subseteq\kappa$ in $V[G_\kappa]$ and let $\dot{A}$ be a nice $\P_\kappa$-name for $A$ such that $\dot{A}\in H_{\kappa^+}$. Fix a $\kappa$-model $M$ with $\dot{A},V_\kappa\in M$ and an elementary embedding $j:M\to N$ where $N$ is also a $\kappa$-model. Since $V_\kappa\in M$ it follows that $\P_\kappa\in M$. I will lift the embedding to $j:M[G_\kappa]\to N[j(G_\kappa)]$ in $V[G_\kappa]$ thus showing that in $V[G_\kappa]$ the set $A$ can be put into a $\kappa$ model which is the domain of an elementary embedding with critical point $\kappa$.

Clearly $j(\P_\kappa)$ is the Levinski iteration up to $j(\kappa)$ as defined in $N$. Furthermore, $j(\P_\kappa)\cong\P_\kappa*\dot{\widetilde{\P}}_{[\kappa,j(\kappa))}$ where $\widetilde{\P}_{[\kappa,j(\kappa))}$ is the Levinski iteration over the interval $[\kappa,j(\kappa))$ as defined in $N[G_\kappa]$. It follows that $\widetilde{\P}_{[\kappa,j(\kappa))}$ is ${<}\kappa$-closed in $N[G_\kappa]$. Since $N$ has size $\kappa$ in $V$ it follows that $\widetilde{\P}_{[\kappa,j(\kappa))}$ has at most $\kappa$ dense subsets in $N[G_\kappa]$. Furthermore, since $N^{<\kappa}\subseteq N$ in $V$ and $\P_\kappa$ is $\kappa$-c.c. it follows by Lemma \ref{lemmachain} that $N[G_\kappa]^{<\kappa}\subseteq N[G_\kappa]$ in $V[G_\kappa]$. Thus one may diagonalize to build an $N[G_\kappa]$-generic filter $\widetilde{G}_{[\kappa,j(\kappa))}$ for $\widetilde{\P}_{[\kappa,j(\kappa))}$ in $V[G_\kappa]$. Since conditions in $\P_\kappa$ have support bounded below the critical point of $j$, it follows that $j"G_\kappa\subseteq G_\kappa * \widetilde{G}_{[\kappa,j(\kappa))}$ and hence the embedding lifts to $j:V[G_\kappa]\to M[j(G_\kappa)]$ in $V[G_\kappa]$ where $j(G_\kappa):=G_\kappa * \widetilde{G}_{[\kappa,j(\kappa))}$.
\end{proof}

\subsection{Partially Supercompact Cardinals and the Levinski Property}\label{sectionpartiallysupercompact}


If $\kappa$ is even $\kappa^+$-supercompact and $2^\kappa=\kappa^+$ then $\GCH$ must hold on a measure one subset of $\kappa$. So, of course, there is no hope of having a nontrivially partially supercompact cardinal with the Levinski property. I will show that one can generalize the proof of Levinski's theorem to show that from a $\lambda$-supercompact cardinal there is a forcing extension in which $\kappa$ is $\lambda$-supercompact and $\lambda$ has the Levinski property, assuming $\lambda$ is regular.

\begin{theorem}\label{theorempartiallysupercompact}
Suppose $\kappa$ is $\lambda$-supercompact where $\lambda$ is regular and $\GCH$ holds. Then there is a cofinality-preserving forcing extension in which $\kappa$ remains $\lambda$-supercompact and $\lambda$ has the strict Levinski property.
\end{theorem}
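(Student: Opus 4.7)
The plan is to closely parallel the proof of Theorem~\ref{theoremlevinski}, with a $\lambda$-supercompactness embedding replacing the normal-measure embedding and with $\lambda,\lambda^+$ taking the roles of $\kappa,\kappa^+$. As usual, I will assume without loss of generality that $\lambda^{<\kappa}=\lambda$ and $2^\lambda=\lambda^+$; the former is automatic from the existence of a $\lambda$-supercompactness embedding, and the latter can be arranged by a $\leq\lambda$-closed forcing which preserves $\lambda$-supercompactness (Lemma~\ref{lemmalambdadist}). Define $\P_\lambda$ to be the Levinski iteration of Section~\ref{sectionlevinskiforcing} truncated at $\lambda$: at each stage $\eta<\lambda$ which is inaccessible (resp.\ a singular limit of inaccessibles) in $V^{\P_\eta}$, the stage forcing is the Easton-support product $\Q_{[\eta,\min(\bar\eta,\lambda))}$ (resp.\ $\Q_{[\eta^+,\min(\bar\eta,\lambda))}$), so that $\GCH$ is preserved at $\lambda$ and above. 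A routine variant of Lemma~\ref{lemmalevinski} shows that $\P_\lambda$ is cofinality-preserving, $\lambda^+$-c.c., and forces $2^\delta=\delta^{++}$ for every regular $\delta<\lambda$ while preserving $\GCH$ at $\lambda$ and above. Let $G_\lambda$ be $V$-generic for $\P_\lambda$ and $H_\lambda$ be $V[G_\lambda]$-generic for $\R_\lambda := \Add(\lambda,\lambda^+)^{V[G_\lambda]}$; then $\lambda$ has the strict Levinski property in $V[G_\lambda][H_\lambda]$.

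To preserve $\lambda$-supercompactness, fix $j:V\to M$ witnessing it with $M^\lambda\subseteq M$, $j(\kappa)>\lambda$, and $M = \{j(h)(j"\lambda)\mid h:P_\kappa\lambda\to V,\ h\in V\}$. Since $V$ and $M$ agree on $V_\lambda$, one has $j(\P_\lambda)\cong \P_\lambda * \dot{\widetilde{\P}}_{[\lambda, j(\lambda))}$. When $\lambda$ is inaccessible the stage-$\lambda$ forcing of $j(\P_\lambda)$ factors as $\widetilde{\R}_\lambda \times \widetilde{\Q}_{[\lambda^+,\bar\lambda^M)}$ with $\widetilde{\R}_\lambda = \Add(\lambda,\lambda^{++})^{M[G_\lambda]}$; and since $|M[G_\lambda]|^{V[G_\lambda]}=\lambda^+$, the ordinal $(\lambda^{++})^M$ has cardinality $\lambda^+$ in $V[G_\lambda]$, yielding an isomorphism $\widetilde{\R}_\lambda \cong \R_\lambda$ in $V[G_\lambda]$. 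Rearranging $H_\lambda$ through this isomorphism produces the required $M[G_\lambda]$-generic for $\widetilde{\R}_\lambda$ exactly as in Theorem~\ref{theoremlevinski}. The remainder of $\widetilde{\P}_{[\lambda, j(\lambda))}$ is $\leq\lambda$-closed in $M[G_\lambda]$ and has at most $\lambda^+$ dense subsets there, while $M[G_\lambda]$ is closed under $\lambda$-sequences in $V[G_\lambda]$ by Lemma~\ref{lemmachain} and the $\lambda^+$-c.c.\ of $\P_\lambda$; so one may diagonalize in $V[G_\lambda]$ to build the remaining generics. When $\lambda$ is a successor cardinal, the first nontrivial stage of $\widetilde{\P}_{[\lambda, j(\lambda))}$ lies strictly above $\lambda$, and the same style of diagonalization handles all of the tail directly. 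Combining these pieces yields a lift $j:V[G_\lambda]\to M[j(G_\lambda)]$ in $V[G_\lambda][H_\lambda]$, and $M[j(G_\lambda)]$ remains closed under $\lambda$-sequences there.

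It remains to lift $j$ through $\R_\lambda$, and for this I will use the ``increasingly masterful conditions'' argument from Theorem~\ref{theoremlevinski}. In $V[G_\lambda]$ enumerate the at most $|j(\lambda^+)|^V \leq (\lambda^+)^\lambda = \lambda^+$ maximal antichains of $j(\R_\lambda) = \Add(j(\lambda), j(\lambda^+))^{M[j(G_\lambda)]}$ lying in $M[j(G_\lambda)]$. Since $j$ is continuous at the regular cardinal $\lambda^+$ (being $\lambda$-supercompactness), each such antichain is contained in $\Add(j(\lambda), j(\alpha))$ for some $\alpha<\lambda^+$, and the partial master condition $q_\alpha := \bigcup j"(H_\lambda\cap \Add(\lambda,\alpha))$ has size $\leq\lambda$ and hence lies in $M[j(G_\lambda)]$, where it dominates every element of $j"(H_\lambda\cap \Add(\lambda,\alpha))$. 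One recursively builds a descending $\lambda^+$-sequence in $j(\R_\lambda)$ of conditions each of which decides one antichain while extending an appropriate $q_\alpha$, and hence remains compatible with every element of $j"H_\lambda$; the limit steps use $\leq\lambda$-closure of $j(\R_\lambda)$ in $M[j(G_\lambda)]$ together with $\lambda$-closure of $M[j(G_\lambda)]$ in $V[G_\lambda][H_\lambda]$. The filter $j(H_\lambda)$ generated by this sequence is $M[j(G_\lambda)]$-generic and contains $j"H_\lambda$, so Lemma~\ref{lemmaliftingcriterion} produces the final lift $j:V[G_\lambda][H_\lambda]\to M[j(G_\lambda)][j(H_\lambda)]$ witnessing the $\lambda$-supercompactness of $\kappa$ in $V[G_\lambda][H_\lambda]$.

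I expect the main obstacle to be a uniform, careful treatment of the stage-$\lambda$ forcing in $j(\P_\lambda)$, in particular the mutual-genericity of the transferred $\widetilde{\R}_\lambda$-generic (obtained from $H_\lambda$ via the isomorphism) and the diagonally constructed $\widetilde{\Q}_{[\lambda^+,\bar\lambda^M)}$-generic, so that the product forcing lemma genuinely yields an $M[G_\lambda]$-generic for the full stage-$\lambda$ poset; this is somewhat delicate when $\lambda$ is inaccessible. Everything else is a faithful adaptation of Theorem~\ref{theoremlevinski}, with the closure $M^\kappa\subseteq M$ replaced throughout by $M^\lambda\subseteq M$ and the counting of dense subsets and antichains carried out one cardinal higher.
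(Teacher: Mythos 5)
Your proposal captures the high-level strategy of the paper's proof (Levinski iteration to $\lambda$, transfer the $\R_\lambda$-generic via an isomorphism, diagonalize the rest, then lift through $\R_\lambda$ with increasingly masterful conditions), but it contains two genuine gaps at precisely the points where this theorem departs from the $\lambda=\kappa$ case of Theorem~\ref{theoremlevinski}.

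First, the case split on whether $\lambda$ is inaccessible is mishandled. You claim that when $\lambda$ is a successor cardinal, ``the first nontrivial stage of $\widetilde{\P}_{[\lambda,j(\lambda))}$ lies strictly above $\lambda$'' and so the tail can be built entirely by diagonalization. This is false. When $\lambda$ is a successor, the last nontrivial stage of $\P_\lambda$ occurs at some $\theta<\lambda$ (the largest $M$-inaccessible or singular limit of $M$-inaccessibles below $\lambda$). In $j(\P_\lambda)$, the stage-$\theta$ forcing is the full product $\Q_{[\theta,\bar\theta^M)}$ where $\bar\theta^M>\lambda$; in particular it contains the factor $\Add(\lambda,\lambda^{++})^{M[G_\theta]}$. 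Because this factor is only ${<}\lambda$-closed, you cannot diagonalize past $\lambda$-sequences of dense sets; it must be absorbed from $H_\lambda$ via the transfer isomorphism exactly as in the inaccessible case. The paper handles this uniformly by locating $\lambda$ inside the block $[\theta,\bar\theta^M)$ from the start, so that the factorization
$$j(\P_\kappa)\cong\P_\kappa*\dot\P_{[\kappa,\theta)}*\bigl(\dot\Q_{[\theta,\lambda)}\times\dot{\widetilde{\R}}_\lambda\times\dot{\widetilde{\Q}}_{[\lambda^+,\bar\theta^M)}\bigr)*\dot{\widetilde{\P}}_{[\bar\theta^M,j(\kappa))}$$
exposes $\widetilde\R_\lambda$ in all cases.

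Second, and more seriously, your one-shot lift $j:V[G_\lambda]\to M[j(G_\lambda)]$ never verifies $j"G_\lambda\subseteq j(G_\lambda)$, and this is not automatic here. In the measurable case conditions in $G_\kappa$ have supports bounded below the critical point, so $j(p)=p$ and $j"G_\kappa\subseteq G_\kappa$ is free. But when $\lambda>\kappa$, a condition $p\in G_\lambda$ has nontrivial support in $[\kappa,\lambda)$, and $j$ moves those coordinates into $[j(\kappa),j(\lambda))$, well past $\lambda$. Thus the conditions $j(p)\restrict[\lambda,j(\lambda))$ must be forced into your tail generic, and neither the transfer of $H_\lambda$ nor the diagonalization you describe does this by itself. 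The paper addresses this by decomposing $\P_\lambda\cong\P_\kappa*\dot\P_{[\kappa,\theta)}*\dot\Q_{[\theta,\lambda)}$ and lifting piecewise: through $\P_\kappa$ (automatic), then through $\P_{[\kappa,\theta)}$ below a master condition dominating $j"G_{[\kappa,\theta)}$ (using that $j(\P_{[\kappa,\theta)})$ is ${\leq}\lambda$-directed closed), then through $\Q_{[\theta,\lambda)}$ below a second master condition, before the final $\R_\lambda$ step. You would need to fold equivalent master-condition arguments into your tail construction; as written, the lift to $V[G_\lambda]$ is unjustified.

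The concern you flag at the end about mutual genericity of the transferred $\widetilde\R_\lambda$-generic and the diagonal generic is in fact the one point that resolves cleanly: build the $\widetilde\Q_{[\lambda^+,\bar\theta^M)}$-generic in $V[G_\theta]$ first, so that $H_\lambda$, being $V[G_\lambda]$-generic, is automatically generic over $M[G_\theta][\widetilde H_{[\lambda^+,\bar\theta^M)}]$; the product forcing lemma then applies. The real difficulties are the two listed above.
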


\begin{proof}
Let $\P_\lambda$ be the Levinski forcing up to $\lambda$, that is force at stage $\gamma<\lambda$ with $\Q_{[\gamma,\bar{\gamma})}$ provided $\gamma$ is either inaccessible or a singular limit of inaccessibles. As before, if $\gamma$ is inaccessible or a singular limit of inaccessible cardinals and there is no inaccessible cardinal in the interval $(\gamma,\lambda)$, then $\P_\lambda$ has a last stage of forcing of the form $\Q_{[\gamma,\lambda)}$. Let $\R_\lambda=\Add(\lambda,\lambda^+)^{V^{\P_\lambda}}$. Let $G_\lambda * H_\lambda$ be $V$-generic for $\P_\lambda*\dot{\R}_\lambda$.

Let $j:V\to M$ be the ultrapower by a normal fine measure on $P_\kappa\lambda$. One may assume without loss of generality that $\lambda^{<\kappa}=\lambda$ since any $\lambda$-supercompactness embedding is also a $\lambda^{<\kappa}$-supercompactness embedding. Then $\lambda\in[\theta,\bar{\theta}^M)$ where $\theta$ is either an $M$-inaccessible cardinal or a singular limit of $M$-inaccessible cardinals. It follows that 
$$j(\P_\kappa)\cong \P_\kappa*\dot\P_{[\kappa,\theta)}*(\dot\Q_{[\theta,\lambda)}\times\dot{\widetilde{\R}}_\lambda\times\dot{\widetilde{\Q}}_{[\lambda^+,\bar{\theta}^M)})*\dot{\widetilde{\P}}_{[\bar{\theta}^M,j(\kappa))}$$
where $\widetilde{\R}_\lambda=\Add(\lambda,\lambda^{++})^{M[G_\theta]}$ and the iteration $j(\P_\kappa)$ agrees with $\P_\lambda:=\P_\theta*\dot\Q_{[\theta,\lambda)}$ up to $\lambda$ since $M$ is closed under $\lambda$-sequences. 

Let me argue that $\P_\theta$ is $\lambda^+$-c.c. If $\theta$ is $M$-inaccessible then since $M^\lambda\subseteq M$ in $V$, it follows that $\theta$ is inaccessible in $V$, and hence that $\P_\theta$ has size $\theta^{<\theta}=\theta$ and is thus $\theta^+$-c.c. Since $\lambda\in[\theta,\bar{\theta}^M)$, this implies $\P_\theta$ is $\lambda^+$-c.c.  If $\theta$ is a singular limit of $M$-inaccessible cardinals, then $V$ agrees on this. Now $\P_\theta$ has size at most $2^\theta=\theta^+$ and is thus $\theta^{++}$-c.c. Furthermore, $\theta^+\leq\lambda$ since $\lambda$ is regular and hence $\theta^{++}\leq\lambda^+$. It easily follows that $\P_\theta$ is $\lambda^+$-c.c.

Since $\P_\theta$ is $\lambda^+$-c.c. in either case above, it follows that $M[G_\theta]^\lambda\subseteq M[G_\theta]$ in $V[G_\theta]$ by Lemma \ref{lemmachain}. As before one can build an $M[G_\theta]$-generic $\widetilde{H}_{[\lambda^+,\bar{\theta}^M)}$ for $\widetilde{\Q}_{[\lambda^+,\bar{\theta}^M)}$ in $V[G_\theta]$ and it follows by Lemma \ref{lemmaground} that $M[G_\theta][\widetilde{H}_{[\lambda^+,\bar{\theta}^M)}]$ is closed under $\lambda$-sequences in $V[G_\theta]$. Now $H_{[\theta,\lambda)}\times H_\lambda$ is $V[G_\theta]$-generic for $\Q_{[\theta,\lambda)}\times\R_\lambda$ and since $\R_\lambda$ is isomorphic to $\widetilde{\R}_\lambda$ in $V[G_\theta]$ this implies that there is a $V[G_\theta][H_{[\theta,\lambda)}]$-generic $\widetilde{H}_\lambda$ in $V[G_\theta][H_{[\theta,\lambda)}\times H_\lambda]$ for $\widetilde{\R}_\lambda$. Since the forcing $\Q_{[\theta,\lambda)}\times\R_\lambda\cong \Q_{[\theta,\lambda)}\times\widetilde{\R}_\lambda$ is $\lambda^+$-c.c. it follows by Lemma \ref{lemmachain} that $M[G_\theta][\widetilde{H}_{[\lambda^+,\bar{\theta}^M)}][H_{[\theta,\lambda)}\times\widetilde{H}_\lambda]$ is closed under $\lambda$-sequences in $V[G_\lambda]$. Furthermore, by the product forcing lemma $\widetilde{H}_{[\theta,\bar{\theta}^M)}:=H_{[\theta,\lambda)}\times\widetilde{H}_\lambda\times\widetilde{H}_{[\lambda^+,\bar{\theta}^M)}$ is $M[G_\theta]$-generic for $\Q_{[\theta,\lambda)}\times\widetilde{\R}_\lambda\times\widetilde{\Q}_{[\lambda^+,\bar{\theta}^M)}$ and $M[G_\theta][\widetilde{H}_{[\theta,\bar{\theta}^M)}]$ is closed under $\lambda$-sequences in $V[G_\lambda]$. Finally since $\widetilde{\P}_{[\bar{\theta}^M,j(\kappa))}$ is $\leq\lambda$-closed in $M[G_\theta][\widetilde{H}_{[\theta,\bar{\theta}^M)}]$ one can build an $M[G_\theta][\widetilde{H}_{[\theta,\bar{\theta}^M)}]$-generic $\widetilde{G}_{[\bar{\theta}^M,j(\kappa))}$ for it in $V[G_\lambda][H_\lambda]$. Since conditions in $G_\kappa$ have bounded support it follows that $j"G_\kappa\subseteq j(G_\kappa):=G_\theta*\widetilde{H}_{[\theta,\bar{\theta}^M)}*\widetilde{G}_{[\bar{\theta}^M,j(\kappa))}$ and thus the embedding lifts to $j:V[G_\kappa]\to M[j(G_\kappa)]$ in $V[G_\lambda][H_\lambda]$ where $M[j(G_\kappa)]$ is closed under $\lambda$-seqeunces in $V[G_\lambda][H_\lambda]$.

It remains to demonstrate that $j$ lifts through $\P_{[\kappa,\theta)}*(\Q_{[\theta,\lambda)}\times \R_\lambda)$. 
Since $\P_{[\kappa,\theta)}$ is ${<}\kappa$-directed closed in $V[G_\kappa]$, the forcing $j(\P_{[\kappa,\theta)})$ is ${\leq}{\lambda}$-directed closed. Since $H_{[\kappa,\theta)}$ has size at most $\lambda$ (a slightly different calculation depending on if $\theta$ is singular or inaccessible in $M$) it follows that there is a master condition $p\in j(\P_{[\kappa,\theta)})$ below each element of $j" H_{[\kappa,\theta)}$. Let me show that one may build an $M[j(G_\kappa)]$-generic filter for $j(\P_{[\kappa,\theta)})$ below $p$. Since $\P_{[\kappa,\theta)}$ has size at most $\lambda$ it has at most $2^\lambda$ dense subsets. By elementarity $j(\P_{[\kappa,\theta)})$ has at most $j(2^\lambda)$ dense subsets in $M[j(G_\kappa)]$. Since each element of $j(2^\lambda)$ is witnessed by a function $P_\kappa\lambda\to 2^\lambda$ in $V[G_\kappa]$ it follows that $j(\P_{[\kappa,\theta)})$ has at most $(2^\lambda)^{\lambda^{<\kappa}}=(2^\lambda)^\lambda = 2^\lambda = \lambda^+$ dense subsets in $M[j(G_\kappa)]$. Furthermore, since $j(\P_{[\kappa,\theta)})$ is $\leq\lambda$-closed in $M[j(G_\kappa)]$ and since $M[j(G_\kappa)]$ is closed under $\lambda$-sequences in $V[G_\lambda][H_\lambda]$, it follows that one can build an $M[j(G_\kappa)]$-generic filter $j(H_{[\kappa,\theta)})$ for $j(\P_{[\kappa,\theta)})$ in $V[G_\lambda][H_\lambda]$ and lift the embedding to 
$$j:V[G_\kappa][H_{[\kappa,\theta)}]\to M[j(G_\kappa)][j(H_{[\kappa,\theta)})]$$
in $V[G_\lambda][H_\lambda]$. Additionally, $M[j(G_\kappa)][j(H_{[\kappa,\theta)})]$ is closed under $\lambda$-sequences in $V[G_\lambda][H_\lambda]$ since $j(H_{[\kappa,\theta)})\in V[G_\lambda][H_\lambda]$.

A standard argument shows that since $\lambda$ is regular, $\Q_{[\theta,\lambda)}$ is ${<}\theta$-directed closed in $V[G_\theta]$. By elementarity, $j(\Q_{[\theta,\lambda)})$ is $\leq\lambda$-directed closed in the model $M[j(G_\kappa)][j(H_{[\kappa,\theta)})]$. Since $\Q_{[\theta,\lambda)}$ has at most $2^\lambda=\lambda^+$ dense subsets it follows that $j(\Q_{[\theta,\lambda)})$ has at most $\lambda^+$ dense subsets in $M[j(G_\kappa)][j(H_{[\kappa,\theta)})]$. Thus one can use a master condition argument to lift $j$ to
$$j:V[G_\lambda]\to M[j(G_\lambda)]$$
in $V[G_\lambda][H_\lambda]$ where $j(G_\lambda):=j(G_\kappa)*j(G_{[\kappa,\theta)})*j(H_{[\theta,\lambda)})$ and $M[j(G_\lambda)]$ is closed under $\lambda$-seqeunces in $V[G_\lambda][H_\lambda]$ since $j(H_{[\theta,\lambda)})\in V[G_\lambda][H_\lambda]$.

Now I will lift the embedding through the forcing $\R_\lambda=\Add(\lambda,\lambda^+)^{V[G_\lambda]}$. It follows that $j(\R_\lambda)=\Add(j(\lambda),j(\lambda^+))$. Suppose $A$ is a maximal antichain of $j(\R_\lambda)$ in $M[j(G_\lambda)]$ and $r\in j(\R_\lambda)$ is compatible with every element of $j"H_\lambda$. I will show that there is a condition $r'\leq r$ that decides $A$ and remains compatible with $j"H_\lambda$. Since $j(\R_\lambda)$ is $j(\lambda^+)$-c.c. in $M[j(G_\lambda)]$ one has $|A|^{M[j(G_\lambda)]}\leq j(\lambda)$. Since $j(\lambda^+)$ is regular in $M[j(G_\lambda)]$ and since $\sup j"\lambda^+=j(\lambda^+)$ there is an $\alpha<\lambda^+$ such that $A\subseteq \Add(j(\lambda),j(\alpha))$ and also $r\in\Add(j(\lambda),j(\alpha))$. Now $j"(H_\lambda\cap\Add(\lambda,\alpha))$ is a subset of $\Add(j(\lambda),j(\alpha))$ of size at most $\lambda$ which is in $V[G_\lambda][H_\lambda]$ and therefore also in $M[j(G_\lambda)]$. Now in $M[j(G_\lambda)]$, each element of $j"(H_\lambda\cap\Add(\lambda,\alpha))$ has size less than $j(\lambda)$. Hence $p:=\bigcup j"(H_\lambda\cap\Add(\lambda,\alpha))$ has size less than $j(\lambda)$ in $M[j(G_\lambda)]$ and is thus a condition in $\Add(j(\lambda),j(\alpha))^{M[j(G_\lambda)]}$. Since $r$ is compatible with every element of $j"H_\lambda$ it follows that $r$ and $p$ are compatible in $\Add(j(\lambda),j(\alpha))$ and thus one may let $r'\in \Add(j(\lambda),j(\alpha))$ decide the maximal antichain $A$ of $\Add(j(\lambda),j(\alpha))$ with $r'\leq r,p$. Choose a condition $q\in j"H_\lambda\subseteq\Add(j(\lambda),j(\lambda^+))$, then $q$ is of the form $q=q_0\cup q_1$ where $\dom(q_0)\subseteq j(\alpha)\times j(\lambda)$ and $\dom(q_1)\subseteq [j(\alpha),j(\lambda^+))\times j(\lambda)$. Since $r'\leq p$ we conclude that $r'\leq q_0$ and since $\dom(r')\subseteq j(\alpha)\times j(\lambda)$ it follows that $r'$ is compatible with $q_1$. Thus $r'$ is compatible with $q$.

Finally, since $j(\R_\lambda)$ has at most $\lambda^+$ maximal antichains in $M[j(G_\lambda)]$ the above procedure can be iterated to build in $V[G_\lambda][H_\lambda]$ a descending sequence of conditions meeting every maximal antichain of $j(\R_\lambda)$ in $M[j(G_\lambda)]$ such that each member of the sequence is compatible with $j"H_\lambda$. Let $j(H_\lambda)$ be the filter generated by the descending sequence. It follows that $j(H_\lambda)$ is $M[j(G_\lambda)]$-generic for $j(\R_\lambda)$ and $j"H_\lambda\subseteq j(H_\lambda)$. This implies that the embedding lifts to 
$$j:V[G_\lambda][H_\lambda]\to M[j(G_\lambda)][j(H_\lambda)]$$
in $V[G_\lambda][H_\lambda]$.
\end{proof}

Notice that in Theorem \ref{theorempartiallysupercompact} if $\lambda$ happened to be a measurable cardinal, then one could also preserve this through the forcing, using the argument given in Theorem \ref{theoremlevinski}.

\subsection{$n$-huge Cardinals and the Levinski Property}\label{sectionnhuge}

A cardinal $\kappa>\omega$ is \emph{$n$-huge with target $\lambda$} if there is a nontrivial elementary embedding $j:V\to M$ with critical point $\kappa$ such that $j^n(\kappa)=\lambda$ and $M^{\lambda}\subseteq M$ in $V$. As with many other large cardinal notions, $n$-hugeness can be characterized by the existence of a certain type of ultrafilter. I will give a brief outline showing how this is done. Suppose $j:V\to M$ witnesses the $n$-hugeness of $\kappa$ and let $\lambda_i:=j^i(\kappa)$ for $0< i\leq n$ and $\lambda_0:=\kappa$. Define 
$$U:=\{X\subseteq [\lambda]^{\lambda_{n-1}}\mid j"\lambda\in j(X)\}.$$ Then $U$ is a $\kappa$-complete ultrafilter and has the following properties. 
\begin{enumerate}
\item For each $\alpha<\lambda$, $\{x\in[\lambda]^{\lambda_{n-1}}\mid \alpha\in x\}\in U$.
\item For every function $f$ that is regressive on a set in $U$, meaning $\{x\in [\lambda]^{\lambda_{n-1}}\mid f(x)\in x\}\in U$, $f$ is constant on a set in $U$. 
\item Furthermore, $U$ has the property that 
\textrm{for each $i<n$, $\{x\in [\lambda]^{\lambda_{n-1}}\mid \ot(x\cap\lambda_{i+1})=\lambda_i\}\in U$.} \label{hugefilterproperty}
\end{enumerate}
Let $j_U:V\to M_U=\Ult(V,U)$. Then (\ref{hugefilterproperty}) implies that for $i<n$, $j_U(\lambda_i)=\lambda_{i+1}$, since:
\begin{align*}
\lambda_{i+1}&=\ot(j_U"\lambda\cap j_U(\lambda_{i+1}))\\
	&=[\ot(\langle x \cap \lambda_{i+1})\mid x \in [\lambda]^{\lambda_{n-1}}\rangle]_U\\
	&=[\langle \lambda_i\mid x\in[\lambda]^{\lambda_{n-1}}\rangle]_U\hspace{0.8in} \textrm{(by (\ref{hugefilterproperty}))}\\
	&=j(\lambda_i)
\end{align*}
Furthermore, $j_U"\lambda=[\id]_U\in M_U$ so that $M_U^\lambda\subseteq M_U$. Since $j_U$ is the ultrapower by $U$ it follows that 
$M_U=\{j_U(h)(j_U"\lambda)\mid h:[\lambda]^{\lambda_{n-1}}\to V, h\in V\}$. This establishes the following.

\begin{lemma}\label{lemmanhugeultrapower}
A cardinal $\kappa>\omega$ is $n$-huge if an only if there is an ultrapower embedding $j:V\to M$ by a normal fine measure on $[\lambda]^{\lambda_{n-1}}$ witnessing the $n$-hugeness of $\kappa$ where $\lambda_i:=j^i(\kappa)$, $\lambda_0=\kappa$, and $\lambda=\lambda_n$. Furthermore, $M=\{j(h)(j"\lambda)\mid h:[\lambda]^{\lambda_{n-1}}\to V, h\in V\}$.
\end{lemma}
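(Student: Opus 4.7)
The backward direction is immediate: if $j:V\to M$ is an ultrapower embedding with critical point $\kappa$, $j^n(\kappa)=\lambda$, and $M^\lambda\subseteq M$, then by definition $\kappa$ is $n$-huge with target $\lambda$. The whole content is in the forward direction, and most of the necessary calculations have already been performed in the paragraph preceding the lemma. The plan is therefore to assemble those observations into a coherent argument.

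First, suppose $j:V\to M$ is an arbitrary embedding witnessing the $n$-hugeness of $\kappa$, and set $\lambda_i:=j^i(\kappa)$ for $0\leq i\leq n$ (with $\lambda_0=\kappa$ and $\lambda_n=\lambda$). Define $U$ on $[\lambda]^{\lambda_{n-1}}$ by
$$U:=\{X\subseteq[\lambda]^{\lambda_{n-1}}\mid j"\lambda\in j(X)\}.$$
The next step is to verify that $U$ is a $\kappa$-complete, fine, normal ultrafilter concentrating on sets of order type $\lambda_{n-1}$. Fineness (property (1) in the preceding discussion) follows from the fact that for each $\alpha<\lambda$ one has $j(\alpha)\in j"\lambda$; normality (property (2)) is the standard press-down argument using that $j"\lambda\in j(X)$ for $X$ in $U$; $\kappa$-completeness follows from the critical point of $j$ being $\kappa$. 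The key extra property (\ref{hugefilterproperty}) comes from the easy computation $\ot(j"\lambda\cap j(\lambda_{i+1}))=\ot(j"\lambda_{i+1})=\lambda_{i+1}=j(\lambda_i)$ in $M$, which shows that $\{x\in[\lambda]^{\lambda_{n-1}}\mid\ot(x\cap\lambda_{i+1})=\lambda_i\}\in U$ for each $i<n$.

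Next, let $j_U:V\to M_U=\mathrm{Ult}(V,U)$ be the ultrapower embedding. By \L{}o\'s's theorem and property (\ref{hugefilterproperty}), the computation displayed just before the lemma gives $j_U(\lambda_i)=\lambda_{i+1}$ for each $i<n$, so that $j_U^n(\kappa)=\lambda$ and in particular $\mathrm{crit}(j_U)=\kappa$. Also by \L{}o\'s's theorem, $[\mathrm{id}]_U=j_U"\lambda$, and since $[\mathrm{id}]_U\in M_U$, one has $j_U"\lambda\in M_U$, which, being a set of ordinals of order type $\lambda$, can be used together with $j_U\restrict\lambda$ to show $M_U^{\lambda}\subseteq M_U$ by the standard argument: any $\lambda$-sequence of elements of $M_U$ is coded by a function from $\lambda$ into $V$ whose image under $j_U$ is recoverable inside $M_U$ from $j_U"\lambda$. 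Hence $j_U$ witnesses the $n$-hugeness of $\kappa$ with target $\lambda$.

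Finally, the representation $M_U=\{j_U(h)(j_U"\lambda)\mid h:[\lambda]^{\lambda_{n-1}}\to V,\ h\in V\}$ is the standard description of the ultrapower by $U$: every element of $M_U$ has the form $[h]_U$ for some $h\in V$ with $\dom(h)=[\lambda]^{\lambda_{n-1}}$, and $[h]_U=j_U(h)([\mathrm{id}]_U)=j_U(h)(j_U"\lambda)$. There is no real obstacle in this proof, since the heavy lifting, namely the verification that the seed $j"\lambda$ forces $j_U$ to send $\lambda_i$ to $\lambda_{i+1}$, is exactly the computation already displayed in the text; the only task is to package fineness, normality, $\kappa$-completeness, and the $\lambda$-closure of $M_U$ in the standard way.
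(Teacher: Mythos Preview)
Your proposal is correct and follows essentially the same approach as the paper: the discussion immediately preceding the lemma already carries out precisely this argument (defining $U$ from $j"\lambda$, verifying fineness, normality, $\kappa$-completeness and property~(\ref{hugefilterproperty}), computing $j_U(\lambda_i)=\lambda_{i+1}$, and reading off $M_U^\lambda\subseteq M_U$ and the seed representation from $[\id]_U=j_U"\lambda$), and the lemma is stated as a summary of that discussion.
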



Note that, using the notation above, for each $i\in[1,n]$, by elementarity $\lambda_i$ is measurable in $M$. Furthermore, since $M$ and $V$ have the same powerset of $\lambda_i$ it follows that $\lambda_i$ is measurable in $V$. This will simplify various calculations performed in $V$ below such as $\lambda_n^{\lambda_i}=\lambda_n$.

Now I will show that the $n$-hugeness of a cardinal is preserved by various forcing notions.

\begin{lemma}\label{lemmanhugedistributive}
If $\kappa$ is $n$-huge with target $\lambda$ then this is preserved by ${\leq}\lambda$-distributive forcing.
\end{lemma}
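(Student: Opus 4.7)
The plan is to apply the abstract lifting criterion of Lemma \ref{lemmalambdadist} to a canonically chosen $n$-hugeness embedding. Begin by using Lemma \ref{lemmanhugeultrapower} to fix an embedding $j:V\to M$ witnessing that $\kappa$ is $n$-huge with target $\lambda$, such that $M=\{j(h)(j"\lambda)\mid h:[\lambda]^{\lambda_{n-1}}\to V,\ h\in V\}$ and $M^\lambda\subseteq M$ in $V$. Since each $\lambda_i$ is measurable, and in particular inaccessible, one has $|[\lambda]^{\lambda_{n-1}}|=\lambda$, so $j$ is generated (in the sense of Lemma \ref{lemmalambdadist}) by the seed $j"\lambda\subseteq j(\lambda)$ over an index set of size $\lambda$.

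Now let $\P$ be any $\leq\lambda$-distributive forcing notion and let $G$ be $V$-generic for $\P$. By Lemma \ref{lemmalambdadist}, the embedding $j$ lifts uniquely to an elementary embedding $j:V[G]\to M[j(G)]$ that is a class of $V[G]$. It remains to verify that this lift witnesses $n$-hugeness of $\kappa$ with target $\lambda$ in $V[G]$. The critical point remains $\kappa$, and since the lift agrees with the original $j$ on all sets in $V$, the ordinals $\kappa_i:=j^i(\kappa)\in V$ continue to satisfy $j(\kappa_i)=\kappa_{i+1}$; hence $j^n(\kappa)=\lambda$ in $V[G]$ as well.

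The only nontrivial point is that $M[j(G)]^\lambda\subseteq M[j(G)]$ in $V[G]$. To see this, suppose $\vec{x}=\langle x_\alpha\mid\alpha<\lambda\rangle\in V[G]$ with each $x_\alpha\in M[j(G)]$. Using the axiom of choice in $V[G]$, pick for each $\alpha<\lambda$ a $j(\P)$-name $\dot{x}_\alpha\in M$ with $\dot{x}_\alpha^{j(G)}=x_\alpha$. The resulting sequence $\langle\dot{x}_\alpha\mid\alpha<\lambda\rangle$ lies in $V[G]$ and takes values in $M\subseteq V$; by well-ordering the transitive closure of its range in $V$ we may view it as a $\lambda$-sequence of ordinals, which by $\leq\lambda$-distributivity of $\P$ already lies in $V$. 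Thus the sequence of names is in $V$, and since $M^\lambda\subseteq M$ in $V$ with each $\dot{x}_\alpha\in M$, it is in fact in $M$. Evaluating this $M$-sequence of names by the filter $j(G)$ inside $M[j(G)]$ recovers $\vec{x}$, so $\vec{x}\in M[j(G)]$, as required.

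The main obstacle is essentially absent: once Lemma \ref{lemmalambdadist} supplies the lift, the only task is the closure verification above, which is a routine name-choosing argument combined with distributivity. In particular, no master condition or diagonal generic construction is needed, because $\P$ adds no new $\lambda$-sequences from the ground model.
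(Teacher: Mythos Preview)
Your proof is correct and follows essentially the same approach as the paper: both use the ultrapower representation of Lemma~\ref{lemmanhugeultrapower} and the fact that $|[\lambda]^{\lambda_{n-1}}|=\lambda$ to lift $j$ through ${\leq}\lambda$-distributive forcing, then verify $M[j(G)]^\lambda\subseteq M[j(G)]$ via distributivity and the ground-model closure $M^\lambda\subseteq M$. The only cosmetic difference is that you invoke Lemma~\ref{lemmalambdadist} as a black box, whereas the paper inlines its proof by explicitly intersecting $\lambda$ many open dense subsets of $\P$ to see that $j"G$ generates an $M$-generic filter on $j(\P)$.
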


\begin{proof}
Suppose $\P$ is $\leq \lambda$-distributive and $G$ is $V$-generic for $\P$. Let $j:V\to M=\Ult(V,U)$ where $U$ is a normal fine measure on $[\lambda]^{\lambda_{n-1}}$ as in Lemma \ref{lemmanhugeultrapower}. Since each open dense subset of $j(\P)$ is of the form $j(h)(j"\lambda)$ where $h$ is a function from $[\lambda]^{\lambda_{n-1}}$ to open dense subsets of $\P$, one may show that $H:=\{ p\in j(\P)\mid \exists q\in G\ j(q)\leq p\}$ is $M$-generic for $j(\P)$ by intersecting $|[\lambda]^{\lambda_{n-1}}|=\lambda^{\lambda_{n-1}}=\lambda$ open dense sets. Then since $j"G\subseteq H$, Lemma \ref{lemmaliftingcriterion} implies that $j$ lifts to $j:V[G]\to M[j(G)]$ in $V[G]$ where $j(G):=H$. Since $\P$ is $\leq\lambda$-distributive it follows that $M[j(G)]$ is closed under $\lambda$-sequences of ordinals in $V[G]$ and hence $M[j(G)]^\lambda\subseteq M[j(G)]$ in $V[G]$. Thus $\kappa$ is huge in $V[G]$. 
\end{proof}

The next theorem (Theorem \ref{theoremnhugegch}) is part of the folklore; I include a proof as the proof of Theorem \ref{theoremnhugelevinski} builds naturally upon the proof of Theorem \ref{theoremnhugegch}.

\begin{theorem}\label{theoremnhugegch}
If $\kappa$ is $n$-huge with target $\lambda$ then there is a forcing extension in which $\GCH$ holds and $\kappa$ remains $n$-huge with target $\lambda$.
\end{theorem}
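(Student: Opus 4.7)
\emph{Proof proposal.} The plan is to use the standard canonical Easton-support iteration for global $\GCH$: define $\P=\langle(\P_\alpha,\dot\Q_\alpha):\alpha\in\ORD\rangle$ to be the Easton-support class iteration in which, at each regular cardinal $\gamma$, $\dot\Q_\gamma$ is a $\P_\gamma$-name for $\Coll(\gamma^+,2^\gamma)^{V^{\P_\gamma}}$. If $G$ is $V$-generic for $\P$, a standard Easton-style argument shows that cofinalities are preserved and $V[G]\models\GCH$. Because the tail $\P_{[\lambda,\infty)}$ is $\leq\lambda$-closed, Lemma \ref{lemmanhugedistributive} reduces the problem of preserving $n$-hugeness to showing that $\kappa$ is still $n$-huge with target $\lambda$ in $V[G_\lambda]$.

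Fix a witness $j:V\to M$ to the $n$-hugeness of $\kappa$ as in Lemma \ref{lemmanhugeultrapower}, so that $M^\lambda\subseteq M$ in $V$ and $M=\{j(h)(j"\lambda)\mid h:[\lambda]^{\lambda_{n-1}}\to V,\ h\in V\}$. The aim is to construct, in $V[G_\lambda]$, an $M$-generic filter $j(G_\lambda)$ for $j(\P_\lambda)$ with $j"G_\lambda\subseteq j(G_\lambda)$; Lemma \ref{lemmaliftingcriterion} will then yield a lift $j:V[G_\lambda]\to M[j(G_\lambda)]$, with the required $\lambda$-closure of the target in $V[G_\lambda]$ following from Lemma \ref{lemmaground}. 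The closure $M^\lambda\subseteq M$ implies $V_\lambda\subseteq M$, and hence $\mathcal{P}(\gamma)^V=\mathcal{P}(\gamma)^M$ for every $\gamma\leq\lambda$; an induction on stages then shows that $V^{\P_\gamma}$ and $M^{\P_\gamma}$ compute the same forcing $\Coll(\gamma^+,2^\gamma)$ at each $\gamma\leq\lambda$. Consequently $j(\P_\lambda)$ factors in $M$ as $\P_\lambda*\dot{\widetilde{\P}}_{[\lambda,j(\lambda))}$, and $G_\lambda$ itself is $M$-generic for the initial factor.

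To build an $M[G_\lambda]$-generic filter $\widetilde{G}_{\mathrm{tail}}$ for the tail $\widetilde{\P}_{[\lambda,j(\lambda))}$ inside $V[G_\lambda]$, one diagonalizes. The tail is $\leq\lambda$-closed in $M[G_\lambda]$, and $M[G_\lambda]$ itself is closed under $\lambda$-sequences in $V[G_\lambda]$ (by Lemma \ref{lemmachain} applied at level $\lambda^+$, since $\P_\lambda$ is $\lambda^+$-c.c.). By $\GCH$ in $M[G_\lambda]$, the tail has at most $j(\lambda^+)$ dense subsets in $M[G_\lambda]$, and the standard ultrapower bound $|j(\lambda^+)|^V\leq(\lambda^+)^\lambda=\lambda^+$ shows that, viewed in $V[G_\lambda]$, there are only $\lambda^+$ such dense sets to meet. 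A $\lambda^+$-length diagonalization then produces $\widetilde{G}_{\mathrm{tail}}$, with the pullback condition $j"G_\lambda\subseteq j(G_\lambda)$ arranged by starting the construction below a master condition absorbing $j"G_{[\kappa,\lambda)}$, which is a directed subset of the tail of size $\leq\lambda$ in $M[G_\lambda]$ and so admits a lower bound by the tail's closure. The main obstacle is the careful inductive bookkeeping for the stagewise agreement of $V$ and $M$ through $\lambda$ and the coherent construction of the master condition across the $n$ levels of the embedding; both steps depend essentially on the hypothesis $M^\lambda\subseteq M$ and on the seed representation of $M$ supplied by Lemma \ref{lemmaseedpreservation}.
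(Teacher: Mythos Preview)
Your argument is essentially correct for $n=1$ but has a real gap for $n\geq 2$. You assert that $j"G_{[\kappa,\lambda)}$ is a directed subset of the tail $\widetilde{\P}_{[\lambda,j(\lambda))}$, and this is precisely where it fails: since $j(\kappa)=\lambda_1<\lambda$, the image of each block $G_{[\lambda_i,\lambda_{i+1})}$ under $j$ lands in $j(\P_{[\lambda_i,\lambda_{i+1})})=\P_{[\lambda_{i+1},\lambda_{i+2})}$, so only the last block $j"G_{[\lambda_{n-1},\lambda)}$ actually sits above $\lambda$. The earlier blocks land inside $\P_{[\lambda_1,\lambda)}$, which is part of the first factor $\P_\lambda$, not the tail. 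For the lifting criterion you would therefore need $j(p)\restrict\lambda\in G_\lambda$ for every $p\in G_{[\kappa,\lambda_{n-1})}$, and a $G_\lambda$ chosen generically over $V$ has no reason to satisfy this. Your closing remark about ``the coherent construction of the master condition across the $n$ levels'' suggests you sense the difficulty, but the construction you actually wrote uses a single master condition in the tail and does not address it.

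The paper resolves this by \emph{not} taking $G_\lambda$ generic all at once. After lifting through $\P_\kappa$, one observes that $j(\P_{[\lambda_0,\lambda_1)})=\P_{[\lambda_1,\lambda_2)}$ is ${\leq}\lambda_1$-directed closed while $j"G_{[\lambda_0,\lambda_1)}$ has size $\lambda_1$, so a master condition $p_1\in\P_{[\lambda_1,\lambda_2)}$ exists; one then \emph{forces below} $p_1$ to obtain $G_{[\lambda_1,\lambda_2)}$, guaranteeing $j"G_{[\lambda_0,\lambda_1)}\subseteq G_{[\lambda_1,\lambda_2)}$ and allowing the lift through the next block. This is repeated inductively through all $n$ blocks. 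At the final step $j(\P_{[\lambda_{n-1},\lambda)})$ lies entirely above $\lambda$, and the paper simply forces over $V[G_\lambda]$ with it below a master condition to obtain an extra generic $H$; the lift lives in $V[G_\lambda][H]$, and one lifts once more through $H$ by the distributivity argument of Lemma~\ref{lemmanhugedistributive}. This also sidesteps your counting problem: the equality $(\lambda^+)^\lambda=\lambda^+$ you invoke requires $\GCH$ at $\lambda$, which has not yet been forced in $V[G_\lambda]$.
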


\begin{proof}
Let $\P$ be the canonical forcing of the $\GCH$ and suppose $j:V\to M$ witnesses that $\kappa$ is $n$-huge with target $\lambda$. Furthermore, assume that $M=\Ult(V,U)$ where $U$ is as in Lemma \ref{lemmanhugeultrapower}. It follows that
$$\P\cong \P_\kappa * \dot\P_{[\lambda_0,\lambda_1)} * \dot\P_{[\lambda_1,\lambda_2)} * \cdots *\dot\P_{[\lambda_{n-1},\lambda_n)} * \dot\P_{tail}$$
where 
\begin{enumerate}
\item[(1)] $\lambda_0=\kappa$, $\lambda_n=\lambda$, and $\lambda_i=j^i(\kappa)$,
\item[(2)] $\P_\kappa$ denotes the iteration up to $\kappa$, 
\item[(3)] for each $i\in[0,n)$ the symbol $\dot\P_{[\lambda_i,\lambda_{i+1})}$ is a $\P_{\lambda_i}$-term for the iteration over the interval $[\lambda_i,\lambda_{i+1})$, and 
\item[(4)] $\dot\P_{tail}$ is a $\P_\lambda$-term for $\leq\lambda$-distributive forcing in $V^{\P_\lambda}$. 
\end{enumerate}

Let $G_\kappa*G_{[\lambda_0,\lambda_1)}$ be $V$-generic for $\P_\kappa*\dot\P_{[\lambda_0,\lambda_1)}$. Then $j(\P_\kappa)\cong \P_\kappa * \dot\P_{[\lambda_0,\lambda_1)}$ and since conditions in $\P_\kappa$ have support of size less than $\kappa$ it follows that $j"G_\kappa\subseteq G_\kappa*G_{[\lambda_0,\lambda_1)}$. Thus the embedding lifts to $j:V[G_\kappa]\to M[G_{\lambda_1}]$ in $V[G_{\lambda_1}]$ where $j(G_\kappa)=G_{\lambda_1}:=G_\kappa*G_{[\lambda_0,\lambda_1)}$. Since $\P_{\lambda_1}$ is $\lambda^+$-c.c. it follows that $M[G_{\lambda_1}]^\lambda\subseteq M[G_{\lambda_1}]$ in $V[G_{\lambda_1}]$. 

By elementarity, $j(\P_{[\lambda_0,\lambda_1)})=\P_{[\lambda_1,\lambda_2)}$. Clearly $j(\P_{[\lambda_0,\lambda_1)})$ is $\leq \lambda_1$-directed closed. Since $j"G_{[\lambda_0,\lambda_1)}$ is a directed subset of $j(\P_{[\lambda_0,\lambda_1)})$ of size $\lambda_1$ it follows that there is a master condition $p_1\in j(\P_{[\lambda_0,\lambda_1)})=\P_{[\lambda_1,\lambda_2)}$ which extends every condition in $j"G_{[\lambda_0,\lambda_1)}$. Force with $\P_{[\lambda_1,\lambda_2)}$ below this master condition to obtain $G_{[\lambda_1,\lambda_2)}$ which is $V[G_{\lambda_1}]$-generic for $\P_{[\lambda_1,\lambda_2)}$ and has $p_1\in G_{[\lambda_1,\lambda_2)}$. It follows that $j"G_{[\lambda_0,\lambda_1)}\subseteq G_{[\lambda_1,\lambda_2)}$ and thus the embedding lifts to 
$$j:V[G_\kappa][G_{[\lambda_0,\lambda_1)}]\to M[G_\kappa*G_{[\lambda_0,\lambda_1)}][G_{[\lambda_1,\lambda_2)}]$$
in $V[G_\kappa][G_{[\lambda_0,\lambda_1)}][G_{[\lambda_1,\lambda_2)}]$ where $j(G_\kappa)=G_\kappa*G_{[\lambda_0,\lambda_1)}$ and $j(G_{[\lambda_0,\lambda_1)})=G_{[\lambda_1,\lambda_2)}$. As before it follows that $M[G_{\lambda_2}]^\lambda\subseteq M[G_{\lambda_2}]$ in $V[G_{\lambda_2}]$.

Continuing in this way one may lift the embedding to $j:V[G_{\lambda_{n-1}}]\to M[G_{\lambda_n}]$ in $V[G_{\lambda_n}]$ where $M[G_{\lambda_n}]^\lambda\subseteq M[G_{\lambda_n}]$ in $V[G_{\lambda_n}]$. Now I must lift the embedding further through $\P_{[\lambda_{n-1},\lambda_n)}$. Since $j(\P_{[\lambda_{n-1},\lambda_n)})$ is $\leq\lambda$-directed closed in $M[G_{\lambda_n}]$ it follows that there is a master condition $p_n\in j(\P_{[\lambda_{n-1},\lambda_n)})$ which is below every condition in $j"G_{[\lambda_{n-1},\lambda_n)}$. Let $H$ be $V[G_{\lambda_n}]$-generic for $j(\P_{[\lambda_{n-1},\lambda_n)})$ with $p_n\in H$.  Since the master condition $p_n$ is in $H$ it follows that $j"G_{[\lambda_{n-1},\lambda_n)}\subseteq H$ and thus one may lift the embedding to $j:V[G_{\lambda_n}]\to M[j(G_{\lambda_n})]$ in $V[G_{\lambda_n}][H]$ where $j(G_{\lambda_n})=G_{\lambda_n}*H$. Since $j(\P_{[\lambda_{n-1},\lambda_n)})$ is $\leq\lambda$-closed over $M[G_{\lambda_n}]$ and hence also over $V[G_{\lambda_n}]$ it follows that $\GCH$ holds below $\lambda_n=\lambda$ in $V[G_{\lambda_n}][H]$ and that $M[G_{\lambda_n}*H]^\lambda\subseteq M[G_{\lambda_n}*H]$ in $V[G_{\lambda_n}][H]$. Furthermore, $\GCH$ holds at $\lambda$ in $V[G_{\lambda_n}][H]$ since $\Add(\lambda^+,1)^{M[G_{\lambda_n}]}=\Add(\lambda^+,1)^{V[G_{\lambda_n}]}$. 
Let $j(H)$ be the filter generated by $j"H$ and note that by intersecting open dense sets one may see that $j(H)$ is $M[j(G_{\lambda_n})]$-generic for $j(j(\P_{[\lambda_{n-1},\lambda_n)}))$. Thus one may lift the embedding to $j:V[G_{\lambda_n}][H]\to M[j(G_{\lambda_n})][j(H)]$ in $V[G_{\lambda_n}][H]$ where $M[j(\lambda_n)][j(H)]$ is closed under $\lambda$-sequences in $V[G_{\lambda_n}][H]$. This shows that $\kappa$ remains $n$-huge with target $\lambda$ in $V[G_{\lambda_n}][H]$ where $\GCH$ holds up to and including at $\lambda$. One may now use $\leq\lambda$-distributive forcing over $V[G_{\lambda_n}][H]$ to obtain a model in which $\GCH$ holds everywhere. By Lemma \ref{lemmanhugedistributive} this produces a forcing extension in which $\GCH$ holds and $\kappa$ is $n$-huge with target $\lambda$.
\end{proof}

Next I will show that the target of an $n$-hugeness embedding for $\kappa$ can be forced to have the Levinski property while preserving the $n$-hugeness of $\kappa$.

\begin{theorem}\label{theoremnhugelevinski}
Suppose $\kappa$ is $n$-huge witnessed by $j:V\to M$ and $\GCH$ holds. Then there is a cofinality-preserving forcing extension in which $j^n(\kappa)$ has the strict Levinski property and to which the embedding $j$ lifts, witnessing the $n$-hugeness of $\kappa$ in the extension.

\end{theorem}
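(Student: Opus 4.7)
The plan is to force with the Levinski iteration $\P=\P_\lambda$ of Section \ref{sectionlevinskiforcing} and lift $j:V\to M$ to the extension. By Lemma \ref{lemmalevinski}, $\P_\lambda$ preserves cofinalities and forces $\lambda$ to have the strict Levinski property, so the only content is to build an $M$-generic filter $j(G_\lambda)\supseteq j"G_\lambda$ for $j(\P_\lambda)$. Assume by Lemma \ref{lemmanhugeultrapower} that $j$ is the ultrapower by a normal fine measure on $[\lambda]^{\lambda_{n-1}}$, so $M=\{j(h)(j"\lambda):h\in V\}$ and $|M|^V=\lambda^+$ under $\GCH$. Closure $M^\lambda\subseteq M$ together with $\GCH$ implies $V_\lambda\in M$, so the initial segment of $j(\P_\lambda)$ of length $\lambda$ coincides with $\P_\lambda$, and we may factor $j(\P_\lambda)\cong\P_\lambda*\dot{\widetilde\P}_{[\lambda,j(\lambda))}$ where the tail is a $\P_\lambda$-name computed in $M$.

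I would factor $\P_\lambda=\P_\kappa*\dot\P_{[\lambda_0,\lambda_1)}*\cdots*\dot\P_{[\lambda_{n-1},\lambda_n)}$ along the huge tower $\lambda_i:=j^i(\kappa)$ and lift $j$ inductively, following the template of Theorem \ref{theoremnhugegch}. The step through $\P_\kappa$ is automatic since conditions have Easton support bounded below $\crit(j)=\kappa$, so one may take $j(G_\kappa):=G_{\lambda_1}$. For each intermediate $1\le i<n$, $j$ carries $\P_{[\lambda_{i-1},\lambda_i)}$ to $\P_{[\lambda_i,\lambda_{i+1})}$, and the componentwise union $p_i^*:=\bigcup j"G_{[\lambda_{i-1},\lambda_i)}$ serves as a master condition: its support $j"[\lambda_{i-1},\lambda_i)$ has $V$-size $\lambda_i$, strictly below every inaccessible in $(\lambda_i,\lambda_{i+1}]$, so Easton support is respected; and at each $\Add$-coordinate $\zeta=j(\zeta')$ inside the stage-$j(\beta)$ factor, the projected image has $V$-cardinality at most $(\zeta')^{++}<\lambda_i\le\zeta$, yielding a genuine $\Add$-condition at $\zeta$. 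Forcing $G_{[\lambda_i,\lambda_{i+1})}$ to contain $p_i^*$ (which does not affect $V[G_{\lambda_i}]$-genericity) extends the lift to $V[G_{\lambda_{i+1}}]$.

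The main obstacle is the top step $i=n$, where $j(\P_{[\lambda_{n-1},\lambda_n)})=\widetilde\P_{[\lambda,j(\lambda))}$ lies above $\lambda$ and is not available in $G_\lambda$, so an $M[j(G_{\lambda_{n-1}})]$-generic must be built from scratch in $V[G_\lambda]$. I would split this tail as $(\widetilde\R_\lambda\times\widetilde\Q_{[\lambda^+,\bar\lambda^M)})*\dot{\widetilde\P}_{[\bar\lambda^M,j(\lambda))}$, where $\widetilde\R_\lambda=\Add(\lambda,\lambda^{++})^{M[G_\lambda]}$ and $\bar\lambda^M$ is the next $M$-inaccessible above $\lambda$. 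The factors $\widetilde\Q_{[\lambda^+,\bar\lambda^M)}$ and $\widetilde\P_{[\bar\lambda^M,j(\lambda))}$ are ${\le}\lambda$-closed in $M[G_\lambda]$ and have at most $|M[G_\lambda]|^{V[G_\lambda]}=\lambda^+$ dense subsets, so a standard diagonalization in $V[G_\lambda]$ (using $M[G_\lambda]^\lambda\subseteq M[G_\lambda]$) produces their generics below the natural master conditions. The delicate factor is $\widetilde\R_\lambda$, which is only ${<}\lambda$-closed; here I would mimic the end of the proof of Theorem \ref{theoremlevinski} and build a descending $\lambda^+$-sequence of increasingly masterful conditions, each deciding the next antichain of $\widetilde\R_\lambda$ in $M[G_\lambda]$ while remaining compatible with $j"H_{\lambda_{n-1}}$, where $H_{\lambda_{n-1}}$ is the $\Add(\lambda_{n-1},\lambda_{n-1}^{++})$-factor of $G_{[\lambda_{n-1},\lambda_n)}$. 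The $(\lambda^+)^{M[G_\lambda]}$-chain condition concentrates each antichain in $\Add(\lambda,S)^{M[G_\lambda]}$ for some $S$ with $|S|^V\le\lambda$; since $|j"\lambda_{n-1}^{++}|^V=\lambda_{n-1}^{++}<\lambda$, the pull-back $T:=j^{-1}"(S\cap j"\lambda_{n-1}^{++})$ has $V$-size at most $\lambda_{n-1}^{++}$, and $\bigcup j"(H_{\lambda_{n-1}}\cap\Add(\lambda_{n-1},T))$ is a bona fide element of $\widetilde\R_\lambda$ below which the given antichain can be decided compatibly with all of $j"H_{\lambda_{n-1}}$ via the domain-splitting trick from Theorem \ref{theoremlevinski}. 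The hard part is precisely this top-level construction: the failure of ${\le}\lambda$-closure in $\widetilde\R_\lambda$ rules out routine diagonalization and forces the Levinski-style masterful-descending-sequence argument, which in turn rests on the cardinality bound $|j"\lambda_{n-1}^{++}|^V<\lambda$ furnished by the huge tower.
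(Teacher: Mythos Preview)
Your handling of the intermediate steps $i<n$ is essentially the paper's argument. The real problem is the top step, where you propose to construct an $M[G_\lambda]$-generic for $\widetilde{\R}_\lambda=\Add(\lambda,\lambda^{++})^{M[G_\lambda]}$ inside $V[G_\lambda]$ by a descending $\lambda^+$-sequence of increasingly masterful conditions. This cannot succeed, for two independent reasons.

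First, the diagonalization itself breaks down. The poset $\widetilde{\R}_\lambda$ is only ${<}\lambda$-closed (conditions are partial functions of size ${<}\lambda$), so at limit stages of cofinality $\lambda$ in your $\lambda^+$-length construction there is no lower bound; the closure of $M[G_\lambda]$ under $\lambda$-sequences does not help, since the obstruction is in the poset, not in locating the sequence. In Theorem~\ref{theoremlevinski} the analogous descending sequence lives in $j(\R_\kappa)=\Add(j(\kappa),j(\kappa^+))^{M[j(G_\kappa)]}$, which is ${<}j(\kappa)$-closed and hence ${\leq}\kappa$-closed, so $\kappa^+$-length diagonalization goes through; you have transplanted the argument to a poset whose conditions live at the wrong level.

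Second, and more decisively, no such generic can exist in $V[G_\lambda]$ at all. Since $M^\lambda\subseteq M$ and $\P_\lambda$ is $\lambda^+$-c.c., one has $P(\lambda)^{V[G_\lambda]}=P(\lambda)^{M[G_\lambda]}$. Any $M[G_\lambda]$-generic for $\widetilde{\R}_\lambda$ yields, on its first coordinate, an $M[G_\lambda]$-generic for $\Add(\lambda,1)$, i.e.\ a subset of $\lambda$ not in $P(\lambda)^{M[G_\lambda]}=P(\lambda)^{V[G_\lambda]}$. So $\widetilde{H}_\lambda\notin V[G_\lambda]$, and your final model cannot be $V[G_\lambda]$.

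The paper resolves this by \emph{forcing} once more: one observes $\widetilde{\R}_\lambda\cong\Add(\lambda,\lambda^+)^{V[G_\lambda]}$ (since $(\lambda^{++})^M$ has $V$-size $\lambda^+$), adds a $V[G_\lambda]$-generic $H_\lambda$ for it below the master condition $\bigcup j"H_{\lambda_{n-1}}$ (which really is a single condition, since $|j"H_{\lambda_{n-1}}|=\lambda_{n-1}^{++}<\lambda$), and uses $H_\lambda$ to complete $j(G_\lambda)$. One must then lift $j$ through the extra forcing $H_\lambda$, and it is \emph{here} that the increasingly-masterful-conditions technique is applied, to $j(\Add(\lambda,\lambda^+))=\Add(j(\lambda),j(\lambda^+))^{M[j(G_\lambda)]}$; this poset is ${<}j(\lambda)$-closed, hence ${\leq}\lambda$-closed, and now the $\lambda^+$-diagonalization works. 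The final model is $V[G_\lambda][H_\lambda]$; the extra $\Add(\lambda,\lambda^+)$ is cofinality-preserving and leaves $2^\lambda=\lambda^+$, so the strict Levinski property at $\lambda$ is unaffected. You have misread what the Levinski trick in Theorem~\ref{theoremlevinski} is for: it does not manufacture the stage-$\kappa$ generic $\widetilde{H}_\kappa$ (that comes from rearranging the externally added $H_\kappa$), it lifts $j$ through the extra $H_\kappa$.
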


\begin{proof}

Without loss of generality, one may assume that $\GCH$ holds by using Theorem \ref{theoremnhugegch}. Let $\P_\lambda$ be the Levinski iteration up to $\lambda$ and let $j:V\to M=\Ult(V,U)$ witness that $\kappa$ is $n$-huge with target $\lambda=j^n(\kappa)$. Let $\kappa=\lambda_0<\lambda_1<\cdots<\lambda_n=\lambda$ be as in Lemma \ref{lemmanhugeultrapower}, that is let $\lambda_i=j^i(\kappa)$ for $0\leq i\leq n$. The Levinski iteration factors as
$$\P_\lambda\cong\P_\kappa*\dot\P_{[\lambda_0,\lambda_1)}*\cdots*\dot\P_{[\lambda_{n-1},\lambda_n)}.$$ 
Let $G_\kappa*G_{[\lambda_0,\lambda_1)}$ be $V$-generic for $\P_\kappa*\dot\P_{[\lambda_0,\lambda_1)}$. Then $j(\P_\kappa)\cong \P_{\lambda_1}\cong \P_\kappa*\dot\P_{[\lambda_0,\lambda_1)}$. Since conditions in $\P_\kappa$ support bounded below the critical point of $j$, it follows that $j"G_\kappa\subseteq G_\kappa * G_{[\lambda_0,\lambda_1)}$ and hence the embedding lifts to $j:V[G_\kappa]\to M[j(G_\kappa)]$ in $V[G_{\lambda_1}]$ where $j(G_\kappa)=G_\kappa*G_{[\lambda_0,\lambda_1)}$ and $M[j(G_\kappa)]^\lambda\subseteq M[j(G_\kappa)]$ in $V[G_{\lambda_1}]$. 

Now $j"G_{[\lambda_0,\lambda_1)}\subseteq j(\P_{[\lambda_0,\lambda_1)})=\P_{[\lambda_1,\lambda_2)}$. Since $j"G_{[\lambda_0,\lambda_1)}$ has size $\lambda_1$ in $M[j(G_\kappa)]$ and $\P_{[\lambda_1,\lambda_2)}$ is only $<\lambda_1$-directed closed in $M[j(G_\kappa)]$ one may factor $\P_{[\lambda_0,\lambda_1)}\cong\Q_{[\lambda_0,\bar{\lambda}_0)}*\dot\P_{[\bar{\lambda}_0,\lambda_1)}$ where $\bar{\lambda}_0$ is the least inaccessible cardinal greater than $\lambda_0$ and $\Q_{[\lambda_0,\bar{\lambda}_0)}=\prod_{\gamma\in[\lambda_0,\bar{\lambda}_0)\cap\REG}\Add(\gamma,\gamma^{++})$ where the product has Easton support. Let me write $G_{[\lambda_0,\lambda_1)}=H_{[\lambda_0,\bar{\lambda}_0)}*G_{[\bar{\lambda}_0,\lambda_1)}$ where $H_{[\lambda_0,\bar{\lambda}_0)}$ is $V[G_\kappa]$-generic for $\Q_{[\lambda_0,\bar{\lambda}_0)}$ and $G_{[\bar{\lambda}_0,\lambda_1)}$ is $V[G_\kappa][H_{[\lambda_0,\bar{\lambda}_0)}]$-generic for $\P_{[\bar{\lambda}_0,\lambda_1)}$. Now $j"H_{[\lambda_0,\bar{\lambda}_0)}\subseteq j(\Q_{[\lambda_0,\bar{\lambda}_0)})=\Q_{[\lambda_1,\bar{\lambda}_1)}$ where $j"H_{[\lambda_0,\bar{\lambda_0})}$ has size $\bar{\lambda}_0$ and $\Q_{[\lambda_1,\bar{\lambda}_1)}$ is $<\lambda_1$-directed closed. Since $\lambda_1$ is measurable it follows that $\bar{\lambda}_0<\lambda_1$ and hence there is a master condition $p_1$ in $\Q_{[\lambda_1,\bar{\lambda}_1)}$ below every element of $j"H_{[\lambda_0,\bar{\lambda}_0)}$. Force below the master condition $p_1$ to obtain $H_{[\lambda_1,\bar{\lambda}_1)}$, a $V[G_{\lambda_1}]$-generic for $\Q_{[\lambda_1,\bar{\lambda}_1)}$. Now one has $j"G_{[\bar{\lambda}_0,\lambda_1)}\subseteq j(\P_{[\bar{\lambda}_0,\lambda_1)})=\P_{[\bar{\lambda}_1,\lambda_2)}$ and since $j"G_{[\bar{\lambda}_0,\lambda_1)}$ has size $\lambda_1$ and $\P_{[\bar{\lambda}_1,\lambda_2)}$ is $<\bar{\lambda}_1$-directed closed there is a master condition $p_1'$ in $\P_{[\bar{\lambda}_1,\lambda_2)}$ below every element of $j"G_{[\bar{\lambda}_0,\lambda_1)}$. Now force over $V[G_{\lambda_1}][H_{[\lambda_1,\bar{\lambda}_1)}]$ below the master condition $p_1'$ to obtain a $V[G_{\lambda_1}][H_{[\lambda_1,\bar{\lambda}_1)}]$-generic $G_{[\bar{\lambda}_1,\lambda_2)}$ for $\P_{[\bar{\lambda}_1,\lambda_2)}$ with $p_1'\in G_{[\bar{\lambda}_1,\lambda_2)}$. Since $p_1*p_1'\in H_{[\lambda_1,\bar{\lambda}_1)}*G_{[\bar{\lambda}_1,\lambda_2)}$ it follows that $j"(H_{[\lambda_0,\bar{\lambda}_0)}*G_{[\bar{\lambda}_0,\lambda_1)})\subseteq H_{[\lambda_1,\bar{\lambda}_1)}*G_{[\bar{\lambda}_1,\lambda_2)}$ and hence the embedding lifts to 
$$j:V[G_\kappa][G_{[\lambda_0,\lambda_1)}]\to M[j(G_\kappa)][j(G_{[\lambda_0,\lambda_1)})]$$
in $V[G_\kappa][G_{[\lambda_0,\lambda_1)}][G_{[\lambda_1,\lambda_2)}]$ where $j(G_{[\lambda_0,\lambda_1)})=G_{[\lambda_1,\lambda_2)}=H_{[\lambda_1,\bar{\lambda}_1)}*G_{[\bar{\lambda}_1,\lambda_2)}$. Furthermore, $M[j(G_\kappa)][j(G_{[\lambda_0,\lambda_1)})]$ is closed under $\lambda$-sequences in the model $V[G_\kappa][G_{[\lambda_0,\lambda_1)}][G_{[\lambda_1,\lambda_2)}]$ since the forcing $G_{[\lambda_1,\lambda_2)}$ is $\lambda^+$-c.c.

Continuing in this way, the embedding lifts to
$$j:V[G_{\lambda_{n-1}}]\to M[j(G_{\lambda_{n-1}})]$$
in $V[G_{\lambda_{n-1}}][G_{[\lambda_{n-1},\lambda_n)}]$ where $j(G_{\lambda_{n-1}})=G_\kappa*G_{[\lambda_0,\lambda_1)}*\cdots*G_{[\lambda_{n-1},\lambda_n)}$ and where $M[j(G_{\lambda_{n-1}})]$ is closed under $\lambda$-sequences in $V[G_{\lambda_n}]$.

Now $j(\P_{[\lambda_{n-1},\lambda_n)})\cong\widetilde{\P}_{[\lambda,j(\lambda))}$ where $\widetilde{\P}_{[\lambda,j(\lambda))}$ is the Levinski iteration, as defined in $M[j(G_{\lambda_{n-1}})]=M[G_\lambda]$, over the interval $[\lambda,j(\lambda))$. Hence 
$$j(\P_{[\lambda_{n-1},\lambda_n)})\cong (\widetilde{\Q}_\lambda\times\widetilde{\Q}_{[\lambda^+,\bar{\lambda}^M)})*\dot{\widetilde{\P}}_{[\bar{\lambda}^M,j(\lambda))}$$
where $\widetilde{\Q}_\lambda=\Add(\lambda,\lambda^{++})^{M[G_\lambda]}$, $\widetilde{\Q}_{[\lambda^+,\bar{\lambda}^M)}$, is an Easton support product that is $\leq\lambda$-directed closed in $M[G_\lambda]$, and $\widetilde{\P}_{[\bar{\lambda}^M,j(\lambda))}$ is the tail of the iteration in $M[G_\lambda]$. Since $j"H_{[\lambda_{n-1}^+,\bar{\lambda}_{n-1})}$ is a directed subset of $\widetilde{\Q}_{[\lambda^+,\bar{\lambda}^M)}$ in $M[G_\lambda]$ and has size $\lambda$ it follows that there is a master condition $p\in \widetilde{\Q}_{[\lambda^+,\bar{\lambda}^M)}$ below every element of $j"H_{[\lambda_{n-1}^+,\bar{\lambda}_{n-1})}$. Since $\widetilde{\Q}_{[\lambda^+,\bar{\lambda})}$ has at most $\lambda^+$-dense subsets in $M[G_\lambda]$ as counted in $V[G_\lambda]$ it follows that one may diagonalize to build an $M[G_\lambda]$-generic $\widetilde{H}_{[\lambda^+,\bar{\lambda}^M)}$ for $\widetilde{\Q}_{[\lambda^+,\bar{\lambda}^M)}$ in $V[G_\lambda]$ below the master condition $p$. It follows that $j"H_{\lambda_{n-1}}$ is a directed subset of $\widetilde{\Q}_\lambda$ of size at most $\lambda_{n-1}^{++}$ in $M[G_\lambda]$. Thus, since $\widetilde{\Q}_\lambda$ is $<\lambda$-directed closed, there is a master condition $p'\in \widetilde{\Q}_\lambda$ below every condition in $j"H_{\lambda_{n-1}}$. Since the forcing $\widetilde{\Q}_\lambda$ is merely ${<}\lambda$-closed in $M[G_\lambda]$ one cannot build a generic for it. So force with $\widetilde{\Q}_\lambda$ below $p'$ to obtain a $V[G_\lambda]$-generic $\widetilde{H}_\lambda$ containing $p'$. Since $\widetilde{H}_{[\lambda^+,\bar{\lambda}^M)}$ was built in $V[G_\lambda]$ it follows that $\widetilde{H}_\lambda$ is $V[G_\lambda][\widetilde{H}_{[\lambda^+,\bar{\lambda}^M)}]$-generic for $\widetilde{\Q}_\lambda$ and is hence $M[G_\lambda][\widetilde{H}_{[\lambda^+,\bar{\lambda}^M)}]$-generic as well. By the product forcing lemma $\widetilde{H}_\lambda\times\widetilde{H}_{[\lambda^+,\bar{\lambda}^M)}$ is $M[G_\lambda]$-generic for $\widetilde{\Q}_\lambda\times\widetilde{\Q}_{[\lambda^+,\bar{\lambda}^M)}$. Since $(p,p')\in \widetilde{H}_\lambda\times\widetilde{H}_{[\lambda^+,\bar{\lambda}^M)}$ it follows that $j"H_{[\lambda_{n-1},\bar{\lambda}_{n-1})}\subseteq \widetilde{H}_\lambda\times\widetilde{H}_{[\lambda^+,\bar{\lambda}^M)}$ and thus the embedding lifts to 
$$j:V[G_{\lambda_{n-1}}][H_{[\lambda_{n-1},\bar{\lambda}_{n-1})}]\to M[j(G_{\lambda_{n-1}})][j(H_{[\lambda_{n-1},\bar{\lambda}_{n-1})})]$$
in $V[G_\lambda][\widetilde{H}_\lambda]$ where $j(H_{[\lambda_{n-1},\bar{\lambda}_{n-1})})=\widetilde{H}_\lambda\times\widetilde{H}_{[\lambda^+,\bar{\lambda}^M)}$. Furthermore since $\widetilde{\Q}_\lambda\cong\Add(\lambda,\lambda^+)^{V[G_\lambda]}$ it follows that $\widetilde{\Q}_\lambda$ is $\lambda^+$-c.c. in $V[G_\lambda]$ and hence $M[G_\lambda][\widetilde{H}_\lambda\times\widetilde{H}_{[\lambda^+,\bar{\lambda}^M)}]$ is closed under $\lambda$-sequences in $V[G_\lambda][\widetilde{H}_\lambda]$. Since $j"G_{[\bar{\lambda}_{n-1}, \lambda_n)}$ is a directed subset of $\widetilde{\P}_{[\bar{\lambda}^M, j(\lambda))}$ in $M[G_\lambda][\widetilde{H}_\lambda\times\widetilde{H}_{[\lambda^+,\bar{\lambda}^M)}]$ of size $\lambda$ and since $\widetilde{\P}_{[\bar{\lambda}^M, j(\lambda))}$ is $\leq\lambda$-directed closed in $M[G_\lambda][\widetilde{H}_\lambda\times\widetilde{H}_{[\lambda^+,\bar{\lambda}^M)}]$ it follows that there is a master condition $p''\in \widetilde{\P}_{[\bar{\lambda}^M, j(\lambda))}$ below every condition in $j"G_{[\bar{\lambda}_{n-1}, \lambda_n)}$. Now since $M[G_\lambda][\widetilde{H}_\lambda\times\widetilde{H}_{[\lambda^+,\bar{\lambda}^M)}]$ is closed under $\lambda$-sequences in $V[G_\lambda][\widetilde{H}_\lambda]$, one may easily diagonalize to build $\widetilde{G}_{[\bar{\lambda}^M,j(\lambda))}$ an $M[G_\lambda][\widetilde{H}_\lambda\times\widetilde{H}_{[\lambda^+,\bar{\lambda}^M)}]$-generic filter for $\widetilde{\P}_{[\bar{\lambda}^M, j(\lambda))}$ in $V[G_\lambda][\widetilde{H}_\lambda]$ below the master condition $p''$. Thus $j"G_{[\bar{\lambda}_{n-1},\lambda_n)}\subseteq\widetilde{G}_{[\bar{\lambda}^M,j(\lambda))}$ and the embedding lifts to
$$j:V[G_{\lambda_{n-1}}][H_{[\lambda_{n-1},\lambda_n)}]=V[G_\lambda]\to M[j(G_{\lambda_{n-1}})][j(H_{[\lambda_{n-1},\lambda_n)})]$$
in $V[G_\lambda][\widetilde{H}_\lambda]$ where $j(H_{[\lambda_{n-1},\lambda_n)})=(\widetilde{H}_\lambda\times\widetilde{H}_{[\lambda^+,\bar{\lambda}^M)})*\widetilde{G}_{[\bar{\lambda}^M,j(\lambda))}$. Note that $M[j(G_{\lambda_{n-1}})][j(H_{[\lambda_{n-1},\lambda_n)})]$ is closed under $\lambda$ sequences in $V[G_\lambda][\widetilde{H}_\lambda]$.

It remains to lift the embedding through $\widetilde{H}_\lambda$ which is $V[G_\lambda]$-generic for $\widetilde{\Q}_\lambda=\Add(\lambda,\lambda^{++})^{M[G_\lambda]}\cong\Add(\lambda,\lambda^+)^{V[G_\lambda]}$. Let $H_\lambda$ be $V[G_\lambda]$-generic for $\Q_\lambda:=\Add(\lambda,\lambda^+)^{V[G_\lambda]}$. It will suffice to lift the embedding through $\Q_\lambda$ since $V[G_\lambda][\widetilde{H}_\lambda]=V[G_\lambda][H_\lambda]$. This can be done by building a descending sequence of increasingly masterful conditions as in the proofs of Theorems \ref{theoremlevinski} and \ref{theorempartiallysupercompact}. Thus the embedding lifts to $j:V[G_\lambda][H_\lambda]\to M[j(G_\lambda)][j(H_\lambda)]$ and $M[j(G_\lambda)][j(H_\lambda)]$ is closed under $\lambda$-sequences in $V[G_\lambda][H_\lambda]$ since $\Q_\lambda$ is $\lambda^+$-c.c.
\end{proof}

As mentioned above, if $\kappa$ is $n$-huge then $j^n(\kappa)$ is measurable. It follows, as a corollary to Theorem \ref{theoremnhugelevinski}, that if $\kappa$ is $n$-huge then $j^n(\kappa)$ need not exhibit any nontrivial degree of strength or supercompactness.

\subsection{$I_1(\kappa)$ and the Levinski Property}\label{sectioni1}

$I_1(\kappa)$ is the assertion that for some $\lambda$ there is a nontrivial elementary embedding $j:V_{\lambda+1}\to V_{\lambda+1}$ with critical point $\kappa$.  Define $I_1(\kappa,\lambda)$ to be the axiom asserting the existence of such an embedding for a specified $\lambda$. Hamkins shows in \cite[Theorem 5.3]{Hamkins:FragileMeasurability}, that if $I_1(\kappa,\lambda)$ holds witnessed by $j$, and $\P_\lambda$ is an  Easton support iteration such that for each $\gamma<\lambda$,
\begin{enumerate}
\item[(1)] $j(\P_\gamma)=\P_{j(\gamma)}$ and
\item[(2)] $\P_\gamma$ forces $\dot{\Q}_\gamma$ is ${<}\gamma$-directed closed forcing of size at most $2^\gamma$,
\end{enumerate}
then forcing with $\P_\lambda$ preserves $I_1(\kappa,\lambda)$. Thus, as noted in \cite{Hamkins:FragileMeasurability}, the canonical forcing of the $\GCH$ preserves $I_1(\kappa,\lambda)$.

The usual reflection argument shows that if $I_1(\kappa,\lambda)$ holds then $\kappa$ cannot have the Levinski property. In analogy to the previous sections, one could hope to show that if $I_1(\kappa,\lambda)$ holds then there is a forcing extension preserving this in which $\lambda$ has the Levinski property. The Levinski iteration up to $\lambda$ does not satisfy Hamkins's (2) above because at inaccessible stages $\gamma$, the forcing is of the form $\Q_{[\gamma,\bar{\gamma})}$, which has size greater than $2^\gamma$ in $V^{\P_\gamma}$. Nonetheless, I will now show that, the methods of \cite{Hamkins:FragileMeasurability} establish the following.

\begin{theorem}\label{theoremi1}
Suppose $I_1(\kappa,\lambda)$ holds witnessed by $j:V_{\lambda+1}\to V_{\lambda+1}$. Then there is a forcing extension in which $I_1(\kappa,\lambda)$ holds witnessed by a lift of $j$, and in which $\lambda$ has the strict Levinski property. 
\end{theorem}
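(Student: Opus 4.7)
The plan is to force with $\P_\lambda$, the Levinski iteration up to $\lambda$, and adapt the technique of \cite[Theorem 5.3]{Hamkins:FragileMeasurability} to lift $j$ to the extension. By Lemma \ref{lemmalevinski}, $\P_\lambda$ preserves cofinalities and forces $\lambda$ to have the strict Levinski property, and since $\P_\lambda$ only affects the continuum function below $\lambda$, the behavior at and above $\lambda$ is preserved. Hamkins's hypothesis that each stage $\dot{\Q}_\gamma$ has size at most $2^\gamma$ fails for the Levinski iteration, since $\Q_{[\gamma,\bar{\gamma})}$ can have size $\bar{\gamma}\gg\gamma^+$; nevertheless, the essential structural feature---that the $j$-image of each stage-$n$ piece has small cardinality relative to the directed closure of the next piece---still holds, and that is all the master-condition argument really needs.

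Setting $\kappa_n:=j^n(\kappa)$ so $\lambda=\sup_n\kappa_n$, I would inductively construct, for each $n<\omega$, a $V$-generic $G_{\kappa_n}\subseteq\P_{\kappa_n}$ and a lift $j:V[G_{\kappa_n}]_{\kappa_n+1}\to V[G_{\kappa_{n+1}}]_{\kappa_{n+1}+1}$ with $G_{\kappa_{n+1}}$ extending $G_{\kappa_n}$. By elementarity, $j(\P_{\kappa_n})=\P_{\kappa_{n+1}}$ as computed within $V_{\lambda+1}$. At the base, $G_\kappa$ can be any $V$-generic for $\P_\kappa$, and $j$ lifts trivially through $\P_\kappa$ because conditions have support bounded below $\kappa$. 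To pass from stage $n$ to stage $n+1$, factor $\P_{[\kappa_n,\kappa_{n+1})}\cong\Q_{[\kappa_n,\bar{\kappa}_n)}*\dot{\P}_{[\bar{\kappa}_n,\kappa_{n+1})}$, noting that $\bar{\kappa}_n<\kappa_{n+1}$ since $\kappa_{n+1}$ is measurable. The images $j"H_{[\kappa_n,\bar{\kappa}_n)}$ and $j"G_{[\bar{\kappa}_n,\kappa_{n+1})}$ are directed subsets of $\Q_{[\kappa_{n+1},\bar{\kappa}_{n+1})}$ and $\P_{[\bar{\kappa}_{n+1},\kappa_{n+2})}$ of cardinalities $\bar{\kappa}_n$ and $\kappa_{n+1}$ respectively, and since those target posets are ${<}\kappa_{n+1}$-directed closed and ${<}\bar{\kappa}_{n+1}$-directed closed, master conditions $p_n$ and $q_n$ exist. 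Choosing $G_{[\kappa_{n+1},\kappa_{n+2})}$ generic below $p_n * q_n$ yields the next lift via Lemma \ref{lemmaliftingcriterion}.

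Let $G_\lambda:=\bigcup_n G_{\kappa_n}$. Every maximal antichain of $\P_\lambda$ in $V$ has cardinality ${<}\lambda$ and support bounded in some $\kappa_n$, so it is met by $G_{\kappa_n}$; hence $G_\lambda$ is $V$-generic for $\P_\lambda$. Iteratively applying $j$ to the reflection $V[G_\kappa]_\kappa\elesub V[G_{\kappa_1}]_{\kappa_1}$---which holds because the stage-$0$ lift has critical point $\kappa$ and therefore fixes $V[G_\kappa]_\kappa$ pointwise---one obtains an elementary chain $V[G_{\kappa_0}]_{\kappa_0}\elesub V[G_{\kappa_1}]_{\kappa_1}\elesub V[G_{\kappa_2}]_{\kappa_2}\elesub\cdots$, whose union of embeddings gives an elementary $j^+:V[G_\lambda]_\lambda\to V[G_\lambda]_\lambda$.

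The main obstacle is extending $j^+$ to $V[G_\lambda]_{\lambda+1}$, which is where the $I_1$ argument genuinely requires that $j$ was originally defined on $V_{\lambda+1}$. Following \cite[Theorem 5.3]{Hamkins:FragileMeasurability}, for $A\in V[G_\lambda]_{\lambda+1}$ one defines $j^+(A):=\bigcup_n j(A\cap V[G_{\kappa_n}]_{\kappa_n})$ and verifies that the pieces cohere, so that $j^+(A)$ is a well-defined element of $V[G_\lambda]_{\lambda+1}$. Elementarity of $j^+$ on $V[G_\lambda]_{\lambda+1}$ follows by the standard elementary-chain argument: any formula $\varphi$ satisfied in $V[G_\lambda]_{\lambda+1}$ reflects down to some $V[G_{\kappa_n}]_{\kappa_n+1}$, where the stage-$n$ lift supplies the corresponding statement about $j^+$-images. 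Since each stage-$n$ lift has critical point $\kappa$, so does $j^+$, and hence $j^+$ witnesses $I_1(\kappa,\lambda)$ in $V[G_\lambda]$.
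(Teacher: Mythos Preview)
Your overall strategy tracks the paper's, and the master-condition bookkeeping at each stage is correct, but there is a genuine gap at the step where you set $G_\lambda := \bigcup_n G_{\kappa_n}$ and claim it is $V$-generic for $\P_\lambda$. Since $\lambda = \sup_n \kappa_n$ has cofinality $\omega$, it is singular, and the Easton-support iteration $\P_\lambda$ therefore takes an \emph{inverse} limit at stage $\lambda$. Conditions in $\P_\lambda$ may have support unbounded in $\lambda$, and there are dense sets---for instance $\{p \in \P_\lambda : (\forall n<\omega)\ p(\kappa_n) \neq \1\}$---consisting entirely of such conditions. No element of $\bigcup_n G_{\kappa_n}$ has unbounded support, so the filter it generates fails to meet such dense sets and is not $V$-generic for $\P_\lambda$. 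Your justification, that maximal antichains of $\P_\lambda$ have support bounded in some $\kappa_n$, is false precisely because an inverse limit is taken at the singular stage. Relatedly, your stage-by-stage construction is an $\omega$-length external process and does not specify a single forcing extension of $V$ in which all the $G_{\kappa_n}$ simultaneously live.

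The paper's fix is exactly to fold your per-stage master conditions into one condition of $\P_\lambda$ before forcing. Because each $p_n * q_n$ depends only on $\dot{G}\restrict\kappa_n$, it can be expressed as a $\P_{\kappa_n}$-\emph{name}; these names assemble into a single condition $q \in \P_\lambda$ with unbounded support (legitimate precisely because the inverse limit is used at $\lambda$), satisfying $q \forces_{\P_\lambda} (\forall p \in \dot{G})(j(p) \in \dot{G})$. One then forces once with $\P_\lambda$ below $q$, obtaining a genuine $V$-generic $G$, and Lemma~\ref{lemmalevinski} applies directly. The lift to $V[G]_{\lambda+1}$ is defined on names by $j(\tau_G) := j(\tau)_G$, using that the original $j$ already acts on names in $V_{\lambda+1}$ and that $q \in G$ makes this well defined; this sidesteps your reflection to $V[G_{\kappa_n}]_{\kappa_n+1}$, whose elementarity you have not established (the elementary chain you built lives at level $\kappa_n$, not $\kappa_n + 1$).
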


\begin{proof}
Let $\P_\lambda$ be the Levinski iteration up to $\lambda$. Then  
$$\P_\lambda\cong\P_{\kappa_0}*\P_{[\kappa_0,\kappa_1)}*\cdots*\P_{[\kappa_n,\kappa_{n+1})}*\cdots$$ where $\lambda=\sup_{n<\omega}\kappa_n$ is singular and thus conditions in $\P_\lambda$ may have support unbounded in $\lambda$. Let $\1$ denote the trivial condition in $\P_\lambda$. 


First I will show that $\P_\lambda$ admits a master condition for $j$; in other words, there is a $q\in \P_\lambda$ such that $q\forces p\in\dot{G}\rightarrow j(p)\in\dot{G}$. Define $q$ inductively as follows. Let $q\restrict \kappa_1$ be the trivial condition in $\P_{\kappa_1}$. Assuming that $q\restrict \kappa_n$ has beed defined, we define $q\restrict\kappa_{n+1}$ as follows. Notice that for each $n<\omega$, 
\begin{enumerate}
\item[(1)] $\forces_{\kappa_n}$ $\dot{\Q}_{\kappa_n}=\dot{\R}_{[\kappa_n,\bar{\kappa}_n)}$ is $<\kappa_n$-directed closed,
\item[(2)] $\forces_{\kappa_n}$ $|(j"\dot{G})(\kappa_n)|=|\dot{G}(\kappa_{n-1})|=2^{\bar{\kappa}_{n-1}}<\kappa_n$, and
\item[(3)] $\forces_{\kappa_n}$ $j"\dot{G}(\kappa_n)\subseteq\dot{\Q}_{\kappa_n}$ is directed.
\end{enumerate}
Conditions (1) - (3) imply that there is a $\P_{\kappa_n}$-name $a$ such that 
$$\forces_{\kappa_n}\textrm{$a\in\dot{\Q}_{\kappa_n}$ and $\forall p\in\dot{G}(a\leq j(p)(\kappa_n))$.}$$ 
Now define $q(\kappa_n):=a$ and let $q(\alpha)$ be the trivial condition for $\alpha\in(\kappa_n,\bar{\kappa}_n)$. At stage $\bar{\kappa}_n$ one has
\begin{enumerate}
\item[(4)] $\forces_{\bar{\kappa}_n}$ $\dot{\P}_{[\bar{\kappa}_n,\kappa_{n+1})}$ is $\leq\kappa_n$-directed closed,
\item[(5)] $\forces_{\bar{\kappa}_n}$ $|(j"\dot{G})\restrict(\bar{\kappa}_n,\kappa_{n+1})|=|\dot{G}\restrict(\bar{\kappa}_{n-1},\kappa_n)|\leq\kappa_n$, and
\item[(6)] $\forces_{\bar{\kappa}_n}$ $(j"\dot{G})\restrict(\bar{\kappa}_n,\kappa_{n+1})\subseteq\P_{[\bar{\kappa}_n,\kappa_{n+1})}$ is directed.
\end{enumerate}
Hence by conditions (4) - (6) there is a $\P_{\bar{\kappa}_n}$-name, call it $b$, such that 
$$\forces_{\bar{\kappa}_n}\textrm{$b\in\P_{[\bar{\kappa}_n,\kappa_{n+1})}$ and $\forall p\in\dot{G}\ (b\leq j(p)\restrict [\bar{\kappa}_n,\kappa_{n+1}))$}$$
Now define $q\restrict[\bar{\kappa}_n,\kappa_{n+1}):=b$ and $q\restrict\kappa_{n+1}:= q\restrict\kappa_n\concat q\restrict[\kappa_n,\kappa_{n+1})$. Now define $q=q\restrict\lambda=\bigcup_{n<\omega}q\restrict\kappa_n$. Since an inverse limit is taken at $\lambda$ (because $\lambda$ is singular) the support of $q$ matches that of $\P_\lambda$. 

I will now show that $q$ is the desired master condition; i.e., $q\forces$ $p\in G\rightarrow j(p)\in G$. Suppose $G$ is $V$-generic for $\P_\lambda$ with $q\in G$. If $p\in G$ then by construction, for each $n\geq 1$ it follows that $q\restrict[\kappa_n,\kappa_{n+1}) \leq j(p)\restrict [\kappa_n,\kappa_{n+1})$ in $V^{\P_{\kappa_n}}$. Hence $q\restrict[\kappa_1,\lambda)\leq j(p)\restrict [\kappa_1,\lambda)$ in $V^{\P_{\kappa_1}}$. Since $q\restrict [\kappa,\kappa_1)$ and $j(p)\restrict [\kappa,\kappa_1)$ are both trivial, it follows that $q\restrict [\kappa,\lambda)\leq j(p)\restrict[\kappa,\lambda)$ in $V^{\P_\kappa}$. It follows that $q\leq (\1\restrict\kappa) \concat q\restrict[\kappa,\lambda)\leq (\1\restrict\kappa)\concat j(p)\restrict[\kappa,\lambda)$ and thus $(\1\restrict\kappa)\concat j(p)\restrict[\kappa,\lambda)\in G$. Since $p\in G$ it follows that $(p\restrict\kappa)\concat \1\restrict[\kappa,\lambda)\in G$. Hence $(p\restrict\kappa)\concat \1\restrict[\kappa,\lambda)$ and $(\1\restrict\kappa)\concat j(p)\restrict[\kappa,\lambda)$ have a common extension, call it $r$, with $r\in G$. Since $r\leq j(p) = (p\restrict\kappa)\concat j(p)\restrict[\kappa,\lambda)$ it follows that $j(p)\in G$.



The following lemma will be required to show that $q\forces$ $j$ lifts to $V[G]_{\lambda+1}$.
\begin{lemma}\label{lemmai1}
If $G\subseteq \P_\lambda$ is $V$-generic, then $V_{\lambda+1}[\bigcup_{n<\omega}G_n]=V[G]_{\lambda+1}$ where $G_n:=G_{\kappa_n}$.
\end{lemma}

\begin{proof}
First I prove $V_\lambda[\bigcup_{n<\omega}G_n]=V[G]_{\lambda}$. $\subseteq$ is immediate. Suppose $x=\tau_G\in V[G]_\lambda$. It follows that $\tau_G\in V[G]_{\kappa_n}$ for some $n$. The closure properties of the iteration $\P_\lambda$ imply that $\tau_G$ could not have been added by the tail $\P_{[\kappa_{n+1},\lambda)}$. Let $\sigma$ be a $\P_{\kappa_{n+1}}$-name with $x=\sigma_{G_{n+1}}=\tau_{G}$. We can assume that the rank of $\sigma$ is less than $\kappa_{n+2}$. Thus $x=\sigma_{G_{n+1}}=\tau_{G}\in V_\lambda[\bigcup_{n<\omega}G_n]$.

Now I will show that $V_{\lambda+1}[\bigcup_{n<\omega}G_n]=V[G]_{\lambda+1}$. $\subseteq$ is immediate. Suppose $x=\tau_G\in V[G]_{\lambda+1}$. Then $x\subseteq V[G]_\lambda$ and $x=\tau_G=\bigcup_{n<\omega}\tau_G\cap V[G]_{\kappa_n}$. From the previous paragraph, each piece of the union has a name in $V_\lambda$, call it $\sigma_n$, which can be evaluated using only an initial segment of $G$. It follows that the sequence $\langle \sigma_n\mid n<\omega\rangle$ is in $V_{\lambda+1}$. Hence $\bigcup_{n<\omega}\sigma_n\in V_{\lambda+1}$. It follows that $x=\tau_G\in V_{\lambda+1}[\bigcup_{n<\omega}G_n]$.
\end{proof}

Now I argue that $q\forces$ $j$ lifts to $V[G]_{\lambda+1}$. Suppose $q\in G$ where $G$ is $V$-generic for $\P_\lambda$. Define $j:V[G]_{\lambda+1}\to V[G]_{\lambda+1}$ by $j(\tau_G)=j(\tau)_G$. By Lemma \ref{lemmai1} one can assume that $\tau\in V_{\lambda+1}$ and can be evaluated using $\bigcup_{n<\omega} G_n$, thus $\tau\in\dom(j)$. Let me show that $j$ is well defined. If $\sigma_G=\tau_G$ then $p\forces\sigma=\tau$ for some $p\in G$. Since $V_{\lambda+1}$ can verify the statement $p\forces\sigma=\tau$ it follows by applying $j$ that $j(p)\forces j(\sigma)=j(\tau)$. Now $j(p)\in G$ since $q\in G$ serves as a master condition. Thus $j(\sigma)_G=j(\tau)_G$. Similarly, $j$ is elementary. The lift of $j$ witnesses $I_1(\kappa,\lambda)$ holds in $V[G]$.
\end{proof}

I end with a question.

\begin{question}
Is there some large cardinal notion, say $\varphi(\kappa)$, among those commonly considered, such that the consistency strength of ``\,$\ZFC + \exists\kappa(\varphi(\kappa)\wedge\textrm{$\kappa$ has the Levinski property})$'' is strictly greater than the consistency strength of ``\,$\ZFC+\exists\kappa\varphi(\kappa)$''?
\end{question}

%
%
%



\backmatter


\addcontentsline{toc}{chapter}{\numberline{}Bibliography }

\end{document}